\numberwithin{equation}{section}
\theoremstyle{plain}
\newtheorem{thm}{Theorem}[section]
\newtheorem{lem}[thm]{Lemma}
\newtheorem{prop}[thm]{Proposition}
\newtheorem{cor}[thm]{Corollary}
\theoremstyle{definition}
\newtheorem{remark}[thm]{Remark}
\newtheorem{definition}{Definition}
\title{Quantitative unique continuation for wave operators with a jump discontinuity across an interface and applications to approximate control}
\author{ Spyridon Filippas}
\date{October 2022}
\DeclareMathOperator\supp{supp}
\def\keywords{
    \vspace{1ex}
    \noindent
    \if@twocolumn
      \small{\bf  Keywords}\/---$\!$    \else
      \begin{center}\small\ {\bf Keywords}\end{center}\quotation\small
    \fi}
\def\endkeywords{\vspace{0.6em}\par\if@twocolumn\else\endquotation\fi
    \normalsize\rm}
\begin{document}
	
	\maketitle
	
	\begin{abstract}
   In this article we prove quantitative unique continuation results for wave operators of the form $\partial_t^2-\textnormal{div}(c(x)\nabla \cdot)$ where the scalar coefficient $c$ is discontinuous across an interface of codimension one in a bounded domain or on a compact Riemannian manifold. We do not make any assumptions on the geometry of the interface or on the sign of the jumps of the coefficient $c$. The key ingredient is a local Carleman estimate for a wave operator with discontinuous coefficients. We then combine this estimate with the recent techniques of Laurent-Léautaud~\cite{Laurent_2018} to propagate local unique continuation estimates and obtain a global stability inequality. As a consequence, we deduce the cost of the approximate controllability for waves propagating in this geometry.
\end{abstract}

\begin{keywords}
  \noindent
Unique continuation, Carleman estimate, wave equation, jumps across an interface, approximate control, stability estimates
\medskip

\noindent
\textbf{2010 Mathematics Subject Classification:}
35B60, 
47F05,       
35L05, 
93B07, 
93B05, 
 35Q93 
\end{keywords}

	\newcommand{\rel}{\operatorname{Re} \lambda}
	\newcommand{\iml}{\operatorname{Im} \lambda}
	\newcommand{\norm}[3]{\left\Vert #1 \right\Vert_{#2}^{#3}}
	\newcommand{\normsurf}[3]{\left\vert #1 \right\vert_{#2}^{#3}}
	\newcommand{\R}{\mathbb{R}}
	\newcommand{\Rp}{\R_+}
	\newcommand{\Rm}{\R_-}
	\newcommand{\lr}{L^2(\R)}
	\newcommand{\lrp}{L^2(\R^+)}
	\newcommand{\lrm}{L^2(\R^-)}
	\newcommand{\lrn}{L^2(\R^{n+1})}
	\newcommand{\norvp}{\norm{v_+}{\lrp}{2} }
	\newcommand{\norvm}{\norm{v_-}{\lrm}{2} }
	\newcommand{\xip}{\xi^\prime}
	\newcommand{\lrpn}{L^2(\R^{n+1}_+)}
	\newcommand{\lrmn}{L^2(\R^{n+1}_-)}
	\newcommand{\ls}{L^2(\Sigma)}
	
	\newcommand{\lefp}{\left(}
	\newcommand{\rightp}{\right)}
	
	\newcommand{\cm}{c_{-}}
	\newcommand{\cp}{c_{+}}
	\newcommand{\invcp}{\cp^{-1}}
	\newcommand{\invcm}{\cm^{-1}}
	\newcommand{\invc}{c^{-1}}
	\newcommand{\xipt}{|\xip|^2+|\xi_t|^2}
	\newcommand{\xipc}{|\xip|^2}
	\newcommand{\xitc}{|\xi_t|^2}
	
	\newcommand{\errt}{E_t (v) }
	
	\newcommand{\fh}{l}
	\newcommand{\oh}{L}
	
	\newcommand{\poids }{Q_{\delta, \tau}^{\phi}}
	
		\newcommand{\poidsps }{Q_{\epsilon, \tau}^{\psi}}
	
	\newcommand{\cnop}{P_\phi}
	\newcommand{\cnp}{P^{+}_\phi}
	\newcommand{\cnm}{P^{-}_\phi}
	
	\newcommand{\phip}{\phi^{\prime}}
	
	\newcommand{\X}{L^2(X)}
	\newcommand{\Y}{L^2(Y)}
	
	\newcommand{\qr}{Q_2}
	\newcommand{\qi}{Q_1}
	
	\newcommand{\bjk}{\sum_{1\leq j,k \leq n-1}b_{jk}(x)}
	
	\newcommand{\op}{\textnormal{op}^w}
	
	\newcommand{\tchi}{\tilde{\chi}}
	\newcommand{\psis}{\psi_{\sigma}}
	
	\newcommand{\ths}{\theta_{\sigma}}
	\newcommand{\tths}{\check{\theta}_{\sigma}}
	
	\newcommand{\tphi}{\tilde{\phi}}
	\newcommand{\tphip}{\tilde{\phi}^\prime}
	
	\newcommand{\lkp}{\norm{\kappa^\prime}{L^\infty}{}}
	\newcommand{\lkps}{\norm{\kappa^\prime}{L^\infty}{2}}
	
	\newcommand{\Mc}{\mathcal{M}}
	\newcommand{\Lc}{\mathcal{L}}

	\newcommand{\psit}{\Psi}
	
	\newcommand{\psitt}{\Psi_{\tau}}
	
	\newcommand{\tran}{\mathcal{W}^{\theta,\Theta}}
	
	\newcommand{\tranphi}{\mathcal{W}^{\theta,\Theta}_{\phi}}

	\newcommand{\xmo}{\Xi_0}
	\newcommand{\xm}{\Xi_1}
	\tableofcontents

\section{Introduction}

For a wave operator $P$ the question of unique continuation consists in asking whether a partial observation of a wave on a small set $\omega$
is sufficient to determine \textit{the whole} wave. If this property holds, then the next natural question is if we can \textit{quantify} it. This is expressed via a stability estimate of the form
\begin{equation}
\label{quantitative phi}
\norm{u}{\Omega}{} \lesssim \phi\left(\norm{u}{\omega}{},\norm{Pu}{\Omega}{},\norm{u}{\Omega}{}\right),    
\end{equation}
with $\phi$ satisfying
$$
\phi(a,b,c) \stackrel{a,b \rightarrow0}{\longrightarrow} 0, \quad \textnormal{with }c\textnormal{ bounded.}
$$
Such estimates have numerous applications in control theory, spectral geometry and inverse problems. Concerning the wave operator a seminal unique continuation result was obtained by Robbiano in~\cite{Robbiano:91} and refined by Hörmander in~\cite{Hormander:92}. The optimal version of this \textit{qualitative} result was finally attained in the so called Tataru, Hörmander, Robbiano-Zuily Theorem \cite{Tataru:95, Tataru:99, Hor:97, RZ:98}. This theorem deals in fact with the more general case of operators with partially analytic coefficients and, in the particular case of a wave operator with coefficients independent of time, gives uniqueness across any non characteristic hypersurface. Recently, in \cite{Laurent_2018} the authors proved a quantitative version of the latter theorem which, for the wave equation, is optimal with respect to the observation time and the stability estimate obtained. Note that, a \textit{qualitative} uniqueness result is equivalent to an approximate controllability result, and a \textit{quantified} version of it gives an estimate of the control cost. The quantitative unique continuation result of~\cite{Laurent_2018} applies to (variants of) the operator $\partial_t^2-\Delta_g$ where $\Delta_g$ is an elliptic operator with $C^\infty$ coefficients. See also~\cite{BKL:16} for a related set of estimates concerning the wave operator.

However, in many contexts, waves propagate through singular media and therefore in the presence of non smooth coefficients. E.g. in the case of seismic waves \cite{Symes:83} or acoustic waves \cite{CDHCH:17,AdHG:17,CHKVU:19} propagating through the Earth's crust. Models proposing to describe such phenomena use discontinuous metrics and more precisely metrics which are piece-wise regular but presenting jumps along some hypersurfaces. See for instance the Mohorovičić discontinuity between the Earth's crust and the mantle. Another example arises in medical imaging. The human brain \cite{F:01,FCCP:17} has two main components: white and grey matter. These two have very different electric conductivities and models describing the situation are very similar to the preceding example.

The question of quantitative unique continuation across a jump discontinuity seems to be well understood in the elliptic/parabolic context. One of the first results (in the parabolic case) is \cite{DOP:02} where the operator $\partial^2_t-\textnormal{div}(c \nabla \cdot)$ is studied with a monotonicity assumption imposed on the scalar coefficient $c=c(x)$: the observation should take place in the region where the coefficient $c$ is smaller. In this article a global Carleman estimate was proved. Later, in the elliptic case in \cite{LRR:10} a similar result was obtained but without any restriction on the sign of the jump of the coefficient. These techniques were extended to the parabolic context in  \cite{LRR:11}.  The most recent (and general) result to the best of our knowledge is proved by Le Rousseau and Lerner \cite{le2013carleman} where the anisotropic case ($-\textnormal{div}(A(x) \nabla \cdot)$, with $A$ a matrix jumping across an interface) is treated.

The question of \textit{exact} control for waves with jumps at an interface has already been addressed in the book \cite{Lio:88}. A controllability result is proved for the operator $\partial_t^2-\textnormal{div}(c \nabla \cdot)$ with $c$ a piece-wise constant coefficient under a geometric assumption on the jump hypersurface and a sign condition on the jump. One of the first Carleman estimates was proved in the discontinuous setting in \cite{baudouin2007global}. With the same assumption on the coefficient and assuming that the interface is convex the authors prove linear quantitative stability estimates. Recently, in \cite{baudouin:hal-03211176} quantitative results were proved as well for interfaces that interpolate between star-shaped and convex.  Other related works are \cite{gagnon:hal-01958161} and \cite{buffe:hal-02473477}. 

However, to our knowledge the question of stability estimates without any particular geometric assumption on the interface has not been studied yet. This is the main object of this article.

\subsection{Setting and statement of main results}
\label{setting and main}
Let $(\mathcal{M},g)$ be a smooth connected compact $n$-dimensional Riemannian manifold with or without boundary. We consider $S$ an $(n-1)$-dimensional submanifold of $\mathcal{M}$ without boundary. We assume that $\mathcal{M}\backslash S= \Omega_{-}\cup \Omega_{+}$ with $\Omega_{-}\cap \Omega_{+}= \emptyset$. 

We consider a scalar coefficient $c(x)=\mathds{1}_{\Omega_{-}}c_-(x)+\mathds{1}_{\Omega_{+}}c_+(x)$ with $c_{\pm} \in C^{\infty}(\overline{\Omega}_{\pm})$ satisfying $0<c_{\min} < c(x)<c_{\max}$ uniformly on $\Omega_{-}\cup \Omega_{+}$ to ensure ellipticity. We shall work with the wave operator $P$ defined as 

\begin{equation}
 \label{definition of P }
 P=\partial^2_t-\textnormal{div}_g(c(x) \nabla_g ),\quad \textnormal{on }\R_t \times \Omega_-\cup \Omega_+.
\end{equation}

We consider for $(u_0,u_1) \in H^1_0( \mathcal{M}) \times L^2(\mathcal{M})$ the following evolution problem:
\begin{equation}
\label{system 1}
\begin{cases}
Pu=0 & \textnormal{in} \:(0,T) \times \Omega_{-}\cup\Omega_{+}\\
u_{|S_-}=u_{|S_+} & \textnormal{in}\: (0,T)\times S \\
(c\partial_\nu u)_{|S_-}=(c \partial_\nu u)_{|S_+} & \textnormal{in}\: (0,T)\times S \\
u=0 & \textnormal{in}\: (0,T)\times \partial\mathcal{M} \\
\left(u,\partial_t u \right)_{|t=0}=(u_0,u_1) & \textnormal{in} \: \mathcal{M},
\end{cases}
\end{equation}
where we denote by $\partial_{\nu}$ a nonvanishing vector field defined in a neighborhood of $S$, normal to $S$ (for the metric $g$), pointing into $\Omega_{+}$ and normalized for $g$. We denote as well by $u_{|S_{\pm}}$ the traces of $u_{|\Omega_{\pm}}$ on the hypersurface $S$. 

Notice that there are two extra equations in our system. These are some natural transmission conditions that we impose in the interface. These conditions imply that the underlying elliptic operator is self-adjoint on its domain and one can show using classical methods (for instance with the Hill-Yosida Theorem) that the system~\eqref{system 1} is well posed. For more details on this we refer to Section~\ref{the cerleman estimate}.

Our first result provides a quantitative unique continuation result from an observation region $\omega$ for the discontinuous wave operator $P$.

In section~\ref{not and def} we introduce $\mathcal{L}(\mathcal{M}, \omega)= \sup_{x \in \mathcal{M}}\textnormal{dist}(x,\omega),$ the “largest distance” of the subset $\omega$ to a point of $\mathcal{M}$, where dist is a distance function adapted to $(\mathcal{M},g,c)$.

\begin{thm}
\label{theorem quant}
Consider $(\mathcal{M},g), S, \Omega_{\pm}$ and $P$ as defined in \eqref{definition of P }. Then for any nonempty open subset $\omega$ of $\mathcal{M}$ and any $T>2\mathcal{L}(\mathcal{M},\omega)$, there exist $C, \kappa, \mu_0$ such that for any $(u_0,u_1) \in H^1_0( \mathcal{M}) \times L^2(\mathcal{M})$ and $u$ solving \eqref{system 1}
one has, for any $\mu\geq \mu_0$,

$$
\norm{(u_0,u_1)}{L^2\times H^{-1}}{}\leq Ce^{\kappa \mu} \norm{u}{L^2((0,T)\times \omega)}{}+\frac{C}{\mu}\norm{(u_0,u_1)}{H^1\times L^2}{}.
$$
If moreover $\partial \mathcal{M}\neq \emptyset$ and $\Gamma$ is a non empty open subset of $\partial \mathcal{M}$, for any $T>2 \mathcal{L}(\mathcal{M},\Gamma)$, there exist $C, \kappa, \mu_0>0$ such that for any $(u_0,u_1) \in H^1_0( \mathcal{M}) \times L^2(\mathcal{M})$ and $u$ solving \eqref{system 1}, we have
$$
\norm{(u_0,u_1)}{L^2\times H^{-1}}{}\leq Ce^{\kappa \mu} \norm{\partial_{\nu_{\Gamma}}u}{L^2((0,T) \times \Gamma)}{}+\frac{C}{\mu}\norm{(u_0,u_1)}{H^1\times L^2}{}.
$$
\end{thm}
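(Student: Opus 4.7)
The plan is to follow the scheme of Laurent--Léautaud \cite{Laurent_2018}, the key new ingredient being a local Carleman estimate compatible with the jump of $c$ across $S$. Assuming this local estimate, one reproduces in the present piecewise-smooth setting the propagation argument of that paper, then converts the resulting global interpolation inequality into the semiclassical form of Theorem~\ref{theorem quant} via a spectral cutoff. The two statements (interior and boundary observation) are handled uniformly: only the final local estimate near $\Gamma$ differs, producing the boundary trace $\|\partial_{\nu_\Gamma} u\|_{L^2((0,T)\times\Gamma)}$ instead of an interior bulk term.

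The main obstacle is the local Carleman estimate for $P$ near an arbitrary point $x_0\in S$. After straightening $S$ to $\{x_n=0\}$, one seeks a weight $\phi$ which is pseudoconvex on each side of the interface (in the sense adapted to the wave principal symbol on each side) and whose traces on $S$ are carefully matched. Following the strategy of Le Rousseau--Lerner \cite{le2013carleman} in the elliptic setting, the trace contributions produced by integration by parts against the transmission conditions of \eqref{system 1} are absorbed into the bulk terms via a microlocal decomposition in the tangential variables into elliptic and hyperbolic regions on each side, combined with a tangential pseudodifferential calculus that survives the jump. Crucially, the construction must work \emph{without} any sign assumption on $c_+-c_-$; this forces additional compatibility conditions on $\phi$ at $\{x_n=0\}$ and is where the bulk of the technical analysis lies. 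The output is an estimate of the rough form
\begin{equation*}
\tau\|e^{\tau\phi}v\|_{H^1_\tau}^2 \leq C\|e^{\tau\phi}Pv\|_{L^2}^2,
\end{equation*}
for $v$ compactly supported near $x_0$ and satisfying the transmission conditions.

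From the local Carleman estimate, a standard cutoff-and-optimisation procedure yields local three-cylinders interpolation inequalities $\|u\|_{H^1(V')}\leq C(\|u\|_{H^1(V)}+\|Pu\|_{L^2})^\theta \|u\|_{H^1}^{1-\theta}$, with some $\theta\in(0,1)$, valid across the interface. Away from $S$ the analogous inequality follows from classical Carleman estimates for smooth wave operators. These local estimates are then chained together by the iteration/foliation scheme of \cite{Laurent_2018}, now executed with respect to the distance function on $(\mathcal{M},g,c)$ used to define $\mathcal{L}$. The assumption $T>2\mathcal{L}(\mathcal{M},\omega)$ (resp.\ $T>2\mathcal{L}(\mathcal{M},\Gamma)$) is exactly what ensures that the entire cylinder $(0,T)\times\mathcal{M}$ can be reached from the observation set in a finite number of such local propagations, producing a global interpolation inequality controlling $\|u\|_{H^1((0,T)\times\mathcal{M})}$ by a power of the observation norm times $\|u\|_{H^1}^{1-\theta}$.

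To pass to the statement of the theorem, decompose $u=\Pi_{\leq\mu}u+\Pi_{>\mu}u$ using the spectral projectors of the self-adjoint operator $A=-\textnormal{div}_g(c\nabla_g)$ on $\mathcal{M}$ with the transmission conditions of \eqref{system 1}, whose well-posedness was recalled in the introduction. Apply the global interpolation to $\Pi_{\leq\mu}u$ and use Young's inequality with a weight $\epsilon$: optimising $\epsilon$ as a function of $\mu$ produces the prefactor $Ce^{\kappa\mu}$ on the observation term. For the high-frequency remainder, combine the trivial bound $\|\Pi_{>\mu}u\|_{L^2}\leq\mu^{-1}\|A^{1/2}\Pi_{>\mu}u\|_{L^2}$ with conservation of energy for \eqref{system 1} to produce the contribution $C\mu^{-1}\|(u_0,u_1)\|_{H^1\times L^2}$. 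Summing the two pieces yields the stated inequality; the norm $L^2\times H^{-1}$ on the left reflects the one-derivative loss inherent to this frequency splitting.
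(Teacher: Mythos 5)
There are two places where your proposal departs from what can actually work, and both are essential.

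\textbf{The form of the local Carleman estimate is wrong.} You write the output of the local step as
$\tau\|e^{\tau\phi}v\|_{H^1_\tau}^2 \leq C\|e^{\tau\phi}Pv\|_{L^2}^2$,
with $\phi$ ``pseudoconvex\ldots on each side.'' For the wave operator $\partial_t^2-\operatorname{div}(c\nabla\cdot)$ with coefficients independent of $t$, no such estimate holds across a general non-characteristic surface: the operator is not pseudoconvex in the $\xi_t$ direction, and this is precisely the obstruction that Robbiano, Hörmander, Tataru, and Robbiano--Zuily had to overcome. The Carleman estimate that the paper proves (Theorem~\ref{the carleman ineq thm}) necessarily involves the microlocal Gaussian weight $Q^\phi_{\delta,\tau}=e^{-\delta|D_t|^2/(2\tau)}e^{\tau\phi}$, which localizes near $\{\xi_t=0\}$, i.e.\ in the region where the operator is close to elliptic. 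Dropping this weight does not merely change constants; the resulting inequality is false. Everything downstream — the factorization argument, the absorption of the transmission traces, the treatment of the non-elliptic region where $|\xi_t|\gtrsim|\xi'|$ as an admissible error — depends on the presence of $e^{-\delta|D_t|^2/(2\tau)}$.

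\textbf{Three-cylinders H\"older interpolation does not give the time $T>2\mathcal{L}$.} You propose to convert the Carleman estimate into local H\"older-type three-ball inequalities $\|u\|_{H^1(V')}\leq C(\|u\|_{H^1(V)}+\|Pu\|)^\theta\|u\|_{H^1}^{1-\theta}$ and chain these. Two problems. First, because of the Gaussian weight, the passage from the Carleman inequality to a local quantitative estimate for the wave operator is not a ``standard cutoff-and-optimisation procedure''; it requires the complex-analysis argument (Fourier transform of the pushforward measure $h_f$, Phragm\'en--Lindel\"of/Paley--Wiener type estimates) that Tataru introduced and that Laurent--L\'eautaud quantify. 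This is exactly what Section~5.2 of the paper reproduces in the discontinuous setting. Second, even granting H\"older three-cylinders estimates, chaining $N$ of them degrades the exponent to $\theta^N$, and the number of steps is not controlled uniformly as $T\to 2\mathcal{L}^+$; this is why the older approach of Robbiano/Lebeau--Robbiano gives a non-optimal observation time. The machinery of \cite{Laurent_2018} that you invoke is not a chain of interpolation inequalities — it is a chain of \emph{low-frequency} estimates of the form $\sum\|M^{\alpha\mu}_\mu\theta_{i,\mu}u\|_{H^1}+\|Pu\| + e^{-\kappa'\mu}\|u\|_{H^1}\gtrsim\sum\|M^{\beta\mu}_\mu\tilde\theta_{j,\mu}u\|_{H^1}$, propagated through the transitive dependence relation $\vartriangleleft$. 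Replacing that with H\"older interpolation loses exactly what makes the time optimal, so the argument as written cannot deliver $T>2\mathcal{L}(\mathcal{M},\omega)$.

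A minor remark: your final spectral cutoff using projectors of $A=-\operatorname{div}_g(c\nabla_g)$ is a workable alternative to the paper's use of the time-frequency cutoffs $M^\mu$ and energy conservation for passing from the semi-global estimate on $u$ to the statement on $(u_0,u_1)$; that step is correct in spirit. But it cannot salvage the two gaps above.
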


\begin{remark}
In fact one can take $\mu >0$ in the statement of the above theorem. However we preferred to state it in this way in order to stress out the fact that this estimate is interesting only when $\mu$ is large.
\end{remark}

With the above one can recover the following qualitative result: “If we do not see anything from $\omega$ during a time $T$ strictly larger than $2 \mathcal{L}(\mathcal{M}, \omega)$, then there is no wave at all.” Indeed, if $ \norm{u}{L^2((0,T)\times \omega)}{}=0$, then letting $\mu \rightarrow +\infty$ in the above inequality implies that $(u_0,u_1)=0$. 

An important aspect of this theorem is that there is no assumption on the sign of the jump of the coefficient $c$ and consequently the observation region $\omega$ can be chosen indifferently on $\Omega_{-}$ or $\Omega_{+}$. Let us explain why this is quite surprising. Suppose, to fix ideas, that $c_-<c_+$ are two constants. We can then interpret $c_-$ and $c_+$ as the the speed of propagation of a wave travelling through two isotropic media $\Omega_{-}$ and $\Omega_{+}$ with different refractive indices, $n_-$ and $n_+$ respectively (recall that $n_\pm=1/c_\pm$). Imagine that a wave starts travelling from a region that is inside $\Omega_{-}$. One has $\frac{c_-}{c_+}=\frac{n_+}{n_-}$ and therefore the assumption $c_-<c_+$ translates to $n_->n_+$. Then Snell-Descartes law states that when a wave travels from a medium with a higher refractive index to one with a lower refractive index there is a critical angle from which there is \textit{total internal reflection}, that is no refraction at all. At the level of geometric optics, that is to say, in the high frequency regime such a wave stays trapped inside $\Omega_-$. Therefore one expects that, at least at high frequency, no information propagates from $\Omega_-$ to $\Omega_+$, following the laws of geometric optics. Our result (see Theorem~\ref{corol of theorem with log}) states that the intensity of waves in $\Omega_+$ is at least exponentially small in terms of the typical frequency $\Lambda$ of the wave.

\begin{figure}
    \centering   
         \begin{subfigure}{.5\textwidth}
  \centering
  \includegraphics[scale=0.5]{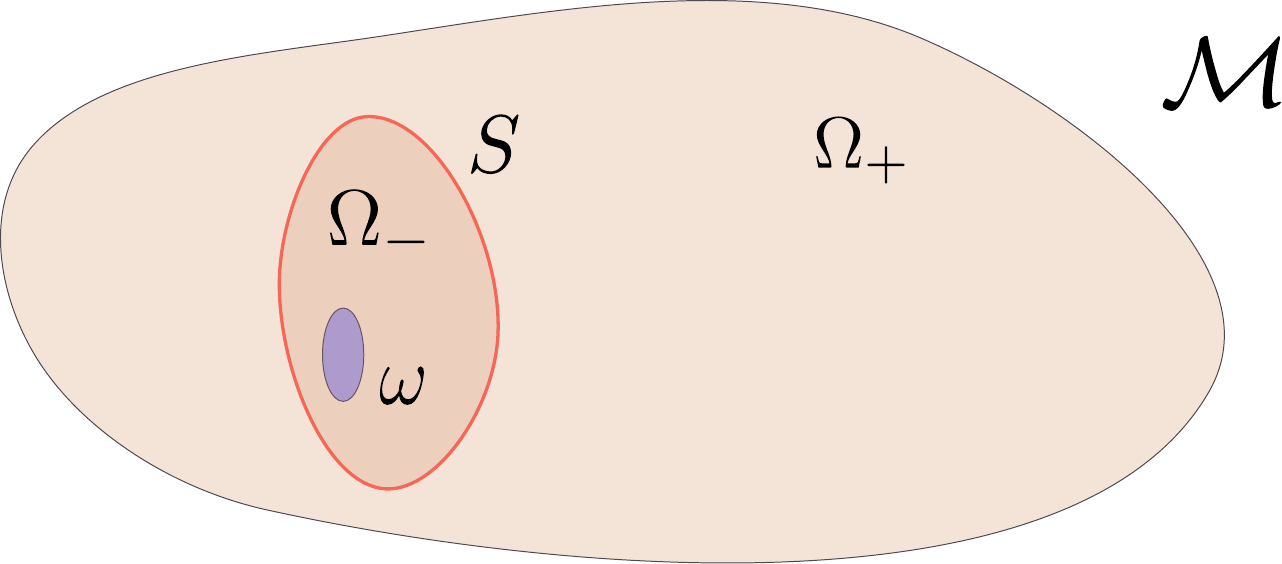}
  \caption{The observation takes place inside $\Omega_{-}$.}
  \label{case 2}
\end{subfigure}%
    \begin{subfigure}{.5\textwidth}
  \centering
  \includegraphics[scale=0.5]{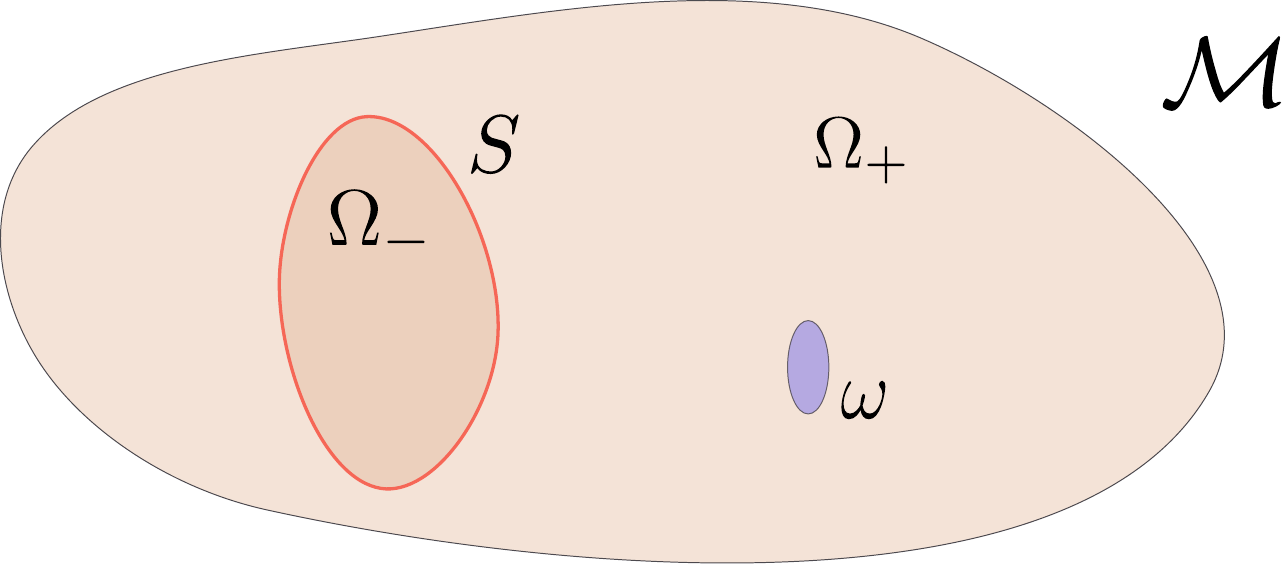}
  \caption{The observation takes place inside $\Omega_{+}$. If $c_-<c_+$ then a part of the wave may be trapped inside $\Omega_{-}$. Nevertheless, the quantitative unique continuation and its consequences still hold.}
  \label{case 1}
\end{subfigure}%
\end{figure}

\bigskip

We can reformulate Theorem~\ref{theorem quant} in a way closer to quantitative estimates such as \eqref{quantitative phi}.  Indeed, optimizing the inequalities of Theorem~\ref{theorem quant} with respect to $\mu$ yields the following result (which we state only in the interior observation case):

\begin{thm}
\label{corol of theorem with log}
Under the assumptions of Theorem~\ref{theorem quant} there exists $C>0$ such that for all $(u_0,u_1) \in H^1_0( \mathcal{M}) \times L^2(\mathcal{M})$ with $(u_0,u_1)\neq(0,0)$ one has:
\begin{align}
\label{fonction log}
 \norm{(u_0,u_1)}{H^1\times L^2}{}&\leq Ce^{C \Lambda}\norm{u}{L^2((0,T)\times \omega)}{}, \nonumber \\
\norm{(u_0,u_1)}{L^2 \times H^{-1}}{}& \leq C \frac{\norm{(u_0,u_1)}{H^1\times L^2}{}}{\log \left(1+ \frac{\norm{(u_0,u_1)}{H^1\times L^2}{}}{ \norm{u}{L^2((0,T)\times \omega)}{}}\right)}.
\end{align}
where $\Lambda= \frac{\norm{(u_0,u_1)}{H^1\times L^2}{}}{\norm{(u_0,u_1)}{L^2 \times H^{-1}}{}}$. 
\end{thm}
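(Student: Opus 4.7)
The plan is to derive both inequalities of Theorem~\ref{corol of theorem with log} from the parameter-dependent estimate of Theorem~\ref{theorem quant} by optimizing in $\mu$, a standard device (often attributed to Robbiano) in the theory of logarithmic stability for unique continuation. Throughout, write $A=\|(u_0,u_1)\|_{L^2\times H^{-1}}$, $B=\|(u_0,u_1)\|_{H^1\times L^2}$, $E=\|u\|_{L^2((0,T)\times\omega)}$, so that Theorem~\ref{theorem quant} reads $A \leq Ce^{\kappa\mu}E + C\mu^{-1}B$ for every $\mu \geq \mu_0$, and note that $\Lambda=B/A$ is bounded below by a positive constant thanks to the continuous embeddings $H^1 \hookrightarrow L^2$ and $L^2 \hookrightarrow H^{-1}$ on the compact manifold $\mathcal{M}$.

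For the exponential inequality $B \leq Ce^{C\Lambda}E$, I choose $\mu = \max(\mu_0, 4C\Lambda)$, so that $C\mu^{-1}B \leq B/(4\Lambda) = A/4$. Substitution into Theorem~\ref{theorem quant} yields $A \leq Ce^{\kappa\mu}E + A/4$, hence $A \leq (4C/3)e^{\kappa\mu}E$; multiplying by $\Lambda$ then gives $B \leq (4C/3)\,\Lambda\, e^{\kappa\mu}E$. Since $\mu \leq \mu_0 + 4C\Lambda$, the factor $\Lambda e^{\kappa\mu} \leq \Lambda e^{\kappa\mu_0}e^{4C\kappa\Lambda}$ is absorbed into a new exponential $e^{C'\Lambda}$ via $\Lambda \leq e^\Lambda$, which proves the first inequality.

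For the logarithmic estimate I choose $\mu = \max\!\left(\mu_0,\,(2\kappa)^{-1}\log(1+B/E)\right)$. When this lies above $\mu_0$, one has $e^{\kappa\mu} = (1+B/E)^{1/2}$, so Theorem~\ref{theorem quant} reads
\begin{equation*}
A \;\leq\; C\sqrt{E(E+B)} + \frac{2C\kappa\,B}{\log(1+B/E)} \;\leq\; CE + C\sqrt{EB} + \frac{C'\,B}{\log(1+B/E)}.
\end{equation*}
Two elementary inequalities now dispose of the first two terms: from $\log(1+t) \leq t$ one deduces $E \leq B/\log(1+B/E)$, and from $\log(1+t) \leq 2\sqrt{t}$ for $t \geq 0$ (a one-line monotonicity check) one obtains $\sqrt{EB} \leq 2B/\log(1+B/E)$. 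Combining yields $A \leq C''B/\log(1+B/E)$, as required. In the opposite regime where $(2\kappa)^{-1}\log(1+B/E) < \mu_0$, the ratio $B/E$ and hence $\log(1+B/E)$ are bounded, so the trivial embedding $A \leq C_0 B$ already gives the inequality up to a multiplicative constant.

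There is no genuine obstacle: the entire analytic content is already packaged in Theorem~\ref{theorem quant}, and what remains is purely algebraic optimization. The only mild care required is the case distinction around the threshold $\mu_0$ (handled by the embedding $A \leq C_0 B$) and the verification of the two elementary inequalities on the logarithm that transform $\sqrt{EB}$ and $E$ into terms of the form $B/\log(1+B/E)$.
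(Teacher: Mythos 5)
Your proof is correct and follows the same route as the paper: the paper simply invokes Lemma~A.3 of Laurent--L\'eautaud~\cite{Laurent_2018} to pass from the interpolation inequality of Theorem~\ref{theorem quant} to the exponential and logarithmic estimates, and your argument carries out precisely that optimization in $\mu$ (with the two choices $\mu \sim \Lambda$ and $\mu \sim \log(1+B/E)$, the case split around $\mu_0$, and the elementary bounds on the logarithm) explicitly from scratch.
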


Note that $\Lambda$ represents the typical frequency of the initial data. Theorem~\ref{corol of theorem with log} is a direct consequence of Theorem~\ref{theorem quant} and Lemma A.3 in \cite{Laurent_2018}. Notice that the function 
$$
x \mapsto \frac{1}{\log(1+1/x)},
$$
appearing in the right hand side of~\eqref{fonction log} has been tacitly extended by continuity by $0$ when $x=0$.

In \cite [proof of Theorem 2, Section 3]{Robbiano:95} it is shown that such a quantitative information can lead to an estimate for the cost of the approximate controllability. We state the case of internal control, a similar result holds for approximate boundary controllability as well.

\begin{thm}[Cost of approximate interior control]
\label{cost of control}
Consider $\mathcal{M}$, $S$ and $\omega \subset \mathcal{M}$ as before. Then for any $T>2\mathcal{L}(\mathcal{M}, \omega)$ there exist $C,c>0$ such that for any $\epsilon>0$ and any $(u_0,u_1) \in H^1_0(\mathcal{M})\times L^2(\mathcal{M})$, there exists $f \in L^2((0,T)\times \omega)$ with 
$$
\norm{f}{L^2((0,T)\times \omega)}{}\leq Ce^{c/\epsilon}\norm{(u_0,u_1)}{H^1_0(\mathcal{M})\times L^2(\mathcal{M})}{}
$$
such that the solution of 
\begin{equation*}
    \begin{cases}
    Pu=\mathds{1}_{\omega}f & \textnormal{in} \:(0,T) \times \Omega_{-}\cup\Omega_{+}\\
    u_{|S_-}=u_{|S_+} & \textnormal{in}\: (0,T)\times S \\
    (c\partial_\nu u)_{|S_-}=(c \partial_\nu u)_{|S_+} & \textnormal{in}\: (0,T)\times S \\
    u=0 & \textnormal{in}\: (0,T)\times \partial\mathcal{M} \\
    \left(u,\partial_t u \right)_{|t=0}=(u_0,u_1) & \textnormal{in} \: \mathcal{M},
    \end{cases}
\end{equation*}
satisfies 
$$
\norm{(u,\partial_t u)_{|t=T}}{L^2\times H^{-1}}{} \leq \epsilon \norm{(u_0,u_1)}{H^1_0\times L^2}{}.
$$
\end{thm}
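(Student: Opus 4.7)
The plan is to deduce the cost estimate from the logarithmic stability of Theorem~\ref{corol of theorem with log} by a penalized Hilbert Uniqueness Method, following the template of Robbiano~\cite{Robbiano:95}.

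First, I would convert the logarithmic inequality of Theorem~\ref{corol of theorem with log} into a \emph{weak observability inequality} of the form
\begin{equation}
\|(\phi_0,\phi_1)\|_{L^2\times H^{-1}} \le C\,e^{c/\epsilon}\,\|\phi\|_{L^2((0,T)\times\omega)} + \epsilon\,\|(\phi_0,\phi_1)\|_{H^1\times L^2},
\label{plan:weakobs}
\end{equation}
valid for every solution $\phi$ of the adjoint wave problem. Because the elliptic operator $-\textnormal{div}_g(c\nabla_g)$ is self-adjoint under the transmission conditions, the adjoint problem is the same system~\eqref{system 1} run backwards in time, so Theorem~\ref{corol of theorem with log} applies directly to $\phi$. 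The reduction is elementary: setting $A=\|(\phi_0,\phi_1)\|_{L^2\times H^{-1}}$, $B=\|\phi\|_{L^2((0,T)\times\omega)}$, $E=\|(\phi_0,\phi_1)\|_{H^1\times L^2}$ in $A\le CE/\log(1+E/B)$, I split on whether $\log(1+E/B)\ge C/\epsilon$ (which yields $A\le \epsilon E$) or $\log(1+E/B)< C/\epsilon$ (which gives $E\le B\,e^{C/\epsilon}$ and, combined with the energy embedding $A\le CE$, yields $A\le C\,e^{C/\epsilon}B$).

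Second, I would run the standard penalized HUM argument. For $(u_0,u_1)\in H^1_0(\mathcal{M})\times L^2(\mathcal{M})$ fixed, define on $H^1_0(\mathcal{M})\times L^2(\mathcal{M})$ the functional
\begin{equation*}
J_\epsilon(\phi_0,\phi_1)=\tfrac12\,\|\phi\|^2_{L^2((0,T)\times\omega)} + \epsilon\,\|(u_0,u_1)\|_{H^1\times L^2}\,\|(\phi_0,\phi_1)\|_{H^1\times L^2} + \bigl\langle (u_0,u_1),(\phi(0),\partial_t\phi(0))\bigr\rangle,
\end{equation*}
where $\phi$ is the solution of~\eqref{system 1} on $(0,T)$ with final Cauchy data $(\phi_0,\phi_1)$ at $t=T$, and the last term is the duality pairing between $H^1_0\times L^2$ and $H^{-1}\times L^2$ (coming from the integration by parts that relates the controlled solution $u$ and $\phi$). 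Using~\eqref{plan:weakobs} to absorb the linear term into the quadratic part and the penalty, $J_\epsilon$ becomes continuous, strictly convex and coercive, hence admits a unique minimizer $(\phi_0^\ast,\phi_1^\ast)$. Setting $f=\mathds{1}_\omega \phi^\ast$ and writing the Euler--Lagrange equation of $J_\epsilon$ against arbitrary test data, combined with direct energy estimates, one reads off the two required bounds: $\|f\|_{L^2((0,T)\times\omega)}\le C\,e^{c/\epsilon}\,\|(u_0,u_1)\|_{H^1\times L^2}$ and $\|(u(T),\partial_t u(T))\|_{L^2\times H^{-1}}\le \epsilon\,\|(u_0,u_1)\|_{H^1\times L^2}$.

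The main technical point will be to justify the duality rigorously across the interface $S$: one needs well-posedness of the forward problem~\eqref{system 1} with inhomogeneous source $\mathds{1}_\omega f\in L^2((0,T)\times \mathcal{M})$ in $C^0([0,T];H^1_0)\cap C^1([0,T];L^2)$ and, dually, that the backward transmission problem with data in $H^1_0\times L^2$ produces initial traces pairing correctly against $(u_0,u_1)$. Both facts follow from the self-adjoint semigroup framework sketched in Section~\ref{the cerleman estimate} and from standard energy identities, which remain valid thanks to the transmission conditions in~\eqref{system 1}; beyond this bookkeeping, no new ingredient is needed.
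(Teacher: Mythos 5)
Your plan is correct and reproduces the argument the paper outsources to Robbiano~\cite{Robbiano:95}: convert the quantitative unique continuation estimate into the weak observability inequality \eqref{plan:weakobs} for the adjoint (backward) transmission problem, then run the penalized HUM/variational scheme to produce the control $f=\mathds{1}_\omega\phi^\ast$ together with the two cost bounds. The only simplification you could make is to obtain \eqref{plan:weakobs} directly from Theorem~\ref{theorem quant} by setting $\mu\sim 1/\epsilon$, rather than passing through the logarithmic form of Theorem~\ref{corol of theorem with log} and splitting on the size of $\log(1+E/B)$; both routes are correct, and the rest (well-posedness via self-adjointness under the transmission conditions, the symplectic pairing, strict convexity from unique continuation) is exactly as in the smooth case.
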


In other words, if we act on the region $\omega$ during a time $T>2 \mathcal{L}(\mathcal{M}, \omega)$ we can drive our solution from energy $1$ (in $H^1 \times L^2$) to $\epsilon$ close to $0$ (in $L^2 \times H^{-1}$). Additionally, this comes with an estimate of the energy of the control which is of the order of $ e^{c/\epsilon}$. In the analytic context and without the presence of an interface it was shown in \cite{Leb:Analytic} that this form of exponential cost is optimal in the absence of Geometric Control Condition~\cite{BLR:92}.

In the more general hypoelliptic context of \cite{LL:20dampedhypo} the result of Theorem~\ref{cost of control} is stated as \textit{approximate observability} for the wave equation. It is shown by the authors of this article that such a property implies some resolvent estimates (Proposition 1.11 in \cite{LL:20dampedhypo}) which in turn give a logarithmic energy decay estimate for the damped wave equation (see Theorem 1.5 in \cite{LL:20dampedhypo}). Consequently, Theorem~\ref{cost of control} combined with the results of \cite{LL:20dampedhypo} provides a different proof for theorems that were already obtained using Carleman estimates for elliptic/parabolic operators (see~\cite{LRR:10} or \cite{le2013carleman}).

\begin{remark}
    We have assumed that the interface $S$ decomposes $\mathcal{M}$ in two disjoint parts $\Omega_+$ and $\Omega_-$. However the same results can be obtained for other more general geometric situations as well. This comes from the fact that the key ingredient is a \textit{local} quantitative estimate (see Theorem~\ref{local quant estimate}). See also the figure in~\cite[Section 1.3.2]{LRLR:13}
\end{remark}
 
\subsection{Strategy of the proof and organization of the paper}

\subsubsection{The Carleman estimate}

One of the main tools for dealing with problems of local unique continuation across a hypersurface $\{\phi=0\}$  is \textit{Carleman estimates}. The idea, introduced by Carleman in \cite{Carleman:39}, is to prove an inequality involving a weight function $\psi$ and a large parameter $\tau$, of the form
$$
\norm{e^{\tau \psi}Pu}{L^2}{} \gtrsim \norm{e^{\tau \psi }u}{L^2}{},\quad \tau \geq \tau_0,
$$
uniform in $\tau$. The weight function $\psi$ is closely related to the level sets of the function $\phi$ which defines implicitly the hypersurface. In an heuristic way, the chosen weight re-enforces the sets where $u$ is zero and propagates smallness from sets where $\psi$ is big to sets where $\psi$ is small. Since Carleman estimates are already quantitative in nature they provide a good starting point for results of the form \eqref{quantitative phi}. We point out the fact that this is a \textit{local} problem. In order to obtain a \textit{global} result one needs in general to propagate the local one by passing through an appropriate family of hypersurfaces.

The core of this article is to prove a local Carleman estimate \textit{in a neighborhood of the interface}, containing a microlocal weight in the spirit of \cite{Tataru:95, Hor:97, RZ:98}. The presence of discontinuous coefficients complicates significantly this task. In general, for a Carleman estimate to hold a condition involving the principal symbol of the operator and the hypersurface needs to hold, the so-called pseudoconvexity condition (see for instance \cite{Hor67}). These results are based on microlocal analysis arguments and some regularity is necessary for the estimate to hold. In our case we explicitly construct an appropriate weight function and show our estimate for this particular weight. Our proof is inspired by that of Lerner-Le Rousseau in the elliptic case \cite{le2013carleman} and relies in a factorization argument. Even though the behavior of our (hyperbolic) operator may be very different we consider in our context the wave operator as a “perturbation” of a Laplacian, in the spirit of~\cite{Tataru:95, Hor:97, RZ:98, Laurent_2018}. Let us explain why. For the sake of exposition, consider that $\mathcal{M}=\R^n$, $g$ is the Euclidean metric and $c$ is piecewise constant with a jump across the hypersurface $S=\{x_n=0\}$. That is $c=c_+\mathds{1}_{\{x_n>0\}}+c_-\mathds{1}_{\{x_n<0\}}$. Then the principal symbol of the wave operator $\Box=\partial_t^2-c \Delta$ is 
$$
\sigma (\Box)=-\xi^2_t+c |\xi|^2.
$$
The inequality we want to prove contains the weight $e^{-\epsilon \frac{D^2_t}{2 \tau}}$ which heuristically localizes close to $\{\xi_t=0\}$, in other words localizes in a microlocal region where our operator is \textit{close to being elliptic}.

 In order to obtain a Carleman estimate with a weight $\psi$ one obtains an inequality for the conjugated operator $P_\psi$ defined by $P_\psi:=e^{\tau \psi}Pe^{-\tau \psi}$. In \cite{le2013carleman} the authors use an idea which can be traced back to Calderón \cite{Calderon:58}.  They factorize the conjugated operator $P_\psi$ as a product of two first-order operators and prove estimates for each first order factor. In the elliptic case $P=-c \Delta$ and for a weight $\psi$ depending only on $x_n$ one has the following factorization:
	\begin{align*}
		P_{\psi}&= c_{+}\left(D_n+i(\tau \psi^\prime +|\xip|)\right)\left(D_n+i(\tau \psi^\prime -|\xip|)\right)\\
		&\hspace{4mm}+c_{-}\left(D_n+i(\tau \psi^\prime -|\xip|)\right)\left(D_n+i(\tau \psi^\prime +|\xip|\right)\\
		&=c_{+}\left(D_n+ie_+\right)\left(D_n+if_+\right)+c_{-}\left(D_n+if_-\right)\left(D_n+ie_-\right).
	\end{align*}
Here the operator has been identified with its symbol in the tangential variables. This allows to work on the normal variable $x_n$ and treat the other ones as parameters. In more technical terms, we can use the tools of tangential symbolic calculus in all the variables but $x_n$ and try to obtain good one-dimensional estimates for the first order operators $D_n+ie_\pm$ and $D_n+if_\pm$. The general principle is that the sign of $e_\pm,f_\pm$ determines the quality of the one dimensional estimates. The choice of the weight function $\psi$ is made so that the following key property is satisfied: we can cover the tangential dual space by $\Gamma_1$, $\Gamma_2$ such that $(x^\prime,\tau, \xip)\in \mathbb{R}^{n-1} \times \mathbb{R}  \times \mathbb{R}^{n-1}=\Gamma_1 \cup \Gamma_2 $ with $f_+ \geq 0$ on $\Gamma_1$ and $f_- \leq 0 $ on $\Gamma_2$.

In the proof of the present article, in which $P$ is a wave operator, we work essentially in two microlocal zones. In the first zone, the operator $P$ is not microlocally elliptic. In this zone, we consider terms involving $D_t$ as admissible errors. Using the operator $e^{-\epsilon\frac{D^2_t}{2 \tau}}$ allows us then to obtain the desired estimate. In the second zone, the operator $P$ is microlocally elliptic one can follow the proof provided in the elliptic case in \cite{le2013carleman}.

In Section~\ref{the cerleman estimate} we give the precise statement of the local Carleman estimate that we prove and we describe adapted local coordinates in a small neighborhood of the interface to prepare the proof. In Section~\ref{proof for the general case} we prove the Carleman estimate. We considered helpful to give first a proof for a toy model (constant coefficients case) in Section~\ref{proof of toy model}. In this case one can simply work on the Fourier domain without having to work with pseudodifferential operators (see ~\cite[Chapter 10]{lerner2019carleman}). At the same time it allows to understand the core of the arguments which will be used in the general case too.

\bigskip
\subsubsection{Using the Carleman estimate}

The next step is to use the Carleman estimate. To do this, one needs to obtain a local quantitative version which can be iterated. This has been done in the smooth case by Laurent and Léautaud in \cite{Laurent_2018}. There, the estimate has the same form as that obtained in \cite{Laurent_2018} in a (arbitrarily small) neighborhood of the interface $S$. Thus, we are able to use it “once” to pass on the other side of $S$ and then combine it with the results of \cite{Laurent_2018}. In Section~\ref{local quant estimate} we show how one can use the techniques of \cite{Laurent_2018} to obtain such a local quantitative estimate in a neighborhood of the interface where the coefficients jump. Finally in Section~\ref{propagation of info} we prove that indeed, one can propagate the quantitative estimate by combining our local estimate with the analogous results in the smooth case.

\bigskip

\subsection{Some notations and definitions }
\label{not and def}
We recall in this Section some elementary geometric facts and we give the precise definition of the distance which is used in the statement of Theorem~\ref{theorem quant} and Theorem~\ref{cost of control}.

\bigskip

Let us recall that the geometric setting is given at the beginning of Section~\ref{setting and main}. The interface $S$ has a natural metric given by the restriction of $g$ to $TS$. In local coordinates we have:
$$
\partial_{\nu}=\sum_j \nu^j \partial_{x_j}, \quad \nu^j=\lambda\sum_k n_k g^{jk}, \quad |\nu_g|=1,
$$
with $g^{ij}g_{jk}=\delta_{ik},$ $\lambda^2=\left(\sum_{i,j}g^{ij}n_i n_j\right)^{-1}$ and $n$ is the normal to $S$ for the Euclidean metric in the chosen local coordinates pointing into $\Omega_{+}$. We denote by $(\cdot,\cdot)_g= g(\cdot,\cdot)$ the inner product on  $T \mathcal{M}$. The Riemannian gradient of a function $f$ is defined in an intrinsic manner by
$$
( \nabla_g f , X )_g=df(X), \quad \textnormal{for any smooth vector field } X.
$$
The integral of a function $f$ is defined by $\int f:= \int_\mathcal{M} f(x) d\textnormal{Vol}_g(x)$, where $ d\textnormal{Vol}_g(x)$ is the Riemannian volume element. The divergence operator, acting on a vector field $X$ is defined by the relation
$$
\int u \textnormal{ div}_g X=-\int (\nabla_g u, X  )_g, \quad \textnormal{for all } u\in C^\infty_0(\textnormal{Int}\mathcal{M}).
$$
Let us recall the expression of these objects in local coordinates. We consider $f$ a smooth function on $\mathcal{M}$ and $X=\sum_i X^i \partial_{x_i}$, $Y=\sum_i Y^i \partial_{x_i}$ two smooth vector fields on $\mathcal{M}$. We have:
\begin{align*}
    ( X, Y )_g &=\sum_{i,j} g_{ij}X^iY^j, \quad \nabla_g f =\sum_{i,j}g^{ij}(\partial_j f)\partial_{x_i}, \quad
     \textnormal{div}_g(X)= \frac{1}{\sqrt{\det g}}\sum_i  \partial_i \left(\sqrt{\det g}X_i\right).
\end{align*}
We finally have:
$$
\textnormal{div}_g(c(x) \nabla_g f)=\frac{1}{\sqrt{\det g}} \sum_{i,j}\partial_{x_i}\left(c g^{ij} \sqrt{\det g} \partial_{x_j}f\right), \quad x\in \Omega_{-}\cup \Omega_{+}.
$$

We want to define the natural distance associated to the operator $P$ appearing in Theorem~\ref{theorem quant}. Consider the piecewise smooth metric $g_c$ by $g_c:=\mathds{1}_{\Omega_-}c^{-1}_-g +\mathds{1}_{\Omega_+}c^{-1}_+g$.

\begin{definition}
	An admissible path $  C^{\infty}([0,1];\mathcal{M}) \ni\gamma: [0,1] \rightarrow \mathcal{M}$ is a path satisfying the conditions:
 \begin{itemize}
     \item $\gamma$ does not have self-intersections
     
     \item $\gamma$ intersects the interface $S$ a finite number of times 
     
     \item $\gamma(t) \in S \Longrightarrow \gamma^\prime(t) \perp T_{\gamma(t)}S $
 \end{itemize}
\end{definition} 

In particular, if $\gamma$ is an admissible path then the map $t \mapsto |\gamma^\prime(t)|_{g_c(\gamma(t))} \in L^{\infty}([0,1]; \mathcal{M})$ is bounded and consequently we can define its length by the usual formula:
$$
\textnormal{length}(\gamma)=\int_{0}^{1}|\gamma^{\prime}(t)|_{g_c(\gamma(t))}dt.
$$

We now define the distance we will be working with:

\begin{definition}
The distance of two points $x_0, x_1 \in \mathcal{M}$ is defined as:
$$
\textnormal{dist}(x_0,x_1)=\inf \{ \textnormal{length}(\gamma)|\:\: \gamma\textnormal{ admissible path, }\gamma(0)=x_0, \gamma(1)=x_1 \}.
$$
\end{definition}

We can now as well define the \textit{largest distance of a subset} $E \subset \mathcal{M}$ to $\mathcal{M}$ by
\begin{equation}
    \label{greatest distance}
    \mathcal{L}(\mathcal{M},E):= \sup_{x \in \mathcal{M}}\textnormal{dist}(x,E),
\end{equation}
where $$
\textnormal{dist}(x,E)= \inf_{y\in E} \textnormal{dist}(x,y).$$

\begin{remark}
    Notice that the conditions imposed on the family of admissible paths do not pose any important restriction since any Lipschitz path can be replaced by an admissible one up to increasing its length by an $\epsilon>0$ arbitrarily small. Since we take the infimum of these lengths the distance remains the same.
\end{remark}

\bigskip

\noindent \textbf{Acknowledgements} The author would like to thank C. Laurent and M. Léautaud for introducing him to the problem as well as for discussions, encouragements, and patient guidance.

	\section{The Carleman estimate}
	\label{the cerleman estimate}

The key ingredient for the proof of Theorem~\ref{theorem quant} is a local Carleman estimate. Since we will work in space-time it is convenient to consider $\Sigma:=\R_t \times S$ with $S$ the smooth interface
of the manifold $\mathcal{M}$ defined in Section~\ref{setting and main}. Therefore, $\Sigma$ is a smooth hypersurface of $\R_t \times \mathcal{M}.$ We define as well $\Omega_{t,\pm}:= \R_t \times \Omega_{\pm}$.

\subsection{General transmission conditions}
\label{general trans cond }
We want to derive a Carleman estimate for the wave operator
$$
\Box = P:=\partial_t^2-\textnormal{div}_g(c(x) \nabla_g),
$$
where the scalar coefficient $c$ satisfies $0<c_{\min}<c(x)<c_{\max}<+ \infty$ uniformly for $x\in \Omega_{-} \cup \Omega_{+}$ to make sure that the ellipticity property is satisfied. We recall as well that $c_{|\Omega_{t,\pm}} \in C^\infty(\overline{\Omega}_{t,\pm})$ but it jumps across the interface $S$.  Since the operator $P$ has discontinuous coefficients one needs to be careful with its domain. Indeed, given a function $u=1_{\Omega_{t,-}} u_-+1_{\Omega_{t,+}}u_+$ with $u_\pm \in C^\infty(\R_t \times \mathcal{M}) $ one has in the distributional sense 
$$
\nabla_g u = 1_{\Omega_{t,-}} \nabla_g u_- +1_{\Omega_{t,+}}\nabla_g u_+ +(u_--u_+)\delta_\Sigma \nu,
$$
where $\delta_\Sigma$ is the surface measure on $\Sigma$ and $\nu$ is the unit normal vector field pointing into $\Omega_{t,+}$. We impose then that
\begin{equation}
\label{trans cond general 1}
u_-{_{|\Sigma}}=u_+{_{|\Sigma}},    
\end{equation}
and the singular term is removed. Similarly, calculating 
$$
\textnormal{div}(c(x)\nabla_g u),
$$
we see that the condition 
\begin{equation}
\label{trans cond general 2}
\cp \partial_{\nu} {u_{+}}_{|\Sigma}=\cm\partial_{\nu} {u_{-}}_{|\Sigma}
\end{equation}
combined with \eqref{trans cond general 1} gives the equality
$$
\textnormal{div}_g(c(x) \nabla_g u)=1_{\Omega_{t,-}}\textnormal{div}(c_- \nabla_g u_- )+1_{\Omega_{t,+}}\textnormal{div}( c_+ \nabla_g u_+).
$$
We define then $\mathcal{W}$ as the space containing functions of the form 
\begin{equation}
\label{form of the functions}
u=1_{\Omega_{t,-}} u_-+1_{\Omega_{t,+}}u_+, 
\end{equation}
with $u_\pm \in C^\infty_0(\R_t \times \mathcal{M})$ and such that \eqref{trans cond general 1} and \eqref{trans cond general 2} hold. These conditions are called \textit{transmission conditions} and for $u \in \mathcal{W}$ one has $Pu \in L^2$. The above calculations show as well that $P$ is formally self adjoint on this domain and therefore by classical methods (energy estimates or semi-group theory) one has that the evolution problem \eqref{system 1} is well posed.

\begin{remark}
	The first transmission condition expresses the continuity across the interface and the second one the continuity for the normal flux. Notice that the second condition excludes many smooth functions from the space $\mathcal{W}$. On the other hand elements of $\mathcal{W}$ are Lipschitz continuous and in particular one has  $\mathcal{W} \subset H^1$.
\end{remark}

For technical reasons it will be useful to work with non-homogeneous transmission conditions as well. More precisely, we shall denote by $\tran$ the space of functions of the form \eqref{form of the functions} satisfying additionally the following \textit{non homogeneous transmission conditions}:
\begin{align}
\label{non hom trans cond intrinsq}
{u_-}_{|\Sigma}&={u_+}_{|\Sigma}+\theta \\
\cp \partial_{\nu} {u_{+}}_{|\Sigma}&=\cm\partial_{\nu} {u_{-}}_{|\Sigma}+\Theta,
\end{align}
where $\theta$ and $\Theta$ are smooth functions of the interface $\Sigma$.

\subsection{Local setting in a neighborhood of the interface}

\label{local setting}
Since we show a local Carleman estimate we can state it directly in adapted local coordinates. In a sufficiently small neighborhood $V$ of a point $x_0$ of $S$ one can use normal geodesic coordinates with respect to the spatial variables $x$ (see \cite{Hoermander:V3}, Appendix C.5, \cite{rousseau2022elliptic} Section 9.4 ). In such a coordinate system the interface $S$ is given by $S=\{x_n=0\}$ and the \textit{principal part} of the operator $P$ denoted by $P_2$ takes the form (on both sides of the interface)
$$
P_2=\partial_t^2-c(x)(\partial^2_{x_n}-r(x,\partial_{x^\prime}/i)),
$$
with $r(x, \xip)$ a $x$family of second order polynomials in $\xip$ that satisfy 
$$
r(x, \xip)\in \R,\quad C_1|\xip|^2 \leq r(x,\xip)\leq C_2 |\xip|^2,
$$
for $x\in V$, $\xip \in \R^{n-1}$ and $0<C_1<C_2 < \infty$. 
The transmission conditions become after this change of variables:
\begin{align}
\label{non hom trans cond 1}
{u_-}_{|x_n=0}&={u_+}_{|x_n=0}+\theta \\
\label{non hom trans cond 2}
\cp \partial_{x_n} {u_{+}}_{|x_n=0}&=\cm\partial_{x_n} {u_{-}}_{|x_n=0}+\Theta,
\end{align}

In this setting, the two sides of the interface become $\Omega_{t,+}=\{x_n>0\}$ and $\Omega_{t,-}=\{x_n<0\}$. We shall use the notation 
$$
H_{\pm}=1_{\Omega_{t,\pm}}.
$$
and we have for the coefficient $c:$
	$$
	c(x)=H_-c_-(x)+H_+c_+(x).
	$$
The space $\tran$ consists in this local context of functions $u$ of the form 
$$ 
u=H_-u_- +H_+u_+,\quad u_\pm \in C^{\infty}_0(\R_t \times V),
$$
satisfying also \eqref{non hom trans cond 1} and \eqref{non hom trans cond 2}. We define as well
$$
P^\pm:=\partial_t^2-\text{div}(c_\pm \nabla \cdot).
$$

Following \cite{Tataru:95, Hor:97, RZ:98} we are seeking to prove a Carleman estimate containing the microlocal weight
\begin{equation}
	\label{def of microlocal weight}
	Q_{\delta, \tau}^{\phi}=e^{-\frac{\delta |D_t|^2}{2 \tau}} e^{\tau \phi }.
\end{equation}
We shall take $\phi$ in the following form:
\begin{equation}
\label{def of weight phi}
\phi=\phi(x_n)= \left(\alpha_{-} x_n+\frac{\beta x_n^2}{2}\right)H_- +\left(\alpha_{+}x_n  +\frac{\beta x_n^2}{2}\right)H_+ ,\quad \alpha_{\pm}, \beta >0.
\end{equation}

	The parameters $\alpha_\pm$ and $\beta$ will be chosen in the sequel. The parameter $\beta$ will be taken large and is related to the sub-ellipticity property (see for instance \cite{Hoermander:63} or \cite{Hoermander:V4}) which is necessary for a Carleman estimate to hold. The choice of $\alpha_\pm$ comes from the construction in the microlocally elliptic case (see Lemma~\ref{Lemma elliptic region}). It is a geometric condition on the interface that requires the jump of $\partial_{x_n}\phi$ (i.e. $\alpha_{+}-\alpha_{-}>0$) to be sufficiently large (see~\ref{geometric assumption}).

	\bigskip
	
	The following is our main Carleman estimate and its proof will occupy a large part of this article:
	
	\begin{thm}
		\label{the carleman ineq thm}
In the geometric situation presented just above let $(t_0,x_0) \in \R_t \times V$. Then there exists an appropriate weight $\phi$ and some positive constants $C$, $\tau_0$, $\delta$, $d$, $r_0$ such that
		\label{theorem}
		\begin{multline*}
			C \norm{H_-Q_{\delta, \tau}^{\phi} P^- u_-}{L^2}{2}+ C\norm{H_+Q_{\delta, \tau}^{\phi} P^+u_+}{L^2}{2} 
			\\
			+Ce^{-d \tau} \left(
			\tau^3 \norm{e^{\tau \phi} u }{L^2}{2}+\tau \norm{H_{+}\nabla e^{\tau \phi} u_{+}}{L^2}{2}+\tau \norm{H_{-}\nabla e^{\tau \phi} u_{-}}{L^2}{2}
			\right)+C T_{\theta, \Theta}\\
			 \geq \tau^3 \norm{Q_{\delta, \tau}^{\phi} u}{L^2}{2}+ \tau \norm{H_+ \nabla Q_{\delta, \tau}^{\phi}  u_+}{L^2}{2}+\tau \norm{H_- \nabla Q_{\delta, \tau}^{\phi}  u_-}{L^2}{2},
		\end{multline*}
		for $u \in \tran$ such that $\supp u \subset B((t_0,x_0),r_0)$ and $\tau \geq \tau_0$, where 
		\begin{equation}
		    \label{def of T}
		T_{\theta, \Theta}=\tau^3 \normsurf{Q_{\delta, \tau}^{\phi}\theta}{\ls}{2}+\tau \normsurf{Q_{\delta, \tau}^{\phi}\nabla \theta}{\ls}{2}+\tau \normsurf{Q_{\delta, \tau}^{\phi}\Theta}{\ls}{2}.
		\end{equation}
	\end{thm}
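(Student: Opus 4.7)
My approach follows the factorization strategy of Le Rousseau–Lerner for the elliptic setting, augmented by a microlocal splitting with respect to the time frequency to handle the genuinely hyperbolic nature of $P$.

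\textbf{Setup and reduction to a one-dimensional problem.} The first step is to introduce the conjugated operators $P^{\pm}_\phi := e^{\tau\phi}P^{\pm}e^{-\tau\phi}$ on each half-space $\{\pm x_n > 0\}$ and to write them as second-order polynomials in $D_{x_n}$ with tangential coefficients (depending on $(t,x')$ and $\tau$). Setting $v_{\pm} := e^{\tau\phi}u_{\pm}$, the goal is recast as a pair of one-dimensional Carleman-type estimates in $x_n$, with parameters in $(D_t,D_{x'},\tau)$, coupled at $x_n=0$ by the transmission conditions \eqref{non hom trans cond 1}–\eqref{non hom trans cond 2}, which after conjugation become affine in $v_\pm$ with source terms encoded in $\theta,\Theta$. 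The Gaussian factor $e^{-\delta D_t^2/(2\tau)}$ appearing in $Q_{\delta,\tau}^\phi$ is inserted through an additional tangential conjugation; since it commutes with $\partial_{x_n}$, its main effect is to damp high $\xi_t$-frequencies and to produce controlled commutators with the tangential symbols.

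\textbf{Microlocal decomposition of phase space.} The core idea is to cover the tangential phase space by two conic regions: $\Gamma_{\mathrm{ell}}$, where the principal symbol $-\xi_t^2 + c_\pm r(x,\xi')$ is microlocally elliptic for the conjugated problem, and $\Gamma_{\mathrm{hyp}}$, its complement, where $\xi_t^2$ dominates $c_\pm r$. Smooth cutoffs $\chi_{\mathrm{ell}}(D_t,D_{x'})$ and $\chi_{\mathrm{hyp}}(D_t,D_{x'})$ split $v_\pm$ into two microlocal pieces, modulo lower-order commutator errors. On $\Gamma_{\mathrm{hyp}}$ one has $|\xi_t|^2 \gtrsim \tau$ on the frequencies not yet killed by the Gaussian, so the factor $e^{-\delta|\xi_t|^2/(2\tau)}$ produces arbitrarily small constants that will swallow the terms involving $\partial_t^2$; essentially, on this region the operator can be handled as if the $\partial_t^2$ contribution were negligible, effectively reducing to the microlocally elliptic setting of Le Rousseau–Lerner.

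\textbf{Factorization in the elliptic zone.} On $\Gamma_{\mathrm{ell}}$ factor
$$P^{\pm}_\phi \equiv c_\pm(D_{x_n}+iE_\pm)(D_{x_n}+iF_\pm) \pmod{\text{admissible errors}},$$
where $E_\pm,F_\pm$ correspond to the two roots $-\tau\phi'(0^\pm)\pm\sqrt{r-\xi_t^2/c_\pm}$. One proves one-dimensional weighted estimates in $x_n$ for each first-order factor by an integration by parts argument that gains a factor of $\tau$ when the relevant symbol has a favorable sign. The weight $\phi$ from \eqref{def of weight phi} is designed so that $\alpha_\pm$ and $\beta$ can be tuned to achieve: (i) the subelliptic condition of Hörmander inside each half-space (this forces $\beta$ large), and (ii) a geometric sign condition at the interface of the form $F_+\geq 0$ on a tangential subregion $\Gamma_1\subset\Gamma_{\mathrm{ell}}$ and $F_-\leq 0$ on its complementary piece $\Gamma_2$ (this is where the largeness of the jump $\alpha_+-\alpha_-$ enters). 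The boundary terms generated at $x_n=0$ by the integrations by parts are evaluated using the transmission conditions and absorbed either into the two-sided bulk inequality or into the surface contribution $T_{\theta,\Theta}$ defined in \eqref{def of T}.

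\textbf{Assembly and main obstacle.} Adding the contributions from both microlocal zones on both sides of $S$, and incorporating the commutator errors (which are of lower order in $\tau$ and thus absorbable for $\tau$ large), yields the desired inequality; the off-support lower-order remainders produced by the Gaussian conjugation are controlled by the exponentially small term $Ce^{-d\tau}(\tau^3\|e^{\tau\phi}u\|^2+\cdots)$ on the left-hand side. The main obstacle I expect is precisely the bookkeeping of the factorization on $\Gamma_{\mathrm{ell}}$ near the interface: one must simultaneously realize the subelliptic positivity inside each slab, choose $\alpha_\pm$ so that the boundary sign pattern of the first-order factors is coherent on both sides of $S$, and ensure that the partition $\Gamma_1\cup\Gamma_2$ covers $\Gamma_{\mathrm{ell}}$ in a way compatible with the tangential pseudodifferential calculus. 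A second technical point is to verify that the commutators between $\chi_{\mathrm{ell}}/\chi_{\mathrm{hyp}}$, the Gaussian regularizer, and the factorized operators produce remainders that are strictly of lower order in $\tau$ than the principal $\tau^3\|Q_{\delta,\tau}^\phi u\|^2$ term on the right-hand side.
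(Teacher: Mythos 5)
Your proposal captures the correct high-level strategy — factorization à la Le Rousseau--Lerner, microlocal splitting in the time frequency, subellipticity via large $\beta$, and the geometric condition on $\alpha_\pm$ at the interface — but there are several places where the mechanism, as you have written it, would not go through.

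First, the paper deliberately splits the proof into two separate stages that you have conflated. Proposition~\ref{theorem bis} is a subelliptic estimate for the operator $P_\phi = e^{\tau\phi}Pe^{-\tau\phi}$ alone (no Gaussian factor), stated for compactly supported $v\in\tranphi$, and it carries the admissible error $E_t(v)$ involving $\tau\|D_t v\|^2$ and the associated surface terms. The passage from that proposition to Theorem~\ref{theorem} is a genuinely separate argument (Section~2.3): one cannot ``conjugate'' by $e^{-\delta D_t^2/(2\tau)}$ in the naive sense because it is a smoothing, non-invertible operator, and more importantly $w = \poids u$ is \emph{not} compactly supported in $t$ even when $u$ is, so Proposition~\ref{theorem bis} does not apply directly. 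The paper introduces a time cutoff $\zeta$, applies the proposition to $\zeta w$, and then controls $(1-\zeta)w$, the commutator $[P_\phi^\pm,\zeta]$, and the terms of $E_t(\zeta w)$ by splitting time frequencies at $\sigma\tau$ and using the Gaussian decay (this is exactly where the $Ce^{-d\tau}(\cdots)$ terms on the left-hand side of Theorem~\ref{theorem} come from). Your proposal acknowledges these exponentially small remainders but does not supply the argument generating them, and as stated there is no way to invoke the subelliptic estimate on a non-compactly supported object.

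Second, your mechanism in the hyperbolic zone is quantitatively wrong. You assert that on $\Gamma_{\mathrm{hyp}}$ one has $|\xi_t|^2\gtrsim\tau$ ``on the frequencies not yet killed by the Gaussian,'' and that this makes the $\partial_t^2$ contribution negligible. But $|\xi_t|^2\gtrsim\tau$ only gives $e^{-\delta|\xi_t|^2/(2\tau)}\lesssim e^{-\delta/2}$, an $O(1)$ constant, not a small one. The decay you need occurs only when $|\xi_t|\gtrsim\tau$ (so that the Gaussian is $\lesssim e^{-\delta\tau/2}$), and the hyperbolic region does not automatically impose this. The paper's resolution is different: in Proposition~\ref{theorem bis} the $D_t$ contributions are never claimed to be small --- they are kept as the explicit admissible error $E_t(v)$ --- and it is only in the subsequent Section 2.3 argument (frequency split at $\sigma\tau$, combined with $\max_{\xi_t\geq\sigma\tau}\xi_t e^{-\delta\xi_t^2/(2\tau)}$ being exponentially small once $\tau\geq1/(\sigma^2\delta)$) that they are absorbed. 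Saying the hyperbolic zone ``reduces to the microlocally elliptic setting of Le Rousseau--Lerner'' is also misleading: in that zone there is no factorization via $\sqrt{r - \xi_t^2/c}$, since the quantity under the square root is not positive; the paper instead proceeds via the commutator (energy) method of Lemmata~\ref{region G+ cv} and~\ref{region G- cv}.

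Third, your decomposition of phase space into $\Gamma_{\mathrm{ell}}\cup\Gamma_{\mathrm{hyp}}$ omits the region $\tau\gg|\xi_t|+|\xi'|$, which the paper handles separately in Lemmata~\ref{tau grand toy} and~\ref{prop for tau grand}. There, neither the LRL factorization (which requires $|\xi'|+|\xi_t|$ comparable to $\tau$ for the surface/trace terms to be controlled) nor the hyperbolic error argument is appropriate; the estimate comes directly from iterating the one-dimensional ellipticity of $(D_n+i\tau\phi')^2$ and treating $Q(x,D_{x'})-c^{-1}D_t^2$ as a perturbation. Without this third region the cover of phase space is incomplete and the assembly step will not close.
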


Several remarks are in order:

\begin{remark}
	Observe that the choice of $\phi$ is well adapted to the geometric situation: for small $|x_n|$ we have that $\Sigma=\{x_n=0\}=\{\phi=0\}$ and the two sides of the interface are given by $\{\phi>0\}$ and $\{\phi<0\}$. We also have (for $|x_n|$ sufficiently small) that $\phi^\prime >0$, and consequently $\phi$ is larger in $\Omega^+$: $\Omega^+$ is the observation region and the Carleman estimate of Theorem~\ref{the carleman ineq thm} will propagate uniqueness from $\Omega^+$ to $\Omega^-$. In particular, the weight function $\phi$ is \textit{suitable for observation} in the sense of \cite[Property (1-12)]{le2013carleman}.
\end{remark}

\begin{remark} 
Notice that in the case of homogeneous transmission conditions $\theta=\Theta = 0$ one has $T_{\theta,\Theta}=0$. The calculations carried out in Section~\ref{general trans cond } imply that $Pu \in L^2$ and therefore 
$$
\norm{H_-P^-u_-}{L^2}{}+\norm{H_+P^+u_+}{L^2}{}=\norm{Pu}{L^2}{}.
$$
Using moreover the fact that $u \in H^1$ we can write the estimate of Theorem~\ref{the carleman ineq thm} in a more concise way as:
\begin{equation*}
    C\left(\norm{Q_{\delta, \tau}^{\phi} P u}{L^2}{2} +e^{-d \tau} \norm{e^{\tau \phi}u }{H^1_\tau}{2}\right)\geq \tau \norm{Q_{\delta, \tau}^{\phi}u}{H^1_\tau}{2},
\end{equation*}
where the $\norm{\cdot}{H^1_\tau}{}$ norm is defined as
$$
\norm{w}{H^1_\tau}{}:=\tau \norm{w}{L^2}{}+\norm{\nabla w}{L^2}{}.
$$

\end{remark}

\begin{remark}
	As in \cite{le2013carleman} we explicitly construct an appropriate weight function $\phi$. It will initially depend only in the variable $x_n$ with $\Sigma=\{x_n=0\}$. In Section~\ref{convexification} we shall use a perturbation argument to allow dependence upon the other variables as well. 
\end{remark}

 \bigskip

	As usual, to prove Theorem~\ref{the carleman ineq thm} we work with the conjugated operator $P_{\phi,\delta}$ defined in our case by the relation:
	\begin{equation}
	\label{def of conjugated operators}
	e^{-\frac{\delta |D_t|^2}{2 \tau}}e^{\tau \phi}P^{\pm}=P^{\pm}_{\phi,\delta}e^{-\frac{\delta |D_t|^2}{2 \tau}} e^{\tau \phi }.		
	\end{equation}

	For a general weight $\phi$ the operators $P^{\pm}_{\phi, \delta}$ do not exist, however in the case where $\phi$ is quadratic in $t$ one can show that we have the following expression for $P^{\pm}_{\phi,\delta}$ (see for instance \cite[Chapter 2]{Hor:97}):
	\begin{align}
	\label{formule for conjugated operators}
		P^{\pm}_{\phi,\delta}&= c_\pm(x)(D_n+i \tau \phi^\prime-\delta \kappa^{\prime \prime}_{t,t}D_t)^2+c_\pm(x)\sum_{1 \leq j,k \leq n-1}b_{jk}(x)(D_j+i \tau \partial_j \phi - \delta \phi^{\prime \prime}_{t, x_j})(D_k+i \tau \partial_k \phi - \delta \phi^{\prime \prime}_{t, x_k}) \nonumber\\&\hspace{4mm}-(D_t+i \tau \phi^\prime _t -\delta \phi^{\prime \prime}_{t,t}D_t)^2
	\end{align}

	This will be used at a later stage where we will \textit{convexify} our initial weight $\phi$ as we know that this is in general a necessary procedure for our Carleman estimate to be used if one wishes to obtain qualitative or quantitative uniqueness results. We shall however initially consider a weight $\phi$ depending solely on the variable $x_n$, and a perturbation argument will be used to allow some convexification. In the case where $\phi$ is independent of $t$ the conjugated operator $e^{\tau \phi}Pe^{-\tau \phi}$ commutes with the Fourier multiplier $e^{-\frac{\delta D^2_t}{2 \tau}}$ and it takes the following particularly simple form:
	\begin{align*}
		P^{\pm}_{\phi} =c_\pm(x) (D_n+i \tau \phi^\prime )^2+ c_{\pm}(x)\bjk D_j D_k-D_t^2
	\end{align*}
	As for the smooth case shown by Tataru we will prove a sub elliptic estimate concerning the conjugated operator, which will act on functions of the form $w=\poids u$ for $u \in \tran$. We therefore have to understand the action of the conjugated operator on the transmission conditions. We use the following expressions:
	\begin{align*}
		w_{\pm} (t,x) &= \lefp \frac{\tau}{2 \pi \delta}\rightp^{\frac{1}{2}} \int_\mathbb{R} e^{-\frac{\tau}{2 \delta} (t-s)^2} e^{\tau \phi } u_{\pm}(s,x) ds \\
		\partial_n w_{\pm}(t,x)&= \lefp \frac{\tau}{2 \pi \delta}\rightp^{\frac{1}{2}} \int_\mathbb{R} e^{-\frac{\tau}{2 \delta} (t-s)^2} e^{\tau \phi } \lefp \tau \phi^\prime _{\pm}u_{\pm} + \partial_n u_{\pm}  \rightp(s,x) ds.
	\end{align*}
	Let us define
	\begin{equation}
		\label{def of Vt}
			V_t:=\R_t \times V.
	\end{equation} 
One has that $u \in \tran$ is equivalent to $w \in \tranphi$ with  the space $\tranphi$ being defined as the space containing functions $w$ such that
	$$
	w=H_-w_-+H_+w_+, \quad w_{\pm}\in C^{\infty}_0 (V_t),
	$$
	satisfying additionally the following modified \textit{transmission conditions}:
	\begin{align}
		\label{transmission cond1}
		{w_-}_{|\Sigma}&={w_+}_{|\Sigma}+\theta_\phi \\
		\label{transmission cond2}
		\cp \lefp D_n {w_{+}}+ i \tau \alpha_+ w_+ \rightp_{|\Sigma}&=\cm \lefp D_n {w_{-}}+ i \tau \alpha_- w_- \rightp_{|\Sigma}+\Theta_\phi,
	\end{align}
where $\theta_\phi= \poids \theta$ and $\Theta_\phi=\poids \Theta$. The following proposition is the main step in the proof of Theorem~\ref{the carleman ineq thm}:
	\begin{prop}
		\label{theorem bis}
			Let $(t_0,x_0) \in \Sigma$.	There exist a suitable weight $\phi$ and $C, \tau_0,  r_0>0$ such that
		\begin{multline*}
			C \bigg( \norm{P^{-}_\phi v_- }{\lrn}{2} +\norm{P^{+}_\phi v_+ }{\lrn}{2}+\tau \norm{H_+  D_t v_+}{\lrn}{2}+\tau \norm{H_-  D_t v_-}{\lrn}{2}  \\
			+\tau \left\vert (D_t v_+ )_{|\Sigma} \right\vert^2_{L^2(\Sigma)}+\tau \left\vert (D_t v_- )_{|\Sigma} \right\vert^2_{L^2(\Sigma)}+T_{\theta, \Theta} \bigg)\\\geq
			\tau^3 \norm{ v}{\lrn}{2}+ \tau \norm{H_+  \nabla v_+}{\lrn}{2}+\tau \norm{H_-  \nabla v_-}{\lrn}{2} \\
			+\tau^3\normsurf{v_+}{\ls}{2}+\tau^3\normsurf{v_-}{\ls}{2}+ \tau \left\vert (\nabla v_+ )_{|\Sigma} \right\vert_{L^2(\Sigma)}^2  +\tau \left\vert (\nabla v_- )_{|\Sigma}\right\vert_{L^2(\Sigma)}^2 ,
		\end{multline*}
		for $v \in \tranphi$ such that $\supp v \subset B((t_0,x_0),r_0)$ and $\tau \geq \tau_0$.

	\end{prop}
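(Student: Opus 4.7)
My plan is to prove Proposition~\ref{theorem bis} by a microlocal partition of the tangential cotangent space of $\Sigma$ into two zones, adapted to the principal symbol $-\xi_t^2 + c_\pm(x)(\xi_n^2+r(x,\xi'))$ of $P^\pm$. Let
\[
p_\pm(x,\xi_n,\xi') := c_\pm(x)(\xi_n + i\tau\phi')^2 + c_\pm(x)r(x,\xi') - \xi_t^2
\]
denote the (semiclassical) principal symbol of $P^\pm_\phi$. Set a small parameter $\eta>0$ and choose symbols $\chi_1, \chi_2$ forming a partition of unity in $(\tau,\xi_t,\xi')$ with $\chi_2$ supported in the microlocally elliptic zone $\mathcal{E}=\{c_\pm r(x,\xi') > (1-\eta)\xi_t^2\}$ and $\chi_1$ supported in the non-elliptic zone $\mathcal{N}=\{c_\pm r(x,\xi') < \xi_t^2\}$. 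The key point is that on $\mathcal{N}$ we have $\xi_t^2 \gtrsim c_\pm r(x,\xi')$, and since $e^{-\delta D_t^2/(2\tau)}$ localizes $\xi_t^2 \lesssim \tau$, the factor $D_t^2$ in $P^\pm_\phi$ contributes $\mathcal{O}(\tau)$ to symbolic estimates and is harmless in a Carleman estimate with $\tau^3$ on the right-hand side.

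The first task is to handle the elliptic zone, following \cite{le2013carleman} almost verbatim. On $\mathcal{E}$ the symbol admits a factorization
\[
p_\pm = c_\pm(\xi_n + ie_\pm)(\xi_n + if_\pm),\qquad e_\pm = \tau\phi' + \mu_\pm,\quad f_\pm = \tau\phi' - \mu_\pm,
\]
with $\mu_\pm = \sqrt{c_\pm r - \xi_t^2}/\sqrt{c_\pm}$ real and smooth on $\mathcal{E}$. Taking $\alpha_+-\alpha_->0$ sufficiently large, the geometric assumption ensures a tangential covering $\mathcal{E}\cap\{\Gamma_1\cup\Gamma_2\}$ with $f_+\geq 0$ on $\Gamma_1$ and $f_-\leq 0$ on $\Gamma_2$. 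One then proves one-dimensional $L^2(\mathbb{R}_{x_n}^\pm)$ estimates for the first-order operators $D_n + ie_\pm$, $D_n + if_\pm$ with parameters frozen in $(\tau,\xi_t,\xi')$: the operators whose imaginary symbols have a favorable sign give clean estimates, the ones with a bad sign produce a controllable trace on $\Sigma$. Gluing across the interface by means of the transmission conditions \eqref{transmission cond1}--\eqref{transmission cond2} generates the boundary contribution $T_{\theta,\Theta}$.

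The second task is the non-elliptic zone. Write $P^\pm_\phi = L^\pm_\phi - D_t^2$ where
\[
L^\pm_\phi := c_\pm(x)(D_n+i\tau\phi')^2 + c_\pm(x)\,\mathrm{op}^w(r(x,\xi')).
\]
The operator $L^\pm_\phi$ is the conjugate of $c_\pm\Delta$ (at leading order) by $e^{\tau\phi}$ and satisfies the classical Le Rousseau--Lerner Carleman estimate including its transmission terms. For $v$ microlocally supported in $\mathcal{N}$ through $\mathrm{op}^w(\chi_1)$, the weight $Q^\phi_{\delta,\tau}$ gives $\|D_t^2 v\|_{L^2}\lesssim (\tau/\delta)\|v\|_{L^2}$, so $\|D_t^2 v\|_{L^2}^2 = \mathcal{O}(\tau^2)\|v\|_{L^2}^2$, which is absorbable on the right-hand side of the estimate that contains $\tau^3\|v\|^2$. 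Writing $P^\pm_\phi v = L^\pm_\phi v - D_t^2 v$ and applying the elliptic Carleman estimate to $L^\pm_\phi$ with the extra tangential term $\tau \|D_t v\|^2$ explicitly allowed on the left, the required inequality follows in $\mathcal{N}$.

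Finally, I would paste the two zones together by applying the tangential pseudodifferential cutoffs $\mathrm{op}^w(\chi_i)$ to $v$, summing the two estimates, and treating the pseudodifferential commutators $[\mathrm{op}^w(\chi_i),P^\pm_\phi]$ as lower order perturbations absorbable through the $\tau$-factors on the right. The commutators with $D_n$ are handled by working with tangential calculus only; the transmission conditions for $\mathrm{op}^w(\chi_i)v$ differ from those for $v$ by tangential operators of order zero applied to $\theta_\phi,\Theta_\phi$, which is exactly the form permitted by the definition of $T_{\theta,\Theta}$ in \eqref{def of T}. I expect the main obstacle to be the careful bookkeeping across the interface: ensuring that the microlocal partition of unity, the factorization of $p_\pm$ into first-order operators, and the boundary traces produced by integration by parts in $x_n$ are all mutually compatible, and that the signs of the imaginary parts $e_\pm,f_\pm$ yield the correct one-dimensional estimates on both sides simultaneously, which is precisely the point where the geometric condition $\alpha_+-\alpha_->0$ must be quantified.
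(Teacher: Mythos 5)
Your overall partition into an elliptic and a non-elliptic microlocal zone matches the paper's strategy in spirit, and your treatment of the elliptic zone by Calderón factorization with the Le Rousseau--Lerner first-order estimates and the geometric condition on $\alpha_+/\alpha_-$ is essentially what the paper does in Lemma~\ref{Lemma elliptic region}. However, there are two genuine gaps.

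The principal one is in the non-elliptic zone. You invoke the Gaussian weight to claim $\|D_t^2 v\|\lesssim(\tau/\delta)\|v\|$, but Proposition~\ref{theorem bis} is a pseudodifferential inequality for an \emph{arbitrary} compactly supported $v\in\tranphi$; it does not assume $v=Q_{\delta,\tau}^{\phi}u$, so that frequency localization is unavailable inside this proof. (It enters only later, in the passage from Proposition~\ref{theorem bis} to Theorem~\ref{the carleman ineq thm}, where the Gaussian tail kills the high-$|\xi_t|$ contributions.) As a result the perturbative scheme $P^\pm_\phi=L^\pm_\phi-D_t^2$ does not close: the admissible error on the left side of Proposition~\ref{theorem bis} is $\tau\|D_t v\|^2$, which does not dominate $\|D_t^2 v\|^2$ when $|\xi_t|$ is large relative to $\tau$, and in your region $\mathcal{N}$ one only has $|\xi_t|\gtrsim|\xi'|$, with $|\xi_t|$ otherwise unbounded. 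The paper instead restricts the non-elliptic analysis to the sub-region where additionally $\tau\lesssim|\xi_t|+|\xi'|$, by first cutting with $\op(1-\psi_\sigma)$: there, $\tau\|D_t v\|^2$ directly controls $\tau^3\|v\|^2$ and $\tau\|\nabla_{x'}v\|^2$ together with their surface traces (Lemma~\ref{regime favor cv}), and the remaining trace $\tau\left\vert D_n v\right\vert^2_{L^2(\Sigma)}$ is obtained by the commutator technique, decomposing $c^{-1}_\pm P^\pm_\phi=Q_2+i\tau Q_1$ and integrating by parts in $x_n$ (Lemma~\ref{region G+ cv}), rather than by an appeal to the elliptic Carleman estimate for $L^\pm_\phi$.

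The second gap is the missing region $\tau\gg|\xi_t|+|\xi'|$, treated separately in the paper by Lemma~\ref{prop for tau grand}. Your two zones are conic in $(\xi_t,\xi')$ alone and do not isolate the regime where the Carleman parameter dominates all frequencies; there the square-root symbol $m_\pm$ is not elliptic in the $\lambda_\tau$ scale, and the correct argument instead exploits the one-dimensional ellipticity of $(D_n+i\tau\phi')^2$ and treats both $\op(r)$ and $D_t^2$ as small perturbations. Without that third region the microlocal estimates cannot be patched uniformly in $\tau$.
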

	
   Proposition~\ref{theorem bis} provides a sub elliptic estimate for the conjugated operator which contains an admissible error (compared to a standard Carleman estimate) in its left hand side, which we will call $E_t$ for convenience in the sequel:
	\begin{equation}
	\label{def of Etv}
	  	\errt :=\tau \norm{H_+  D_t v_+}{\lrn}{2}+\tau \norm{H_-  D_t v_-}{\lrn}{2}  \\
	+\tau \left\vert (D_t v_+ )_{|\Sigma} \right\vert^2_{L^2(\Sigma)}+\tau \left\vert (D_t v_- )_{|\Sigma} \right\vert^2_{L^2(\Sigma)}.  
	\end{equation}

	\subsection{Proof of Theorem \ref{the carleman ineq thm} from Proposition \ref{theorem bis}}

	One should notice that Theorem \ref{the carleman ineq thm} is not a straightforward consequence of Proposition \ref{theorem bis}. Indeed when one considers $w=\poids u$ the function $w$ is not necessarily compactly supported even though this is the case for $u$ and consequently Proposition~\ref{theorem bis} cannot be applied directly. In particular when we pass from Proposition~\ref{theorem bis} to Theorem~\ref{theorem} the Gaussian  weight localizes close to $\{D_t=0\}$ and that is why $E_t$ is an admissible remainder term. Nevertheless the passage from Proposition~\ref{theorem bis} to Theorem~\ref{theorem} is quite classical (\cite{Tataru:95,Hor:97}). In our context one has additionally to deal with the terms coming from the interface. Let us present a proof here:
	
	\begin{proof} [\textit{Proof that Proposition~\ref{theorem bis} implies Theorem~\ref{theorem}}]
We consider an element $u \in \tran$ satisfying additionally $$\supp{u} \subset B(0,r/4),$$ with $r:=\frac{r_0}{2}$ and $r_0$ fixed by Proposition~\ref{theorem bis}. We define as before $w=Q^\phi_{\delta,\tau}u=e^{-\frac{\delta |D_t|^2}{2 \tau}} e^{\tau \phi }u$ which is \textit{not} compactly supported on the time variable $t$. We take then $\zeta \in C^\infty_0((-r,r);[0,1] ) $ with $\zeta=1$ on $(-r/2,r/2)$. This implies that the function $v:=\zeta(t) w(t,x)\in \tranphi$ satisfies additionally $\supp{ v} \subset B(0,r_0)$ which means that we can apply Proposition~\ref{theorem bis} to it. We neglect the last two surface terms and write the estimate of Proposition~\ref{theorem bis} in a more compact and slightly weaker form to obtain:
\begin{equation*}
    \sum_{\pm} \left( \norm{ H_\pm P^{\pm}_\phi  v }{L^2}{2}+ \tau\norm{H_{\pm}D_t  v_{\pm}}{L^2}{2}+\tau\normsurf{D_t  v_{\pm}}{\ls}{2}+T_{ \zeta \theta, \zeta \Theta} \right) \gtrsim \tau \norm{ v}{H^1_\tau}{2}+\tau^3\normsurf{ v_\pm}{\ls}{2}.
\end{equation*}
Using the fact that $\zeta(t)\leq 1$, the definition of $T_{\theta,\Theta}$ in \eqref{def of T} and the property \eqref{property croissance} of $\poids $ we see that $T_{\zeta \theta, \zeta \Theta} \leq C T_{ \theta,\Theta}$. This yields:
\begin{equation}
    \label{application of thm bis}
     \sum_{\pm} \left( \norm{ H_\pm P^{\pm}_\phi  v }{L^2}{2}+ \tau\norm{H_{\pm}D_t  v_{\pm}}{L^2}{2}+\tau\normsurf{D_t  v_{\pm}}{\ls}{2}+T_{ \theta,  \Theta} \right) \gtrsim \tau \norm{ v}{H^1_\tau}{2}+\tau^3\normsurf{ v_\pm}{\ls}{2}.
\end{equation}
Now we estimate for $w$:
\begin{align}
\label{application of thm bis 2}
    \tau \norm{v}{H^1_\tau}{2}+\tau^3 \normsurf{w_\pm}{\ls}{2}
    &\lesssim \tau \norm{\zeta w}{H^1_\tau}{2}+\tau \norm{ (1-\zeta)w}{H^1_\tau}{2}+\tau^3 \normsurf{\zeta w_\pm}{\ls}{2}+\tau^3\normsurf{(1-\zeta)w_\pm}{\ls}{2} \nonumber\\
    &\lesssim   \sum_{\pm}\left( \norm{ H_\pm P^{\pm}_\phi \zeta w }{L^2}{2}+ \tau\norm{H_{\pm}D_t \zeta w_{\pm}}{L^2}{2}+\tau\normsurf{D_t \zeta w_{\pm}}{\ls}{2}+T_{\theta, \Theta}  \right)\nonumber\\
    &\hspace{4mm}+\tau \norm{ (1-\zeta)w}{H^1_\tau}{2}+\tau^3\normsurf{(1-\zeta)w_\pm}{\ls}{2},
\end{align}
thanks to \eqref{application of thm bis}. For the last two terms we remark that
$$
(1-\zeta )w=(1-\zeta)e^{-\frac{\delta |D_t|^2}{2 \tau}} e^{\tau \phi }u=(1-\zeta)e^{-\frac{\delta |D_t|^2}{2 \tau}} (\check{\chi} e^{\tau \phi } u),
$$
with $\check{\chi}=\check{\chi}(t)\in C^\infty_0((-r/3,r/3))$ with $\check{\chi}=1$ in a neighborhood of $[-r/4,r/4]$ which implies that $\check{\chi} u=u$. Since $1-\zeta$ is supported away from $(-r/2,r/2)$ one can apply Lemma~\ref{lemma 2.4 from ll} which gives:
\begin{equation*}
   \tau \norm{ (1-\zeta)w}{H^1_\tau}{2}+\tau^3\normsurf{(1-\zeta)w_\pm}{\ls}{2} \leq C\tau e^{-c \frac{\tau}{\delta}}\norm{e^{\tau \phi}u}{H^1_\tau}{2}+
   C\tau^3 e^{-c \frac{\tau}{\delta}}\normsurf{e^{\tau \phi}u_\pm}{\ls}{2}.
\end{equation*}
Therefore it remains to estimate the following three terms, appearing in the right hand side of \eqref{application of thm bis 2}:	
\begin{itemize}
    \item We take a function $\tilde{\chi}=\tilde{\chi}(t)$ such that $\tchi=1$ on $\supp(\zeta^{\prime})$ and $\tchi=0$ on $(-r/3,r/3)$ and find for the first one:
     \begin{align}
     \label{estimate bullet 1}
    \sum_{\pm}\norm{ H_\pm P^{\pm}_\phi \zeta w }{L^2}{2} &\lesssim \sum_\pm \left( \norm{H_\pm \zeta   P^{\pm}_\phi   w }{L^2}{2}+\norm{ H_\pm [ P^{\pm}_\phi , \zeta] w }{L^2}{2} \right) \nonumber\\
    &\lesssim \sum_\pm \left(\norm{  H_\pm P^{\pm}_\phi  w }{L^2}{2}+\norm{H_\pm [ P^{\pm}_\phi, \zeta] w }{L^2}{2} \right) \nonumber\\
    &=\sum_\pm \left(\norm{  H_\pm P^{\pm}_\phi  w }{L^2}{2}+\norm{H_\pm [ P^{\pm}_\phi, \zeta] \tchi w }{L^2}{2} \right) \nonumber \\
    &\lesssim \sum_\pm \norm{  H_\pm P^{\pm}_\phi  w }{L^2}{2}+\norm{ \tchi w }{H^1_\tau}{2} \nonumber \\
    &\lesssim \sum_\pm\norm{  H_\pm P^{\pm}_\phi  w }{L^2}{2}+e^{-c \frac{\tau}{\delta}}\norm{e^{\tau \phi}u}{H^1_\tau}{2} 
\end{align}
    
where we used the properties of the support of $\tchi$ and $u$ combined with Lemma~\ref{lemma 2.4 from ll}. 

\item For the second term we have, using the support of $\zeta ^\prime$, as well (to alleviate notation we drop the $\pm$):
\begin{align*}
\tau \norm{D_t \zeta w }{L^2}{2} &\lesssim \tau \norm{ \zeta^{\prime}w }{L^2}{2}+\tau \norm{D_t w }{L^2}{2} \\ &\lesssim \tau e^{-c \frac{\tau}{\delta}}\norm{e^{\tau \phi}u}{L^2}{2}+\tau \norm{D_t w }{L^2}{2}.
\end{align*}
To estimate $\tau \norm{D_t w }{L^2}{2}$ we work on the Fourier domain (with respect to the time variable $t$) and distinguish between the frequencies smaller or bigger than $\sigma \tau$ for an arbitrary $\sigma>0$ (as in \cite[Section 5.2]{Laurent_2018}). One has:
\begin{align*}
    \norm{D_t w}{L^2}{}&\leq \norm{1_{|D_t|\leq \sigma \tau}D_t w}{L^2}{}+\norm{1_{|D_t|\geq \sigma \tau}D_t w}{L^2}{}\\
    &=\norm{1_{|D_t|\leq \sigma \tau}D_t w}{L^2}{}+\norm{1_{|D_t|\geq \sigma \tau} D_t e^{-\frac{\delta |D_t|^2}{2 \tau}} e^{\tau \phi }u}{}{}\\
    &\leq \norm{1_{|D_t|\leq \sigma \tau}D_t w}{L^2}{}+  \underset{\xi_t \geq \sigma \tau}{\textnormal{max}} \left( \xi_t e^{-\frac{\delta \xi^2_t}{2 \tau}} \right)\norm{e^{\tau \phi}u}{L^2}{}.
\end{align*}
We see that if $\tau \geq \frac{1}{\sigma^2 \delta}$ the function $s\mapsto se^{-\frac{\delta s^2}{2 \tau}}$ is decreasing on the interval $[\sigma \tau, +\infty)$. We obtain therefore for $\tau \geq \textnormal{max}(\tau_1,\frac{1}{\sigma^2 \delta})$ the estimate:
\begin{equation}
\label{estimate bullet 2}
\tau \norm{D_t w }{L^2}{2} \lesssim \sigma^2 \tau^3 \norm{w}{L^2}{2}+\sigma^2 \tau^3 e^{-\tau  \sigma^2 \delta} \norm{e^{\tau \phi}u}{L^2}{2}.
\end{equation}

Above and in the sequel the hidden constant will be independent of $\sigma$.

\item For the third term $\tau \normsurf{D_t \zeta w_\pm}{\ls}{2}$ in \eqref{application of thm bis 2} one can proceed exactly as above to find:
\begin{equation}
\label{estimate bullet 3}
    \tau \normsurf{D_t \zeta w}{\ls}{2} \lesssim \tau e^{-c \frac{\tau}{\delta}}\normsurf{e^{\tau \phi}u}{\ls}{2}+\sigma^2 \tau^3 \normsurf{w}{\ls}{2}+\sigma^2 \tau^3 e^{-\tau  \sigma^2 \delta} \normsurf{e^{\tau \phi}u}{\ls}{2}.
\end{equation}

\end{itemize}
We inject estimates \eqref{estimate bullet 1}, \eqref{estimate bullet 2} and \eqref{estimate bullet 3} in the right hand side of \eqref{application of thm bis 2} to obtain:
\begin{align*}
    \tau \norm{w}{H^1_\tau}{2}+\tau^3 \normsurf{w}{\ls}{2}
    &\lesssim \norm{ P_\phi  w }{L^2}{2}+\left(\tau e^{-c \frac{\tau}{\delta}}+\sigma^2 \tau^3 e^{-\tau  \sigma^2 \delta} \right)\norm{e^{\tau \phi}u}{H^1_\tau}{2}\\
   &\hspace{4mm}+ \left(\tau^3 e^{-c \frac{\tau}{\delta}}+\sigma^2 \tau^3 e^{-\tau  \sigma^2 \delta}\right)\normsurf{e^{\tau \phi}u}{\ls}{2} \\
   &\hspace{4mm}+\sigma^2 \tau^3 \norm{w}{L^2}{2}+\sigma^2 \tau^3 \normsurf{w}{\ls}{2}+T_{\theta, \Theta}.
\end{align*}
We now choose $\sigma$ sufficiently small to absorb the last two terms above in the left hand side of our estimate. Then there exists $d>0$ such that for $\tau \geq \max(\tau_0, \frac{1}{\sigma^2 \delta})$:
\begin{align*}
     \sum_{\pm}\left( \tau \norm{w_\pm}{H^1_\tau}{2}+\tau^3 \normsurf{w_\pm}{\ls}{2}\right)  &\lesssim \sum_\pm \left( \norm{H_\pm P^{\pm}_\phi  w_\pm }{L^2}{2}+e^{-d \tau}\left(\norm{e^{\tau \phi}u_\pm}{H^1_\tau}{2}+\normsurf{e^{\tau \phi}u_\pm}{\ls}{2} \right)+T_{\theta, \Theta}\right)  \\
     & \lesssim \sum_\pm \left( \norm{H_\pm P^{\pm}_\phi  w_\pm }{L^2}{2}+e^{-d \tau}\norm{e^{\tau \phi}u_\pm}{H^1_\tau}{2}+T_{\theta, \Theta}\right) \\
     & \lesssim \sum_\pm \left( \norm{H_\pm \poids P^{\pm}  u_\pm }{L^2}{2}+e^{-d \tau}\norm{e^{\tau \phi}u_\pm}{H^1_\tau}{2}+T_{\theta, \Theta}\right),
\end{align*}
where we have used the trace estimate $\normsurf{e^{\tau \phi}u}{\ls}{2} \lesssim \norm{e^{\tau \phi}u}{H^1}{2}$ as well as the definition of the conjugated operator $P_\phi$. This concludes the proof of Theorem~\ref{theorem} from Proposition~\ref{theorem bis}.
\end{proof}

\section{Proof of Proposition \ref{theorem bis} for a toy model}
\label{proof of toy model}

The goal of this section is to prove the subelliptic estimate of Proposition \ref{theorem bis} in the particular situation where the coefficient $c$ is piecewise constant. This case works as a sketch of proof since it is technically simpler but at the same time it allows to understand the core of the arguments that will be used for the proof of the general case in Section \ref{proof for the general case}.

\bigskip

\noindent	\textbf{Notations:} Before going further let us fix some useful notations that will be used in the sequel. We write $\R^{n+1}_+=\{x_n >0\} \times \R^n$ with the analogous definition for $\R^{n+1}_-$. In particular $\Rp$ will refer to $\{x_n>0\}$ and $\Rm$ to $\{x_n<0\}$.  We note $\big(a,b\big)$ the inner product in $\lrn$ and $x^\prime=(x_1,...,x_{n-1})$. The inner product on $\lrpn, \lrmn, \Sigma$ will de denoted by $(\cdot,\cdot)_+, (\cdot,\cdot)_-, (\cdot,\cdot)_\Sigma$ respectively. When we consider norms on $\Sigma$ the argument will automatically be considered to be restricted in $\Sigma$, even though we shall not always write $|\Sigma$ to simplify our notation. We will simply write $\hat{v}$ for the $\textit{partial}$ Fourier transform of $v$ in the variables $(t,x^\prime)$, whose dual variables are $(\xi_t, \xip)$. We recall that the space of functions $\tranphi$ and the small neighborhood $V$ have been defined in Section~\ref{local setting}. To alleviate notation we denote 
$$
\mathcal{W}_{\phi}:=\mathcal{W}_{\phi}^{0,0},
$$
for the case of homogeneous transmission conditions.

	We recall that we work in the setting introduced in Section \ref{local setting}.
 We suppose additionally only in this section that the coefficient $c$ is piecewise constant. We write then  $c=H_-c_-+H_+c_+$ with $c_\pm>0$ constants and we consider homogeneous transmission conditions $\theta=\Theta=0$. This allows to write $v_-(0)=v_+(0)=v(0)$ for $v \in \mathcal{W}_{\phi}$ and it implies as well that $\cnop v \in L^2(V_t)$.

	\bigskip
	
\subsection{Factorization and first estimates }
\label{section factorization microlocal and estimates}

	\textit{In the sequel when we use the notation $\lesssim$ or $\gtrsim$ the implicit constant will depend on the coefficients of $\cnop$ (here $c_{\pm}$) and on the coefficients $\alpha_{\pm}, \beta$ of our weight function $\phi$. The constants denoted by $C$ will depend on the same variables and they can be different from one line to another.}
	
	\bigskip

	\bigskip
	
	In the elliptic case of \cite{le2013carleman} the authors use a factorization argument which takes advantage of the fact that $-\Delta$ is positive and therefore one can define its square root. In our proof of Proposition~\ref{theorem bis} we use and extend this idea of factorization, however as we no longer have the positivity property this factorization is not always possible. Before giving the details let us observe that we can identify the operator $\cnop$ with its symbol in the tangential variables $(t,x^\prime)$. Indeed, using Plancherel's theorem and denoting by $\mathcal{F}_{t,x^\prime}$ the partial Fourier transform in the variables $(t,x^\prime)$ we have:
	\begin{align*}
	(2 \pi)^{n/2}	\norm{\cnop v}{\lrn}{}&=\norm{ \mathcal{F}_{t,x^\prime}\cnop v }{\lrn}{} \\
		&=\norm{H_+\left(c_+ (D_n+i \tau \phi^\prime )^2+ c_{+}|\xip|^2-\xi_t^2\right) \mathcal{F}_{t \rightarrow \xi_t ,x^\prime \rightarrow \xip}v_+}{\lrn}{}\\
		&\hspace{4mm}  + \norm{H_-\left(c_- (D_n+i \tau \phi^\prime )^2+ c_{-}|\xip|^2-\xi_t^2\right) \mathcal{F}_{t \rightarrow \xi_t ,x^\prime \rightarrow \xip}v_-}{\lrn}{}.
	\end{align*}
	Here we used the fact that the coefficients of $P$ (and thus of $\cnop$ since $\phi= \phi(x_n)$) are constant. In the general case, although this identification is no longer valid, symbolic calculus will allow to exploit the core of the arguments carried out below. We write:
	$$
	\cnop^{\pm}=c_{\pm}\lefp (D_n+ i \tau \phi^\prime )^2+|\xip|^2-\frac{1}{c_{\pm}}|\xi_t|^2 \rightp.
	$$
For $\epsilon > 0 $ small to be chosen, we distinguish the following regions of the tangential frequency space $\R^n \ni (\xip, \xi_t)$:
	
	\begin{enumerate}
		
		\item $\mathcal{E^{\pm}_{\epsilon}}: $ \boxed  { |\xip|^2 - \invc_{\pm} |\xi_t|^2 \geq \epsilon (|\xip|^2+ |\xi_t|^2) }. This is the elliptic region. Here we can factorize in precisely the same way as in the elliptic case \cite{le2013carleman} with the same estimates for the first order factors. For $(\xi_t,\xip)\in\mathcal{E^{-}_{\epsilon}}\cap \mathcal{E^{+}_{\epsilon}} $ we can proceed exactly as in the elliptic case.
		
		\item $\mathcal{GH^{\pm}_{\epsilon}}: $ \boxed{|\xip|^2 -\invc_{\pm} |\xi_t|^2 < \epsilon (|\xip|^2+ |\xi_t|^2) }. This is the union of the hyperbolic and glancing regions  (see for instance \cite[Chapter 23.2]{Hoermander:V3} or \cite{galkowski2020control}). The important fact is that here we have for sufficiently small $\epsilon$ that $|\xi_t |\gtrsim |\xip |$. Since $\norm{D_t v}{}{}=\norm{\xi_t v}{}{}$ and $|D_t v|=|\xi_t v|$ are admissible remainder terms (in view of the statement of Proposition \ref{theorem bis}) we obtain directly a useful estimate on all tangential derivatives. It thus only remains to obtain an estimate on $\norm{D_n v}{}{}$ and $|D_n v|$.
		
	\end{enumerate}

	We have thus considered two regions adapted to the geometry in each side of the interface (that is  $\{x_n<0\}$ and $\{x_n>0\}$). This gives us four regions which cover all of the tangent dual space. Note that making an assumption on the \textit{sign} of the jump could reduce the number of regions one has to deal with but we do not make such an assumption.

The following simple Lemma will be used in multiple occasions in the sequel:
	
	\begin{lem}
		\label{one of boundary derivatives}
		One has for $v \in \mathcal{W}_{\phi}$ and all $\tau>0$:
		$$
		\tau |D_n\hat{v}_{\pm}(0)|_{}^2 \lesssim \tau |D_n \hat{v}_{\mp}(0)|_{}^2+ \tau^3 |\hat{v}(0)|_{}^2
		$$

	\end{lem}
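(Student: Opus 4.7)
The plan is to derive the estimate directly from the transmission conditions~\eqref{transmission cond1}--\eqref{transmission cond2} (in their homogeneous form, since $v \in \mathcal{W}_\phi$). Since these conditions are posed on $\Sigma = \{x_n=0\}$ and are linear with coefficients independent of $(t,x')$ (in this toy-model section, where $c_\pm$ are constant and $\phi'(0) = \alpha_\pm$), they are preserved by the partial Fourier transform $\mathcal{F}_{t,x'}$. Thus at $x_n=0$ we obtain the pointwise-in-$(\xi_t,\xi')$ relations
\begin{equation*}
\hat{v}_-(0) = \hat{v}_+(0), \qquad c_+\bigl(D_n\hat{v}_+(0) + i\tau\alpha_+\hat{v}_+(0)\bigr) = c_-\bigl(D_n\hat{v}_-(0) + i\tau\alpha_-\hat{v}_-(0)\bigr).
\end{equation*}

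Next, I would solve the second identity for $D_n\hat{v}_+(0)$ using the first identity to eliminate $\hat{v}_-(0)$, obtaining
\begin{equation*}
D_n\hat{v}_+(0) = \frac{c_-}{c_+}\, D_n\hat{v}_-(0) + i\tau\,\frac{c_-\alpha_- - c_+\alpha_+}{c_+}\,\hat{v}(0),
\end{equation*}
where $\hat{v}(0)$ stands for the common trace. Taking the square modulus and applying the elementary inequality $|a+b|^2 \le 2|a|^2 + 2|b|^2$ yields
\begin{equation*}
|D_n\hat{v}_+(0)|^2 \;\lesssim\; |D_n\hat{v}_-(0)|^2 + \tau^2|\hat{v}(0)|^2,
\end{equation*}
with an implicit constant depending only on $c_\pm$ and $\alpha_\pm$. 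Multiplying by $\tau$ gives the claim with the $+$ sign on the left. The symmetric inequality (with $+$ and $-$ exchanged) is obtained by solving for $D_n\hat{v}_-(0)$ instead, which is permissible since $c_- > 0$.

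There is essentially no obstacle here: the lemma is a direct algebraic consequence of the Neumann-type transmission condition on $\Sigma$, and the only point worth stressing is that the jump $c_-\alpha_- - c_+\alpha_+$ produces the factor $\tau$ (and hence, after squaring and multiplying by $\tau$, the $\tau^3|\hat{v}(0)|^2$ term on the right-hand side), which is precisely the admissible trace term that will be controlled by the boundary contribution $\tau^3\normsurf{v_\pm}{\ls}{2}$ in the estimates of Proposition~\ref{theorem bis}.
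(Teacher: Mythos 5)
Your proof is correct and follows essentially the same route as the paper's: apply the partial Fourier transform to the homogeneous transmission conditions (which commutes with restriction to $\Sigma$ since the operators are tangential), solve the Neumann-type condition for one normal trace in terms of the other plus a term of size $\tau|\hat v(0)|$, then square and multiply by $\tau$. The only cosmetic difference is that you solve for $D_n\hat v_+(0)$ while the paper's displayed equation solves for $D_n\hat v_-(0)$, but the algebra and the conclusion are the same.
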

	This means that if we have one of the two terms $\tau \normsurf{D_n\hat{v}_+}{L^2(\Sigma)}{2}$ or $\tau \normsurf{D_n \hat{v}_-}{L^2(\Sigma)}{2}$ we automatically have the other one too, modulo the surface term $\tau^3 |\hat{v}|^2$.
	
	\begin{proof}
		This is a result of the transmission conditions \eqref{transmission cond1} and \eqref{transmission cond2} and of the fact that partial Fourier transform commutes with restriction on $\Sigma=\{x_n=0\}$. The first transmission condition allows to write $\hat{v}_-(0)=\hat{v}_+(0)=\hat{v}(0)$
		and \eqref{transmission cond2} reads
		$$
		D_n\hat{v}_-(0)=\frac{\cp}{\cm} (D_n\hat{v}_+ + i \tau \alpha_+ \hat{v}_+ (0))- i \tau \alpha_- \hat{v}_-(0),
		$$
and therefore 
$$
|c_-D_n\hat{v}_-(0)-c_+D_n\hat{v}_+(0)| \lesssim \tau |\hat{v}(0)|,
$$
from which the Lemma follows.
	\end{proof}

	Consequently, if for instance one has $\norm{\cnop v}{\lrn}{2}+\errt \gtrsim \tau \normsurf{D_n v_-}{L^2(\Sigma)}{2}+ \tau^3 |v(0)|_{L^2(\Sigma)}^2$  then thanks to Lemma~\ref{one of boundary derivatives} we have:
	$$
\tau |D_nv_{+}(0)|_{L^2(\Sigma)}^2 \lesssim \tau |D_n v_{-}(0)|_{L^2(\Sigma)}^2+ \tau^3 |v(0)|_{L^2(\Sigma)}^2 \lesssim \norm{\cnop v}{\lrn}{2}+\errt.
	$$

	As we aim at proving a Carleman estimate (which involves a large parameter $\tau$) we expect that regions depending on $\tau$ arise. The following lemma furnishes such a region which is rather favorable:
	
	\begin{lem}
		\label{regime favorable} For all $\tau, c_0>0$,  $v\in \mathcal{W}_\phi $ and $Y \subset \{|\xi_t| \geq c_0 ( \tau + |\xip|)\}$ one has
		\begin{multline*}
			\tau \norm{ \widehat{D_t v_+}}{L^2(\Rp \times Y)}{2}+\tau \norm{ \widehat{D_t v_-}}{L^2(\Rm \times Y)}{2} + \tau  \normsurf{\widehat{D_t v_+}(0)}{L^2(\Sigma\cap Y)}{2}+\tau  \normsurf{\widehat{D_t v_-}(0)}{L^2(\Sigma \cap Y)}{2}  \\ \gtrsim \tau^3 \norm{\widehat{v}}{L^2(\R \times Y)}{2}+\tau^3 |\widehat{v (0)} |_{L^2(\Sigma \cap Y)}^2+\tau\left\vert\widehat{ \nabla_{x^\prime} v }(0) \right\vert_{L^2(\Sigma \cap Y)}^2   \\
			+\tau \norm{\widehat{\nabla_{x^\prime}v_{-}}}{L^2(\Rm \times Y)}{2}+\tau \norm{\widehat{\nabla_{x^\prime}v_{+}}}{L^2(\Rp \times Y)}{2}.
		\end{multline*}
	\end{lem}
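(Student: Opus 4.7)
The statement is essentially a pointwise observation on the tangential dual space, upgraded to $L^2$ by Plancherel. The key point is that on $Y\subset\{|\xi_t|\geq c_0(\tau+|\xi'|)\}$ the pointwise inequality
\begin{equation*}
|\xi_t|^2\;\geq\;c_0^2(\tau+|\xi'|)^2\;\geq\;c_0^2(\tau^2+|\xi'|^2)
\end{equation*}
holds, so $|\xi_t|$ simultaneously dominates $\tau$ and $|\xi'|$. Multiplying by $\tau|\hat v|^2$ gives
\begin{equation*}
\tau|\xi_t|^2|\hat v|^2\;\geq\;c_0^2\bigl(\tau^3|\hat v|^2+\tau|\xi'|^2|\hat v|^2\bigr).
\end{equation*}

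First I would handle the volume terms. Since the Fourier transform is partial (in $(t,x')$ only), at each fixed $x_n$ one has $|\widehat{D_t v_\pm}(x_n,\xi_t,\xi')|=|\xi_t|\,|\hat v_\pm(x_n,\xi_t,\xi')|$ and $|\widehat{\nabla_{x'}v_\pm}(x_n,\xi_t,\xi')|=|\xi'|\,|\hat v_\pm(x_n,\xi_t,\xi')|$. Plugging into the pointwise bound and integrating over $(x_n,\xi_t,\xi')\in\mathbb{R}_{\pm}\times Y$ gives
\begin{equation*}
\tau\|\widehat{D_tv_\pm}\|_{L^2(\mathbb{R}_\pm\times Y)}^2\;\gtrsim\;\tau^3\|\hat v_\pm\|_{L^2(\mathbb{R}_\pm\times Y)}^2+\tau\|\widehat{\nabla_{x'}v_\pm}\|_{L^2(\mathbb{R}_\pm\times Y)}^2,
\end{equation*}
and summing the $+$ and $-$ contributions yields the three volume terms on the right-hand side (using $\|\hat v\|^2=\|\hat v_+\|^2+\|\hat v_-\|^2$).

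Next I would treat the surface terms identically, applied to the traces at $x_n=0$. The homogeneous transmission condition \eqref{transmission cond1} with $\theta_\phi=0$ forces $v_+(0)=v_-(0)=v(0)$, so $\widehat{v_\pm}(0)=\widehat{v(0)}$, and the same equality holds for every tangential derivative, in particular for $\widehat{\nabla_{x'}v_\pm}(0)$ and $\widehat{D_tv_\pm}(0)$. The same pointwise inequality, now applied to $|\widehat{v(0)}|^2$ and integrated over $(\xi_t,\xi')\in\Sigma\cap Y$, produces
\begin{equation*}
\tau\bigl|\widehat{D_tv_+}(0)\bigr|_{L^2(\Sigma\cap Y)}^2\;\gtrsim\;\tau^3\bigl|\widehat{v(0)}\bigr|_{L^2(\Sigma\cap Y)}^2+\tau\bigl|\widehat{\nabla_{x'}v}(0)\bigr|_{L^2(\Sigma\cap Y)}^2,
\end{equation*}
which already covers both surface terms on the right; the $\tau|\widehat{D_tv_-}(0)|^2$ summand on the left is simply kept as a nonnegative extra.

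There is no real obstacle here: the content of the lemma is the name ``favorable region'', reflecting the fact that $\widehat{D_t v}$ controls all tangential derivatives and the zeroth-order term with the correct powers of $\tau$ precisely when $|\xi_t|\gtrsim\tau+|\xi'|$. The only subtlety is tracking that the Fourier transform is tangential (so integration in $x_n$ can be done first, and Plancherel is applied only in $(t,x')$) and that the continuity transmission condition identifies the three possible meanings of the trace.
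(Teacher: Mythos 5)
Your proof is correct and coincides with the paper's argument: both rest on the pointwise bound $|\xi_t|\gtrsim\tau+|\xi'|$ on $Y$, squared and multiplied by $\tau|\hat v|^2$, then integrated via the tangential Plancherel theorem separately over $\mathbb{R}_\pm\times Y$ and over $\Sigma\cap Y$. Your explicit remark that the continuity transmission condition identifies $\widehat{v_+}(0)=\widehat{v_-}(0)=\widehat{v}(0)$ (and likewise for the tangential derivatives) makes precise a step the paper leaves implicit.
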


	\begin{proof}
	We simply write
		\begin{align*}
			\tau \norm{\widehat{D_t v_{\pm}}}{L^2(\R \times Y)}{2} &= \tau \norm{\xi_t \hat{v}_{\pm}}{L^2(\R \times Y)}{2} \gtrsim \tau \norm{(\tau + |\xip|)\hat{v}_{\pm})}{L^2(\R \times Y)}{2} \\
			&\gtrsim \tau^3 \norm{\hat{v}_{\pm}}{L^2(\R \times Y)}{2}+ \tau \norm{\widehat{\nabla_{x^\prime}v}}{L^2(\R \times Y)}{2}
		\end{align*}
	And:
		\begin{align*}
			\tau \normsurf{D_t v}{L^2(\Sigma \cap Y)}{2} &= \tau \normsurf{\xi_t \hat{v}}{L^2(\Sigma \cap Y)}{2} \gtrsim \tau \normsurf{(\tau + |\xip|)\hat{v})}{L^2(\Sigma \cap Y)}{2} \\
			&\gtrsim \tau^3 \normsurf{\hat{v}}{L^2(\Sigma \cap Y)}{2}+ \tau \normsurf{\widehat{\nabla_{x^\prime}v}}{L^2(\Sigma \cap Y)}{2}.
		\end{align*}

	\end{proof}

 The following lemma allows to obtain the volume derivatives modulo the reminder of the terms of the right hand-side of Proposition~\ref{theorem bis}.

	\begin{lem}
		\label{derivatives Dn}
		One has for $v \in \mathcal{W}_\phi$ and $\tau\geq \tau_0$:
		\begin{multline*}
			\norm{\cnop v}{L^2(\R^{n+1})}{2} +\errt +\bigg(\tau^3\norm{v}{\lrpn}{2}+\tau^2\normsurf{v(0)}{L^2(\Sigma)}{2}+  \normsurf{D_n v_{+}(0)}{L^2(\Sigma)}{2}+\normsurf{D_n v_{-}(0)}{L^2(\Sigma)}{2} \bigg)\\
			\gtrsim \tau \norm{\nabla v_{-}}{\lrmn}{2}+	\tau \norm{\nabla v_{+}}{\lrpn}{2}.
		\end{multline*}
\end{lem}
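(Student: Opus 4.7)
The plan is a classical multiplier argument: test the equation on each half-space $\{\pm x_n>0\}$ against $v_{\pm}$, take the real part, integrate by parts in $x_n$, and multiply by $\tau$ at the end. Writing
\[
\cnop^{\pm} = c_{\pm} D_n^2 + 2ic_{\pm}\tau \phi^{\prime} D_n + ic_{\pm}\tau \phi^{\prime\prime} - c_{\pm}\tau^2 (\phi^{\prime})^2 + c_{\pm}|\xip|^2 - |\xi_t|^2,
\]
the quantity $\mathrm{Re}(\cnop^{\pm} v_{\pm}, v_{\pm})_{\pm}$ can be computed piece by piece. The principal normal part $c_{\pm}D_n^2$ yields $c_{\pm}\|D_n v_{\pm}\|_{\pm}^2$ together with a boundary contribution of the form $\mathrm{Im}(c_{\pm} D_n v_{\pm}(0)\overline{v_{\pm}(0)})$ on $\Sigma$; the tangential part $c_{\pm}|\xip|^2 - |\xi_t|^2$ produces $c_{\pm}\|\nabla_{x^\prime} v_{\pm}\|_{\pm}^2 - \|D_t v_{\pm}\|_{\pm}^2$ exactly, with no further boundary term; the first-order term $2ic_{\pm}\tau \phi^{\prime} D_n$ yields, after one more integration by parts, a bulk contribution of size $O(\tau\|v_{\pm}\|_{\pm}^2)$ plus a boundary term $O(\tau|v(0)|_{\Sigma}^2)$; the imaginary zero-order piece $ic_{\pm}\tau\phi^{\prime\prime}$ drops out on taking the real part; and $-c_{\pm}\tau^2(\phi^{\prime})^2$ contributes a bulk remainder $O(\tau^2\|v_{\pm}\|_{\pm}^2)$.

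Rearranging and summing over $\pm$ gives
\[
\sum_{\pm} c_{\pm}\bigl(\|D_n v_{\pm}\|_{\pm}^2 + \|\nabla_{x^\prime} v_{\pm}\|_{\pm}^2\bigr) \lesssim \sum_{\pm}|(\cnop^{\pm} v_{\pm}, v_{\pm})_{\pm}| + \sum_{\pm}\|D_t v_{\pm}\|_{\pm}^2 + C\tau^2\|v\|^2 + B,
\]
where $B$ lumps the boundary contributions on $\Sigma$. Multiplying the inequality by $\tau$, Young's inequality controls $\tau|(\cnop^{\pm} v_{\pm}, v_{\pm})_{\pm}| \leq \tfrac{1}{2}\|\cnop^{\pm} v_{\pm}\|^2 + \tfrac{\tau^2}{2}\|v_{\pm}\|_{\pm}^2$; the term $\tau\sum_{\pm}\|D_t v_{\pm}\|_{\pm}^2$ is absorbed into $\errt$ by definition; and the boundary mass $\tau B$ is dominated via $\tau|\langle D_n v_{\pm}(0), v_{\pm}(0)\rangle_{\Sigma}|\leq \tfrac{1}{2}|D_n v_{\pm}(0)|_{\Sigma}^2 + \tfrac{\tau^2}{2}|v(0)|_{\Sigma}^2$, which lands exactly on the terms $|D_n v_{\pm}(0)|_{\Sigma}^2$ and $\tau^2|v(0)|_{\Sigma}^2$ appearing on the left-hand side of the claim. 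The remaining $\partial_t$-part of $\tau\|\nabla v_{\pm}\|^2$ is $\tau\|D_t v_{\pm}\|_{\pm}^2 \leq \errt$ for free, closing the inequality.

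The main obstacle is the indefinite signature of the wave operator: unlike the elliptic setting of \cite{le2013carleman}, the multiplier $v_{\pm}$ cannot produce $\|\nabla v_{\pm}\|^2$ through a Garding-type identity, because the $-|\xi_t|^2$ piece generates $-\|D_t v_{\pm}\|_{\pm}^2$ with the wrong sign. The statement is engineered so that this defect is harmless: the quantity $\errt$ sits on the left precisely to absorb it, consistent with the microlocal strategy underlying Theorem~\ref{the carleman ineq thm}, where the Gaussian multiplier $e^{-\delta D_t^2/(2\tau)}$ eventually localizes the analysis near $\{\xi_t = 0\}$, i.e.\ to the microlocally elliptic region of the wave operator. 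The transmission conditions play no role at this step, which is why the two normal derivatives $D_n v_{\pm}(0)$ enter the right-hand side of the lemma independently rather than being tied together.
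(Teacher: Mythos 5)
Your proof is correct and takes the same route as the paper: both pair $P^{\pm}_{\phi}v_{\pm}$ against $v_{\pm}$, integrate by parts in $x_n$, absorb the wrong-sign $-\|D_t v_{\pm}\|^2$ contribution into $E_t(v)$, and control the boundary and cross terms with Young's inequality. One small algebraic slip worth noting: the commutator piece in $(D_n+i\tau\phi')^2$ is the \emph{real} term $\tau\phi''=\tau\beta$, not $i\tau\phi''$, so it does not vanish upon taking real parts; it instead contributes $O(\tau\|v_{\pm}\|_{\pm}^2)$, which lands in the same error bucket as your accounting, leaving the conclusion unaffected.
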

	
	\begin{remark}
  Lemma~\ref{derivatives Dn} allows to forget the volume norm of the derivatives in the proof of our proposition. From now on we will try to obtain all the other terms of Proposition~\ref{theorem bis}. Notice also that there is no restriction on the frequency support of $v$.
 
	\end{remark}
	
	\begin{proof}
	We start with the positive half-line. One has the following elementary inequality:
		
		$$
		\norm{\cnop^+ v_+}{L^2(\R^{n+1}_+)}{2}+\tau^2\norm{v_+}{L^2(\R^{n+1}_+)}{2}\geq 2 \tau \operatorname{Re}(P^+_{\phi}v_+,H_+v_+).
		$$
Now we can integrate by parts. Indeed, recall that:
		\begin{align*}
			P^+_{\phi} &=c^{+}(D_n^2+ \tau \beta +2 i \tau \phi^\prime D_n-\tau^2|\phi^\prime | ^2)
			+c^{+}|D^{'}|^2-D_t^2
		\end{align*}
		and that the definition of the weight function $\phi$  gives $\phi^\prime (x_n)=\alpha_{\pm}+\beta x_n $, therefore:
			\begin{align}
			\label{estim volume derivatives toy model }
			2\tau \operatorname{Re} (P^+_{\phi}v,H_+v)&= c_+\Big(2\tau \norm{D_n v_+}{\lrpn}{2}+2 \tau \int_{\R^n}\operatorname{Re}(\partial_{x_n}v_+(0) \Bar{v}_+(0))dx^\prime dt \nonumber\\& \hspace{4mm} +2\tau^2 \beta \norm{v_+}{\lrpn}{2} -2\tau^2 \alpha_+|v_+(0)|^2 -2\tau^2 \beta \norm{v_+}{\lrpn}{2}  \nonumber \\
			&\hspace{4mm}-2 \tau^3\norm{\phi^\prime  v_+}{\lrpn}{2}+2 \tau \norm{D^{'} v_+}{\lrpn}{2} \Big)-2 \tau \norm{D_t v_+}{\lrpn}{2}.
		\end{align}
		As one has 
		$$
		\left \vert 2 \tau \int_{\R^n}\operatorname{Re}(\partial_{x_n}v_+(0) \Bar{v}_+(0))dx^\prime dt \right \vert \lesssim \normsurf{D_n v_+(0)}{L^2(\Sigma)}{2}+ \tau^2 \normsurf{v_+(0)}{L^2(\Sigma)}{2},
		$$
    \eqref{estim volume derivatives toy model } can be rewritten as
		$$
		2\tau \operatorname{Re} (P^+_{\phi}v,H_+v))_{L^2(\mathbb{R}_{+}^{n+1})}=C \tau \norm{\nabla v{_+}}{\lrpn}{2}+R,
		$$
		with
		$$
		|R|\lesssim \tau^3\norm{v_+}{\lrpn}{2}+\tau \norm{D_t v_+}{\lrpn}{2}+\tau^2\normsurf{v_+(0)}{L^2(\Sigma)}{2}+  \normsurf{D_n v_+(0)}{L^2(\Sigma)}{2}.
		$$
		which implies the desired inequality for $v_+$.
		Since the proof above is insensitive to the sign of the boundary terms coming from the integration by parts in \eqref{estim volume derivatives toy model } we also obtain the desired inequality for the negative half-line.

	\end{proof}

	The following simple calculation will be at the core of the one dimensional estimates that will be used in the sequel, with $s=x_n$:
	\begin{lem}
		\label{calcul clef}
		One has for $\gamma>0$, $\lambda \in \mathbb{C}$ and $v \in C_c^1(\R)$:
		\begin{align}
		 \label{equality clef positif}
			\norm{(D_s+i \gamma s + \lambda)v}{\lrp}{2}= 
			& \norm{D_sv+\operatorname{Re} \lambda v }{\lrp}{2}+\norm{(\gamma s +\operatorname{Im} \lambda )v}{\lrp}{2}  \\ \nonumber
			&+\gamma \norm{v}{\lrp}{2} +\operatorname{Im} \lambda |v(0)|^2,
		\end{align}
		and
		\begin{align}
		\label{equality clef negatif}
			\norm{(D_s+i \gamma s + \lambda)v}{\lrm}{2}= 
			& \norm{D_sv+\operatorname{Re} \lambda v }{\lrm}{2}+\norm{(\gamma s +\operatorname{Im} \lambda )v}{\lrm}{2}  \\ \nonumber
			&+\gamma \norm{v}{\lrm}{2} -\operatorname{Im} \lambda |v(0)|^2.
		\end{align}
	\end{lem}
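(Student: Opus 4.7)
\textbf{Proof plan for Lemma \ref{calcul clef}.} The strategy is a direct one-dimensional computation: split the operator into its ``self-adjoint plus multiplication'' parts, expand the square, and integrate by parts once. Write $\lambda = a + ib$ with $a = \operatorname{Re}\lambda$, $b = \operatorname{Im}\lambda$, and decompose
\[
(D_s + i\gamma s + \lambda)v = (D_s + a)v + i(\gamma s + b)v = Av + iBv,
\]
where $A = D_s + a$ and $B = \gamma s + b$ are both real in the sense that $B$ is a real multiplication operator and $A$ acts on the complex scalar $v$ with a real zero-order perturbation.

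Next I would expand $|Av + iBv|^2 = |Av|^2 + |Bv|^2 + 2B\,\operatorname{Im}(\overline{v}\,Av)$, using that $B$ is real. The cross term simplifies because $A = -i\partial_s + a$ gives
\[
\operatorname{Im}(\overline{v}\,Av) = \operatorname{Im}(-i\overline{v}v' + a|v|^2) = -\operatorname{Re}(\overline{v}v') = -\tfrac{1}{2}\partial_s |v|^2,
\]
so the pointwise identity reduces to
\[
|(D_s + i\gamma s + \lambda)v|^2 = |(D_s + a)v|^2 + |(\gamma s + b)v|^2 - (\gamma s + b)\,\partial_s |v|^2.
\]

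Finally, integrating the last term by parts on $\R^+$ produces $\gamma \norm{v}{\lrp}{2} + b|v(0)|^2$ (the boundary term at $+\infty$ vanishes by compact support, and the one at $0$ contributes $+b|v(0)|^2$ because the outward normal direction gives a sign $+1$ at the lower endpoint with the minus sign in front of the integral). This yields \eqref{equality clef positif}. For $\R^-$ the exact same computation applies, but the boundary term at $s = 0$ appears with the opposite sign, giving $-b|v(0)|^2$ and hence \eqref{equality clef negatif}. There is no real obstacle here: the only point that requires a moment's care is the bookkeeping of the sign of the boundary term, which is what ultimately distinguishes the two identities and will later allow us to play off the positive and negative half-lines through the transmission conditions.
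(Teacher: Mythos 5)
Your proof is correct and follows essentially the same route as the paper: split the conjugated first-order operator into a self-adjoint part $D_s+\operatorname{Re}\lambda$ and a real multiplication part $\gamma s+\operatorname{Im}\lambda$, expand the square, and integrate the cross term by parts, tracking the boundary contribution whose sign distinguishes $\R_+$ from $\R_-$. The only difference is cosmetic — you package the cross term as a pointwise total derivative $-(\gamma s+\operatorname{Im}\lambda)\,\partial_s|v|^2$ before integrating, whereas the paper works directly with the $L^2$ inner products and integrates the two cross terms by parts separately.
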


	In particular one can deduce the following downgraded estimates:
	\begin{align}
		\label{ineg 1 positif}
		& \norm{(D_s+i \gamma s + \lambda)v}{\lrp}{2}\geq \norm{(\gamma s +\operatorname{Im} \lambda )v}{\lrp}{2}+\operatorname{Im} \lambda |v(0)|^2,\\
		\vspace{8mm}
		\label{ineg 2 negatif}
		&\norm{(D_s+i \gamma s + \lambda)v}{\lrm}{2} \geq \gamma \norm{v}{\lrm}{2}+\norm{(\gamma s +\operatorname{Im} \lambda )v}{\lrm}{2}-\operatorname{Im} \lambda |v(0)|^2 .
	\end{align}

\begin{proof}[Proof of Lemma \ref{calcul clef}]
We develop:
\begin{align*}
    \norm{(D_s+\rel +i \gamma s + i \iml )v}{\lrp}{2}&=\norm{(D_s+\rel)v}{\lrp}{2}+\norm{(\gamma s +\iml )v}{\lrp}{2} \\
    &\hspace{4mm}+2\operatorname{Re} (H_+(D_s+\rel)v, i \gamma s v+i \iml v).
    \end{align*}
We have $\operatorname{Re} (H_+\rel v, i \gamma s v+i \iml v)=0$ and then integrate by parts:

$$
(H_+D_sv,i \iml v)=\iml|v(0)|^2-\overline{(H_+D_sv,i \iml v)},
$$
and

$$
(H_+D_s v, i \gamma s v)=\gamma \norm{v}{\lrp}{2}-\overline{(H_+D_s v, i \gamma s v)},
$$
which allows to explicitly obtain the real parts and get the stated equality.
For the proof of \eqref{equality clef negatif} observe that the boundary term comes out with a negative sign.
\end{proof}

We first deal with the case $\tau \gg |\xip|+|\xi_t|$. Here we can use the ellipticity of 
$$
(D_n+ i \tau \phip)^2
$$
as an 1D operator, and then a perturbation argument. Recall that $V_t$ has been defined in~\eqref{def of Vt}.

\begin{lem}
\label{tau grand toy}
There exists $\sigma_0, \tau_0>0$ and $\mathcal{V} \Subset V_t$ such that  for all $v \in \mathcal{W}_\phi$ with $\supp v \subset \mathcal{V}$ and 
\begin{equation}
\label{support condition tau grand toy}
Y\subset \{ \tau \geq  \frac{1}{\sigma}(|\xi_t|+|\xip|) \} 
\end{equation}
one has for all $\sigma \leq \sigma_0$ and $\tau \geq \tau_0:$
\begin{multline*}
			\norm{\widehat{\cnop v}}{L^2(\R \times Y)}{2}+\tau \norm{ \widehat{D_t v_+}}{L^2(\Rp \times Y)}{2}+\tau \norm{ \widehat{D_t v_-}}{L^2(\Rm \times Y)}{2} 
			\\+ \tau  \normsurf{\widehat{D_t v_+(0)}}{L^2(\Sigma\cap Y)}{2}+\tau  \normsurf{\widehat{D_t v_-(0)}}{L^2(\Sigma \cap Y)}{2}  \\  \gtrsim \tau^3 \norm{ \widehat {v}}{L^2(\R \times Y)}{2}
			 +\tau^3 \left\vert \widehat{v (0)} \right\vert_{L^2(\Sigma \cap Y)}^2 +\tau \left\vert\widehat{ (\nabla v_+ )}(0) \right\vert_{L^2(\Sigma \cap Y)}^2+\tau \left\vert \widehat{ (\nabla v_- )}(0) \right\vert_{L^2(\Sigma \cap Y)}^2.
		\end{multline*}
	\end{lem}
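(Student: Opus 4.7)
The plan is to reduce to a one-dimensional problem in $x_n$ via partial Fourier transform in the tangential variables $(t,x')$. Since $c_\pm$ and $\phi'(x_n)=\alpha_\pm+\beta x_n$ are constant in $(t,x')$, for each fixed $(\xi_t,\xi')$ the conjugated operator on the half-line $\{\pm x_n>0\}$ becomes
\begin{equation*}
    \widehat{P_\phi^\pm v_\pm}=c_\pm\bigl((D_n+i\tau\phi')^2+m_\pm^2\bigr)\hat v_\pm,\qquad m_\pm^2:=|\xi'|^2-c_\pm^{-1}|\xi_t|^2,
\end{equation*}
where I choose the branch of $m_\pm$ so that $m_\pm\geq 0$ when $m_\pm^2\geq 0$ and $m_\pm=i\tilde m_\pm$ with $\tilde m_\pm\geq 0$ otherwise. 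Because $m_\pm$ does not depend on $x_n$, the operator factorizes as
\begin{equation*}
    \widehat{P_\phi^\pm v_\pm}=c_\pm(D_n+i\tau\phi'+im_\pm)(D_n+i\tau\phi'-im_\pm)\hat v_\pm,
\end{equation*}
and the key observation is that on $Y$ we have $|m_\pm|\leq C\sigma\tau$, so each first-order factor has the form $D_n+i\tau\beta x_n+\lambda$ treated in Lemma~\ref{calcul clef}, with $\gamma=\tau\beta$ and $\operatorname{Im}\lambda=\tau\alpha_\pm\pm\operatorname{Re} m_\pm\gtrsim\tau$ provided $\sigma$ is small enough (here we use $\alpha_\pm>0$).

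On the positive half-line I apply \eqref{equality clef positif} successively to $f_+:=(D_n+i\tau\phi'-im_+)\hat v_+$ and then to $\hat v_+$. Shrinking $\mathcal{V}\Subset V_t$ so that $|x_n|$ is small on $\supp v$ makes $(\gamma x_n+\operatorname{Im}\lambda)\sim\tau$, and every term on the right of \eqref{equality clef positif} is nonnegative because $\operatorname{Im}\lambda>0$. Combining the two applications yields
\begin{equation*}
    \norm{\widehat{P_\phi^+v_+}}{\lrp}{2}\gtrsim \tau^4\norm{\hat v_+}{\lrp}{2}+\tau^3|\hat v_+(0)|^2+\tau|f_+(0)|^2,
\end{equation*}
and since $f_+(0)=D_n\hat v_+(0)+i(\tau\alpha_+-im_+)\hat v_+(0)$ with $|\tau\alpha_+-im_+|\lesssim\tau$, the last two terms together control $\tau|D_n\hat v_+(0)|^2$.

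On the negative half-line the analogous application of \eqref{equality clef negatif} produces boundary contributions with the opposite sign, so after the two successive applications one only obtains
\begin{equation*}
    \norm{\widehat{P_\phi^-v_-}}{\lrm}{2}+C\tau|f_-(0)|^2+C\tau^3|\hat v_-(0)|^2\gtrsim \tau^4\norm{\hat v_-}{\lrm}{2},
\end{equation*}
and absorbing the two unwanted boundary contributions on the left is the main obstacle of the argument. Using the homogeneous transmission conditions $\hat v_-(0)=\hat v_+(0)=:\hat v(0)$ and $c_+(D_n\hat v_++i\tau\alpha_+\hat v)|_{x_n=0}=c_-(D_n\hat v_-+i\tau\alpha_-\hat v)|_{x_n=0}$, a direct computation gives
\begin{equation*}
    f_-(0)=\tfrac{c_+}{c_-}f_+(0)+i\bigl(\tfrac{c_+}{c_-}m_+-m_-\bigr)\hat v(0),
\end{equation*}
so that $\tau|f_-(0)|^2\lesssim \tau|f_+(0)|^2+\sigma^2\tau^3|\hat v(0)|^2$. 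Both of these are already bounded by $\norm{\widehat{P_\phi^+v_+}}{\lrp}{2}$ from the positive half-line estimate, so summing the two half-line bounds absorbs the negative terms and yields a frequency-pointwise lower bound of the form $\tau^4\norm{\hat v_\pm}{}{2}+\tau^3|\hat v(0)|^2+\tau|D_n\hat v_+(0)|^2$. Lemma~\ref{one of boundary derivatives} then recovers $\tau|D_n\hat v_-(0)|^2$ from $\tau|D_n\hat v_+(0)|^2$ modulo $\tau^3|\hat v(0)|^2$.

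Finally I integrate over $\xi\in Y$ by Plancherel. On $Y$ the inequality $\tau(|\xi_t|^2+|\xi'|^2)|\hat v_\pm(0)|^2\leq C\sigma^2\tau^3|\hat v(0)|^2$ is immediate, so the tangential boundary derivatives $\tau|\widehat{D_tv_\pm}(0)|^2$ and $\tau|\widehat{\nabla_{x'}v_\pm}(0)|^2$ are absorbed into $\tau^3|\hat v(0)|^2$ and, combined with the $\tau|D_n\hat v_\pm(0)|^2$ obtained above, reconstitute the full $\tau|\widehat{\nabla v_\pm}(0)|^2$ of the statement. The inequality $\tau^4\norm{\hat v_\pm}{\lrp,\lrm}{2}\geq\tau^3\norm{\hat v_\pm}{\lrp,\lrm}{2}$ supplies the required volume term. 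The extra terms $\tau\norm{\widehat{D_tv_\pm}}{L^2}{2}$ and $\tau|\widehat{D_tv_\pm}(0)|^2$ sitting on the left-hand side of the statement are not needed at this stage.
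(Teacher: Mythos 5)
Your proof is correct and reaches the stated conclusion, but it takes a genuinely different route from the paper's. The paper splits $c_\pm^{-1}P_\phi^\pm = A_\pm + R_\pm$ with $A_\pm = (D_n+i\tau\phi')^2$ and treats the tangential multiplier $R_\pm = |\xi'|^2 - c_\pm^{-1}\xi_t^2$ as a small additive perturbation on $Y$ (via $\|R_\pm\hat v\|^2 \lesssim \sigma^4\tau^4\|\hat v\|^2$), iterating Lemma~\ref{calcul clef} twice on the unperturbed $A_\pm$ alone. You instead carry out an \emph{exact} factorization $c_\pm^{-1}P_\phi^\pm = (D_n+i\tau\phi'+im_\pm)(D_n+i\tau\phi'-im_\pm)$ with $m_\pm$ a (possibly imaginary) square root of $R_\pm$, and apply the 1D lemma to the exact factors — which works because on $Y$ the imaginary parts $\tau\alpha_\pm \pm \operatorname{Re} m_\pm \gtrsim \tau$ once $\sigma$ is small. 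Your route is essentially the same factorization used in the paper's elliptic region (Lemma~\ref{Lemma elliptic region}), transplanted to the $\tau$-large region; it is cleaner for the piecewise-constant toy model, and your transmission-condition identity $f_-(0) = \tfrac{c_+}{c_-}f_+(0) + i(\tfrac{c_+}{c_-}m_+ - m_-)\hat v(0)$ gives a slicker way to import the negative-side boundary errors into the positive-side estimate than the paper's direct use of Lemma~\ref{one of boundary derivatives}. What the paper's perturbation argument buys, which your branch of $m_\pm$ does not, is that it avoids taking a square root of a symbol whose sign is not fixed — the step that makes the exact factorization awkward for variable coefficients — and this is precisely the version that carries over to the general Lemma~\ref{prop for tau grand}.
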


\begin{proof}
We identify, with a slight abuse of notation, the operator with its symbol. One has:
	
	$$
	\invcp P_\phi ^+=(D_n+ i \tau \phi^\prime )^2+|\xip|^2-\invcp \xi_t^2:=A_+ +R_+
	$$
 	where $A_+:=(D_n+ i \tau \phi^\prime )^2$ is elliptic as a 1D operator and $R_+:=|D^{'}|^2-\invcp D_t^2$. One has with $w=( D_n+ i \tau \phi^\prime )\widehat{v}_+=(D_n+ i \tau \alpha_+ + i \tau \beta x_n)\widehat{v}_+$, using \eqref{ineg 1 positif}:
	\begin{align*}
		\norm{A_+\widehat{v}_+}{L^2(\R_+ \times Y)}{2}&=\norm{(D_n+ i \tau \alpha_+ + i \tau \beta x_n)w}{L^2(\R_+ \times Y)}{2} \\
		&\geq \norm{(\tau \beta x_n + \tau \alpha_+ )w}{L^2(\R_+ \times Y)}{2} + \alpha_+ \tau \normsurf{w}{L^2(\Sigma \cap Y)}{2}\gtrsim \tau^2 \norm{w}{L^2(\R_+ \times Y)}{2}+\normsurf{w}{L^2(\Sigma \cap Y)}{2}\\ &\gtrsim \tau^2\left(\norm{D_n \widehat{v}_+}{L^2(\R_+ \times Y)}{2}+ \norm{ (\tau \alpha_+ +  \tau \beta x_n)\widehat{v}_+}{L^2(\R_+ \times Y)}{2}+ \tau \alpha_+ \normsurf{\widehat{v}_+}{L^2(\Sigma \cap Y)}{2}\right)\\
		& \hspace{4mm}+ \alpha_+ \tau \normsurf{(D_n+i \tau \alpha)\widehat{v}_+}{L^2(\Sigma \cap Y)}{2}.
	\end{align*}
In particular we find that $\norm{A_+\widehat{v}_+}{L^2(\R_+ \times Y)}{2} \gtrsim \tau ^4 \norm{\widehat{v}_+}{L^2(\R_+ \times Y)}{2} + \tau^3 \normsurf{\widehat{v}_+}{L^2(\Sigma \cap Y)}{2}+\tau \normsurf{(D_n+i \tau \alpha)\widehat{v}_+}{L^2(\Sigma \cap Y)}{2}$, and then using $\tau ^3\normsurf{D_n \widehat{v}_+}{L^2(\Sigma \cap Y)}{2} \lesssim \tau^3 \normsurf{\widehat{v}_+}{L^2(\Sigma \cap Y)}{2}+ \tau \normsurf{(D_n+i \tau \alpha)\widehat{v}_+}{L^2(\Sigma \cap Y)}{2}  $ we get 
\begin{equation*}
   \norm{A\widehat{v}_+}{L^2(\R_+ \times Y)}{2} \gtrsim  \tau ^4 \norm{\widehat{v}_+}{L^2(\R_+ \times Y)}{2} + \tau^3 \normsurf{\widehat{v}_+}{L^2(\Sigma \cap Y)}{2}+ \tau \normsurf{D_n \widehat{v}_+}{L^2(\Sigma \cap Y)}{2}.
\end{equation*}
Lemma \ref{one of boundary derivatives} then implies:
\begin{equation}
    \label{estim for A+}
     \norm{A_+\widehat{v}_+}{L^2(\R_+ \times Y)}{2} \gtrsim  \tau ^4 \norm{\widehat{v}_+}{L^2(\R_+ \times Y)}{2} + \tau^3 \normsurf{\widehat{v}_+}{L^2(\Sigma \cap Y)}{2}+ \tau \normsurf{D_n \widehat{v}_+}{L^2(\Sigma \cap Y)}{2}+\tau \normsurf{D_n \widehat{v}_-}{L^2(\Sigma \cap Y)}{2}. 
\end{equation}
Recalling that $A_+=P_\phi - R_+$ we now explain how  $\norm{R_+ \widehat{v}_+}{L^2(\R_+ \times Y)}{}$ can be absorbed as an error term. We estimate as follows:
\begin{align*}
\norm{  \cnp \hat{v}_+}{L^2(\R_+ \times Y)}{2} &\geq C\norm{A_+ \widehat{v}_+}{L^2(\R_+ \times Y)}{2}- \norm{R_+ \widehat{v}_+}{L^2(\R_+ \times Y)}{2} \\
&\geq  C\left(\tau ^4 \norm{\widehat{v}_+}{L^2(\Rp \times Y)}{2} + \tau^3 \normsurf{\widehat{v}_+}{L^2(\Sigma \cap Y)}{2}+\tau \normsurf{D_n\widehat{v}_+}{L^2(\Sigma \cap Y)}{2}\right)- \norm{R_+ \widehat{v}_+}{L^2(\R_+ \times Y)}{2} \\
&=\left((C\tau^4-R^*R)\widehat{v}_+,\widehat{v}_+\right)_+ +C\tau^3 \normsurf{\widehat{v}_+}{L^2(\Sigma \cap Y)}{2}+C\tau \normsurf{D_n\widehat{v}_+}{L^2(\Sigma \cap Y)}{2}.
\end{align*}
Using \eqref{support condition tau grand toy} we have
$$
C\tau^4-R^*R=C\tau^4-(|\xip|^2-c_+\xi^2_t)^2\geq \left(C-\sigma^4(1+\invcp)^2\right)\tau^4.
$$
Then, taking $\displaystyle \sigma \leq \left( \frac{C}{2(1+\invcp)^2}\right)^{1/4}$ we deduce
\begin{equation*}
  \norm{ \cnp \hat{  v}_+}{L^2(\R_+ \times Y)}{2} \gtrsim \tau ^4 \norm{\widehat{v}_+}{L^2(\Rp \times Y)}{2} + \tau^3 \normsurf{\widehat{v}_+}{L^2(\Sigma \cap Y)}{2}+ \tau \normsurf{D_n\widehat{v}_+}{L^2(\Sigma \cap Y)}{2}.
\end{equation*}
Using the fact that $\tau \gtrsim |\xip|$ together with the transmission condition \eqref{transmission cond1} we obtain the tangential terms :
\begin{align*}
    \tau^3 \normsurf{\widehat{v}_+}{L^2(\Sigma \cap Y)}{2}\gtrsim \tau \normsurf{(|\xip|^2+|\xi_t|^2)^{1/2} \hat{v}}{L^2(\Sigma \cap Y)}{2}=\tau \normsurf{ \widehat{\nabla v}}{L^2(\Sigma \cap Y)}{2}.
\end{align*}
This yields
\begin{equation*}
  \norm{ \cnp \hat{  v}_+}{L^2(\R_+ \times Y)}{2} \gtrsim \tau ^4 \norm{\widehat{v}_+}{L^2(\Rp \times Y)}{2} + \tau^3 \normsurf{\hat{v}}{L^2(\Sigma \cap Y)}{2}+\tau \normsurf{\widehat{\nabla_{x^\prime}v} }{L^2(\Sigma \cap Y)}{2}+\tau \normsurf{D_n\widehat{v}_+}{L^2(\Sigma \cap Y)}{2}.
\end{equation*}
and finally thanks to Lemma~\ref{one of boundary derivatives}:
\begin{equation}
\label{estimate for A+ region tau grand toy}
  \norm{ \cnp \hat{  v}_+}{L^2(\R_+ \times Y)}{2}+\norm{\widehat{v}_+}{L^2(\R_+ \times Y)}{2} \gtrsim \tau ^4 \norm{\widehat{v}_+}{L^2(\Rp \times Y)}{2} + \tau^3 \normsurf{\hat{v}}{L^2(\Sigma \cap Y)}{2}+\tau \normsurf{\widehat{\nabla v_-}}{L^2(\Sigma \cap Y)}{2}+\normsurf{\nabla \widehat{v}_+}{L^2(\Sigma \cap Y)}{2}.
\end{equation}
The only missing term is the volume norm on the negative half-line. Here one needs to use $\cnm$. We write 
   $$
	\invcm P_\phi ^-=(D_n+ i \tau \phi^\prime )^2+|\xip|^2 -\invcm \xi_t^2=A_- +R_-, \quad A_-:=(D_n+ i \tau \phi^\prime )^2
	$$
In the region under consideration $\norm{R_- \hat{v}}{L^2(\R_- \times Y)}{}$ is a perturbation of $\norm{A_- \hat{v}}{L^2(\R_- \times Y)}{}$. After repeating the same steps as for the positive half-line one finds:
\begin{equation}
\label{estimate for A- region tau grand toy}
  \norm{\cnm  \hat{ v}_-}{L^2(\R_- \times Y)}{2}+ \tau^3 \normsurf{\hat{v}}{L^2(\Sigma \cap Y)}{2}+\tau \normsurf{\widehat{\nabla v_-}}{L^2(\Sigma \cap Y)}{2} \gtrsim \tau ^4 \norm{\hat{v}_-}{L^2(\R_- \times Y)}{2} .
\end{equation}
To finish the proof of the Lemma one can simply multiply \eqref{estimate for A+ region tau grand toy} by a sufficiently large constant and add it to \eqref{estimate for A- region tau grand toy}.
\end{proof}

We now give a Lemma for the region $\tau \lesssim |\xip|+|\xi_t|.$

\begin{lem}
\label{region tau petit non elliptic toy}
For all $c_1, c_2 >0 $ there exists $\mathcal{V} \Subset V_t $ and $\tau_0>0$ such that for all  $v \in \mathcal{W}_\phi$ with $\supp v \subset \mathcal{V}$ and $$Y\subset \{ \tau \leq c_1 ( |\xi_t|+|\xip|) \} \cap \{|\xi_t| \geq c_2 |\xip| \} $$ one has:
\begin{multline*}
			\norm{\widehat{\cnop v}}{L^2(\R \times Y)}{2}+\tau \norm{ \widehat{D_t v_+}}{L^2(\Rp \times Y)}{2}+\tau \norm{ \widehat{D_t v_-}}{L^2(\Rm \times Y)}{2} 
			\\+ \tau  \normsurf{\widehat{D_t v_+(0)}}{L^2(\Sigma\cap Y)}{2}+\tau  \normsurf{\widehat{D_t v_-(0)}}{L^2(\Sigma \cap Y)}{2}  \\  \gtrsim \tau^3 \norm{ \widehat {v}}{L^2(\R \times Y)}{2}
			 +\tau^3 \left\vert \widehat{v (0)} \right\vert_{L^2(\Sigma \cap Y)}^2 +\tau \left\vert\widehat{ (\nabla v_+ )}(0) \right\vert_{L^2(\Sigma \cap Y)}^2+\tau \left\vert \widehat{ (\nabla v_- )}(0) \right\vert_{L^2(\Sigma \cap Y)}^2.
		\end{multline*}
		for $\tau\geq \tau_0$.
\end{lem}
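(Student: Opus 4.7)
The strategy reduces most of the target inequality to Lemma~\ref{regime favorable} via a geometric observation on $Y$, and then separately produces the normal surface derivative using a factorization of the conjugated operator. The two hypotheses $|\xi_t|\geq c_2|\xi^\prime|$ and $\tau\leq c_1(|\xi_t|+|\xi^\prime|)$ combine to yield $|\xi_t|\geq c_0(\tau+|\xi^\prime|)$ on $Y$ for some $c_0=c_0(c_1,c_2)>0$: the first inequality gives $|\xi^\prime|\leq c_2^{-1}|\xi_t|$, hence $\tau\leq c_1(1+c_2^{-1})|\xi_t|$, and averaging with $|\xi_t|\geq c_2|\xi^\prime|$ produces the claim. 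Thus Lemma~\ref{regime favorable} applies on $Y$ and, from the $E_t(v)$ part of the LHS alone, yields control of $\tau^3\|\hat v\|^2_{L^2(\R\times Y)}$, $\tau^3|\hat v(0)|^2_{L^2(\Sigma\cap Y)}$, the tangential surface gradient $\tau|\widehat{\nabla_{x^\prime}v}(0)|^2_{L^2(\Sigma\cap Y)}$, and the tangential volume gradient $\tau\|\widehat{\nabla_{x^\prime}v_\pm}\|^2_{L^2(\R_\pm\times Y)}$. The only piece of the target RHS not yet controlled is the normal surface derivative $\tau|D_n\hat v_\pm(0)|^2$.

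By Lemma~\ref{one of boundary derivatives} it suffices to control $\tau|D_n\hat v_+(0)|^2$. In the constant coefficient toy setting, the conjugated operator admits the symbolic factorization
\[
c_+^{-1}P_\phi^+ = (D_n + i\tau\phi^\prime - \mu_+)(D_n + i\tau\phi^\prime + \mu_+), \qquad \mu_+^2 := c_+^{-1}\xi_t^2 - |\xi^\prime|^2,
\]
with $\mu_+$ independent of $x_n$. Split $Y$ into the hyperbolic subregion $\{\mu_+^2\geq 0\}$, where $\mu_+\in\R$, and its elliptic complement, where $\mu_+=i\tilde\mu_+$ with $\tilde\mu_+\geq 0$ real. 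On the hyperbolic piece, set $w:=(D_n + i\tau\phi^\prime + \mu_+)\hat v_+$ and apply Lemma~\ref{calcul clef}~\eqref{equality clef positif} to the outer factor $D_n + i\tau\beta x_n + (-\mu_+ + i\tau\alpha_+)$: this produces the boundary term $\tau\alpha_+|w(0)|^2$, giving $\|c_+^{-1}P_\phi^+\hat v_+\|_+^2 \geq \tau\alpha_+|w(0)|^2$. Expanding $w(0)=(D_n + \mu_+ + i\tau\alpha_+)\hat v_+(0)$ and using $|\mu_+|\lesssim|\xi_t|$ then yields
\[
\tau|D_n\hat v_+(0)|^2 \lesssim \|P_\phi^+\hat v_+\|_+^2 + \tau^3|\hat v(0)|^2 + \tau\xi_t^2|\hat v_+(0)|^2.
\]
The third term equals $\tau|\widehat{D_tv_+}(0)|^2$ and hence belongs to $E_t(v)$; the second is absorbed by the $\tau^3|\hat v(0)|^2$ already produced in Step~1. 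On the elliptic complement, the factorization becomes $(D_n + i\tau\phi^\prime + i\tilde\mu_+)(D_n + i\tau\phi^\prime - i\tilde\mu_+)$; applying Lemma~\ref{calcul clef} to the factor with positive imaginary constant part $\tau\alpha_+ + \tilde\mu_+$ leads to an analogous bound in which the remainder $\tau\tilde\mu_+^2|\hat v_+(0)|^2 \lesssim \tau|\xi^\prime|^2|\hat v_+(0)|^2$ is absorbed into the tangential surface gradient $\tau|\widehat{\nabla_{x^\prime}v}(0)|^2$ from Step~1.

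Adding the bound of Step~2 to a sufficiently large multiple of Step~1's estimate (to absorb the $\tau^3|\hat v(0)|^2$ error) and invoking Lemma~\ref{one of boundary derivatives} a second time to transfer $\tau|D_n\hat v_+(0)|^2$ into $\tau|D_n\hat v_-(0)|^2$ modulo $\tau^3|\hat v(0)|^2$ reconstructs the full right-hand side, which concludes the proof.

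The main obstacle is Step~2: the zero-order term $-c_+^{-1}\xi_t^2$ carries the opposite sign from the elliptic regime of Lemma~\ref{tau grand toy}, so the standard 1D elliptic Carleman estimate cannot be invoked directly and the factorization must be analyzed in each of the hyperbolic and elliptic frequency subpieces separately. The key observation that makes everything fit is the identity $\tau\xi_t^2|\hat v_+(0)|^2 = \tau|\widehat{D_tv_+}(0)|^2$, which lets the surface error born from the factorization be swept into the admissible time-derivative error $E_t(v)$ on the LHS.
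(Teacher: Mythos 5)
Your proof is correct, and it takes a genuinely different route from the paper's for the key Step~2 (recovering the normal trace $\tau\normsurf{D_n\hat v_+}{L^2(\Sigma\cap Y)}{2}$). Step~1, the reduction via Lemma~\ref{regime favorable}, is identical to the paper's, including the observation that the two hypotheses on $Y$ combine to $|\xi_t|\gtrsim\tau+|\xi^\prime|$. For Step~2, the paper does \emph{not} factorize: it uses the commutator/quadratic-form technique (Lebeau--Robbiano style), writing $\invcp P^+_\phi=Q_2+i\tau Q_1$, integrating by parts, and extracting the boundary term $2\tau(\phi^\prime D_n\hat v,D_n\hat v)_{\Sigma\cap Y}$ from $\mathcal{B}(\hat v)$ in \eqref{equality with commutator}, then disposing of the commutator $i\tau[Q_2,Q_1]$ by writing it as $B_0Q_2+B_1Q_1+B_2$. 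Your alternative factorizes $\invcp P_\phi^+=(D_n+i\tau\phi^\prime-\mu_+)(D_n+i\tau\phi^\prime+\mu_+)$ and applies Lemma~\ref{calcul clef}, which forces a further case split according to the sign of $\mu_+^2=\invcp\xi_t^2-|\xi^\prime|^2$: on the hyperbolic subpiece $\mu_+$ is real and the surface error $\tau\mu_+^2|\hat v_+(0)|^2\lesssim\tau|\widehat{D_tv_+}(0)|^2$ is swept into $E_t(v)$; on the elliptic subpiece $\mu_+=i\tilde\mu_+$ and the error $\tau\tilde\mu_+^2|\hat v_+(0)|^2\lesssim\tau|\widehat{\nabla_{x^\prime}v_+}(0)|^2$ is absorbed by Lemma~\ref{regime favorable}. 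Both routes are valid; yours is more elementary and stays closer in spirit to the factorization strategy used in Lemmas~\ref{tau grand toy} and~\ref{Lemma elliptic region}, at the cost of a hyperbolic/elliptic case distinction. The paper's commutator argument avoids the split entirely, which is the more robust choice for the general-coefficient proof (Lemma~\ref{region G+ cv}): there $\mu_+$ would be the square root of an indefinite symbol vanishing on the glancing set, hence not a smooth symbol, whereas the $Q_2,Q_1$ decomposition involves only polynomial symbols. In the constant-coefficient toy model, where one works frequency-by-frequency and no smoothness is needed, your factorization is perfectly adequate and arguably cleaner.
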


\begin{proof}
Observe that in such a region one has in particular
$$
|\xi_t| \gtrsim |\xip|+ \tau,
$$
and consequently we are in the regime of Lemma~\ref{regime favorable}. This implies:
\begin{multline}
		\label{first estimate G+ }
			\norm{\widehat{\cnop^+ v_+}}{L^2(\Rp \times Y)}{2}+\tau \norm{ \widehat{D_t v_+}}{L^2(\Rp \times Y)}{2} + \tau  \normsurf{\widehat{D_t v_+(0)}}{L^2(\Sigma\cap Y)}{2} \\\gtrsim \tau^3 \norm{ \widehat {v_+}}{L^2(\Rp \times Y)}{2}
			 +\tau \norm{\widehat{\nabla_{x^\prime}v_+}}{L^2(\Rp \times Y)}{2}     +\tau^3 \left\vert \widehat{v (0)} \right\vert_{L^2(\Sigma \cap Y)}^2 
			 \\+\tau \left\vert\widehat{ (\nabla v_+ )}(0) \right\vert_{L^2(\Sigma \cap Y)}^2+\tau \left\vert\widehat{ (\nabla v_-)}(0) \right\vert_{L^2(\Sigma \cap Y)}^2.
		\end{multline}
	The only remaining term is then $\tau \normsurf{D_n \hat{v}_+}{L^2(\Sigma \cap Y)}{2}$. To obtain it we use the commutator technique (see \cite[Section 3A]{LR:95}). we write
	$$
	\invcp P^{+}_\phi=Q_2+ i \tau Q_1,
	$$
	with $\qr=D^2_n-\tau^2 |\phi^\prime |^2+|\xip|^2-\invcp |\xi_t|^2  $ and $\qi=\phi^\prime D_n +D_n \phi^\prime  $. We integrate by parts taking into account the boundary terms to find:
	\begin{align}
	\label{equality with commutator}
	    \norm{\invcp P^{+}_{\phi} \hat{v}_+}{L^2(\Rp \times  Y)}{2}=\norm{\qr \hat{v}_+}{L^2(\Rp \times  Y)}{2}+ \tau^2\norm{\qi \hat{v}_+}{L^2(\Rp \times  Y)}{2}+ i \tau \left([\qr,\qi]\hat{v}_+,\hat{v}_+ \right )_++\tau \mathcal{B}(\hat{ v}).
	\end{align}
	With $\mathcal{B}(\hat{ v})$ the boundary term which can be written as (see for instance \cite{LR:95} or \cite[Proposition 3.24]{rousseau2022elliptic}):
	\begin{align*}
	    \mathcal{B}(\hat{ v})=2 \left(\phi^\prime D_n \hat{ v} ,D_n \hat{ v}\right)_{\Sigma \cap Y}+\left(M_1 \hat{ v} ,D_n \hat{ v} \right)_{\Sigma \cap Y}
	    +\left(M_1^{'}D_n \hat{ v} ,\hat{ v}\right)_{\Sigma \cap Y}+\left(M_2 \hat{ v} ,\hat{ v}\right)_{\Sigma \cap Y},
	\end{align*}
	where $M_j$ is a Fourier multiplier by a polynomial of degree $j$ in $(\xip, \xi_t, \tau)$. Using this as well as the Young inequality yields, for arbitrary $\delta>0$:
	\begin{align*}
	|\mathcal{B}(\hat{ v})-2  \left(\phi^\prime D_n \hat{ v} ,D_n \hat{ v}\right)_{\Sigma \cap Y} | \lesssim \tau^2 \normsurf{\hat{ v}}{L^2(\Sigma \cap Y)}{2}
+(1+\delta^{-1})\normsurf{\nabla_{x^\prime}\hat{ v}}{L^2(\Sigma \cap Y)}{2}+ \delta\normsurf{D_n \hat{v}_+}{L^2(\Sigma \cap Y)}{2}.    
	\end{align*}
We combine this last inequality with \eqref{equality with commutator}, recalling that $\phi^\prime >0$ in the support of $\hat{ v}$ and by choosing $\delta$ sufficiently small to find, for $\tau$ sufficiently large:
\begin{multline}
\label{estimate with commutator for G+}
    \norm{ P^{+}_{\phi} \hat{v}_+}{L^2(\Rp \times  Y)}{2}+\tau^3 \normsurf{\hat{v}_+}{L^2(\Sigma \cap Y)}{2}
+\tau \normsurf{\nabla_{x^\prime}\hat{v}_+}{L^2(\Sigma \cap Y)}{2} 
\\ \gtrsim
    \norm{\qr \hat{v}_+}{L^2(\Rp \times  Y)}{2}+ \tau^2\norm{\qi \hat{v}_+}{L^2(\Rp \times  Y)}{2}+ i \tau \left([\qr,\qi]\hat{v}_+,\hat{v}_+ \right )_++\tau \normsurf{D_n \hat{v}_+}{L^2(\Sigma \cap Y)}{2} .
\end{multline}
This almost gives us the desired term: we need to take care of the commutator. In our case, this can be done in a very simple way. Indeed, one can write:
\begin{equation*}
    i[\qr,\qi]=B_0 \qr + B_1 \qi +B_2,
\end{equation*}
with $B_j$ a Fourier multiplier by a polynomial of degree $j$ in $(\xip, \xi_t, \tau)$. This implies that:

\begin{equation*}
   \left| i \tau \left(B_0 \qr \hat{v}_+,\hat{v}_+\right)\right| \lesssim \tau^{-\frac{1}{2}} \norm{\qr \hat{v}_+}{L^2(\Rp \times  Y)}{2}+ \tau^{\frac{5}{2}} \norm{\hat{v}_+}{L^2(\Rp \times  Y)}{2},
\end{equation*}
as well as 
\begin{equation*}
 \left| i \tau \left(B_1 \qi \hat{v}_+,\hat{v}_+\right)\right| \lesssim \tau\norm{\qi \hat{v}_+}{L^2(\Rp \times  Y)}{2}+\tau^{3}\norm{\hat{v}_+}{L^2(\Rp \times  Y)}{2}+ \tau^{\frac{1}{2}}\norm{\widehat{\nabla_{x^\prime}v_+}}{L^2(\Rp \times  Y)}{2}+\tau \norm{\widehat{D_t v}}{L^2(\Rp \times  Y)}{2}
    \end{equation*}
    and
    \begin{equation*}
      \left| i \tau \left(B_2 \hat{v}_+,\hat{v}_+\right)\right| \lesssim \tau^3 \norm{\hat{v}_+}{L^2(\Rp \times  Y)}{2}+\tau \norm{\widehat{\nabla_{x^\prime}v_+}}{L^2(\Rp \times  Y)}{2}+\tau \norm{\widehat{D_t v}}{L^2(\Rp \times  Y)}{2}.
    \end{equation*}
	We can now inject these three estimates in the commutator term in \eqref{estimate with commutator for G+} taking $\tau \geq \tau_0 $, $\tau_0$ large to absorb the terms $\norm{Q_1}{}{2}, \norm{Q_1}{}{2} $ to finally find:
		\begin{multline}
		\label{estimate in G+ for Dn}
			\norm{\widehat{\cnop^+ v_+}}{L^2(\Rp \times Y)}{2}+\tau \norm{ \widehat{D_t v_+}}{L^2(\Rp \times Y)}{2} + \tau  \normsurf{\widehat{D_t v_+(0)}}{L^2(\Sigma\cap Y)}{2} + \tau^3 \norm{ \widehat {v_+}}{L^2(\Rp \times Y)}{2}\\
			 +\tau \norm{\widehat{\nabla_{x^\prime}v_+}}{L^2(\Rp \times Y)}{2}     +\tau^3 \left\vert \widehat{v (0)} \right\vert_{L^2(\Sigma \cap Y)}^2 
			 \gtrsim \tau \normsurf{D_n \widehat{v_+}(0)}{L^2( \Sigma \cap Y ) }{2}.
		\end{multline}
	We can now multiply \eqref{first estimate G+ } by a large constant and add it to \eqref{estimate in G+ for Dn} to conclude the proof of Lemma~\ref{region tau petit non elliptic toy}.
\end{proof}

	\subsection{End of the proof for the toy model}
	
	We finish here the proof of Proposition~\ref{theorem bis} for the constant coefficient case. One has to deal with all the possible cases and put together the estimates Section \ref{section factorization microlocal and estimates}. We have the following \textit{partition} of $\R^{n} \ni (\xip, \xi_t)$: 
	\begin{align*}
	\R^{n}&=\{\tau \geq \frac{1}{\sigma} (|\xip|+|\xi_t|)\}\sqcup
	\left(\{\tau <\frac{1}{\sigma}(|\xip|+|\xi_t|)\}\cap	\mathcal{E}_{\epsilon}^{-}\cap\mathcal{E}_{\epsilon}^{+} \right)\sqcup \left( \{\tau <\frac{1}{\sigma}(|\xip|+|\xi_t|)\}\cap\mathcal{G}_\epsilon\mathcal{H}_\epsilon\right) \\
	&=Y^{\sigma}_1 \sqcup Y^{\sigma, \epsilon}_2 \sqcup Y^{\sigma, \epsilon}_3
\end{align*}
with
$$ 
Y^{\sigma}_1:=\{\tau \geq \frac{1}{\sigma}  (|\xip|+|\xi_t|)\}, \quad Y^{\sigma, \epsilon}_2:=	\{\tau <\frac{1}{\sigma}(|\xip|+|\xi_t|)\}\cap	\mathcal{E}_{\epsilon}^{-}\cap\mathcal{E}_{\epsilon}^{+}, \quad Y^{\sigma, \epsilon}_3:= \{\tau <\frac{1}{\sigma}(|\xip|+|\xi_t|)\}\cap \left(\mathcal{G}_\epsilon \mathcal{H}_\epsilon^{-}\cup\mathcal{G}_\epsilon \mathcal{H}_\epsilon^{+}\right).
$$
We recall the notations/definitions of Section \ref{section factorization microlocal and estimates}. The crucial remark is that in all of the above regions with the exception of $\mathcal{E}^{-}\cap\mathcal{E}^{+}$ we have $|\xi_t| \gtrsim |\xip|$.
	
	In this particular toy model we are dealing with here, we work on the Fourier domain and we can simply restrict ourselves in each of those regions and prove the sought estimate. In the general case treated in Section \ref{proof for the general case} symbolic calculus will be used and as a result we will have to use \textit{overlapping} regions and an associated \textit{smooth} partition of unity. This being said we deal with the three regions above to conclude:
	
	\bigskip

	\boxed{Y^{\sigma}_1} Here we just apply the result of Lemma~\ref{tau grand toy} which gives:
	\begin{multline}
	\label{estimate tau grand toy}
			\norm{\widehat{\cnop v}}{L^2(\R \times  Y^{\sigma}_1)}{2}+\tau \norm{ \widehat{D_t v_+}}{L^2(\Rp \times  Y^{\sigma}_1)}{2}+\tau \norm{ \widehat{D_t v_-}}{L^2(\Rm \times  Y^{\sigma}_1)}{2} 
			\\+ \tau  \normsurf{\widehat{D_t v_+}(0)}{L^2(\Sigma\cap  Y^{\sigma}_1)}{2}+\tau  \normsurf{\widehat{D_t v_-}(0)}{L^2(\Sigma \cap  Y^{\sigma}_1)}{2}  \\  \gtrsim \tau^3 \norm{ \widehat {v_+}}{L^2(\Rp \times  Y^{\sigma}_1)}{2}
			 +\tau^3 \left\vert \widehat{v (0)} \right\vert_{L^2(\Sigma \cap  Y^{\sigma}_1)}^2 +\tau \left\vert\widehat{ (\nabla v_+} )(0) \right\vert_{L^2(\Sigma \cap  Y^{\sigma}_1)}^2+\tau \left\vert \widehat{ (\nabla v_- )}(0) \right\vert_{L^2(\Sigma \cap  Y^{\sigma}_1)}^2.
		\end{multline}
This region fixes the choice of $\sigma \leq \sigma_0$.

	\boxed{Y^{\sigma, \epsilon}_2}In this region our operator is elliptic and one can follow the proof of \cite{le2013carleman}. Indeed here one has an elliptic factorization on both sides of the interface. That is
	$$
		\invcp \cnop^{+}v_{+}=(D_n+i \tau \phi^\prime + im_+)(D_n+i \tau \phi^\prime - im_+)v_+,\quad \phi^\prime =\alpha_{+}+ \beta x_n,
		$$
	and
	$$
		\invcm \cnop^{-}v_{-}=(D_n+i \tau \phi^\prime + im_-)(D_n+i \tau \phi^\prime - im_-)v_-,\quad \phi^\prime =\alpha_{-}+ \beta x_n.
		$$
	with
$$ m_{\pm}= \sqrt{|\xip|^2-\invc_{\pm} \xi_{t}^2} \gtrsim |\xip| + |\xi_t|.$$ 
Consequently the arguments used in \cite{le2013carleman} can be used in this microlocal region as well. We refer to Lemma \ref{Lemma elliptic region} for a rigorous proof in the general case. 
	Notice that this region forces:
	$$
	\frac{\alpha_+}{\alpha_-} > \underset{x_n=0}{\sup} \frac{m_+}{m_-}.
	$$
	We obtain the following estimate:
	\begin{multline}
	\label{estimate in E_E+}
			\norm{\widehat{\cnop v}}{L^2(\R \times Y^{\sigma, \epsilon}_2)}{2}+\tau \norm{ \widehat{D_t v_+}}{L^2(\Rp \times Y^{\sigma, \epsilon}_2)}{2}+\tau \norm{ \widehat{D_t v_-}}{L^2(\Rm \times Y^{\sigma, \epsilon}_2)}{2} 
			\\+ \tau  \normsurf{\widehat{D_t v_+}(0)}{L^2(\Sigma\cap Y^{\sigma, \epsilon}_2)}{2}+\tau  \normsurf{\widehat{D_t v_-}(0)}{L^2(\Sigma \cap Y^{\sigma, \epsilon}_2)}{2}  \\  \gtrsim \tau^3 \norm{ \widehat {v_+}}{L^2(\Rp \times Y^{\sigma, \epsilon}_2)}{2}
			 +\tau^3 \left\vert \widehat{v (0)} \right\vert_{L^2(\Sigma \cap Y^{\sigma, \epsilon}_2)}^2 +\tau \left\vert\widehat{ (\nabla v_+} )(0) \right\vert_{L^2(\Sigma \cap Y^{\sigma, \epsilon}_2)}^2+\tau \left\vert \widehat{ (\nabla v_-} )(0) \right\vert_{L^2(\Sigma \cap Y^{\sigma, \epsilon}_2)}^2.
		\end{multline}
	\bigskip

\boxed{Y^{\sigma, \epsilon}_3} Here we are in the situation of Lemma~\ref{region tau petit non elliptic toy}. We obtain:
\begin{multline}
	\label{estimate in tau petit non ell}
			\norm{\widehat{\cnop v}}{L^2(\R \times  Y^{\sigma, \epsilon}_3)}{2}+\tau \norm{ \widehat{D_t v_+}}{L^2(\Rp \times  Y^{\sigma, \epsilon}_3)}{2}+\tau \norm{ \widehat{D_t v_-}}{L^2(\Rm \times  Y^{\sigma, \epsilon}_3)}{2} 
			\\+ \tau  \normsurf{\widehat{D_t v_+}(0)}{L^2(\Sigma\cap  Y^{\sigma, \epsilon}_3)}{2}+\tau  \normsurf{\widehat{D_t v_-}(0)}{L^2(\Sigma \cap  Y^{\sigma, \epsilon}_3)}{2}  \\  \gtrsim \tau^3 \norm{ \widehat {v_+}}{L^2(\Rp \times  Y^{\sigma, \epsilon}_3)}{2}
			 +\tau^3 \left\vert \widehat{v (0)} \right\vert_{L^2(\Sigma \cap  Y^{\sigma, \epsilon}_3)}^2 +\tau \left\vert\widehat{ (\nabla v_+ )}(0) \right\vert_{L^2(\Sigma \cap  Y^{\sigma, \epsilon}_3)}^2+\tau \left\vert \widehat{ (\nabla v_- )}(0) \right\vert_{L^2(\Sigma \cap  Y^{\sigma, \epsilon}_3)}^2.
		\end{multline}

\bigskip

\begin{proof}[End of the proof of proposition~\ref{theorem bis} for the toy model]
 We can now finish our proof in this particular setting by simply putting together the results of the three above regions. Indeed, adding \eqref{estimate tau grand toy}, \eqref{estimate in E_E+} and \eqref{estimate in tau petit non ell}  yield:
	\begin{multline*}
			\norm{\widehat{\cnop v}}{L^2(\R \times \R^n)}{2}+\tau \norm{ \widehat{D_t v_+}}{L^2(\Rp \times \R^n)}{2}+\tau \norm{ \widehat{D_t v_-}}{L^2(\Rm \times \R^n)}{2} 
			\\+ \tau  \normsurf{\widehat{D_t v_+}(0)}{L^2(\Sigma )}{2}+\tau  \normsurf{\widehat{D_t v_-}(0)}{L^2(\Sigma  )}{2}  \\  \gtrsim \tau^3 \norm{ \widehat {v}}{L^2(\R \times \R^n)}{2}
			 +\tau^3 \left\vert \widehat{v (0)} \right\vert_{L^2(\Sigma  )}^2 +\tau \left\vert\widehat{ (\nabla v_+ )}(0) \right\vert_{L^2(\Sigma  )}^2+\tau \left\vert \widehat{ (\nabla v_- )}(0) \right\vert_{L^2(\Sigma )}^2,
		\end{multline*}
	which using the Plancherel theorem translates to,
	\begin{multline}
	\label{but without volume derivatives}
			 \norm{\cnop v }{\lrn}{2} +\tau \norm{H_+  D_t v_+}{\lrn}{2}+\tau \norm{H_-  D_t v_-}{\lrn}{2}  \\
			+\tau \left\vert (D_t v_+ ) \right\vert^2_{L^2(\Sigma)}+\tau \left\vert (D_t v_- ) \right\vert^2_{L^2(\Sigma)} \\\gtrsim
			\tau^3 \norm{ v}{\lrn}{2}
			+\tau^3 \left\vert v  \right\vert_{L^2(\Sigma)}^2  +\tau \left\vert (\nabla v_+ ) \right\vert_{L^2(\Sigma)}^2  +\tau \left\vert (\nabla v_- )\right\vert_{L^2(\Sigma)}^2.
		\end{multline}
Notice that the only missing term from \eqref{but without volume derivatives} is the volume norm of the gradients. Proposition~\ref{theorem bis} is then a result of \eqref{but without volume derivatives} and Lemma~\ref{derivatives Dn}.
\end{proof}

\section{Proof of Proposition~\ref{theorem bis} for the general case}
\label{proof for the general case}

\subsection{Notation, microlocal regions and first estimates}
The proof for the general case uses essentially the ideas introduced in Section~\ref{proof of toy model}. The main difference is that one needs to consider this time sub-regions of the whole phase space and not only of the frequency space. To do so one needs to use some microlocal analysis tools. We refer to Appendix~\ref{a few facts on pseudo} for some basic properties that will be used in the sequel.

We first define, for $m \in \R$ the class of standard \textit{tangential} smooth symbols $\mathcal{S}^m$. These are the functions $a \in C^{\infty}( t,x,\xi_t, \xip)$ satisfying for all $(\alpha, \beta) \in \mathbb{N}^{n+1} \times \mathbb{N}^n$:
$$
\sup_{( t,x,\xi_t, \xip)} (1+|\xip|^2+ |\xi_t|^2)^{\frac{-m+ |\beta|}{2}}\left|(\partial^{\alpha}_{t,x} \partial^{\beta}_{\xi_t,\xip})a( t,x,\xi_t, \xip)\right| < \infty.
$$
We will also work in the class of smooth tangential symbols depending on a large parameter. This class will be denoted by $\mathcal{S}_{\tau}^m$ and contains the functions $a \in C^{\infty}( t,x,\xi_t, \xip, \tau)$ satisfying for all $(\alpha, \beta) \in \mathbb{N}^{n+1} \times \mathbb{N}^n$:
$$
\sup_{\substack{( t,x,\xi_t, \xip) \\ \tau \geq 1}} (\tau^2+|\xip|^2+ |\xi_t|^2)^{\frac{-m+ |\beta|}{2}}\left|(\partial^{\alpha}_{t,x} \partial^{\beta}_{\xi_t,\xip})a( t,x,\xi_t, \xip, \tau)\right| < \infty.
$$

To an element $a$ of $\mathcal{S}^m$ we associate an operator $\op(a) \in \psit^m$~\footnote{Notice that in our definition we use the Weyl quantization and not the standard one.}, which is an element of the class of \textit{tangential} pseudodifferential operators.  Notice that to alleviate our notation we do not use the tangential notation for the symbols, since all of the symbols we will consider will be tangential. The same remark applies to the notation $\Psi^m$ which refers to tangential operators even though it is not explicit in the notation. We have the analogous notation for $$\psitt^m=\op(\mathcal{S}^m_\tau).$$

Let us denote $\lambda^2:=1+|\xip|^2+ |\xi_t|^2$ and $\lambda^2_\tau=\tau^2+|\xip|^2+ |\xi_t|^2$. We introduce the following Sobolev norms, defined in the tangential variables:
$$
\normsurf{u(x_n,\cdot)}{H^s}{}=\normsurf{\op(\lambda^s) u(x_n, \cdot)}{L^2(\R^{n})}{}, \quad \normsurf{u(x_n,\cdot)}{H^s_{\tau}}{}=\normsurf{\op(\lambda^s_{\tau}) u(x_n, \cdot)}{L^2(\R^{n})}{}.
$$

\begin{remark}
 In the sequel we will have to consider symbols $a \in \mathcal{S}^{m}$ independent of $\tau$. However the natural symbol class for our Carleman estimate is $\mathcal{S}^m_\tau$. Lemma~\ref{lemma with different classes} provides with a sufficient condition for making use of pseudodifferential calculus mixing operators in $\Psi^m$ and $\Psi^m_\tau$.
\end{remark} 

\begin{remark}
\label{Carleman insensitive}
Of crucial importance is the fact that the Carleman estimate we are seeking to prove (in fact most Carleman estimates in general) is insensitive to perturbations with respect to elements of the class $$\psit^1+\tau \psit^0+ \psit^0 D_n,$$ up to taking even larger values for our parameter $\tau$. Let us briefly recall why. Suppose that Proposition~\ref{theorem bis} is proved for $P_{\phi}$ and consider $T \in \psit^1+\tau \psit^0+ \psit^0 D_n$, that is $T=S_1+\tau S_0+\tilde{S}_0 D_n$ for $S_j$ tangential operators of order $j$. Since $\norm{(P_{\phi}+T)v}{\lrn}{} \leq \norm{P_{\phi}v}{\lrn}{}+\norm{Tv}{\lrn}{}$ one simply needs to show that $\norm{Tv}{\lrn}{}$ can be absorbed in the right hand side of our estimate. By Sobolev regularity of the pseudo differential calculus one has:

$$\norm{S_1 v }{\lrn}{} \lesssim \norm{v}{L^2(\R; H^1)}{}, 
$$
which yields
$$
\tau^{\frac{3}{2}} \norm{v}{\lrn}{}+\tau \norm{\nabla_{x^\prime}v}{\lrn}{}- \norm{S_1 v }{\lrn}{}  \gtrsim \tau^{\frac{3}{2}} \norm{v}{\lrn}{}+\tau \norm{\nabla_{x^\prime}v}{\lrn}{},\forall \:  \tau \geq \tau_0
$$
 for $\tau$ sufficiently large. Similarly, using
$$
\norm{\tau S_0 v}{\lrn}{} \lesssim \tau \norm{v}{\lrn}{}, \quad \norm{ \tilde{S}_0 D_n v}{\lrn}{} \lesssim  \norm{D_n v}{\lrn}{},
$$
we see that the perturbation is absorbed in our estimate. We can therefore from now on replace without any loss of generality our operator $P_\phi$ by an element of $P_\phi+\psit^1+\tau \psit^0+ \psit^0 D_n$. Remark as well that if $L\in \mathcal{D}^1$ is a differential operator of order one then $L_\phi=e^{\tau \phi}Le^{-\tau \phi} \in \psit^1+\tau \psit^0+ \psit^0 D_n$.
\end{remark}

\bigskip

Recall that we are working in the local setting of Section~\ref{local setting} with the operator:
$$
P_2^{\pm}=-D^2_t+c_\pm(x)D^2_n+c_\pm(x)Q(x,D_{x^\prime}),
$$
with 
\begin{equation}
\label{def of Q}
 Q(x,\xip):=\bjk \xi_j \xi_k 
	\end{equation}  and $b_{jk}$ satisfying 
$$
b_1 |\xip|^2 \leq Q(x,\xip) \leq b_2 |\xip|^2, \quad b_1, b_2 >0.
$$
The conjugated operator is given then by
$$
P_\phi=H_-P_\phi^{-}+H_+P_\phi^{+},
$$
where
$$
c^{-1}_{\pm}(x)P^{\pm}_{\phi}=(D_n+ i \tau \phi^\prime )^2+Q(x,D_{x^\prime})   -c^{-1}_{\pm}(x)D^2_t.
$$

We now consider the analog of the microlocal regions used in the toy model, for $\epsilon > 0 $ small to be chosen:
	
\begin{equation}
\label{def of E}
 \mathcal{E^{\bullet}_{\epsilon}}:=\{ (t,x, \xip, \xi_t)\in \R \times \R^{n} \times \R^{n-1} \times \R \quad \textnormal{such that} \quad   Q(x,\xip)-c^{-1}_{\bullet}(x)\xi^2_t \geq \epsilon (|\xip|^2+ |\xi_t|^2)\}, 
\end{equation}
  
 \begin{equation}
     \label{def of GH}
     \mathcal{G}\mathcal{H^{\bullet}_{\epsilon}}:= \{ (t,x, \xip, \xi_t)\in \R \times \R^{n} \times \R^{n-1} \times \R \quad \textnormal{such that} \quad   Q(x,\xip) - c_{\bullet}(x)\xi^2_t \leq 2\epsilon (|\xip|^2+ |\xi_t|^2)\}.
 \end{equation}

Notice that for $\bullet=+$ or $\bullet=-$, $\mathcal{E^{\bullet}_{\epsilon}}$ and $\mathcal{G}\mathcal{H^{\bullet}_{\epsilon}}$ overlap and are conic in $(\xi_t,\xip)$ which will allow to construct an associated partition of unity. Another important remark is that whenever we are in the regions $\mathcal{G}\mathcal{H^{\bullet}_{\epsilon}}$  we have (for $\epsilon$ sufficiently small) thanks to the ellipticity of $b_{jk}$ that $|\xi_t| \gtrsim |\xip|$.

The Carleman estimate we want to show (see Proposition~\ref{theorem bis}) concerns functions $v$ supported in a compact set $K \subset V$. However the space of compactly supported functions is not stable by pseudo differential operators. Let us denote by $\pi_{t,x}$ the projection in the physical space of an element of $\R_t\times \R_x \times \R_{\xi_t}\times \R_{\xip}$. The natural space in which we will be working is the Schwartz space $\mathscr{S}$ and we will use cut-off functions  $\chi$ satisfying $\pi_{t,x}(\supp{\chi}) \subset K$. That is the projection of their support on the physical space will be contained on a compact set $K$. This will allow us to suppose that $(t,x)$ lies on a compact set. Indeed, if we consider an additional cut-off function $\tchi=\tchi(t,x)$ to the left of our operator, with $\supp{\tchi} \subset K$ and $\tchi=1$ on $\pi_{t,x}(\supp{\chi})$ then we have for $u \in \mathscr{S}(\R^{n+1})$:
\begin{align*}
    \norm{\cnop \op(\chi)u}{\lrn}{} \geq \norm{\tchi \cnop \op(\chi)u}{\lrn}{}- \norm{(1-\tchi)\cnop\op(\chi)u}{\lrn}{},
\end{align*}
since $(1-\tchi)P_\phi \op(\chi) \in \Psi^{-\infty}_\tau$ for $\chi \in \mathcal{S}^0_\tau$, the last term above yields an error term  which can be absorbed in our estimate. Up to replacing $\cnop$ by $\tchi \cnop$ we can indeed suppose that $(t,x)$ lies on a compact set of $\R^{n+1}$.

We shall work in the space
$$
\mathscr{S}_c=\{u \in \mathscr{S}(\R^{n+1}); \text{ there exists }  \eta>0,\: \supp{u} \subset \R_t \times \R^{n-1}_{x^\prime}  \times (-\eta,\eta)_{x_n}\}.
$$
Since all of the pseudo differential operators we consider are tangential, the support of a function in the $x_n$ direction is preserved and the above space is therefore stable by application of pseudodifferential operators in $\Psi^m$ or $\Psi^m_\tau$.

\bigskip

\textit{In the sequel, the implicit constants may depend on the coefficients $b_{jk}, c_{\pm}$ of $\cnop$ , on the coefficients of $\phi$  $(\alpha_{\pm}, \beta$) and they may also depend on $\epsilon$. However the value of $\epsilon$ will be a small fixed value depending on $\alpha_{\pm}, b_{jk}$ and $c_{\pm}$. More precisely, $\epsilon$ is fixed by the above remark guaranteeing that being in the regions $\mathcal{G}\mathcal{H}^{\pm}_{\epsilon}$  implies that $|\xi_t| \gtrsim |\xip|.$  Once the value of $\epsilon$ has been fixed in this way, the implicit constants depend on $b_{jk}, c_{\pm}, \alpha_{\pm}, \beta$. The choice of the coefficients $\alpha_{\pm}$ and $\beta$ is done in Lemma~\ref{Lemma elliptic region}. We take $\alpha_\pm$ such that the geometric condition \eqref{geometric assumption} is satisfied and $\beta$ large such that the sub-ellipticity condition \eqref{estim sous ell changins sign} is satisfied.}

\bigskip

Let us recover some of the basic estimates of Section~\ref{proof of toy model}. Recall the definition of the space $\tranphi$ as well as the setting given in Section~\ref{local setting}. In particular, elements of $\tranphi$ have small support contained in $V_t$. We start with the lemma giving the trace of the normal derivatives modulo the surface norm:
\begin{lem}
		\label{one of boundary derivatives cv}
	Let $\tilde{v}=H_-\tilde{v}_-+H_+\tilde{v}_+ \in \tranphi$ and suppose that $\chi \in \mathcal{S}^{0}$ or that $\chi \in \mathcal{S}^0_\tau$. Consider $v_{\pm}=\op(\chi) \tilde{v}_{\pm}$.	Then one has:
		$$
		\tau |D_nv_{\pm}|_{L^2(\Sigma)}^2 \lesssim \tau |D_n v_{\mp}|_{L^2(\Sigma)}^2+ \tau^3 |v_{+}|_{L^2(\Sigma)}^2+\tau^3 |v_{-}|_{L^2(\Sigma)}^2+T_{\theta,\Theta}+\normsurf{\tilde{v}_\mp}{\ls}{2}.
		$$

	\end{lem}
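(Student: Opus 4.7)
The plan is to generalize the elementary proof of Lemma~\ref{one of boundary derivatives} by applying the tangential pseudodifferential operator $\op(\chi)$ to the transmission condition~\eqref{transmission cond2} satisfied by $\tilde{v}$, and then using commutator calculus to translate that relation into one for $v_\pm = \op(\chi) \tilde{v}_\pm$, with controlled error terms. The key observation making this possible is that, for a tangential operator, $x_n$ enters only as a parameter in the Weyl quantization, so $\op(\chi)$ commutes with restriction to $\Sigma = \{x_n = 0\}$.

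First, I would start from
\[
c_+(D_n \tilde{v}_+ + i\tau\alpha_+ \tilde{v}_+)|_\Sigma = c_-(D_n \tilde{v}_- + i\tau\alpha_- \tilde{v}_-)|_\Sigma + \Theta_\phi
\]
and apply $\op(\chi)$ to both sides. Using $\op(\chi) c_\pm = c_\pm \op(\chi) + [\op(\chi), c_\pm]$ with $[\op(\chi), c_\pm] \in \Psi^{-1}$ (resp.\ $\Psi^{-1}_\tau$), and $\op(\chi) D_n = D_n \op(\chi) + [\op(\chi), D_n]$ with $[\op(\chi), D_n] = i\,\op(\partial_{x_n}\chi) \in \Psi^0$ (resp.\ $\Psi^0_\tau$), I would rewrite the condition as
\[
c_+ D_n v_+|_\Sigma - c_- D_n v_-|_\Sigma = -i\tau\alpha_+ c_+ v_+|_\Sigma + i\tau\alpha_- c_- v_-|_\Sigma + \op(\chi)\Theta_\phi|_\Sigma + R,
\]
where $R$ collects the commutator remainders of the form $[\op(\chi),c_\pm] D_n \tilde{v}_\pm|_\Sigma$, $c_\pm [\op(\chi), D_n] \tilde{v}_\pm|_\Sigma$, and $i\tau[\op(\chi), c_\pm]\tilde{v}_\pm|_\Sigma$. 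Squaring and taking the $L^2(\Sigma)$ norm, then multiplying by $\tau$, yields directly the terms $\tau|D_n v_\mp|^2_\Sigma$, $\tau^3|v_\pm|^2_\Sigma$, and $\tau\|\op(\chi)\Theta_\phi\|^2_{L^2(\Sigma)} \lesssim \tau\normsurf{\Theta_\phi}{\ls}{2} \leq T_{\theta,\Theta}$ by the $L^2$-boundedness of $\op(\chi)$. The remaining task is to absorb $\tau\|R\|^2_{L^2(\Sigma)}$ into the stated right-hand side.

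The main obstacle will be the term $[\op(\chi), c_\pm] D_n \tilde{v}_\pm|_\Sigma$, whose tangential order-$(-1)$ commutator is applied to a normal derivative trace. I plan to handle it by writing
\[
[\op(\chi), c_\pm] D_n \tilde{v}_\pm = D_n\bigl([\op(\chi), c_\pm]\tilde{v}_\pm\bigr) + \bigl[D_n, [\op(\chi), c_\pm]\bigr]\tilde{v}_\pm,
\]
where the second commutator is still of tangential order $-1$, and by then invoking the original transmission condition once more to trade the remaining normal derivative on one side for the normal derivative on the opposite side plus tangential data (and a contribution from $\Theta_\phi$). The order-$(-1)$ gain of $[\op(\chi), c_\pm]$ in the $\tau$-dependent tangential calculus compensates the $\tau$ factors coming from the $i\tau\alpha_\pm$ terms, reducing the bound to $\normsurf{\tilde{v}_\pm}{\ls}{2}$ plus quantities that are already contained in $T_{\theta,\Theta}$ or in the main terms $\tau^3|v_\pm|^2_\Sigma$.

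Finally, I would use the first transmission condition $\tilde{v}_-|_\Sigma = \tilde{v}_+|_\Sigma + \theta_\phi$ to trade $\normsurf{\tilde{v}_\pm}{\ls}{2}$ for $\normsurf{\tilde{v}_\mp}{\ls}{2}$, picking up $\normsurf{\theta_\phi}{\ls}{2}$ which is again controlled by $T_{\theta,\Theta}$. Combining everything yields the stated symmetric estimate, with the $\pm$/$\mp$ pattern reflecting the freedom to bound each side's normal derivative trace by the other's.
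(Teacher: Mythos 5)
Your route matches the paper's: apply $\op(\chi)$ to the transmission conditions, use that a tangential operator commutes with restriction to $\Sigma = \{x_n=0\}$, and track the commutators that arise. You are in fact more explicit than the paper about the commutators with $c_\pm$ — the paper's displayed computation records only $[D_n,\op(\chi)]\tilde{v}_+$ — and flagging $[\op(\chi),c_\pm]\,D_n\tilde{v}_\pm|_\Sigma$ as the delicate term is the right instinct.

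The mechanism you propose for that term, however, does not close. The identity should read $[\op(\chi), c_\pm] D_n \tilde{v}_\pm = D_n\bigl([\op(\chi), c_\pm]\tilde{v}_\pm\bigr) - \bigl[D_n, [\op(\chi), c_\pm]\bigr]\tilde{v}_\pm$ (minus rather than plus, though that is cosmetic), and it merely moves $D_n$ to the outside: the surviving piece $D_n\bigl([\op(\chi), c_\pm]\tilde{v}_\pm\bigr)|_\Sigma$ is a normal-derivative trace of a function which does \emph{not} satisfy the transmission conditions, so \eqref{transmission cond2} is not applicable to it. If instead you apply \eqref{transmission cond2} directly to $D_n\tilde{v}_\pm|_\Sigma$ before hitting it with $[\op(\chi),c_\pm]$, you only trade one normal trace of $\tilde{v}$ for the other side's, plus $\tau$-weighted tangential data — you never remove a $D_n\tilde{v}_\cdot|_\Sigma$ trace, and such a trace is not among the quantities controlled by the Lemma's right-hand side (only $D_n v_\mp|_\Sigma$, $v_\pm|_\Sigma$, and the background $\tilde{v}_\mp|_\Sigma$ appear there, and $\op(\chi)$ is not invertible). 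So the ``invoke the transmission condition once more'' step runs in a circle. In practice this term is handled not by eliminating it but by accepting it: in each application the effective $\chi$ lies in $\mathcal{S}^0_\tau$ (either $\chi=\psi_\sigma$, or a product $\chi\,(1-\psi_\sigma)$ via Lemma~\ref{lemma with different classes}), so $[\op(\chi),c_\pm]\in\Psi^{-1}_\tau$ and the contribution is $\lesssim\tau^{-1}\normsurf{D_n\tilde{v}_\pm}{\ls}{2}$, an error that is absorbed later against $\tau\normsurf{\nabla\tilde{v}_\pm}{\ls}{2}$ on the estimated side of Proposition~\ref{theorem bis}. The paper's own proof does not display the $c_\pm$-commutators at all, so you are not behind it; but the specific repair you sketch is a wrong turn.
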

	
   	\begin{proof}
		This is a result of the transmission conditions and of the fact that $\op(\chi)$ is a \textit {tangential} operator. This implies that it commutes with restriction on $\Sigma$ (here restriction on $x_n=0$). We use then~\eqref{transmission cond1} and~\eqref{transmission cond2}. The first one allows to write $v_-(0)=v_+(0)+\op(\chi)\theta_\phi$
		and \eqref{transmission cond2} reads
		$$
		D_nv_-(0)=\frac{\cp(x)}{\cm(x)} (D_n v_+(0) + i \tau \alpha_+ v_+ (0))- i \tau \alpha_- v_-(0)+\frac{1}{\cm(x)}\op(\chi)\Theta_\phi+[D_n,\op(\chi)]\tilde{ v}_+.
		$$
	Since $\left|[D_n,\op(\chi)]\tilde{ v}_\pm\right | = |\op(\partial_{x_n}\chi) \tilde{ v}_\pm|$, using the fact that $0<c_{\min}<c(x)< c_{\max}$ we find that there exist constants $C_1$, $C_2$ and $C_3$ depending on $c_{\pm}$ and $\alpha_{\pm}$ such that 
		
		$$
		\tau \normsurf{D_n v_-}{L^2(\Sigma)}{2} \leq C_1 \tau \normsurf{D_n v_+}{L^2(\Sigma)}{2}+C_2 \tau^3  \normsurf{v_\pm}{L^2(\Sigma)}{2}+C_3 T_{\theta,\Theta}+\normsurf{\tilde{v}_\pm}{\ls}{2},
		$$
	where we have used the fact that since $\chi \in \mathcal{S}^0$ one has $\norm{\op(\chi)}{L^2\rightarrow L^2}{}\lesssim 1$. This gives one of the two desired inequalities and we can get the second one using again \eqref{transmission cond2}.
	\end{proof}
	
Recall that $E_t(v)$ has been defined in \eqref{def of Etv}. The following lemma allows to obtain the volume norms of the derivatives modulo the remainder of the terms. This corresponds to Lemma~\ref{derivatives Dn} in the toy model of Section~\ref{proof of toy model}.

\begin{lem}
		\label{derivatives Dn cv}
		There exists  $\tau_0>0 $ such that for $v \in \tranphi$ and $\tau \geq \tau_0$ we have:
	   \begin{multline*}
			\norm{H_-\cnop v_-}{L^2(\R^{n+1})}{2}+	\norm{H_+\cnop v_+}{L^2(\R^{n+1})}{2} +\errt +\bigg(\tau^3\norm{v}{\lrpn}{2}+\tau^2\normsurf{v_-(0)}{L^2(\Sigma)}{2}+\tau^2\normsurf{v_+(0)}{L^2(\Sigma)}{2}  
			\\ +\normsurf{D_n v_{+}(0)}{L^2(\Sigma)}{2}+\normsurf{D_n v_{-}(0)}{L^2(\Sigma)}{2} \bigg)
			\gtrsim \tau \norm{\nabla v_{-}}{\lrmn}{2}+	\tau \norm{\nabla v_{+}}{\lrpn}{2}.
		\end{multline*}
	\end{lem}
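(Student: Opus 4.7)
The plan is to mimic the proof of Lemma~\ref{derivatives Dn} given for the toy model, treating $v_+$ and $v_-$ separately (so the transmission conditions are never invoked) and absorbing the new error terms produced by the variable coefficients $c_\pm(x)$ and $b_{jk}(x)$ into the right-hand side. I would start from the elementary pointwise bound
\begin{equation*}
\norm{H_\pm \cnop v_\pm}{L^2}{2}+\tau^2 \norm{v_\pm}{L^2(\R^{n+1}_\pm)}{2}\geq 2\tau\operatorname{Re}\bigl(P_\phi^\pm v_\pm,\, H_\pm v_\pm\bigr),
\end{equation*}
and expand the right-hand side using $c_\pm^{-1}(x) P_\phi^\pm=(D_n+i\tau\phi')^2+Q(x,D_{x'})-c_\pm^{-1}(x)D_t^2$ together with $(D_n+i\tau\phi')^2=D_n^2+2i\tau\phi'D_n+\tau\phi''-\tau^2(\phi')^2$.

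The core of the proof is three integrations by parts. In the normal variable $x_n$, the $(c_\pm D_n^2 v_\pm, v_\pm)_\pm$ piece recovers the positive bulk term $\tau\norm{c_\pm^{1/2}D_nv_\pm}{L^2(\R^{n+1}_\pm)}{2}$ together with a trace at $\{x_n=0\}$ controlled by $\varepsilon\normsurf{D_n v_\pm(0)}{L^2(\Sigma)}{2}+C_\varepsilon\tau^2\normsurf{v_\pm(0)}{L^2(\Sigma)}{2}$; the cross term $2i\tau\phi' D_n$ produces the beneficial bulk contribution $\tau^2\beta\norm{c_\pm^{1/2}v_\pm}{L^2(\R^{n+1}_\pm)}{2}$ from $\phi''=\beta$; and the last piece yields the negative but lower-order term $-\tau^3\norm{c_\pm^{1/2}\phi' v_\pm}{L^2(\R^{n+1}_\pm)}{2}$, to be absorbed by the $\tau^3\norm{v}{}{2}$ term allowed on the left-hand side of the Lemma. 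In the tangential variables, integration by parts on $(c_\pm Q(x,D_{x'})v_\pm,v_\pm)_\pm$ yields the symmetric form $\sum_{j,k}\int c_\pm b_{jk}\partial_j v_\pm\overline{\partial_k v_\pm}$, bounded from below by $c_0\norm{\nabla_{x'}v_\pm}{L^2(\R^{n+1}_\pm)}{2}$ via the uniform ellipticity of $Q$ and the lower bound on $c_\pm$, plus a remainder coming from $\partial_{x'}(c_\pm b_{jk})$ that Young's inequality bounds by $\varepsilon\norm{\nabla_{x'}v_\pm}{L^2(\R^{n+1}_\pm)}{2}+C_\varepsilon\norm{v_\pm}{L^2(\R^{n+1}_\pm)}{2}$. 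Finally, the $-D_t^2$ piece integrates (in $t$) to $-\tau\norm{D_tv_\pm}{L^2(\R^{n+1}_\pm)}{2}$, which is exactly an $E_t(v)$ error.

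Collecting the three contributions and choosing the Young-inequality splittings small enough to absorb the $\varepsilon$-pieces into the gradient term being built, then taking $\tau_0$ large enough so that both the $-\tau^3\norm{\phi'v_\pm}{}{2}$ and the $C_\varepsilon\norm{v_\pm}{}{2}$ remainders are dominated by $\tau^3\norm{v_\pm}{L^2(\R^{n+1}_\pm)}{2}$, one obtains the announced bound on $v_+$. The same manipulation on $\R^{n+1}_-$ with $P_\phi^-$ gives the analogous bound for $v_-$: the sign of the surface trace at $\{x_n=0\}$ produced by the normal integration by parts is reversed, but the statement already allows both $\normsurf{D_n v_+(0)}{L^2(\Sigma)}{2}$ and $\normsurf{D_n v_-(0)}{L^2(\Sigma)}{2}$ (and likewise for the $v_\pm$ traces) on its left-hand side, so this causes no difficulty.

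The main technical obstacle is bookkeeping of the error terms produced by the variable-coefficient commutators $[D_n,c_\pm(x)]$, $[D_n,c_\pm \phi']$ and the derivatives $\partial_{x'}(c_\pm b_{jk})$ arising from integration by parts against $Q(x,D_{x'})$. Each such commutator is strictly subprincipal (of order $\leq 1$) with respect to the quadratic form it accompanies, so none of them destroys the leading positivity; however the Young-inequality splittings must be fixed \emph{before} the subprincipal contributions are absorbed, and $\tau_0$ has to be chosen large depending on the $L^\infty$ norms of $c_\pm$, $b_{jk}$ and their first derivatives, which are finite thanks to the assumption $c_\pm\in C^\infty(\overline{\Omega}_\pm)$ and the smoothness of the metric $g$.
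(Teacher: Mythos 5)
Your argument is correct and follows the paper's proof of Lemma~\ref{derivatives Dn cv} almost verbatim: the same starting inequality $\norm{\cnop^\pm v_\pm}{L^2(\R^{n+1}_\pm)}{2}+\tau^2\norm{v_\pm}{L^2(\R^{n+1}_\pm)}{2}\geq 2\tau\operatorname{Re}(P^\pm_\phi v_\pm,H_\pm v_\pm)$, the same expansion of the conjugated operator, the same three integrations by parts with the same beneficial/adverse terms, and the same observation that the flipped sign of the boundary trace on $\R^{n+1}_-$ is harmless because both $D_n$-traces are allowed on the left-hand side. The only cosmetic difference is in handling the tangential piece: the paper replaces $Q(x,D_{x'})$ by $-\operatorname{div}(B\nabla_{x'})$ modulo $\Psi^1$ and invokes Remark~\ref{Carleman insensitive} to discard that error, whereas you absorb the $\partial_{x'}(c_\pm b_{jk})$ commutator by hand via Young's inequality — both are fine.
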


	\begin{proof}
		
		We start with the positive half-space $\R^{n+1}_+$. One has the following elementary inequality:
		
		$$
		\norm{\cnop^+ v_+}{L^2(\R^{n+1}_+)}{2}+\tau^2\norm{v_+}{L^2(\R^{n+1}_+)}{2}\geq 2 \tau \operatorname{Re}(P^+_{\phi}v_+,H_+v_+).
		$$
 Now we can integrate by parts. Indeed, recall the form of our operator:
		\begin{align*}
			\invcp(x) P^+_{\phi} &=(D_n^2+ \tau \beta +2 i \tau \phi^\prime D_n-\tau^2|\phi^\prime | ^2)
			+Q(x,D_{x^\prime}) - \invcp(x)D_t^2
		\end{align*}
		and that the definition of the weight function $\phi$  gives $\phi^\prime (x_n)=\alpha_{\pm}+\beta x_n $, therefore:
		\begin{align*}
			2\tau \operatorname{Re} (P^+_{\phi}v,H_+v)&\gtrsim 2\tau \norm{D_n v_+}{\lrpn}{2}+2 \tau \int_{\R^n}\operatorname{Re}(\partial_{x_n}v_+(0) \Bar{v}_+(0))dx^\prime dt\\& \hspace{4mm} +2\tau^2 \beta \norm{v_+}{\lrpn}{2} -2\tau^2 \alpha_+\normsurf{v_+}{\ls}{2} -2\tau^2 \beta \norm{v_+}{\lrpn}{2} \\
			&\hspace{4mm}-2 \tau^3\norm{\phi^\prime  v_+}{\lrpn}{2}+2 C\tau \norm{D^{'} v_+}{\lrpn}{2} -2C \tau \norm{D_t v_+}{\lrpn}{2}.
		\end{align*}
		Here we used the fact that $c$ is bounded as well as the ellipticity of $b_{jk}$. Indeed, up to adding an element of $\Psi^1$ (which does not have any influence on the estimate we are seeking to prove, see Remark~\ref{Carleman insensitive}) we can replace $Q(x,D_{x^\prime})$ by $-\textnormal{div}(B \cdot \nabla_{x^\prime})$ with $B=(b_{jk})_{1\leq j,k \leq n-1}$ which satisfies
		$$
		(-\textnormal{div}(B \cdot \nabla_{x^\prime})v,v)_{+} \gtrsim  \norm{\nabla_{x^\prime} v_+}{\lrpn}{2}.
		$$
		As one has 
		$$
		\left \vert 2 \tau \int_{\R^n}\operatorname{Re}(\partial_{x_n}v_+(0) \Bar{v}_+(0))dx^\prime dt \right \vert \lesssim \normsurf{D_nv_+}{L^2(\Sigma)}{2}+ \tau^2 \normsurf{v_+}{L^2(\Sigma)}{2},
		$$
		the above inequality can be written as
		$$
		2\tau \operatorname{Re} (P^+_{\phi}v,H_+v)_{L^2(\mathbb{R}_{+}^{n+1})} \gtrsim  \tau \norm{\nabla v{_+}}{\lrpn}{2}+R,
		$$
		with
		$$
		|R|\lesssim \tau^3\norm{v_+}{\lrpn}{2}+\tau \norm{D_t v_+}{\lrpn}{2}+\tau^2\normsurf{v_+}{L^2(\Sigma)}{2}+  \normsurf{D_nv_+}{L^2(\Sigma)}{2}.
		$$
		which gives the sought result. Since the proof above is insensitive with respect to the sign of the boundary terms coming from the integration by parts we also obtain the desired inequality on the negative half-space $\R^{n+1}_-$.
	 \end{proof}
When we are in a microlocal region where $|\xi_t|$ is large compared to $\tau$ and $|\xip|$ we automatically have a very good estimate:

\begin{lem}
\label{regime favor cv}
Let $c_0>0$ and  $\chi \in \mathcal{S}^{0}_\tau$ with  $\supp(\chi) \subset V \times \{|\xi_t| \geq c_0(\tau+ |\xip|)\}$ . Then there exists $\tau_0$ such that:
\begin{multline*}
    \tau \norm{  D_tv}{L^2(\R^{n+1}_{\pm})}{2} +\norm{u}{L^2(\R^{n+1}_{\pm})}{2}+ \normsurf{u}{\ls}{2} 
			+\tau \left\vert D_tv  \right\vert^2_{L^2(\Sigma)}\\\gtrsim
			\tau \norm{v}{L^2(\R_\pm ; H^1_\tau)}{2}
			+\tau^3 \normsurf{v}{\ls}{2}+\tau \norm{\nabla_{x^\prime}v}{L^2(\R^{n+1}_{\pm})}{2}  +\tau \normsurf{\nabla_{x^\prime}v}{\ls}{2} ,
\end{multline*}
for $\tau \geq \tau_0 $, $u \in \mathscr{S}_c(\R^{n+1}) $ and $v= \op(\chi)u$.
\end{lem}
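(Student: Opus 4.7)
This is the pseudodifferential analogue of Lemma~\ref{regime favorable}: the microlocal hypothesis $|\xi_t| \ge c_0(\tau + |\xip|)$ on $\supp \chi$ says that $\xi_t$ dominates both $\tau$ and $|\xip|$ there, so $D_t v$ should control the full $H^1_\tau$-size of $v = \op(\chi) u$ up to remainders of order $0$ on $u$. First I would prove the volume estimate. Starting from the pointwise symbol inequality $c_0^2 (\tau^2 + |\xip|^2)\chi^2 \le 2 \xi_t^2 \chi^2$ (valid on $\supp \chi$, and hence everywhere since the left-hand side vanishes outside), I would apply the sharp Gårding inequality in the class $\mathcal{S}^2_\tau$ to the nonnegative symbol $a = 2(\xi_t \chi)^2 - c_0^2(\tau\chi)^2 - c_0^2 |\xip \chi|^2$. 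Expanding the squared operators via the Weyl composition formula --- using that $\chi \# \chi = \chi^2$ modulo $\mathcal{S}^{-2}_\tau$ since $\{\chi, \chi\} = 0$ --- and controlling all remainders of order $0$ on $L^2$ by $\norm{u}{L^2}{2}$, this yields
\begin{equation*}
c_0^2 \tau^2 \norm{v}{L^2(\R^{n+1}_\pm)}{2} + c_0^2 \norm{\op(\xip \chi) u}{L^2(\R^{n+1}_\pm)}{2} \lesssim \norm{\op(\xi_t \chi) u}{L^2(\R^{n+1}_\pm)}{2} + C \norm{u}{L^2(\R^{n+1}_\pm)}{2}.
\end{equation*}

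A further composition identifies $\op(\xi_t \chi) u = D_t v + \op(r_1) u$ and $\op(\xip \chi) u = \nabla_{x'} v + \op(r_2) u$ with $r_1, r_2 \in \mathcal{S}^0_\tau$, and the restriction to $\R^{n+1}_\pm$ is licit because the tangential operator $\op(\chi)$ commutes with the indicator $H_\pm$. Multiplying through by $\tau$ gives
\begin{equation*}
\tau^3 \norm{v}{L^2(\R^{n+1}_\pm)}{2} + \tau \norm{\nabla_{x'} v}{L^2(\R^{n+1}_\pm)}{2} \lesssim \tau \norm{D_t v}{L^2(\R^{n+1}_\pm)}{2} + \norm{u}{L^2(\R^{n+1}_\pm)}{2}.
\end{equation*}

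For the surface contribution, since $\chi$ is tangential the trace commutes with $\op(\chi)$, so $v_{|\Sigma} = \op(\chi_0)(u_{|\Sigma})$ with $\chi_0 = \chi_{|x_n = 0}$ still supported in $\{|\xi_t| \ge c_0(\tau + |\xip|)\}$. Repeating the symbolic argument on the tangential phase space of $\Sigma$ yields
\begin{equation*}
\tau^3 \normsurf{v}{\ls}{2} + \tau \normsurf{\nabla_{x'} v}{\ls}{2} \lesssim \tau \normsurf{D_t v}{\ls}{2} + \normsurf{u}{\ls}{2}.
\end{equation*}
Summing the volume and surface estimates, and noting that $\tau \norm{v}{L^2(\R_\pm; H^1_\tau)}{2} \simeq \tau^3 \norm{v}{L^2(\R^{n+1}_\pm)}{2} + \tau \norm{D_t v}{L^2(\R^{n+1}_\pm)}{2} + \tau \norm{\nabla_{x'} v}{L^2(\R^{n+1}_\pm)}{2}$ --- with the $\tau \norm{D_t v}{L^2}{2}$ piece already on the left-hand side of the lemma --- produces the claim.

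The main obstacle I expect is the careful bookkeeping of orders in the $\mathcal{S}^m_\tau$ calculus: after sharp Gårding and the successive compositions, each remainder must genuinely land in an operator class of order $\le 0$, so that it is $L^2$-bounded on $u$ and contributes only the acceptable errors $\norm{u}{L^2}{2}$ and $\normsurf{u}{\ls}{2}$, rather than norms involving derivatives of $u$ or powers of $\tau$ that would require an extra absorption argument.
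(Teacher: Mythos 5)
Your plan diverges from the paper's at the Gårding step, and this is precisely where a genuine gap appears. You propose to apply sharp Gårding to the \emph{nonnegative but non-elliptic} symbol $a = 2(\xi_t\chi)^2 - c_0^2(\tau\chi)^2 - c_0^2|\xip\chi|^2 \in \mathcal{S}^2_\tau$ and assert that "all remainders" are of order $0$. But sharp Gårding for a nonnegative symbol of order $2$ only gives $(\op(a)u,u) \gtrsim -\|u\|_{H^{1/2}_\tau}^2$: the error is of order $1$, not $0$. After multiplying through by $\tau$, this contributes $\tau\|u\|_{H^{1/2}_\tau}^2 \gtrsim \tau^2\|u\|_{L^2}^2$, which cannot be absorbed by the $\|u\|_{L^2}^2$ term that the lemma allows on the left-hand side. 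To recover an $L^2$ remainder from positivity of a nonnegative symbol you would need the Fefferman--Phong inequality, a considerably deeper result that the paper does not invoke (and whose $\tau$-uniform version in $\mathcal{S}^2_\tau$ would itself need to be established). The only Gårding inequality available in the paper (Lemma~\ref{Garding with a large}) applies to symbols that are \emph{elliptic}, $\operatorname{Re}a_m \gtrsim \lambda_\tau^m$, and it is precisely this that the paper arranges to use.

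The paper sidesteps the issue by not working with a symbol that vanishes outside $\supp\chi$: it introduces a cutoff $\tchi$ equal to $1$ on $\supp\chi$ and defines $\tilde{a} := \xi_t^2\tchi + (1-\tchi)\lambda_\tau^2$, which is genuinely elliptic, $\tilde{a} \gtrsim \lambda_\tau^2$ on all of phase space, because $\xi_t^2 \gtrsim \lambda_\tau^2$ on $\supp\tchi$ and $(1-\tchi)\lambda_\tau^2$ fills in the complement. The elliptic Gårding inequality then applies directly to $\tilde{a}$, giving $(\op(\tilde a)v,v)_+ \gtrsim \|v\|_{L^2(\Rp;H^1_\tau)}^2$ with no half-derivative loss. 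The discrepancy between $\op(\xi_t^2)\op(\chi)$ and $\op(\tilde a)\op(\chi)$ is then handled not by positivity at all but by disjoint-support considerations: both $\op((1-\tchi)\xi_t^2)\op(\chi)$ and $\op((1-\tchi)\lambda_\tau^2)\op(\chi)$ lie in $\Psi_\tau^{-\infty}$ because $\tchi=1$ on $\supp\chi$, and these contribute only $\tau^{-1}\|u\|_{L^2}^2$. Your overall architecture (treat $x_n$ as a parameter so the half-space pairing is licit, commute tangential operators with the trace on $\Sigma$, identify $\op(\xi_t\chi)u$ with $D_t v$ modulo $\Psi^0_\tau$) matches the paper, but you should replace the sharp-Gårding-on-a-degenerate-symbol step with the elliptic-majorant construction.
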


\begin{remark}
Notice that thanks to Lemma~\ref{lemma with different classes} the support assumption on $\chi$ implies that if $\chi_0$ is an element of $\mathcal{S}^0$ independent of $\tau$ one has in fact that $\op(\chi_0)\op( \chi) \in \Psi^0_{\tau}$.
\end{remark}

\begin{proof}
Let $0\leq\tchi\leq 1$ satisfy the same properties as $\chi$ with moreover $\tchi=1$ on $\supp{\chi}$. Defining 
$$
\mathcal{S}^2_{\tau}\ni \tilde{a}:= \xi_t^2\tchi+(1-\tchi)\lambda^2_{\tau},
$$
we notice that $\tilde{a}\gtrsim \lambda_\tau^2$.
Let $a:= \xi_t^2 $ and remark that $\op(a) \op(\chi)= \op(\tchi a)\op(\chi)+R$ with $R=\op((1-\tchi)a)\op(\chi) \in \psitt^{-\infty}$, since $\tchi=1$ on $\supp{\chi}$. We then obtain:
\begin{align}
\label{auxil equality in lem regime fav }
&(\op(a)\op(\chi)u,\op(\chi)u)_{+} \\ \nonumber
&=(\op(\tilde{a})\op(\chi)u,\op(\chi) u)_{+}-(\op((1-\tchi)\lambda^2_{\tau})\op(\chi)u,\op(\chi)u)_{+}+(Ru,\op(\chi)u)_{+}
\end{align}
The last two terms yield an operator in $\psitt^{-\infty}$ which implies in particular
$$
|-(\op((1-\tchi)\lambda^2_{\tau})\op(\chi)u,\op(\chi)u)_{+}+(Ru,\op(\chi)u)_{+}|\lesssim  \tau^{-1}\norm{u}{\lrpn}{2}.
$$
We now can apply Gårding's inequality in the context of tangential pseudodifferential calculus with a large parameter (Lemma~\ref{Garding with a large}) to the first term to find (recall that $v=\op(\chi)u$):
$$
\operatorname{Re}(\op(\tilde{a})v,v)_+ \gtrsim \norm{v}{L^2(\Rp; H^1_{\tau})}{2}.
$$
The estimates above combined with the equality \eqref{auxil equality in lem regime fav } yield
\begin{align*}
\norm{  D_tv}{\lrpn}{2}=  (\op(a)v,v)_{+} \gtrsim \norm{v}{L^2(\Rp; H^1_{\tau})}{2} - \tau^{-1}\norm{u}{\lrpn}{2},
\end{align*}
We multiply the above estimate by $\tau$ and write the $H^1_\tau$ norm as $$
\norm{\cdot}{H^1_\tau}{2}\sim \tau^2 \norm{\cdot}{L^2}{2}+\norm{\nabla \cdot}{L^2}{2},
$$
to deduce
$$
 \tau \norm{ D_tv}{\lrpn}{2}+\norm{u}{\lrpn}{2} \gtrsim \tau^3 \norm{v}{\lrpn}{2}+ \tau \norm{\nabla_{x^\prime}v}{\lrpn}{2}.
$$
In a similar fashion, using the same notation we apply Gårding's inequality on $\Sigma$ (which here is $\R^n$). Notice that since the operators considered above are tangential they commute with the restriction on $\Sigma$ and we find
$$
\operatorname{Re}(\op(\tilde{a})v,v)_{\Sigma}\gtrsim\normsurf{v}{H^1_\tau}{2}-\tau^{-1}\normsurf{u}{\ls}{2},
$$
which implies in particular
$$
\tau \normsurf{D_t v}{\ls}{2} +\normsurf{u}{\ls}{2} \gtrsim \tau^3\normsurf{v}{\ls}{2}+ \tau \normsurf{\nabla_{x^\prime}v}{\ls}{2}.
$$
We obtain the same estimates when integrating in the negative half-line.
\end{proof}

\bigskip
We shall now show the desired estimate when micro-localized in a region where $$\tau \gg |\xi_t|+|\xip|.$$ 
From an heuristic point of view, in a such a region $(D_n+ i \tau \phip)^2$ is the most important term of $\cnop$. Since $D_n+ i \tau \phip$ is elliptic as an 1D operator we expect that this will give a good estimate. Let us define (as in \cite{le2013carleman}) $\psi \in C^{\infty}(\R)$ nonnegative with $\psi=0$ in $[0,1]$ and $\psi=1$ in $[2, +\infty)$ and then 
\begin{equation}
\label{defintion of psi sigma}
\psi_\sigma(\tau,\xi_t, \xip):=\psi\left(\frac{\sigma \tau}{(1+|\xi_t|^2+ |\xip|^2)^{\frac{1}{2}}}\right) \in \mathcal{S}^0_{\tau},    
\end{equation}
with $\sigma$ small to be chosen. The choice of $\psi$ implies that $\tau \geq (|\xi_{t}|^2+|\xip|^2)^{1/2}/\sigma$ on the support of $\psi_{\sigma}$. We have the following lemma:

\begin{lem}
\label{prop for tau grand}
There exists $\sigma_0, \tau_0$ and $\mathcal{V} \Subset V_t$ such that for $0<\sigma < \sigma_0$ we have:
\begin{align*}
     \norm{H_-\cnop v_-}{\lrn}{2}&+ \norm{H_+\cnop v_+}{\lrn}{2} +\norm{u}{\lrn}{2}+\normsurf{u_+}{\ls}{2}+\normsurf{u_-}{\ls}{2} +T_{\theta,\Theta}
     \\& \gtrsim \tau \norm{v}{L^2(\R ; H^1_\tau)}{2}+\tau^3 \normsurf{v_+}{\ls}{2}+\tau^3 \normsurf{v_-}{\ls}{2}
      +\tau\normsurf{ \nabla v_+}{\ls}{2}+\tau\normsurf{ \nabla v_-}{\ls}{2},
\end{align*}
for all $\tau \geq \tau_0$, $u \in \tranphi$ with $\supp u \subset \mathcal{V}$ and $v=\op(\psis)u$.
\end{lem}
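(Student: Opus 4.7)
The plan is to transfer the Fourier-based argument of Lemma~\ref{tau grand toy} to the pseudodifferential setting by factoring $c_\pm^{-1}\cnop^\pm = A_\pm + R_\pm$ with the $1$-dimensional elliptic piece $A_\pm := (D_n + i\tau \phip)^2$ and the tangential perturbation $R_\pm := Q(x, D_{x^\prime}) - c_\pm^{-1}D_t^2 \in \psit^2$. The essential point, which motivates the whole strategy, is that on $\supp\psis$ we have $|\xi^\prime|^2 + |\xi_t|^2 \leq \sigma^2\tau^2$, so the symbol of $R_\pm$ is pointwise bounded by $C\sigma^2\tau^2$ there; consequently $R_\pm$ will play the role of a small perturbation of $A_\pm$ once $\sigma$ is taken sufficiently small.

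For the $A_\pm$ estimate I would apply Lemma~\ref{calcul clef} twice in the normal variable $x_n$ (once to $v_\pm$ and once to $w_\pm := (D_n + i\tau\phip)v_\pm$), with $\gamma = \tau\beta$ and $\operatorname{Im}\lambda = \tau\alpha_\pm$; since $|\tau\phip| \gtrsim \tau$ on the support of $v$, the weighted terms in~\eqref{equality clef positif}--\eqref{equality clef negatif} dominate by $\tau^2\|\cdot\|^2$. On $\R_+^{n+1}$ both boundary contributions carry the favorable sign and I obtain directly
\begin{equation*}
\norm{A_+ v_+}{\lrpn}{2} \gtrsim \tau^4 \norm{v_+}{\lrpn}{2} + \tau^3 \normsurf{v_+}{\ls}{2} + \tau \normsurf{D_n v_+}{\ls}{2}.
\end{equation*}
On $\R_-^{n+1}$ both boundary terms come out with the wrong sign, but the same double application still yields $\norm{A_- v_-}{\lrmn}{2} + \tau \normsurf{w_-}{\ls}{2} + \tau^3 \normsurf{v_-}{\ls}{2} \gtrsim \tau^4 \norm{v_-}{\lrmn}{2}$, the boundary deficit being transferred to the positive side at the end via the transmission conditions and Lemma~\ref{one of boundary derivatives cv}.

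The central technical step is the perturbation estimate for $R_\pm v$. To exploit the \emph{microlocal} support of $v$ rather than only the total $L^2$-mass of $u$, I introduce $\widetilde{\psi}_\sigma \in \mathcal{S}^0_\tau$ equal to $1$ on $\supp\psis$ and still supported in $\{|(\xi_t,\xi^\prime)| \leq 2\sigma\tau\}$. By tangential symbolic calculus $\op(\psis) = \op(\widetilde{\psi}_\sigma)\op(\psis) + S$ with $S \in \psitt^{-\infty}$, so $R_\pm v = R_\pm \op(\widetilde{\psi}_\sigma) v + R_\pm S u$; the composition $R_\pm\op(\widetilde{\psi}_\sigma)$ has principal symbol $r_\pm\widetilde{\psi}_\sigma$ of pointwise size $\lesssim \sigma^2\tau^2$ on its support, and hence acts on $L^2$ with norm $\lesssim \sigma^2\tau^2$ by the $L^2$-continuity of order-zero operators in the $\tau$-calculus (invoking Lemma~\ref{lemma with different classes}). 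This yields
\begin{equation*}
\norm{R_\pm v}{\lrn}{2} \lesssim \sigma^4\tau^4 \norm{v}{\lrn}{2} + C_N \tau^{-N}\norm{u}{\lrn}{2}.
\end{equation*}
Combined with $\norm{\cnop^\pm v_\pm}{}{2} \geq \tfrac{1}{2}\norm{A_\pm v_\pm}{}{2} - 2\norm{R_\pm v_\pm}{}{2}$, choosing $\sigma$ small enough to absorb the $\sigma^4\tau^4\norm{v}{}{2}$ contribution into the $\tau^4\norm{v}{}{2}$ already produced by $A_\pm$ and $\tau_0$ large enough to dominate $C_N\tau^{-N}\norm{u}{}{2}$ by the $\norm{u}{\lrn}{2}$ term on the left, I recover the volume estimate $\tau^4\norm{v}{\lrn}{2} + \tau^3\normsurf{v_+}{\ls}{2} + \tau\normsurf{D_n v_+}{\ls}{2}$ from the left-hand side of the lemma.

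It remains to recover the other surface and gradient terms. The transmission identity $v_-(0) = v_+(0) + \op(\psis)\theta_\phi$ yields $\tau^3\normsurf{v_-}{\ls}{2} \lesssim \tau^3\normsurf{v_+}{\ls}{2} + T_{\theta,\Theta}$, and Lemma~\ref{one of boundary derivatives cv} applied with $\chi=\psis$ gives $\tau\normsurf{D_n v_-}{\ls}{2} \lesssim \tau\normsurf{D_n v_+}{\ls}{2} + \tau^3\normsurf{v_\pm}{\ls}{2} + T_{\theta,\Theta} + \normsurf{u_\pm}{\ls}{2}$. The tangential surface derivatives $\tau\normsurf{\nabla_{x^\prime}v_\pm}{\ls}{2}$ are controlled by $\sigma^2\tau^3\normsurf{v_\pm}{\ls}{2}$ up to smoothing errors, again thanks to the microlocal support of $\psis$. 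Finally, the volume gradient norm $\tau\norm{\nabla v_\pm}{\lrpn}{2}$ is delivered by Lemma~\ref{derivatives Dn cv}, whose hypotheses are precisely what we have already obtained (noting that $\errt$ appears on its left-hand side and is also present in our working assumptions). The main obstacle I expect is the rigorous justification of the operator bound $\|R_\pm\op(\widetilde{\psi}_\sigma)\|_{L^2\to L^2} \lesssim \sigma^2\tau^2$: a naive $L^2$-boundedness argument for an order-$2$ tangential operator would only deliver a factor $\tau^2\|u\|$ and lose the critical $\sigma^2$, so one must genuinely combine the large-parameter calculus with the microlocal support of $\widetilde{\psi}_\sigma$ to extract the $\sigma^2$ gain.
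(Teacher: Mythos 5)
Your proposal uses the same decomposition $c_\pm^{-1}\cnop^\pm = A_\pm + R_\pm$ and the same double application of Lemma~\ref{calcul clef} to $A_\pm$ as the paper, but you absorb $R_\pm$ differently. The paper never tries to bound $\norm{R_\pm v}{}{}$ directly: instead it passes to the quadratic form $\bigl((C\tau^4 - R_+^*R_+)v_+,v_+\bigr)$, microlocalizes with $\tilde\psi_\sigma$, constructs the everywhere-elliptic symbol $\tilde a = (C\tau^4-r_+^2)\tilde\psi_\sigma + (1-\tilde\psi_\sigma)\lambda_\tau^4$, and applies Gårding (Lemma~\ref{Garding with a large}). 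This sidesteps any need to quantify a small $\sigma$-dependent operator norm: one only needs positive ellipticity of $C\tau^4 - r_+^2$ on $\supp\tilde\psi_\sigma$, which holds because there $r_+^2 \lesssim \sigma^4\tau^4$. Your variant, an operator-norm bound $\norm{R_\pm\op(\tilde\psi_\sigma)}{L^2\to L^2}{}\lesssim\sigma^2\tau^2$, is in fact correct — the symbol $r_\pm\tilde\psi_\sigma$ and all its $(x,\xi)$-derivatives are uniformly $\lesssim\sigma^2\tau^2$ on $\supp\tilde\psi_\sigma$ (using $|\xi|\lesssim\sigma\tau$ for $r_\pm$ and the $\lambda_\tau^{-|\beta|}\sim\tau^{-|\beta|}$ decay for $\tilde\psi_\sigma$, noting $\sigma\tau\geq 1$), so a Calderón–Vaillancourt-type bound does yield the $\sigma^2$ gain. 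However, your cited justification is wrong: Lemma~\ref{lemma with different classes} requires the essential support of the $\mathcal{S}^m_\tau$ symbol to lie in $\{|\xi|\gtrsim\tau\}$, which is the opposite of the present microlocal region. The argument must instead track the $\sigma$-dependence of the symbol semi-norms directly, which you flag as the weak point — and indeed it requires more work than you supply, whereas the paper's Gårding route avoids it altogether.

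One genuine error in the final paragraph: you invoke Lemma~\ref{derivatives Dn cv} to recover ``the volume gradient norm $\tau\norm{\nabla v_\pm}{\lrpn}{2}$,'' but no such term appears in the statement you are proving; the right-hand side contains only the \emph{tangential} volume norm $\tau\norm{v}{L^2(\R;H^1_\tau)}{2}$, which follows directly from the $\tau^4\norm{v}{L^2}{2}$ bound since $\lambda_\tau\lesssim\tau$ on $\supp\psis$ up to a $\Psi_\tau^{-\infty}$ error. Moreover your claim that ``$\errt$ appears on its left-hand side and is also present in our working assumptions'' is false: $\errt$ does not appear on the left-hand side of Lemma~\ref{prop for tau grand}, so Lemma~\ref{derivatives Dn cv}'s hypotheses are not at your disposal here. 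Delete this step; the estimate already closes without it.
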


\begin{remark}
In fact here we obtain a slightly better estimate (without the loss of a half-derivative in the volume norm). But we prefer to state it like this since that is how we use it in view of the final estimate, when all regions are put together.
\end{remark}

\begin{proof}
Recall that we write $u=H_-u_-+H_+u_+$ and $v_\pm=\op(\psis)u_\pm$. We start by working in the positive half-space $\R^{n+1}_+$. As one could expect this is where we obtain most terms of the sought estimate, since it is the observation region.

We write:
$$
\invcp(x) P_\phi ^+=(D_n+ i \tau \phi^\prime )^2+ Q(x,D_{x^\prime}) -\invcp D_t^2:=A_+ +R_+,
$$
where we have defined $A_+:=(D_n+ i \tau \phi^\prime )^2$ and $R_+:=Q(x,D_{x^\prime}) -\invcp D_t^2$. One has with $w=( D_n+ i \tau \phi^\prime )v_+=(D_n+ i \tau \alpha_+ + i \tau \beta x_n)v_+$, using \eqref{ineg 1 positif} twice:
	\begin{align*}
		\norm{A_+v_+}{\lrpn}{2}&=\norm{(D_n+ i \tau \alpha_+ + i \tau \beta x_n)w}{\lrpn}{2} \\
		&\geq \norm{(\tau \beta x_n + \tau \alpha_+ )w}{\lrpn}{2} + \alpha_+ \tau \normsurf{w}{\ls}{2}\\ &\gtrsim \tau^2\left(\norm{D_n v_+}{\lrpn}{2}+ \norm{ (\tau \alpha_+ +  \tau \beta x_n)v_+}{\lrpn}{2}+ \tau \alpha_+ \normsurf{v_+}{\ls}{2} \right)\\
		& \hspace{4mm}+ \alpha_+ \tau\normsurf{(D_n+i \tau \alpha)v_+}{\ls}{2}. 
	\end{align*}

In particular using the fact that $\alpha_+>0$ and taking $|x_n|$ sufficiently small we find that for $u$ supported in a small neighborhood $\mathcal{V} \Subset V_t$ one has  $$\norm{A_+v_+}{\lrpn}{2} \gtrsim \tau ^4 \norm{v_+}{\lrpn}{2} + \tau^3 \normsurf{v_+}{\ls}{2}+\tau\normsurf{(D_n+i \tau \alpha)v_+}{\ls}{2}.$$ 
We deduce then
\begin{equation*}
   \norm{A_+v_+}{\lrpn}{2} \gtrsim  \tau ^4 \norm{v_+}{\lrpn}{2} + \tau^3 \normsurf{v_+}{\ls}{2}+ \tau \normsurf{D_n v_+}{\ls}{2}.
\end{equation*}
Recalling that $A_+=c^{-1}P^{+}_\phi-R_+$ we obtain
\begin{align}
\label{aux ineq 1}
\norm{\cnp  v_+}{\lrpn}{2} &\geq C\norm{A_+ v_+}{\lrpn}{2} - \norm{R_+ v_+}{\lrpn}{2} \nonumber \\
&\geq  C\left(\tau ^4 \norm{v_+}{\lrp}{2} + \tau^3 \normsurf{v_+}{\ls}{2}+\tau \normsurf{D_nv_+}{\ls}{2}\right)- \norm{R_+ v_+}{\lrpn}{2}\nonumber \\
&=\left((C\tau^4-R_+^*R_+)v_+,v_+\right)_+ +C\tau^3 \normsurf{v_+}{\ls}{2}+C\tau \normsurf{D_nv_+}{\ls}{2},
\end{align}
with $C$ positive constant depending on the coefficients of $\cnop$ and of $\phi$. Observe now that the principal symbol $r_+^2=\left(Q(x,\xip)- \invcp(x) |\xi_t|^2\right)^2=\left(\bjk \xi_j \xi_k- \invcp(x) |\xi_t|^2\right)^2$ of $R^*_{+}R_{+} \in \Psi^4$ satisfies  $$r^2_+ \leq \max(b_2,  c^{-1}_{\min}) (|\xi_t|^2+|\xip|^2)^2.$$
 Using the fact that  $$\tau \geq \frac{1}{\sigma}(1+|\xi_t|^2+|\xip|^2)^\frac{1}{2},$$ 
 on the support of $\psis$ one obtains the existence of $\sigma_0$ sufficiently small depending on the coefficients of $\cnop$ and of $\phi$ such that for all $0<\sigma \leq \sigma_0$:
\begin{equation}
\label{ellipticity}
C \tau^4-r^2_+ \geq \lambda_\tau^4,
\end{equation}
on the support of $\psis$. We consider now $\tilde{\psi}\in C^{\infty}(\R^{+})$ with $\tilde{\psi}=1$ in $[1/3,\infty)$ and $\tilde{\psi}=0$ in $[0,1/4]$ and then define $\tilde{\psi}_{\sigma}$ similarly to $\psis$:
$$
\tilde{\psi}_{_\sigma}(\tau,\xi_t):=\tilde{\psi}\left(\frac{\sigma \tau}{(1+|\xi_t|^2+ |\xip|^2)^{\frac{1}{2}}}\right) \in \mathcal{S}^0_{\tau}.
$$
We write:
\begin{align}
\label{aux ineq 2}
    \left((C\tau^4-R^*_{+}R_{+})v_+,v_+\right)&=\left((C\tau^4-R^*_{+}R_{+})\op(\tilde{\psi}_{\sigma})v_+,v_+\right)_+ +\left((C\tau^4-R^*_{+}R_{+})(\op(1-\tilde{\psi}_{\sigma})v_+,v_+\right)_+.
\end{align}
Observe that $\tilde{\psi}_{\sigma}=1$ on the support of $\psis$ and that $\tau \lesssim |\xip|+ |\xi_t|$ on the support of $1- \tilde{\psi}_{\sigma}$ and this gives thanks to Lemma~\ref{lemma with different classes} that in fact  $\op(1-\tilde{\psi}_{\sigma})\op(\psis) \in \psitt^{-\infty}$. 

As a consequence, we have in particular:
$$
\left|\left((C\tau^4-R^*_{+}R_{+})(\op(1-\tilde{\psi}_{\sigma})v_+,v_+\right)_+\right| \lesssim \norm{u_+}{\lrpn}{2}.
$$
We consider now 
$$
\tilde{a}(t,x,\xi_t,\xip,\tau)=(C \tau^4-r_+^2)\tilde{\psi}_{\sigma}+(1-\tilde{\psi}_{\sigma})\lambda^4_\tau \gtrsim\lambda^4_\tau,
$$
by construction of $\tilde{\psi}_{\sigma}$ and~\eqref{ellipticity}. We obtain therefore the following relation:
\begin{align*}
    \left((C\tau^4-R^*_{+}R_{+})\op(\tilde{\psi}_{\sigma})v_+,v_+\right)_+=\left(\op(\tilde{a})v_+,v_+\right)_+-\left(\op((1-\tilde{\psi}_{\sigma})\lambda^4_\tau) v_+,v_+\right)_+ +\left( Sv_+, v_+\right)_+,
\end{align*}
where $S\in\Psi^3_\tau$ is a subprincipal term coming from the pseudodifferential calculus. Indeed, remark that in fact $C\tau^4-R_+^*R_+ \in \mathcal{D}^4_\tau \subset\Psi^4_\tau.$  In particular one can control this term by
$$
\normsurf{\left( Sv_+,  v_+\right)_+}{}{} \lesssim  \norm{v_+}{L^2(\Rp; H^{3/2}_\tau)}{2}
$$
As before, using that $\op(1-\tilde{\psi}_{\sigma})\op(\psis) \in \psitt^{-\infty}$  one has 
$$
\left|\left(\op((1-\tilde{\psi}_{\sigma})\lambda^4_\tau )v_+,v_+\right)_+\right|\lesssim \norm{u_+}{\lrpn}{2}.$$
We use then the fact that $\tilde{a}\gtrsim\lambda^4_\tau$ which thanks to Gårding's inequality with a large parameter (Lemma~\ref{Garding with a large}) yields:
\begin{equation}
\label{aux ineq 3}
(\op(\tilde{a})v_+,v_+)_+ \gtrsim \norm{v_+}{L^2(\Rp; H^2_\tau)}{2}.	
\end{equation}
Putting \eqref{aux ineq 1}, \eqref{aux ineq 2}, \eqref{aux ineq 3} together we find, taking $\tau$ large enough 
\begin{equation*}
  \norm{\cnp  v_+}{\lrpn}{2}+\norm{u_+}{\lrpn}{2} \gtrsim \tau ^4 \norm{v_+}{\lrp}{2} + \tau^3 \normsurf{v_+}{\ls}{2}+ \tau \normsurf{D_nv_+}{\ls}{2}.
\end{equation*}
Since $v_+$ localizes in a region where $\tau \gtrsim |\xip|$ one can simply control the trace of the tangential derivatives (thanks also to the transmission condition \eqref{transmission cond1}):
\begin{align*}
    \tau^3 \normsurf{v_+}{\ls}{2}+T_{\theta,\Theta}\gtrsim \tau^3 \normsurf{v_\pm}{\ls}{2} =\tau \normsurf{ \tau \hat{v}_\pm}{\ls}{2}
    \gtrsim \tau \normsurf{\lambda \hat{v}_\pm}{\ls}{2}=\tau \normsurf{\nabla_{x^\prime}v_\pm}{\ls}{2}.
\end{align*}
This yields:
\begin{multline*}
  \norm{\cnp  v_+}{\lrpn}{2}+\norm{u_+}{\lrpn}{2}+T_{\theta,\Theta}\\ \gtrsim \tau ^4 \norm{v_+}{\lrp}{2} + \tau^3 \normsurf{v_+}{\ls}{2}+\tau \normsurf{\nabla_{x^\prime} v_-}{\ls}{2}+\tau \normsurf{\nabla_{x^\prime} v_+}{\ls}{2}+\tau \normsurf{D_nv_+}{\ls}{2}.
\end{multline*}
and finally thanks to Lemma~\ref{one of boundary derivatives cv}:
\begin{multline}
\label{estimate for A+ region tau grand}
  \norm{\cnp  v_+}{\lrpn}{2}+\norm{u_+}{\lrpn}{2}+\normsurf{u_-}{\ls}{2}+ T_{\theta,\Theta} \\ \gtrsim \tau ^4 \norm{v_+}{\lrpn}{2} + \tau^3 \normsurf{v_-}{\ls}{2}+\tau^3 \normsurf{v_+}{\ls}{2}+\tau \normsurf{\nabla v_-}{\ls}{2}+\normsurf{\nabla v_+}{\ls}{2}.
\end{multline}
The only missing term is the volume norm on the negative half-line. Here one needs to use $\cnm$. We write 
   $$
	\invcm(x) P_\phi ^-=(D_n+ i \tau \phi^\prime )^2+Q(x,D_{x^\prime})-\invcm D_t^2=A_- +R_-.
	$$
In the region under consideration $\norm{R_- v}{\lrmn}{}$ is a perturbation of $\norm{A_- v}{\lrmn}{}$. The difference is that here we integrate on the negative half-line and the boundary terms come with the opposite sign when one calculates $\norm{A_- v}{\lrmn}{2}$. More precisely, after repeating the same steps as above one finds
\begin{multline}
\label{estimate for A- region tau grand}
  \norm{\cnp  v_-}{\lrmn}{2}+\norm{u_-}{\lrmn}{2}+ \tau^3 \normsurf{v_-}{\ls}{2}\\+\tau^3 \normsurf{v_+}{\ls}{2}+\tau \normsurf{\nabla v_-}{\ls}{2} +T_{\theta,\Theta}\gtrsim \tau ^4 \norm{v_-}{\lrmn}{2} .
\end{multline}
To finish the proof of the Lemma one can simply multiply \eqref{estimate for A+ region tau grand} by a sufficiently large constant and add it to \eqref{estimate for A- region tau grand}.
\end{proof}

\subsection{Microlocal estimates in the non-elliptic regions }
\label{microlocal estimates in the non elliptic region}
In \eqref{def of E} and \eqref{def of GH} we have defined two microlocal regions on each side of the interface. In this section we prove microlocal estimates inside the non elliptic regions. As expected most surface terms are estimated by the positive half-space where the observation takes place.

Lemma~\ref{prop for tau grand} proves the appropriate estimate in the sub-region $\tau \gg |\xi_t|+|\xip|$. We can consequently localize in its complementary region in the sequel. The following lemma deals with the microlocal sub-regions where the operator $\cnop$ is \textbf{not} elliptic microlocally. In this case the error terms in $\xi_t$ become very useful.

\begin{lem}[\textbf{Non-elliptic positive half-space}]
\label{region G+ cv}
Let $K \subset \R^{n+1}$ be a compact set, $c_0>0$, $Y \subset \{|\xi_t| \geq c_0 |\xip| \}$. Consider $\chi \in \mathcal{S}^{0}$ with $\supp(\chi) \subset Y$ and $\pi_{t,x}(\supp{\chi}) \subset K $. Then for all $\sigma>0$ there exists $\tau_0>0$ such that one has:
\begin{multline*}
  \norm{P^{+}_{\phi}v}{\lrpn}{2}  +\tau \norm{ D_t v}{\lrpn}{2}+\tau \left\vert D_t v  \right\vert^2_{L^2(\Sigma)}+\tau\norm{u}{\lrpn}{2} + \tau \left\vert  u  \right\vert^2_{L^2(\Sigma)}
			 \\ \gtrsim
			\tau \norm{ v}{L^2(\R_+ ; H^1_\tau)}{2}
			+\tau^3 \normsurf{v}{\ls}{2}  +\tau \left\vert \nabla v  \right\vert_{L^2(\Sigma)}^2 ,
\end{multline*}
for all $\tau\geq \tau_0$, $u \in \mathscr{S}_c(\R^{n+1}) $ and $v= \op(\chi)\op(1-\psis)u$.
\end{lem}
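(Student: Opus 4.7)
The strategy, inspired by the proof of Lemma~\ref{region tau petit non elliptic toy} in the toy model, is to first apply the favorable-regime estimate of Lemma~\ref{regime favor cv} to capture every term on the right-hand side except $\tau \normsurf{D_n v}{\ls}{2}$, and then to extract the missing normal trace via the standard commutator method. To set up the first step, note that on $\supp \chi$ one has $|\xi_t| \geq c_0 |\xip|$, while on $\supp(1-\psis)$ one has $\sigma \tau \leq 2(1+|\xi_t|^2+|\xip|^2)^{1/2}$. Combining these, for $\tau \geq \tau_0$ large enough there is $c_0' > 0$ with $|\xi_t| \geq c_0'(\tau + |\xip|)$ on the joint support. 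By Weyl symbolic calculus and Lemma~\ref{lemma with different classes}, $\op(\chi)\op(1-\psis) = \op(\tilde\chi)$ modulo a negligible remainder in $\psitt^{-\infty}$, where $\tilde\chi \in \mathcal{S}^0_\tau$ is supported in this favorable region. Applying Lemma~\ref{regime favor cv} (restricted to $\R^{n+1}_+$) therefore yields
\begin{equation}\label{eq:planstep1}
\tau\norm{D_t v}{\lrpn}{2} + \norm{u}{\lrpn}{2} + \normsurf{u}{\ls}{2} + \tau \normsurf{D_t v}{\ls}{2} \gtrsim \tau \norm{v}{L^2(\R_+;H^1_\tau)}{2} + \tau^3 \normsurf{v}{\ls}{2} + \tau \normsurf{\nabla_{x'} v}{\ls}{2}.
\end{equation}

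To recover the missing $\tau \normsurf{D_n v}{\ls}{2}$, decompose $c_+(x)^{-1} P^+_\phi = Q_2 + i\tau Q_1$ with $Q_2 = D_n^2 - \tau^2(\phi')^2 + Q(x,D_{x'}) - c_+(x)^{-1} D_t^2$ and $Q_1 = \phi' D_n + D_n \phi'$; subprincipal corrections coming from the Weyl calculus lie in $\psit^1 + \tau \psit^0 + \psit^0 D_n$ and are admissible in the sense of Remark~\ref{Carleman insensitive}. Integration by parts on $\R^{n+1}_+$ gives
\begin{equation*}
\norm{c_+^{-1}P^+_\phi v}{\lrpn}{2} = \norm{Q_2 v}{\lrpn}{2} + \tau^2 \norm{Q_1 v}{\lrpn}{2} + i\tau ([Q_2,Q_1]v,v)_+ + \tau \mathcal{B}(v),
\end{equation*}
where $\mathcal{B}(v) = 2(\phi' D_n v, D_n v)_\Sigma + (M_1 v, D_n v)_\Sigma + (M_1' D_n v, v)_\Sigma + (M_2 v, v)_\Sigma$ with each $M_j$ of tangential order $j$ in $(\xip,\xi_t,\tau)$. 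Shrinking the $x_n$-neighborhood in the definition of $\mathscr{S}_c$ so that $\phi' \geq \alpha_+/2$ on $\supp u$, Young's inequality applied to the off-diagonal boundary terms absorbs a small $\delta \normsurf{D_n v}{\ls}{2}$ into $2(\phi' D_n v, D_n v)_\Sigma$ and produces
\begin{equation*}
\norm{P^+_\phi v}{\lrpn}{2} + \tau^3 \normsurf{v}{\ls}{2} + \tau \normsurf{\nabla_{x'} v}{\ls}{2} \gtrsim \norm{Q_2 v}{\lrpn}{2} + \tau^2 \norm{Q_1 v}{\lrpn}{2} + i\tau ([Q_2,Q_1]v,v)_+ + \tau \normsurf{D_n v}{\ls}{2}.
\end{equation*}

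To finish, we control the commutator volume term. The identity $i[Q_2,Q_1] = B_0 Q_2 + B_1 Q_1 + B_2$ with $B_j \in \psitt^j$ holds by direct symbolic computation, exactly as in the toy case. Each contribution is bounded by Cauchy–Schwarz and Young: $|i\tau(B_0 Q_2 v,v)_+| \lesssim \tau^{-1/2} \norm{Q_2 v}{\lrpn}{2} + \tau^{5/2} \norm{v}{\lrpn}{2}$, $|i\tau(B_1 Q_1 v,v)_+| \lesssim \tau \norm{Q_1 v}{\lrpn}{2} + \tau^3 \norm{v}{\lrpn}{2} + \tau \norm{\nabla_{x'} v}{\lrpn}{2} + \tau \norm{D_t v}{\lrpn}{2}$, and analogously for $B_2$. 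For $\tau$ large, the $\norm{Q_j v}{}{2}$ pieces are absorbed on the right, and the residual terms $\tau^3 \norm{v}{}{2}$, $\tau\norm{\nabla_{x'}v}{}{2}$, $\tau\norm{D_t v}{}{2}$ are exactly those already controlled by \eqref{eq:planstep1}. Adding a suitable large positive multiple of \eqref{eq:planstep1} to this commutator estimate yields the claim. I expect the main obstacle to be the rigorous tangential symbolic bookkeeping: verifying the decomposition $i[Q_2,Q_1] = B_0 Q_2 + B_1 Q_1 + B_2$ in the $\tau$-dependent tangential Weyl calculus with variable $b_{jk}$ and $c_+$, and checking that every subprincipal remainder produced along the way falls under the admissible class of Remark~\ref{Carleman insensitive}.
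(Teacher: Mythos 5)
Your overall strategy matches the paper's exactly: apply the favorable-regime estimate (Lemma~\ref{regime favor cv}) to capture all right-hand-side terms except $\tau|D_n v|^2_{L^2(\Sigma)}$, then extract that missing trace via the $Q_2 + i\tau Q_1$ commutator method. The second half (commutator step) is correct and coincides with the paper's. However, the first step as you have stated it contains a gap that needs to be closed.

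You claim ``by Weyl symbolic calculus and Lemma~\ref{lemma with different classes}, $\op(\chi)\op(1-\psi_\sigma) = \op(\tilde\chi)$ modulo a negligible remainder in $\psitt^{-\infty}$, where $\tilde\chi \in \mathcal{S}^0_\tau$ is supported in this favorable region.'' That is not what symbolic calculus gives you. Lemma~\ref{lemma with different classes} produces a composed symbol $b \in \mathcal{S}^0_\tau$ with $\op(\chi)\op(1-\psi_\sigma)=\op(b)$, but $b$ is only \emph{essentially} supported in (a neighborhood of) $\supp\chi\cap\supp(1-\psi_\sigma)$: every term of the asymptotic expansion is supported there, but the tail after any finite truncation is merely in $\mathcal{S}^{-N}_\tau$ without any support control, so you cannot split $b = \tilde\chi + (\textnormal{symbol of a }\psitt^{-\infty}\textnormal{ operator})$ with $\supp\tilde\chi$ compact in the favorable cone. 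Moreover Lemma~\ref{regime favor cv} as stated requires the \emph{literal} support of its symbol to lie in $\{|\xi_t|\geq c_0(\tau+|\xip|)\}$, so you cannot invoke it on an essential-support basis without re-proving it.

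The fix is exactly what the paper does: instead of trying to replace $v=\op(\chi)\op(1-\psi_\sigma)u$ by a single cleanly supported quantization, introduce relaxed cutoffs $\check\theta_\sigma$ (a slightly larger version of $1-\psi_\sigma$, e.g.\ built from a $\check\psi$ that is $1$ on $[4,\infty)$ and $0$ on $[0,3]$) and $\tilde\chi$ (equal to $1$ on $\supp\chi$ with the same support constraints), note that $\check\theta_\sigma\tilde\chi\in\mathcal{S}^0_\tau$ is \emph{literally} supported in $\{|\xi_t|\gtrsim\tau+|\xip|\}$, and apply Lemma~\ref{regime favor cv} to $\op(\check\theta_\sigma\tilde\chi)v$. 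One then passes from $\op(\check\theta_\sigma\tilde\chi)v$ back to $v$ by observing that $\op(1-\check\theta_\sigma\tilde\chi)v = \op(1-\check\theta_\sigma\tilde\chi)\op(\chi)\op(1-\psi_\sigma)u$, where the three symbol supports are pairwise arranged so that $\supp(1-\check\theta_\sigma\tilde\chi)\cap\supp(1-\psi_\sigma)\cap\supp\chi=\emptyset$; together with the fact that $1-\psi_\sigma$ has essential support in $\tau\lesssim|\xi_t|+|\xip|$ (so Lemma~\ref{lemma with different classes} applies), this makes the remainder genuinely $\psitt^{-\infty}$ and hence absorbable by $\norm{u}{\lrpn}{2}$. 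Your \eqref{eq:planstep1} then holds after $\tau$ is taken large. Once this bookkeeping is in place, the rest of your argument goes through as written.
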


\begin{proof}
Here we are microlocally in a region where $\xi_t$ is large. We consider as before an auxiliary function $\check{\psi}\in C^{\infty}(\R^{+})$ with $\check{\psi}=1$ in $[4,\infty)$ and $\check{\psi}=0$ in $[0,3]$ and then define $\check{\psi}_{\sigma}$ similarly to $\psis$:
$$
\check{\psi}_{_\sigma}(\tau,\xi_t):=\check{\psi}\left(\frac{\sigma \tau}{(1+|\xi_t|^2+ |\xip|^2)^{\frac{1}{2}}}\right) \in \mathcal{S}^0_{\tau}.
$$

To simplify notation we consider additionally
$\theta_\sigma:=1-\psis$ and $\check{\theta}_{\sigma}:=1-\check{\psi}_{\sigma}$. Remark then $\theta_\sigma$ and $\tths$ localize in a region where $|\xi_t|+|\xip| \gtrsim \tau$ with moreover $\tths=1$ on $\supp{\ths}$. We introduce also $\tchi$ satisfying the same properties as $\chi$ with $\tchi=1$ on $\supp{\chi}$. 

Observe now that on the one hand one has $|\xi_t|+|\xip| \gtrsim \tau$ on the support of $\tths$, and on the other hand $|\xi_t| \gtrsim |\xip|$ on the support of $\tchi$. Consequently $\tths \tchi$ localizes in the regime of Lemma~\ref{regime favor cv} and belongs to $\mathcal{S}^0_\tau$ thanks to Lemma~\ref{lemma with different classes}. This yields the estimate:
\begin{multline}
\label{estimate almost all terms region E+ with op}
\tau \norm{  D_t\op(\tths \tchi)v}{L^2(\R^{n+1}_{+})}{2} +\norm{v}{L^2(\R^{n+1}_{+})}{2}+ \normsurf{v}{\ls}{2} 
			+\tau \left\vert D_t\op(\tths \tchi)v  \right\vert^2_{L^2(\Sigma)}\\\gtrsim
			\tau^3 \norm{\op(\tths \tchi)v}{L^2(\R^{n+1}_{+})}{2}
			+\tau^3 \left\vert \op(\tths \tchi)v \right\vert_{L^2(\Sigma)}^2\\+\tau\norm{\nabla_{x^\prime}\op(\tths \tchi)v}{\lrpn}{2}+\tau \normsurf{\nabla_{x^\prime}\op(\tths \tchi)v}{\ls}{2}.
\end{multline}
One then has:
\begin{align*}
   \tau^{\frac{3}{2}} \norm{\op(\tths \tchi) v}{\lrpn}{}&=\tau^{\frac{3}{2}} \norm{v-\op(1-\tths \tchi)v}{\lrpn}{} \\
    &\geq\tau^{\frac{3}{2}}\norm{v}{\lrpn}{}- \tau^{\frac{3}{2}}\norm{ \op(1-\tths \tchi)v}{\lrpn}{} \\
    &= \tau^{\frac{3}{2}}\norm{v}{\lrpn}{}- \tau^{\frac{3}{2}} \norm{R u}{\lrpn}{},
\end{align*}
with $R=\op(1-\tths \tchi)\op(\chi) \op(\ths)$. Since 
$$\supp{(1-\tths \tchi)} \cap \supp{\ths} \cap \supp{\chi}=\emptyset,$$
one has immediately that $R \in \psit^{-\infty}$. Since $\ths \in \mathcal{S}^{0}_{\tau}$ has support in a region where $\tau \lesssim |\xi_t|+|\xip|$ Lemma~\ref{lemma with different classes} implies $R \in \psitt^{-\infty}$  and therefore 
$$\tau^{\frac{3}{2}} \norm{R u}{\lrpn}{} \lesssim \norm{u}{\lrpn}{}.$$ 
We control in a similar fashion all the terms in \eqref{estimate almost all terms region E+ with op} in which  $\op(\tths \tchi)$ appears. Taking then $\tau\geq\tau_0$,  $\tau_0$ large yields
\begin{multline}
\label{estimate almost all terms region G+ cv}
\tau \norm{  D_tv}{L^2(\R^{n+1}_{+})}{2}	+\tau \left\vert D_tv  \right\vert^2_{L^2(\Sigma)} +\norm{u}{L^2(\R^{n+1}_{+})}{2}+ \normsurf{u}{\ls}{2} 
		\gtrsim
			\tau^3 \norm{v}{L^2(\R^{n+1}_{+})}{2}
			+\tau^3 \normsurf{v}{\ls}{2}\\+
			\tau\norm{\nabla_{x^\prime}v}{\lrpn}{2}+\tau \normsurf{\nabla_{x^\prime}v}{\ls}{2}.
\end{multline}
The only term we need now is the trace of the normal derivative. Here we have no factorization and we will use the commutator technique  (see  \cite[Section 3A]{LR:95}, \cite[Chapter 3.4]{rousseau2022elliptic}). In our case we already control almost all of the terms thanks to \eqref{estimate almost all terms region G+ cv}.
We write
	$$
	\invcp(x) P^{+}_\phi=Q_2+ i \tau Q_1,
	$$
	with 
	\begin{equation}
	\label{def of Q2}
		\qr=D^2_n-\tau^2 |\phi^\prime |^2+Q(x,D_{x^\prime}) -\invcp(x) D_t^2  	
	\end{equation}
	and 
	\begin{equation}
	\label{def of Q1}
	\qi=\phi^\prime D_n +D_n \phi^\prime.	
	\end{equation}
We integrate by parts taking into account the boundary terms to find:
	\begin{equation}
	\label{equality with commutator cv }
	    \norm{\invcp P^{+}_{\phi} v}{\lrpn}{2}=\norm{\qr v}{\lrpn}{2}+ \tau^2\norm{\qi v}{\lrpn}{2}+ i \tau \left([\qr,\qi]v,v \right )_+ +\tau \mathcal{B}(v).
	\end{equation}
	After some calculations (which are the same as in \cite[Chapter 3.4]{rousseau2022elliptic}) we see that the  boundary term $\mathcal{B}(v)$ can be written as:
		\begin{align*}
	    \mathcal{B}(v)=2 \left(\phi^\prime D_n v ,D_n v\right)_{\Sigma}+\left(M_1 v ,D_n v \right)_{\Sigma}
	    +\left(M_1^{'}D_n v ,v\right)_{\Sigma}+\left(M_2 v ,v\right)_{\Sigma},
	\end{align*}
	where $M_j \in \mathcal{D}^j_{\top, \tau} $, that is a tangential differential operator of order $j$ depending on $\tau$ (see Section~\ref{differential operators} for a definition). Using this as well as the Young inequality yields, for arbitrary $\delta_2>0$:
	\begin{align*}
	|\mathcal{B}(v)-2  \left(\phi^\prime D_n v ,D_n v\right)_{\Sigma} | \lesssim  (1+\delta_2^{-1})\left( \tau^2 \normsurf{v}{\ls}{2}
+\normsurf{\nabla_{x^\prime}v}{\ls}{2}\right)+ \delta_2\normsurf{D_n v}{\ls}{2}.    
	\end{align*}
We want to combine this last inequality with \eqref{equality with commutator cv }. To do this we recall that $\phi^\prime >0$ close to $\Sigma$ (that is for $|x_n|$ small enough) and we choose $\delta_2$ sufficiently small. One finds then, for $\tau$ sufficiently large:
\begin{multline}
\label{estimate with commutator for G+ cv}
    \norm{ P^{+}_{\phi} v}{\lrpn}{2}+\tau^3 \normsurf{v}{\ls}{2}
+\tau \normsurf{\nabla_{x^\prime}v}{\ls}{2} 
\\ \gtrsim
    \norm{\qr v}{\lrpn}{2}+ \tau^2\norm{\qi v}{\lrpn}{2}+ i \tau \left([\qr,\qi]v,v \right )_++\tau \normsurf{D_n v}{\ls}{2} .
\end{multline}
We need to take care of the commutator to deduce an estimate on $\tau \normsurf{D_n v}{\ls}{2}$. We simply write:
\begin{equation*}
    i[\qr,\qi]=B_0 D^2_n + C_1 D_n +C_2,
\end{equation*}
with $C_j$ tangential operators of order $j$ depending on $\tau$. Since $\phi^\prime>0$ close to $\Sigma$ we can use \eqref{def of Q2} and \eqref{def of Q1} to express $D^2_n$ and $D_n$ in terms of $\qr$, $\qi$. This yields the following expression:
\begin{equation*}
    i[\qr,\qi]=B_0 \qr + B_1 \qi +B_2,
\end{equation*}
with $B_j$ tangential operators of order $j$ depending on $\tau$. This implies that:

\begin{equation*}
   \left| i \tau \left(B_0 \qr v,v\right)\right| \lesssim \tau^{-\frac{1}{2}} \norm{\qr v}{\lrpn}{2}+ \tau^{\frac{5}{2}} \norm{v}{\lrpn}{2},
\end{equation*}
as well as 
\begin{equation*}
 \left| i \tau \left(B_1 \qi v,v\right)\right| \lesssim \tau\norm{\qi v}{\lrpn}{2}+\tau^3\norm{v}{\lrpn}{2}+ \tau\norm{\nabla_{x^\prime}v}{\lrpn}{2}+\tau \norm{D_t v}{\lrpn}{2}
    \end{equation*}
    and
    \begin{equation*}
      \left| i \tau \left(B_2 v,v\right)\right| \lesssim \tau^3 \norm{v}{\lrpn}{2}+\tau \norm{\nabla_{x^\prime}v}{\lrpn}{2}+\tau \norm{D_t v}{\lrpn}{2}.
    \end{equation*}
We inject now these last three estimates in the commutator term in \eqref{estimate with commutator for G+ cv} and take $\tau \geq \tau_0$, $\tau_0$ large to absorb the terms with $\norm{\qr}{}{}$ and $\norm{\qi}{}{}$ to find:
\begin{multline}
\label{trace of normal deri for G+ cv}
\norm{ P^{+}_{\phi} v}{\lrpn}{2}+\tau^3 \norm{v}{\lrpn}{2}+\tau \norm{\nabla_{x^\prime}v}{\lrpn}{2}
+\tau^3 \normsurf{v}{\ls}{2}
+\tau \normsurf{\nabla_{x^\prime}v}{\ls}{2} 
\gtrsim \tau \normsurf{D_n v}{\ls}{2}
\end{multline}
We multiply finally \eqref{estimate almost all terms region G+ cv} by a large constant and add it to  \eqref{trace of normal deri for G+ cv} to obtain:
\begin{multline*}
      \norm{\cnp v}{\lrpn}{2} +\tau \norm{  D_tv}{L^2(\R^{n+1}_{+})}{2}	+\tau \left\vert D_tv \right\vert^2_{L^2(\Sigma)}
      +\norm{u}{L^2(\R^{n+1}_{+})}{2}+ \normsurf{u}{\ls}{2} \\
     \gtrsim 	\tau^3 \norm{v}{L^2(\R^{n+1}_{+})}{2}
			+\tau^3 \normsurf{v}{\ls}{2}  +\tau \left\vert D_nv \right\vert_{L^2(\Sigma)}^2+\tau \norm{\nabla_{x^\prime}v}{\lrpn}{2}+\tau \normsurf{\nabla_{x^\prime}v}{\ls}{2},
\end{multline*}
which gives immediately the sought estimate.
\end{proof}

\begin{lem}[\textbf{Non elliptic negative half-space}]
	\label{region G- cv}
Let $K \subset \R^{n+1}$ be a compact set, $c_0>0$, $Y \subset \{|\xi_t| \geq c_0 |\xip| \}$. Consider $\chi \in \mathcal{S}^{0}$ with $\supp(\chi) \subset Y$ and $\pi_{t,x}(\supp{\chi}) \subset K $. Then for all $\sigma>0$, there exists $\tau_0>0$ such that:
\begin{equation*}
\tau \norm{  D_tv}{L^2(\R^{n+1}_{-})}{2}	+\tau \left\vert D_tv  \right\vert^2_{L^2(\Sigma)} +\norm{u}{L^2(\R^{n+1}_{-})}{2}+ \normsurf{u}{\ls}{2} 
\gtrsim
\tau\norm{v}{L^2(\R_- ; H^1_\tau)}{2},
\end{equation*}
for all $\tau\geq \tau_0$, $u \in \mathscr{S}_c(\R^{n+1}) $ and $v= \op(\chi)\op(1-\psis)u$.
\end{lem}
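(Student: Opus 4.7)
The plan is to reduce the estimate to Lemma~\ref{regime favor cv} applied on the negative half-line, adapting the first half of the proof of Lemma~\ref{region G+ cv}. The crucial observation is that on $\supp(\chi(1-\psis))$ one has both $|\xi_t| \geq c_0|\xip|$ (from $\supp \chi$) and $|\xi_t|^2 + |\xip|^2 \gtrsim \tau^2$ (from $\supp(1-\psis)$); combining these yields $|\xi_t| \geq c_1(\tau + |\xip|)$ for some $c_1 > 0$, so $\op(\chi)\op(1-\psis)$ localizes precisely in the favorable regime of Lemma~\ref{regime favor cv}.

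First I would introduce auxiliary cutoffs $\tchi \in \mathcal{S}^0$ and $\tths := 1 - \check\psi_\sigma \in \mathcal{S}^0_\tau$ (with $\check\psi$ as defined in the proof of Lemma~\ref{region G+ cv}) satisfying $\tchi = 1$ on $\supp \chi$ and $\tths = 1$ on $\supp(1-\psis)$, with supports still contained in $\{|\xi_t| \geq (c_0/2)|\xip|\}$ and $\{|\xi_t|+|\xip| \gtrsim \tau\}$ respectively. By Lemma~\ref{lemma with different classes}, the product $\tchi\tths$ then lies in $\mathcal{S}^0_\tau$ with support in the favorable region. Noting that $v \in \mathscr{S}_c$ since tangential operators preserve this space, Lemma~\ref{regime favor cv} applied on the negative half-line with cutoff symbol $\tchi\tths$ and input $v$ yields
\begin{equation*}
\tau \norm{D_t \op(\tchi\tths) v}{\lrmn}{2} + \tau \normsurf{D_t \op(\tchi\tths) v}{\ls}{2} + \norm{v}{\lrmn}{2} + \normsurf{v}{\ls}{2} \gtrsim \tau \norm{\op(\tchi\tths) v}{L^2(\R_-; H^1_\tau)}{2}.
\end{equation*}

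To transfer the right-hand side to $v$, I would use that $\op(1-\tchi\tths) \op(\chi) \op(1-\psis) \in \psitt^{-\infty}$: the symbol $1-\tchi\tths$ and the full Weyl symbol of $\op(\chi)\op(1-\psis)$ have disjoint supports because $\tchi\tths \equiv 1$ on $\supp(\chi(1-\psis))$, and Lemma~\ref{lemma with different classes} handles the mixing of symbol classes. Consequently $\op(\tchi\tths)v = v + Ru$ with $R \in \psitt^{-\infty}$, so the error is controlled by $\tau^{-N}\norm{u}{\lrmn}{}$ for any $N$. On the left-hand side, $\op(\tchi\tths)$ is tangential and bounded on $L^2$ with $[D_t, \op(\tchi\tths)] \in \psitt^0$, whence $\norm{D_t \op(\tchi\tths)v}{\lrmn}{} \lesssim \norm{D_t v}{\lrmn}{} + \norm{v}{\lrmn}{}$ and an analogous trace bound. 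Using $\norm{v}{\lrmn}{} \lesssim \norm{u}{\lrmn}{}$ and $\normsurf{v}{\ls}{} \lesssim \normsurf{u}{\ls}{}$ (again by $L^2$-boundedness of $\op(\chi)\op(1-\psis)$), and absorbing the residues into $\norm{u}{\lrmn}{2}$ for $\tau \geq \tau_0$, one obtains the stated estimate.

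No serious obstacle arises in this proof. Unlike the positive half-space case of Lemma~\ref{region G+ cv}, no commutator-technique argument is needed here, since the conclusion only requires the volume tangential $H^1_\tau$-norm of $v$, with no surface or normal-derivative trace terms. The only technical care is the bookkeeping of symbol classes when mixing $\mathcal{S}^0$ and $\mathcal{S}^0_\tau$, systematically handled by Lemma~\ref{lemma with different classes}.
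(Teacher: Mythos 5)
Your proof is correct and follows essentially the same route as the paper's: introduce the auxiliary cutoffs $\tchi$ and $\tths=1-\check\psi_\sigma$ so that $\tchi\tths$ localizes in the favorable regime $\{|\xi_t|\gtrsim\tau+|\xip|\}$ of Lemma~\ref{regime favor cv}, apply that lemma on the negative half-line, and transfer back to $v$ via $\op(1-\tchi\tths)\op(\chi)\op(1-\psis)\in\Psi^{-\infty}_\tau$, exactly as in the first half of the proof of Lemma~\ref{region G+ cv}. Your explicit identification of the favorable regime is in fact cleaner than the paper's one-line description, which refers to $\{\tau\gtrsim|\xi_t|+|\xip|\}$ but clearly means the region $\{|\xi_t|\gtrsim\tau+|\xip|\}$ appearing in Lemma~\ref{regime favor cv}.
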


\begin{proof}
We introduce, as in the proof of Lemma~\ref{region G+ cv} the function $\tths$ and $\tchi$ such that $\tths \tchi$ has a support contained in the region of Lemma~\ref{regime favor cv}, that is $\{\tau \gtrsim |\xi_t|+|\xip|\}$. We obtain as above the sought estimate.
\end{proof}

\subsection{Microlocal estimate in the elliptic region}
\label{section for elliptic region}

We now state the desired estimate in a microlocal region where the operator $\cnop$ is \textbf{elliptic}. In this case we adapt step by step the proof of \cite{le2013carleman}. Indeed, here we can factorize our operator as in \cite[Eq. 2-17]{le2013carleman} and obtain the same estimates for the first order factors. Notice that in this regime the error terms in $D_t$ in the left hand side are useless.

We recall that $\psis$ has been defined in \eqref{defintion of psi sigma}. The aim of this subsection is to prove the following lemma:

\begin{lem}[\textbf{Elliptic region}]
\label{Lemma elliptic region}
Let $K \subset \R^{n+1}$ be a compact set, $\chi \in \mathcal{S}^{0}$ with $\supp(\chi) \subset \mathcal{E}^-_\epsilon \cap \mathcal{E}^+_\epsilon  $ and $\pi_{t,x}(\supp{\chi}) \subset K $. Then for all  $\sigma>0$ there exist $\mathcal{V} \Subset V_t$ and $\tau_0>0$ such that:
\begin{align*}
    \norm{H_-\cnop v_+}{\lrn}{2}+ &\norm{H_+\cnop v_+}{\lrn}{2}+\norm{u}{\lrn}{2}+ \normsurf{u_+}{\ls}{2}+\normsurf{u_-}{\ls}{2}+T_{\theta,\Theta} \\
    &\gtrsim \tau\norm{v}{L^2(\R ; H^1_\tau)}{2}+\tau^3 \normsurf{v_+}{\ls}{2}+\tau^3 \normsurf{v_-}{\ls}{2}
    \\& \hspace{4mm}+\tau\normsurf{ \nabla v_+}{\ls}{2}+\tau\normsurf{ \nabla v_-}{\ls}{2}+\norm{\nabla v_+}{\lrpn}{2}+\norm{\nabla v_-}{\lrmn}{2},
\end{align*}
for all $\tau \geq \tau_0$ and $v_{\pm}=\op(1-\psis)\op(\chi)u_{\pm}$ with $u\in \tranphi$ satisfying $\supp u \subset \mathcal{V}$ .

\end{lem}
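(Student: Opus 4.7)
I plan to adapt the elliptic factorization argument of Le Rousseau--Lerner~\cite{le2013carleman}, exploiting the fact that on $\mathrm{supp}(\chi)\cap(\mathcal{E}^-_\epsilon\cap\mathcal{E}^+_\epsilon)$ both operators $P^\pm_\phi$ are \emph{microlocally elliptic}. There the symbol $m_\pm(x,\xip,\xi_t):=\sqrt{Q(x,\xip)-c^{-1}_\pm(x)\xi_t^2}$ is a genuine tangential symbol of order one with $m_\pm\gtrsim |\xip|+|\xi_t|$, so by symbolic calculus one has, modulo admissible remainders in the class $\psit^1+\tau\psit^0+\psit^0 D_n$ (negligible by Remark~\ref{Carleman insensitive} for $\tau$ large),
\begin{equation*}
c^{-1}_\pm P^\pm_\phi \;\equiv\; \bigl(D_n+i(\tau\phi'+m_\pm)\bigr)\bigl(D_n+i(\tau\phi'-m_\pm)\bigr) \;=:\;(D_n+ie_\pm)(D_n+if_\pm).
\end{equation*}
Because $\phi'(0)=\alpha_\pm>0$, choosing $\mathcal{V}\Subset V_t$ small enough ensures $\phi'\gtrsim 1$ on $\mathrm{supp}(v)$, and then $\mathrm{Im}\, e_\pm\gtrsim \tau+|\xip|+|\xi_t|$ carries the good sign, while $\mathrm{Im}\, f_\pm$ may change sign.

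\textbf{One-dimensional estimates.} With $g_\pm:=(D_n+if_\pm)v_\pm$, the one-dimensional identity of Lemma~\ref{calcul clef} applied to the factor $D_n+ie_\pm$ (fibrewise in $(t,x')$) yields, on account of the favourable sign of $\mathrm{Im}\, e_\pm$, a control of $\|g_\pm\|_{L^2(\R^{n+1}_\pm)}$ and of the trace $|g_\pm(0)|_{L^2(\Sigma)}$ with the correct sign on both half-lines. A second application of Lemma~\ref{calcul clef} to the factor $D_n+if_\pm$ then recovers $v_\pm$ in the $\tau$-weighted $H^1$-norm together with the trace of $(v_\pm,D_n v_\pm)$; however, the resulting boundary term now carries the sign of $\mathrm{Im}\, f_\pm(0)=\tau\alpha_\pm-m_\pm$, which is not signed a priori.

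\textbf{Microlocal partition and matching at $\Sigma$.} To absorb the ``bad'' surface contribution I split the support of $\chi$ by a smooth conic partition of unity $\chi=\chi_1+\chi_2$ in which $\chi_1$ is supported where $\tau\alpha_+-m_+\geq 0$ and $\chi_2$ is supported where $\tau\alpha_--m_-\leq 0$: the geometric assumption~\eqref{geometric assumption} $\alpha_+/\alpha_->\sup_{x_n=0}m_+/m_-$ is precisely the covering condition that makes such a decomposition possible on $\mathcal{E}^-_\epsilon\cap\mathcal{E}^+_\epsilon$. On $\mathrm{supp}(\chi_1)$ I use the factorization on the $+$ side to obtain with the good sign the trace terms $\tau|D_nv_+(0)|^2$ and $\tau^3|v_+(0)|^2$; on $\mathrm{supp}(\chi_2)$ the factorization on the $-$ side produces the analogous favourable contributions. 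The missing traces on the opposite side of $\Sigma$ are then recovered via the transmission conditions \eqref{transmission cond1}--\eqref{transmission cond2} and Lemma~\ref{one of boundary derivatives cv}, which is exactly the mechanism generating the admissible remainder $T_{\theta,\Theta}$ together with the absorbable $\normsurf{u_\pm}{\ls}{2}$ terms. Summing the two microlocal pieces, the full statement follows for $v_\pm=\op(1-\psis)\op(\chi)u_\pm$.

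\textbf{Main obstacle.} The principal difficulty lies in the bookkeeping of two concurrent sources of error: the subprincipal terms generated by the pseudodifferential factorization, which must be absorbed using Remark~\ref{Carleman insensitive} and Gårding's inequality (Lemma~\ref{Garding with a large}) to keep the right-hand side of the sought estimate, and the exchange of surface norms between $\Omega_{t,+}$ and $\Omega_{t,-}$ through the partition $\chi_1+\chi_2$, which must be carefully controlled so that each ``wrong-sign'' boundary contribution is swallowed either by the favourable trace from the opposite factorization or by the transmission conditions. The assumption~\eqref{geometric assumption} on $\alpha_+/\alpha_-$ is indispensable here: without it, the covering by $(\chi_1,\chi_2)$ fails and no factorization provides a usable sign at the interface. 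Once these two issues are handled, the volume derivative norms $\|\nabla v_\pm\|_{L^2(\R^{n+1}_\pm)}$ are recovered by the integration-by-parts argument of Lemma~\ref{derivatives Dn cv}, concluding the proof.
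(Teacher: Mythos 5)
Your overall plan (factorize, split the tangential phase space according to the sign of $f_\pm$, close the estimate through the transmission conditions) is indeed the route the paper follows, and you correctly identify the geometric assumption~\eqref{geometric assumption} as the covering condition that makes the microlocal partition possible. However, there are two concrete gaps.

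First, you invoke Lemma~\ref{calcul clef} ``fibrewise in $(t,x')$'' to estimate the first-order factors $D_n+iE_\pm$ and $D_n+iF_\pm$. That identity is a one-dimensional ODE computation for $D_s+i\gamma s+\lambda$ with $\lambda\in\mathbb C$ a scalar, and applying it fibrewise only makes sense when $E_\pm$, $F_\pm$ act as Fourier multipliers in $(t,x')$. In the general case the symbols $e_\pm=\tau\phi'+m_\pm$ and $f_\pm=\tau\phi'-m_\pm$ depend on $(t,x')$ through $Q(x,\xip)$ and $c^{-1}_\pm(x)$, so $E_\pm,F_\pm$ are genuinely tangential pseudodifferential operators that are not diagonalized by the partial Fourier transform. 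One must instead run the Gårding-based first-order estimates (the analogues of~\eqref{estimate for e+}, \eqref{estimate for e-}, \eqref{positive f+ }, \eqref{negative f- } and \eqref{changin sign f} in the paper), each obtained from the sign of the symbol via Lemma~\ref{Garding with a large}, not from the scalar identity. This is exactly the distinction between the toy model of Section~\ref{proof of toy model} and the general proof, and your proposal collapses it.

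Second, your account of how the surface terms are recovered in the region where $f_-$ is negative elliptic (the paper's $\tilde\Gamma_\mu$) is not sufficient. In that region the factorization on each side only yields control of the \emph{combined} traces $\normsurf{(D_n+iF_+)v_+}{H^{1/2}_\tau}{}$ and $\normsurf{(D_n+iE_-)v_-}{H^{1/2}_\tau}{}$, not of $\tau^{3}\normsurf{v_\pm}{\ls}{2}$ or $\tau\normsurf{D_nv_\pm}{\ls}{2}$ separately. Lemma~\ref{one of boundary derivatives cv} only trades $D_nv_+$ for $D_nv_-$ \emph{modulo} the surface norms of $v_\pm$; it cannot produce those surface norms when you do not already hold them. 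The step you are missing is the interface argument: using~\eqref{transmission cond1}--\eqref{transmission cond2} to show
\[
\normsurf{c_+(D_n+iF_+)v_+}{\ls}{}+\normsurf{c_-(D_n+iE_-)v_-}{\ls}{}
\gtrsim \normsurf{(c_-M_-+c_+M_+)v_+}{\ls}{}-C\bigl(T_{\theta,\Theta}^{1/2}+\tau^{1/2}\normsurf{v_+}{\ls}{}\bigr),
\]
and then invoking the positive ellipticity of the tangential operator $c_-M_-+c_+M_+$ (through Gårding, Lemma~\ref{Garding with a large}) to recover $\normsurf{v_+}{H^1_\tau}{}$, hence $\tau^3\normsurf{v_+}{\ls}{2}$ and, via the transmission condition, $\normsurf{v_-}{H^1_\tau}{}$ as well. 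Without this step the estimate in $\tilde\Gamma_\mu$ does not close.

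A minor remark: the volume gradient norms $\norm{\nabla v_\pm}{L^2(\R^{n+1}_\pm)}{2}$ fall out directly of the first-order estimates (through the $L^2(\R_\pm;H^{1/2}_\tau)$ control on $D_nv_\pm$ and the $L^2(\R_\pm;H^{3/2}_\tau)$ control on $v_\pm$); invoking Lemma~\ref{derivatives Dn cv} as you propose would reintroduce the $E_t(v)$ remainder, which is not present in the statement being proved.
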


\begin{proof}[\unskip\nopunct]
To simplify we remove the $\pm$ notation from $v\pm$. Let $\tchi$ satisfy the same properties as $\chi$, satisfying additionally $\tchi=1$ on $\supp{\chi}$ and define $$s(x,\xi_t,\xip):= Q(x,\xip)-\invcp (x)\xi^2_t,$$ which means that $\invcp(x)\cnp=(D_n+ i \tau \phip)^2+\op(s)+\tilde{R}^+$, where $\tilde{R}^+ \in \Psi^1$. We write:
\begin{align}
\label{expression for factorization}
    \invcp(x)\cnp v
    =\left((D_n+ i \tau \phip)^2+\op(s \tchi)\right)v+\op((1-\tchi)s)\op(1-\psis)\op(\chi)u+\tilde{R}^+u.
\end{align}
The support condition on $\tchi$ guarantees that $\op((1-\tchi)s)\op(1-\psis)\op(\chi) \in \psitt^{-\infty}$. We define now the symbol 
\begin{equation}
\label{definition of m+}
m_{+}^2:=s \tchi+(1-\tchi)\lambda^2. 
\end{equation}
To justify the slightly abusive notation of $m^2_+$ we notice that by definition of the region $\mathcal{E}_{\epsilon}^+$ (see \eqref{def of E}) one has $s \gtrsim \epsilon \lambda^2,$ on the support of $\tchi$ and consequently 
$
m_{+}^2 \gtrsim \epsilon \lambda^2.
$ This means that $m_{+}^2 \in\mathcal{S}^2$ is elliptic positive and therefore it is indeed a square of another symbol in $\mathcal{S}^1$. We now write:
\begin{align*}
   \op(s \tchi)v=\op(m_{+}^2)v- \op((1-\tchi)\lambda^2)\op(1-\psis)\op(\chi)u,
\end{align*}
with $\op((1-\tchi)\lambda^2)\op(1-\psis)\op(\chi) \in \Psi^{-\infty}_{\tau} $. Coming back to ~\eqref{expression for factorization} we have obtained
$$
 \invcp(x)\cnp v=\left((D_n+ i \tau \phip)^2+\op(m_{+}^2)\right)v+\tilde{R}^+u+R^+u,
$$
with $R^+ \in \Psi^{-\infty}_\tau$. In particular $\norm{R^+u}{\lrpn}{2} \lesssim \norm{u}{\lrpn}{2},$ which can be absorbed in the left hand side of the sought estimate. Using the positive ellipticity of $m^2_+$ we define its square root which we denote by $m_+ \in\mathcal{S}^1$. Using symbolic calculus we obtain
\begin{align}
\label{microlocal factorization +}
 \invcp(x)\cnp v &=\left((D_n+ i \tau \phip)^2+M_+^2\right)v +R^+_1u \nonumber \\
         & = (D_n+ i \tau \phip -iM_{+})(D_n+ i \tau \phip+iM_{+})v +R^+_2u \nonumber \\
         & = (D_n+ i \tau \phip+iM_{+})(D_n+ i \tau \phip -iM_{+})v+R^+_3u ,
\end{align}
where $M_{+}=\op(m_{+}) \in \psit^1$ and the operators $R^+_j \in \psit^1+\tau \psit^0+ \psit^0 D_n$ which is an admissible perturbation (see Remark \ref{Carleman insensitive}). Similarly, using that $\supp(\chi) \subset \mathcal{E}^-_\epsilon \cap \mathcal{E}^+_\epsilon$ we may write:
\begin{align}
\label{microlocal factorization -}
\invcm(x)\cnm v &=\left((D_n+ i \tau \phip)^2+M_-^2\right)v +R^-_1u \nonumber \\
& = (D_n+ i \tau \phip -iM_{-})(D_n+ i \tau \phip+iM_{-})v +R^-_2u \nonumber \\
& = (D_n+ i \tau \phip+iM_{-})(D_n+ i \tau \phip -iM_{-})v+R^-_3u ,
\end{align}
where $M_-=\op(m_-)$ and $m_- \in \mathcal{S}^1$ is (similar to $m_+$) \textit{positive elliptic} and \textit{homogeneous} of degree one. Recall as well that $\op(1-\psis)$ localizes in a region where $\tau \lesssim |\xi_t|+|\xip|$. We can therefore suppose without loss of generality, up to introducing an admissible error in our estimate, that $m_\pm \in S^1_\tau$.

So far we have obtained microlocally a factorization as in \cite{le2013carleman} with the same weight $\phi$ and with operators $M_{\pm}$ having real, positive elliptic, homogeneous symbols of degree one. This is sufficient for the proof of the elliptic case to hold in our context too. Let us present the arguments of ~\cite{le2013carleman} in the present context.

\bigskip

We have two first-order factors on each side of the interface. Using the same notation as in \cite{le2013carleman} we write:
\begin{align*}
    &D_n+i(\tau \phip +M_{\pm}):=D_n+iE_\pm, \quad e_\pm=\tau \phip+m_\pm, \: E_\pm=\op(e_{\pm}) \\
    &D_n+i(\tau \phip -M_{\pm}):=D_n+iF_\pm, \quad f_\pm=\tau \phip-m_\pm, \: F_\pm=\op(f_{\pm}) .
\end{align*}
The crucial remark here is that one has always
$$
\mathcal{S}^1_\tau \ni e_\pm \gtrsim \lambda_\tau,
$$
thanks to the positive ellipticity of $m_\pm$, whereas the sign of $f_\pm$ may change. The quality of the estimate obtained depends on the ellipticity/positivity of the factors above, therefore one needs to distinguish cases concerning their sign. 

We collect now a series of lemmas giving some first order estimates. These correspond in the elliptic case to the estimates obtained in \cite[Sections from 3.2 to 3.7]{le2013carleman} and the proofs are similar.

For the elliptic factors $e_\pm$ one has:

\begin{lem}[\textbf{Positive factor on the positive half-space}]
Let $e_+ \in \mathcal{S}^1_\tau$ such that $e_+\gtrsim \lambda_\tau$ and $E_+=\op(e_+).$ Then for all $l\in \R$ there exists $\tau_0>0$ such that
   \begin{equation}
    \label{estimate for e+}
    \norm{(D_n+ i E_+)v}{L^2(\Rp; H^l_\tau)}{} \gtrsim \normsurf{v}{H^{l+1/2}_\tau}{}+\norm{v}{L^2(\Rp; H^{l+1}_\tau)}{}+\norm{D_n v}{L^2(\Rp; H^l_\tau)}{},
    \end{equation}
   for $\tau \geq \tau_0$ and $v \in \mathscr{S}_c(\R^{n+1}).$
\end{lem}

\begin{proof}
We sketch the proof for $l=0$.
An integration by parts in the $x_n$ variable yields:
\begin{align*}
2 \operatorname{Re}\left((D_n+i E_+)v,i \op(\lambda_\tau)v \right)_+= \normsurf{v}{H^{1/2}_\tau}{2}+ 2\operatorname{Re}\left(E_+v,\op(\lambda_\tau) v\right)_+,
\end{align*}
and then we simply use the fact that $e_+ \gtrsim \lambda_\tau$ as well as Gårding's inequality with a large parameter which implies:
$$
\left(E_+v,\op(\lambda_\tau) v\right)_+=\left(\op(\lambda_\tau)E_+v, v\right)_+ \gtrsim \norm{v}{L^2(\Rp; H^1_\tau)}{2},
$$
and \eqref{estimate for e+} without the term $\norm{D_n v}{}{}$ follows for $l=0$ from Young's inequality. To introduce $\norm{D_n v}{}{}$ in our estimate we simply use the expression 
$$ \norm{(D_n+ i E_+)v}{\lrpn}{2}=\norm{D_n v}{\lrpn}{2}+2 \operatorname{Re}(D_n v, i E_+v)_++\norm{E_+ v}{\lrpn}{2},$$
and we bound from above the last two terms by $\norm{v}{L^2(\Rp; H^{1}_\tau)}{2}$.
For general $l$ we just consider $$2 \operatorname{Re}\left((D_n+i E_+)v,i \op(\lambda^{2l+1}_\tau)v \right)_+$$ 
instead of $2 \operatorname{Re}\left((D_n+i E_+)v,i \op(\lambda_\tau)v \right)_+$.
\end{proof}

Estimate~\ref{estimate for e+} is of great quality as one could expect since we have an elliptic factor in the observation region. In the sequel, to alleviate notation, we sketch the proofs for $l=0$.

 \begin{lem}[\textbf{Positive factor on the negative half-space} ]
Let $e_- \in \mathcal{S}^1_\tau$ such that $e_-\gtrsim \lambda_\tau$ and $E_-=\op(e_-).$  Then for all $l\in \R$ we have that there exists $\tau_0>0$ such that
 \begin{equation}
    \label{estimate for e-}
    \norm{(D_n+ i E_-)v}{L^2(\Rm; H^l_\tau)}{}+\normsurf{v}{H^{l+1/2}_\tau}{} \gtrsim \norm{v}{L^2(\Rm; H^{l+1}_\tau)}{}+\norm{D_n v}{L^2(\Rm; H^l_\tau)}{},
    \end{equation}
 for $\tau \geq \tau_0$ and $v \in \mathscr{S}_c(\R^{n+1}) $.
 \end{lem}
 
 \begin{proof}
 The proof is the same as for \eqref{estimate for e+} but the boundary term comes with different sign.
 \end{proof}

Since the factors $f_\pm$ do not have a constant sign we need to consider appropriate microlocal regions.

\begin{lem}[\textbf{Positive $f_+$ on the positive half-space}]
Let $f_+ \in \mathcal{S}^1_\tau$ and $F_+=\op(f_+).$ We consider $c_0>0$ and $\chi=\chi(t,x,\xi_t,\xip,\tau) \in \mathcal{S}^{0}_\tau$ such that $f_+ \geq c_0 \lambda_\tau$ on $\supp{\chi}$. Then for all $l\in \R$ there exists $\tau_0>0$ such that
\begin{align}
\label{positive f+ }
    \norm{(D_n+i F_+)\op(\chi)v}{L^2(\Rp; H^l_\tau)}{}&+\norm{v}{\lrpn}{} \\ \nonumber  &\gtrsim \normsurf{\op(\chi)v}{H^{l+1/2}_\tau}{}+\norm{\op(\chi)v}{L^2(\Rp; H^{l+1}_\tau)}{}\\ \nonumber &\hspace{4mm}+ \norm{D_n \op(\chi)v}{L^2(\Rp; H^l_\tau)}{},
\end{align}
for $\tau \geq \tau_0$ and $v \in \mathscr{S}_c(\R^{n+1})$.
\end{lem}

\begin{proof}
The proof only uses the positive ellipticity of $f_+$ on the support of $\chi$ and is exactly the same as the proof of Lemma 3.3 in \cite{le2013carleman}. The (harmless) error term on the left hand side comes from the microlocalization.
\end{proof}

\begin{lem}[\textbf{Positive $f_-$ on the negative half-space}]
Let $e_- \in \mathcal{S}^1_\tau$ such that $e_-\gtrsim \lambda_\tau$ and $E_-=\op(e_-).$ Let $f_- \in \mathcal{S}^1_\tau$  and $F_-=\op(f_-).$ We consider $c_0>0$ and $\chi=\chi(t,x,\xi_t,\xip,\tau) \in \mathcal{S}^{0}_\tau$ such that $f_- \geq c_0 \lambda_\tau$ on $\supp{\chi}$. Then for all $l\in \R$ there exists $\tau_0>0$ such that
\begin{align}
\label{positive f- }
    &\norm{(D_n+i F_-)(D_n+iE_-)\op(\chi)v}{L^2(\Rm; H^l_\tau)}{}+\norm{v}{\lrmn}{}+\norm{D_n v}{\lrmn}{}\nonumber\\
   &\hspace{8mm}+\normsurf{(D_n+iE_-)\op(\chi)v}{H^{l+1/2}_\tau}{}  \gtrsim \norm{(D_n+iE_-)\op(\chi)v}{L^2(\Rm; H^{l+1}_\tau)}{},
\end{align}
for $\tau \geq \tau_0$ and $v \in \mathscr{S}_c(\R^{n+1})$.
\end{lem}

\begin{proof}
 The proof is as \cite[Lemma 3.4]{le2013carleman}. We set $u=(D_n+iE_-)\op(\chi)v$ and write
\begin{align*}
    2 \operatorname{Re}((D_n+i F_-)u,\op(\lambda_\tau)u)_-=-\normsurf{u}{H^{1/2}_\tau}{}+2\operatorname{Re}(F_-u,\op(\lambda_\tau)u)_-.
\end{align*}
 We then use the fact that $f_-\geq c_0 \lambda_t$ on the support of $\chi$ combined with Gårding's inequality with a large parameter.
\end{proof}

Even though one would expect that the crucial surface terms should come from the observation region $\R^{n+1}$ there is a situation in which the “good estimate” arises from the negative side $\R^{n+1}_-$. This is the content of the next estimate.

 \begin{lem}[\textbf{Negative $f_-$ on the negative half-space}]
 Let $e_- \in \mathcal{S}^1_\tau$ such that $e_-\gtrsim \lambda_\tau$ and $E_-=\op(e_-).$ Let $f_- \in \mathcal{S}^1_\tau$  and $F_-=\op(f_-).$ We consider $c_0$ and $\chi=\chi(t,x,\xi_t,\xip,\tau) \in \mathcal{S}^{0}_\tau$ such that $f_- \leq - c_0 \lambda_\tau$ on $\supp{\chi}$. Then there exists $\tau_0>0$ such that
\begin{align}
\label{negative f- }
&\norm{(D_n+i F_-)(D_n+iE_-)\op(\chi)v}{\lrmn}{}+\norm{v}{\lrmn}{}\nonumber\\ & \hspace{8mm}+\norm{D_n v}{\lrmn}{}\gtrsim \normsurf{(D_n+iE_-)\op(\chi)v}{H^{1/2}_\tau}{} + \norm{(D_n+iE_-)\op(\chi)v}{L^2(\Rm; H^1_\tau)}{},
\end{align}
for $\tau \geq \tau_0$ and $v \in \mathscr{S}_c(\R^{n+1})$.
\end{lem}

\begin{proof}
 This is the analogue of Lemma 3.5 in \cite{le2013carleman}. The proof is in the previous Lemma, here however the fact that $f_-$ is negative elliptic on the support of $\chi$ will allow us to control the crucial surface term.
\end{proof}

We finally give the estimate when $f_\pm$ has no constant sign. Here we exhibit an estimate with loss of a half derivative. Notice that in the estimates above, no assumption on the coefficients of the weight function $\phi$ was made. The sub-ellipticity (pseudoconvexity) assumption below \eqref{estim sous ell changins sign} is used in the next region. As in the classic case (with no interface) (see for example \cite[Chapter 28]{Hoermander:V4}) this translates into taking $\beta$ large enough. We sketch the proof. We recall that the weight $\phi$ is defined in~\eqref{def of weight phi}.

\begin{lem}[\textbf{Changing signs for $f_\pm$}]

Let $m_\pm  \in \mathcal{S}^1_\tau$ be real positive elliptic homogeneous symbols. Define $f_\pm=\tau \phip - m_\pm$ and $F_\pm=\op(f_\pm)$. Then for all $l\in \R$ there exist $\mathcal{V} \Subset V_t$ and $\tau_0>0$ such that
\begin{equation}
    \label{changin sign f}
\norm{(D_n+i F_\pm)v_\pm}{L^2(\R_\pm; H^l_\tau)}{}+\normsurf{v_\pm}{H^{l+1/2}_\tau}{} \gtrsim \tau^{-\frac{1}{2}}\left(\norm{v_\pm}{L^2(\R^{n+1}_\pm; H^{l+1}_\tau)}{}+\norm{D_n v_{\pm}}{L^2(\R^{n+1}_\pm; H^{l}_\tau)}{} \right),
\end{equation}
for $\tau \geq \tau_0$ and $v \in \mathscr{S}_c(\R^{n+1})$ with $\supp v \subset 
\mathcal{V}$.

\end{lem}

\begin{proof}
We do the proof for $l=0$. Since the symbol of $F_+ \in \psitt^1$ is real we have $F_+^*=F_+$, according to the Weyl quantization. We compute then
\begin{align}
\label{changing sign aux 1}
\norm{(D_n+iF_+)v}{\lrpn}{2}&=\norm{D_n v}{\lrpn}{2}+\norm{F_+ v}{\lrpn}{2}+(D_n v ,i F_+ v)_+ +(iF_+v,D_nv)_+  \nonumber \\
&\geq\norm{F_+ v}{\lrpn}{2}+(D_n v ,i F_+ v)_+ +(iF_+v,D_nv)_+   \nonumber\\
&=\operatorname{Re}\left((F_+^2v,v)_+ +i([D_n,F_+]v,v)_+ +(F_+v,v)_\Sigma  \right)  \nonumber\\
&\geq \operatorname{Re}\left(\tau^{-1}\mu(  F_+^2v,v)_+ +i([D_n,F_+]v,v)_+ +(F_+v,v)_\Sigma  \right)  \nonumber\\
&=\tau^{-1} \operatorname{Re}(\mu F_+^2+i \tau  [D_n,F_+]v,v)_++ \operatorname{Re}(F_+v,v)_\Sigma,
\end{align}
for all $\tau \geq \mu$. Now since $F_+ \in \psitt^1$ one has 
\begin{equation}
\label{changing sign aux 2}
|\operatorname{Re}i(F_+v,v)_\Sigma | \lesssim \normsurf{v}{H^{1/2}_\tau}{2}	
\end{equation}
which goes to the left hand side of \eqref{changin sign f}. We need therefore to estimate the term
$$
\operatorname{Re}(\mu F_+^2+i\tau [D_n,F_+]v,v)_+.
$$
The principal symbol of $\mu F_+^2+i\tau( [D_n,F_+]v,v)_+$ in the class $\psitt^2$ is $$\mu f_+^2+\tau \{\xi_n,f_+\}=\mu f_+^2+\tau\partial_{x_n}f_+ \in \mathcal{S}^2_\tau.$$
Notice in particular that the commutator kills the non tangential part. This is the point where the sub-ellipticity condition appears. Indeed, for $\beta$ sufficiently large one has that for $\mu$ sufficiently large,
\begin{equation}
\label{estim sous ell changins sign}
\mu f_+^2+\tau \{\xi_n,f_+\} \gtrsim \lambda_\tau^2.    
\end{equation}

To prove \eqref{estim sous ell changins sign} we start by noticing that there exists $\epsilon_1>0$ such that the following implication holds:
$$
|f_+| \leq \epsilon_1 \lambda_\tau \Longrightarrow \tau \sim \lambda=(|\xip|^2+|\xi_t|^2)^{\frac{1}{2}},
$$
where the notation $\tau \sim \lambda$ means that there exists $C>0$ such that $1/C \tau \leq \lambda \leq C \tau$.
The above implication is a consequence of the facts $f_+=\tau \phip- m_+$, $\phip\gtrsim 1$ and $m_+ \lesssim \lambda$. We distinguish two cases:

\begin{enumerate}
    \item Suppose that $|f_+| \leq \epsilon_1 \lambda_\tau$. Then the above implies $\tau \sim \lambda$ and we can compute as follows 
    $$
    \{\xi_n,f_+\}=\tau \beta-\partial_{x_n}m_+ \gtrsim \beta \lambda-\partial_{x_n}m_+.
    $$
Using homogeneity and compactness we obtain that for $\beta$ large enough one has 
$$
 \beta \lambda-\partial_{x_n}m_+ \gtrsim \lambda,
$$
and since  $\tau\sim \lambda$ this implies 
$
\tau    \{\xi_n,f_+\} \gtrsim \lambda_\tau^2,$ which implies \eqref{estim sous ell changins sign}.

\item If $|f_+| \geq \epsilon_1 \lambda_\tau$ then again by homogeneity and compactness one has for $\mu$ sufficiently large and all $\tau \geq \mu$ the sought estimate \eqref{estim sous ell changins sign}.
\end{enumerate}

One can then apply Gårding's inequality with a large parameter which gives
$$
\operatorname{Re}(\mu F_+^2+i\tau [D_n,F_+]v,v)_+ \gtrsim \norm{v}{L^2(\Rp ; H^1_\tau)}{2}.
$$
Combined with \eqref{changing sign aux 1} and \eqref{changing sign aux 2}, this proves inequality \eqref{changin sign f} in the positive half-space. The proof for the negative half-space is exactly the same.
\end{proof}

We have presented the first order estimates that we will iterate. In order for the iteration to work, we need to impose a condition on the coefficients of the weight function $\phi$. The crucial assumption that we use is the following \textbf{geometric hypothesis} of \cite{le2013carleman}. We choose the coefficients $\alpha_-$, $\alpha_+$ such that:

\begin{equation*}
\frac{\alpha_+}{\alpha_-} > \sup_{\substack{t,x^\prime,\xi_t,\xip \\ |(\xi_t,\xip)|\geq 1}} \frac{m_+(t,0,x^\prime,\xi_t,\xip)}{m_-(t,0,x^\prime,\xi_t,\xip)},
\end{equation*}
where the symbols $m_\pm$ defined above (see \eqref{ellipticity}) are real positive elliptic and homogeneous of degree one. 
This geometric hypothesis can be stated as well by saying that:
\begin{equation}
\label{geometric assumption}
\exists \: \mu>1, \quad     
\frac{\alpha_+}{\alpha_-} = \mu^2 \sup_{\substack{t,x^\prime,\xi_t,\xip \\ |(\xi_t,\xip)|\geq 1}} \frac{m_+(t,0,x^\prime,\xi_t,\xip)}{m_-(t,0,x^\prime,\xi_t,\xip)}.
\end{equation}

We now recall how the geometric assumption on the weight \eqref{geometric assumption} allows to effectively combine all of the above estimates. This is expressed through the following lemma. 

\begin{lem}
\label{regions gamma}
Let $\mu_0>\mu>1$ and $\alpha_\pm$, be positive numbers such that \eqref{geometric assumption} holds. For $s>0$ define the following subsets of $\R^{n}_{x^\prime,t} \times \R^{n}_{\xi_t,\xip} \times \R^*_+$ by:
\begin{align*}
    \Gamma_s &=\{(t,x^\prime, \xi_t,\xip,\tau); \: |(\xip, \xi_t)|<2\textnormal{ or } \tau \alpha_+>s m_+(t,0,x^\prime,\xi_t,\xip),\\
    \tilde{\Gamma}_s &=\{(t,x^\prime, \xi_t,\xip,\tau); \: |(\xip, \xi_t)|>1\textnormal{ and } \tau \alpha_+<s m_+(t,0,x^\prime,\xi_t,\xip)\}.
    \end{align*}
Then there exists $\eta, \tau_0>0$ such that for $|x_n|\leq \eta $ and $\tau\geq \tau_0$ we have $\R^{n}_{x^\prime,t} \times \R^{n}_{\xi_t,\xip} \times \R^*_+=\Gamma_{\mu_0} \cup \tilde{\Gamma}_\mu$ and
\begin{align*}
    \Gamma_{\mu_0} &\subset \{(t,x^\prime, \xi_t,\xip,\tau) \in \R^n\times \R^n \times \R^*_+; \: f_+( t,x,\xi_t, \xip) \geq C \lambda_ \tau  \textnormal{ for $0\leq x_n\leq \eta$}\} \},\\
    \tilde{\Gamma}_\mu &\subset \{(t,x^\prime, \xi_t,\xip,\tau) \in \R^n\times \R^n \times \R^*_+; \: f_-( t,x,\xi_t, \xip) \leq -C \lambda_ \tau  \textnormal{ for $-\eta \leq x_n\leq 0$}\}.
\end{align*}
\end{lem}
The proof of this Lemma uses the ellipticity and homogeneity of $m_\pm$ and is exactly the same as in Section 4A of \cite{le2013carleman}.

The end of the proof is also similar to \cite{le2013carleman}. Let us sketch it here for the sake of completeness. The crucial property of Lemma~\ref{regions gamma} is that we have covered the tangential dual space by two regions such that $f_+$ is positive elliptic on the one region and $f_-$ is negative elliptic on the other. We consider a homogeneous partition of unity
$$
1=\chi_{\Gamma,0}(t,x^\prime, \xi_t,\xip ,\tau)+\chi_{\Gamma,1}(t,x^\prime, \xi_t,\xip ,\tau), \: \supp{\chi_{\Gamma,0}} \subset\Gamma_{\mu_0}, \: \supp{\chi_{\Gamma,1}} \subset \tilde{\Gamma}_\mu.
$$
Remark that the derivatives of $\chi_{\Gamma,j}$ are supported in a region where $\tau \lesssim |\xip|+\xi_t$ and consequently one has $\chi_{\Gamma,j} \in \mathcal{S}^0_\tau$.

We now prove the estimate of Lemma \ref{Lemma elliptic region} in the interior of each of the microlocal sub-regions given by Lemma \ref{regions gamma}. After this, the last step will be to put together these two microlocal estimates.

\bigskip 

\textbf{Estimate in $\Gamma_{\mu_0}$}

We consider $\Xi_0=\op(\chi_{\Gamma,0}) \in \Psi^0_\tau$. We start by applying \eqref{estimate for e+} to get:
\begin{align*}
  \norm{  (D_n+iE_+)(D_n+iF_+)\xmo v_+}{\lrpn}{}\gtrsim \normsurf{(D_n+iF_+)\xmo v_+}{H^{1/2}_\tau}{}+\norm{(D_n+iF_+)\xmo v_+}{L^2(\Rp; H^1_\tau)}{}.
\end{align*}
The localization of $\chi_{\Gamma,0}$ allows to use Lemma \eqref{positive f+ } which gives
$$
\norm{(D_n+iF_+)\xmo v_+}{L^2(\Rp; H^1_\tau)}{}+\norm{v_+}{L^2}{} \gtrsim \normsurf{\xmo v_+}{H^{3/2}_\tau}{}+\norm{\xmo v_+}{L^2(\Rp; H^2_\tau)}{}+\norm{D_n \xmo v_+}{L^2(\Rp; H^1_\tau)}{}.
$$
We obtain therefore:
\begin{align*}
&\norm{  (D_n+iE_+)(D_n+iF_+)\xmo v_+}{\lrpn}{}+\norm{v_+}{L^2}{} \\
&\hspace{4mm}\gtrsim \normsurf{(D_n+iF_+)\xmo v_+}{H^{1/2}_\tau}{}+\normsurf{\xmo v_+}{H^{3/2}_\tau}{}+\norm{\xmo v_+}{L^2(\Rp; H^2_\tau)}{}+\norm{D_n \xmo v_+}{L^2(\Rp; H^1_\tau)}{}.
\end{align*}
Using the fact that 
$$
\normsurf{(D_n+iF_+)\xmo v_+}{H^{1/2}_\tau}{}\geq \normsurf{D_n \xmo v_+}{H^{1/2}_\tau}{}-\normsurf{F_+ \xmo v_+}{H^{1/2}_\tau}{}\geq  \normsurf{D_n \xmo v_+}{H^{1/2}_\tau}{}-C \normsurf{\xmo v_+}{H^{3/2}_\tau}{},
$$
we finally get
\begin{align}
\label{positive half line region gamma one}
&\norm{  (D_n+iE_+)(D_n+iF_+)\xmo v_+}{\lrpn}{}+\norm{v_+}{L^2}{} \\ \nonumber
&\hspace{4mm}\gtrsim \normsurf{D_n \xmo v_+}{H^{1/2}_\tau}{}+\normsurf{\xmo v_+}{H^{3/2}_\tau}{}+\norm{\xmo v_+}{L^2(\Rp; H^2_\tau)}{}+\norm{D_n \xmo v_+}{L^2(\Rp; H^1_\tau)}{}.
\end{align}
For the negative half-space we start by using \eqref{changin sign f} for $l=1/2$ which yields:
\begin{align*}
  \norm{(D_n+iF_-)(D_n+iE_-)\xmo v_-}{\lrmn}{}+\normsurf{(D_n+iE_-)\xmo v_-}{L^2}{}\gtrsim \norm{(D_n+iE_-)\xmo v_-}{L^2(\Rm ; H^{1/2}_\tau)}{}.
\end{align*}
Estimate \eqref{estimate for e-} gives:
\begin{align*}
  \norm{(D_n+iE_-)\xmo v_-}{L^2(\Rm ; H^{1/2}_\tau)}{}  +\normsurf{\xmo v_- }{H^{1}_\tau}{}\gtrsim  \norm{\xmo v_-}{L^2(\Rm; H^{3/2}_\tau)}{}+\norm{\xmo D_n v_-}{L^2(\Rm ; H^{1/2}_\tau)}{},
\end{align*}
and consequently
\begin{align}
\label{negative half line region gamme one}
  \norm{(D_n+iF_-)(D_n+iE_-)\xmo v_-}{\lrmn}{}&+\normsurf{D_n \xmo v_-}{L^2}{}+ \normsurf{\xmo v_- }{H^{1}_\tau}{}\\ \nonumber
  &\gtrsim   \norm{\xmo v_-}{L^2(\Rm; H^{3/2}_\tau)}{}+\norm{\xmo D_n v_-}{L^2(\Rm ; H^{1/2}_\tau)}{}.
\end{align}
We can now multiply \eqref{positive half line region gamma one} by a large constant and add it to \eqref{negative half line region gamme one}, take $\tau$ large and use the transmission conditions to find:
\begin{align}
\label{estimate region gamma one}
\norm{\cnop^+ \xmo v_+}{\lrpn}{}&+\norm{\cnop^- \xmo v_-}{\lrmn}{}+T_{\theta,\Theta}^{1/2}+\tau^{1/2}\normsurf{v_+}{\ls}{}+\norm{v_+}{\lrpn}{}\\ \nonumber
&\gtrsim\tau^{1/2}\left(\normsurf{\xmo D_n v_-}{\ls}{}+\normsurf{\xmo D_n v_+}{\ls}{}+ \normsurf{\xmo v_- }{H^1_\tau}{} +\normsurf{\xmo v_+}{H^1_\tau}{} \right)\\ \nonumber
&\hspace{4mm}+\norm{\xmo v}{L^2(\R; H^{3/2}_\tau)}{}+\norm{\xmo D_n v_-}{L^2(\R; H^{1/2}_\tau)}{}+\norm{\xmo D_n v_+}{L^2(\R; H^{3/2}_\tau)}{}.
\end{align}

This gives the desired estimate in the microlocal region $\Gamma_{\mu_0}$.

\bigskip

\textbf{Estimate in $\tilde{\Gamma}_\mu$}

We consider $\xm=\op(\chi_{\Gamma,1}) \in \Psi^{0}_\tau$. We apply estimate \eqref{estimate for e+} which gives:
\begin{align}
\label{aux for gamma tilde 1}
  \norm{  (D_n+iE_+)(D_n+iF_+)\xm v_+}{\lrpn}{}\gtrsim \normsurf{(D_n+iF_+)\xm v_+}{H^{1/2}_\tau}{}+\norm{(D_n+iF_+)\xm v_+}{L^2(\Rp; H^1_\tau)}{}.
\end{align}
Thanks to the localization of $\chi_{\Gamma,1}$ we can use the estimate \eqref{negative f- } for the negative half-space:
\begin{align}
\label{aux for gamma tilde 2}
 \norm{ (D_n+iF_-)(D_n+iE_-)\xm v_-}{\lrpn}{} &+\norm{v_-}{\lrmn}{}+\norm{D_n v_-}{\lrmn}{} \nonumber\\
 &\hspace{-8mm}\gtrsim \normsurf{(D_n+iE_-)\xm v_-}{H^{1/2}_\tau}{} +\norm{(D_n+iE_-) \xm v_-}{L^2(\Rm; H^1_\tau)}{}.
\end{align}
 Estimates \eqref{aux for gamma tilde 1} and \eqref{aux for gamma tilde 2} imply in particular that we control:
 \begin{align*}
 c_+ \norm{\cnop \xm v_+}{\lrpn}{}&+c_-\norm{\cnop \xm v_+}{\lrmn}{}+\norm{v_-}{\lrmn}{}+\norm{D_n v_-}{\lrmn}{}\\&\gtrsim \tau^{1/2}\left( \normsurf{c_+(D_n+iF_+)\xm v_+}{\ls}{}+ \normsurf{c_-(D_n+iE_-)\xm v_-}{\ls}{}\right).
 \end{align*}
Using the transmission conditions \eqref{transmission cond1}, \eqref{transmission cond2} as well as the triangle inequality we find:
\begin{align*}
\tau^{1/2}\bigg(\normsurf{c_+(D_n+iF_+)\xm v_+}{\ls}{} &+ \normsurf{c_-(D_n+iE_-)\xm v_-}{\ls}{}\bigg) \\
&\geq \tau^{1/2} \normsurf{(c_-M_-+c_+M_+)\xm v_+}{\ls}{}-C \left(T_{\theta,\Theta}^{1/2}+\tau^{1/2}\normsurf{v_+}{\ls}{} \right).
\end{align*}
We now use the positive ellipticity of $M_\pm$ combined with Lemmata~\ref{Garding with a large} and~\ref{lemma with different classes} to obtain:
\begin{align*}
    \normsurf{(D_n+iE_-)\xm v_-}{H^{1/2}_\tau}{} &+\norm{(D_n+iE_-) \xm v_-}{L^2(\Rm; H^1_\tau)}{}+T_{\theta,\Theta}^{1/2}+\tau^{1/2}\normsurf{v_+}{\ls}{} \\
    &\hspace{8mm} \gtrsim \normsurf{\xm v_- }{H^1_\tau}{}+\normsurf{\xm v_+ }{H^1_\tau}{}.
\end{align*}
Hence we control
\begin{align*}
     &c_+ \norm{\cnop \xm v_+}{\lrpn}{}+c_-\norm{\cnop \xm v_+}{\lrmn}{}+\norm{v_-}{\lrmn}{}+\norm{D_n v_-}{\lrmn}{}\\
     &+T_{\theta,\Theta}^{1/2}+\tau^{1/2}\normsurf{v_+}{\ls}{} \gtrsim \tau^{1/2}\bigg(\normsurf{\xm v_- }{H^1_\tau}{}+\normsurf{\xm v_+ }{H^1_\tau}{}+\normsurf{\xm D_n v_-}{\ls}{}+\normsurf{\xm D_n v_+}{\ls}{} \\&\hspace{8mm}+\norm{(D_n+iF_+)\xm v_+}{L^2(\Rp; H^{1/2}_\tau)}{}
     +\norm{(D_n+iE_-)\xm v_+}{L^2(\Rp; H^{1/2}_\tau)}{} \bigg).
\end{align*}
From here one can proceed as for the region $\Gamma_{\mu_0}$ by using estimate \eqref{estimate for e-} for the term $$\norm{(D_n+iE_-)\xm v_+}{L^2(\Rp; H^{1/2}_\tau)}{}$$ and estimate \eqref{changin sign f} for $$\norm{(D_n+iF_+)\xm v_+}{L^2(\Rp; H^{1/2}_\tau)}{}.$$

We finally obtain the desired estimate microlocalized in $\tilde{\Gamma}_\mu$:
\begin{align}
\label{estimate region gamma two}
\norm{\cnop^+ \xm v_+}{\lrpn}{}&+\norm{\cnop^- \xm v_-}{\lrmn}{}+T_{\theta,\Theta}^{1/2}+\tau^{1/2}\normsurf{v_+}{\ls}{}+\norm{v_+}{\lrpn}{}\\ \nonumber
&+\norm{D_n v_-}{\lrmn}{}\gtrsim\tau^{1/2}\left(\normsurf{\xm D_n v_-}{\ls}{}+\normsurf{\xm D_n v_+}{\ls}{}+ \normsurf{\xm v_- }{H^1_\tau}{} +\normsurf{\xm v_+}{H^1_\tau}{} \right)\\ \nonumber
&\hspace{4mm}+\norm{\xm v}{L^2(\R; H^{3/2}_\tau)}{}+\norm{\xm D_n v_-}{L^2(\R; H^{1/2}_\tau)}{}+\norm{\xm D_n v_+}{L^2(\R; H^{3/2}_\tau)}{}.
\end{align}

\bigskip

\noindent \textit{End of the proof of Lemma~\ref{Lemma elliptic region}.} To finish the proof of Lemma~\ref{Lemma elliptic region} one has to add estimates \eqref{estimate region gamma one} and \eqref{estimate region gamma two}. This yields the estimate:
\begin{align*}
    \norm{\cnop \xmo v}{\lrn}{}&+\norm{\cnop \xm v}{\lrn}{}+T_{\theta,\Theta}^{1/2}+\tau^{1/2}\normsurf{v_+}{\ls}{}+\norm{v}{\lrn}{}+\norm{D_n v_-}{\lrmn}{} \\
    &\gtrsim \sum_{j=0,1}\tau^{1/2}\left(\normsurf{\Xi_{j} D_n v_-}{\ls}{}+\normsurf{\Xi_{j} D_n v_+}{\ls}{}+ \normsurf{\Xi_{j} v_- }{H^1_\tau}{} +\normsurf{\Xi_{j} v_+}{H^1_\tau}{} \right)\\ \nonumber
&\hspace{4mm}+\norm{\Xi_{j} v}{L^2(\R; H^{3/2}_\tau)}{}+\norm{\Xi_{j} D_n v_-}{L^2(\R; H^{1/2}_\tau)}{}+\norm{\Xi_{j} D_n v_+}{L^2(\R; H^{3/2}_\tau)}{}.
\end{align*}

The right hand side can be estimated from below by simply using the triangle inequality as well as the fact that $\xmo+\xm=\textnormal{Id}$. To bound from above the left hand side we argue with commutators noticing that $\cnop^{\pm} \in \Psi^2_\tau$, $\xmo,\xm \in \Psi^0_\tau$ and therefore 
$
[\cnop^{\pm},\Xi_{j}] \in \Psi^1_\tau.
$
This, combined with Sobolev regularity of the pseudo differential calculus allows to estimate as follows:
\begin{align*}
    \norm{\cnop^{\pm}\Xi_{j}v_\pm}{L^2(\R^{n+1}_\pm)}{}&\leq \norm{\Xi_{j}\cnop^{\pm}v_\pm}{L^2(\R^{n+1}_\pm)}{}+\norm{[\cnop^{\pm},\Xi_{j}]v_\pm}{L^2(\R^{n+1}_\pm)}{} \\
    &\lesssim \norm{\cnop^{\pm}v_\pm}{L^2(\R^{n+1}_\pm)}{}+\norm{v_\pm}{L^2(\R_\pm;H^1_\tau)}{}.
\end{align*}
We take $\tau\geq \tau_0$, $\tau_0$ large to absorb the error term $\norm{v_\pm}{L^2(\R_\pm;H^1_\tau)}{}$. This concludes the proof of Lemma~\ref{Lemma elliptic region}. 
\end{proof}

\subsection{Patching together microlocal estimates: End of the proof of Proposition~\ref{theorem bis}}

\begin{proof}[End of the proof of Proposition~\ref{theorem bis}]

We are now ready to finish the proof of the sub-elliptic estimate for $P$. We have considered above two regions on each side of the interface (given in our local coordinates by $\Sigma=\{x_n=0\}$). This yields the following covering of $\R \times \R^{n} \times \R \times \R^{n-1} \ni (t,x,\xi_t,\xip)$:
\begin{align*}
	\R \times \R^{n} \times \R \times \R^{n-1}= \left(\mathcal{E}_{\epsilon}^{-}\cap \mathcal{E}_{\epsilon}^{+}\right) \cup Y.
	\end{align*}
	where we have defined
	$$
	Y:= \left(\mathcal{E}_{\epsilon}^{-} \cap \mathcal{GH}_{\epsilon}^{+}\right) 
	\cup \left(\mathcal{E}_{\epsilon}^{+}\cap \mathcal{GH}_{\epsilon}^{-} \right)
	\cup \left( \mathcal{GH}_{\epsilon}^{-}\cap \mathcal{GH}_{\epsilon}^{+} \right).
	$$
Let us recall that the regions above have been defined in \eqref{def of E} and \eqref{def of GH}.	
The crucial remark here is that the definition of our microlocal regions implies that $Y \subset \mathcal{GH}_{\epsilon}^{-} \cup \mathcal{GH}_{\epsilon}^{+} $ so that
$$
|\xi_t| \gtrsim |\xip| \quad \textnormal{on} \: Y.
$$
Notice that the definition of the conic regions above imply in particular that $Y$ and $\mathcal{E}_{\epsilon}^{-}\cap \mathcal{E}_{\epsilon}^{+}$ overlap due to the factor $2\epsilon$ in the definition of $\mathcal{GH}_{\epsilon}^{\pm}$. That means that we can consider an associated homogeneous partition of unity. More precisely, given a compact set $K$ of $\R^{n+1}$ we can introduce $\chi_j \in \mathcal{S}^0$ homogeneous, $\supp{\chi_1} \cap \{|\xip|+|\xi_t|\geq 1\}\subset \mathcal{E}_{\epsilon}^{-}\cap \mathcal{E}_{\epsilon}^{+}$, $\supp{\chi_2} \cap \{|\xip|+|\xi_t|\geq 1\}\subset Y$~\footnote{In fact one has to construct the partition of unity on the cosphere bundle. That is the reason why we excluded a compact set which includes the zero section. However this does not pose any problem since a function localized on a compact subset of the phase space yields a residual operator and does not have any impact on our estimates.  }, $\pi_{t,x}(\supp \chi_j) \subset K$ and 
	$$
 \chi_1+\chi_2=1.
	$$
We pick now an element $u$ of $\tranphi$ which is compactly supported and write 
$$
\op(1-\psis)u=v_1+v_2, \quad v_j:=\op(\chi_j)\op(1-\psis)u, j=1,2.
$$	
One needs now to simply put together the estimates already obtained according to the microlocalization of $v_1$ and $v_2$: 

\begin{itemize}
    \item We apply Lemma~\ref{Lemma elliptic region}, with $v_1=\op(\chi_1) \op(1-\psis)u$ and we obtain the desired estimate microlocalized in $\mathcal{E_{\epsilon}}^{-} \cap \mathcal{E_{\epsilon}}^{+}$:
\begin{multline}
    \label{region Y1}
    \norm{\cnop v_1}{\lrn}{2}+\norm{u}{\lrn}{2}+\normsurf{u}{\ls}{2} +T_{\theta, \Theta}\\ \gtrsim \tau \norm{v_1}{L^2(\R ; H^1_\tau)}{2}+\tau^3 \normsurf{v_1}{\ls}{2}
    +\tau\normsurf{ \nabla v^+_1}{\ls}{2}+\tau\normsurf{ \nabla v^-_1}{\ls}{2},
\end{multline}
where we write $v_1=H_-v^-_1+H_+v^+_1$.

\item We consider $v_2= \op(\chi_2) \op(1-\psis)u=H_-v_2^-+H_+v_2^+$. Since $\chi_2$ localizes in particular in a region where $|\xi_t| \gtrsim |\xip|$ we can apply Lemma~\ref{region G- cv} to $v_2^-$  to find:
\begin{multline}
\label{patch aux 1}
  \norm{P^{-}_{\phi}v_2^-}{\lrmn}{2}  +\tau \norm{ D_t v_2^-}{\lrmn}{2}+\tau \left\vert D_t v_2^-  \right\vert^2_{L^2(\Sigma)}+\norm{u}{\lrmn}{2} + \left\vert  u  \right\vert^2_{L^2(\Sigma)}
			 \\  \gtrsim
			\tau \norm{ v_2^-}{L^2(\R_- ; H^1_\tau)}{2}.
	\end{multline}
Thanks to the localization of $\chi_2$ we can apply Lemma~\ref{region G+ cv} as well to $v_2^+$
\begin{multline*}
  \norm{P^{+}_{\phi}v_2^+}{\lrpn}{2}  +\tau \norm{ D_t v_2^+}{\lrpn}{2}+\tau \left\vert D_t v_2^+  \right\vert^2_{L^2(\Sigma)}+\norm{u}{\lrpn}{2} + \left\vert  u  \right\vert^2_{L^2(\Sigma)}
			 \\ \gtrsim
			\tau \norm{ v_2^+}{L^2(\R_+ ; H^1_\tau)}{2}
			+\tau^3 \left\vert v_2^+  \right\vert_{L^2(\Sigma)}^2  +\tau \left\vert \nabla_{x} v_2^+  \right\vert_{L^2(\Sigma)}^2.
\end{multline*}
We use then Lemma~\ref{one of boundary derivatives cv} and~\eqref{transmission cond2} to control the trace of the derivative of $v_2^-$ to deduce
\begin{multline}
\label{patch aux 2}
\norm{P^{+}_{\phi}v_2^+}{\lrpn}{2}  +\tau \norm{ D_t v_2^+}{\lrpn}{2}+\tau \left\vert D_t v_2^+  \right\vert^2_{L^2(\Sigma)}+\norm{u}{\lrpn}{2} + \left\vert  u  \right\vert^2_{L^2(\Sigma)}+T_{\theta,\Theta}
			 \\ \gtrsim
			\tau \norm{ v_2^+}{L^2(\R_+ ; H^1_\tau)}{2}
			+\tau \left\vert v_2^+  \right\vert_{L^2(\Sigma)}^2  +\tau \left\vert \nabla_{x} v_2^+  \right\vert_{L^2(\Sigma)}^2+\tau \left\vert \nabla_{x} v_2^-  \right\vert_{L^2(\Sigma)}^2.
\end{multline}
We can then multiply as usual the above estimate \eqref{patch aux 2} by a large constant and add it to \eqref{patch aux 1} to obtain:
\begin{multline}
    \label{region Y2}
     \norm{\cnop v_2}{\lrn}{2} +\tau \norm{ D_t v_2^{\pm}}{\lrmn}{2}+\tau \left\vert D_t v_2^{\pm}  \right\vert^2_{L^2(\Sigma)}+\norm{u}{\lrmn}{2} + \left\vert  u  \right\vert^2_{L^2(\Sigma)} +T_{\theta,\Theta}
     \\\gtrsim \tau \norm{v_2}{L^2(\R ; H^1_\tau)}{2}+\tau^3 \normsurf{v_2}{\ls}{2}
    +\tau\normsurf{ \nabla v^+_2}{\ls}{2}+\tau\normsurf{ \nabla v^-_2}{\ls}{2}.
\end{multline}

\end{itemize}

 Summarising, we have thus shown, for $j \in \{1,2\} $:
\begin{multline*}
\norm{\cnop \op(\chi_j)\op(1-\psis)u}{\lrn}{2} +\tau \norm{ D_t \op(\chi_j)\op(1-\psis)u_{\pm}}{L^2(\R^{n+1}_{\pm})}{2}\\ \hspace{4mm}+\tau \left\vert D_t \op(\chi_j)\op(1-\psis)u_{\pm}  \right\vert^2_{L^2(\Sigma)}
+\norm{u}{\lrmn}{2} + \left\vert  u  \right\vert^2_{L^2(\Sigma)} +T_{\theta,\Theta}
     \\\gtrsim \tau \norm{\op(\chi_j)\op(1-\psis)u}{L^2(\R ; H^1_\tau)}{2}+\tau^3 \normsurf{\op(\chi_j)\op(1-\psis)u}{\ls}{2}\\+ 
    \tau\normsurf{ \nabla \op(\chi_j)\op(1-\psis)u_\pm}{\ls}{2}.
\end{multline*}
We add the two estimates above. We control the right hand side from below by using the triangle inequality as well as the fact $$\op(1-\psis)u= \sum_j \op(\chi_j) \op(1-\psis)u.$$ For the left hand side we argue with commutators. For instance we control:
\begin{align*}
    \norm{\cnop \op(\chi_j) \op(1-\psis)u}{\lrn}{} &\leq  \norm{\op(\chi_j)\cnop  \op(1-\psis)u}{\lrn}{}\\
    &\hspace{4mm}+ \norm{[\cnop, \op(\chi_j)] \op(1-\psis)u}{\lrn}{}.
\end{align*}
Recall the notations $\tths=1-\check{\psi}_{\sigma}$ and $\ths=1-\psis$ with $\check {\psis}$ as defined in the proof of Lemma~\ref{region G+ cv}. We write
\begin{align*}
  [\cnop, \op(\chi_j)]=[\cnop, \op(\chi_j)\op(\tths)]+[\cnop, \op(\chi_j)\op(1-\tths)].
\end{align*}
Now on the one hand $\supp{(1-\tths)} \cap \supp{\ths}= \emptyset$ and on the other hand $\ths$ localizes in a region where $\tau \lesssim |\xip|+|\xi_t|$. This implies that
$$
[\cnop, \op(\chi_j)\op(1-\tths)]\op(\ths) \in \psitt^{-\infty},
$$
therefore
$$
\norm{[\cnop, \op(\chi_j)\op(1-\tths)]\op(\ths) u}{\lrn}{}\lesssim \norm{u}{\lrn}{}.
$$
We have $[\cnop, \op(\chi_j)\op(\tths)] \in \psitt^1.$
The above considerations as well as the fact that $\op(\chi_j) \in \psit^0$ finally yield:
\begin{align*}
    \norm{\cnop \op(\chi_j) \op(1-\psis)u}{\lrn}{} &\leq  \norm{\op(\chi_j)\cnop  \op(1-\psis)u}{\lrn}{}\\
    &\hspace{4mm}+ \norm{[\cnop, \op(\chi_j)] \op(1-\psis)u}{\lrn}{} \\
    &\lesssim \norm{\op(1-\psis)u}{L^2(\R; H^1_\tau)}{}+\norm{u}{\lrn}{}\\
    &\hspace{4mm}+\norm{\cnop \op(1-\psis)u}{\lrn}{}.
\end{align*}
We use the same argument for the other terms of the right hand side (noticing that $\xi_t \in \mathcal{S}^1_\tau$). We finally obtain, taking also $\tau$ large to absorb the error terms (as for instance $\norm{\op(1-\psis)u}{L^2(\R; H^1_\tau)}{}$)
\begin{multline}
\label{final estim tau petit}
    \norm{\cnop \op(1-\psis)u}{\lrn}{2} +\tau \norm{ D_t \op(1-\psis)u_{+}}{L^2(\R^{n+1}_{+})}{2}+\tau \norm{ D_t \op(1-\psis)u_{-}}{L^2(\R^{n+1}_{-})}{2}  
    \\+\tau \left\vert D_t \op(1-\psis)u_{+}  \right\vert^2_{L^2(\Sigma)}+\tau \left\vert D_t \op(1-\psis)u_{-}  \right\vert^2_{L^2(\Sigma)}
+\norm{u}{\lrn}{2} + \left\vert  u  \right\vert^2_{L^2(\Sigma)}+T_{\theta,\Theta} 
     \\\gtrsim \tau \norm{\op(1-\psis)u}{L^2(\R ; H^1_\tau)}{2}+\tau^3 \normsurf{\op(1-\psis)u}{\ls}{2} 
     \\+\tau\normsurf{ \nabla \op(1-\psis)u_+}{\ls}{2}+\tau\normsurf{ \nabla \op(1-\psis)u_-}{\ls}{2}.
\end{multline}

Lemma~\ref{prop for tau grand} furnishes an estimate in the complementary sub-region:
\begin{multline}
\label{final estimate tau grand}
     \norm{\cnop \op(\psis)u}{\lrn}{2} +\norm{u}{\lrn}{2} +T_{\theta,\Theta}+\normsurf{u_-}{\ls}{2}+\normsurf{u_+}{\ls}{2}
     \\\gtrsim \tau \norm{\op(\psis)u}{L^2(\R ; H^1_\tau)}{2}+\tau^3 \normsurf{\op(\psis)u}{\ls}{2}\\+\tau\normsurf{ \nabla \op(\psis)u_+}{\ls}{2}+\tau\normsurf{ \nabla \op(\psis)u_-}{\ls}{2}.
\end{multline}

To finish the proof we add \eqref{final estim tau petit} and \eqref{final estimate tau grand}. For the right hand side we simply use the triangle inequality. For example:
$$
\norm{\op(1-\psis)u}{\lrn}{}+\norm{\op(\psis)u}{\lrn}{}\geq \norm{u}{\lrn}{},
$$
since $\op(1-\psis)+\op(\psis)=\textnormal{Id}$. For the left hand side we argue with commutators as before. We notice that: $$\cnop \in \mathcal{D}^2_\tau, \: \op(\psis)\in \psitt^0, \: 1-\op(\psis) \in \psitt^0\: \textnormal{and} \: D_t \in \psitt^1.$$ Therefore in particular $$[\cnop, \op(\psis)] \in \psitt^1, \: [\cnop, 1-\op(\psis)] \in \psitt^1 \: \textnormal{and}\:
[D_t,1-\op(\psis)]\in \Psi^0_\tau \: .$$
We take as usual $\tau$ sufficiently big to absorb the errors terms and we have thus proven:
\begin{multline}
\label{final ineq but without volume derivatives}
    \norm{\cnop u}{\lrn}{2} +\tau \norm{ D_t u_{+}}{L^2(\R^{n+1}_{+})}{2}+\tau \norm{ D_t u_{-}}{L^2(\R^{n+1}_{-})}{2}  
    \\+\tau \left\vert D_t u_{+}  \right\vert^2_{L^2(\Sigma)}+\tau \left\vert D_t u_{-}  \right\vert^2_{L^2(\Sigma)}
+\norm{u}{L^2(\R;H^1_\tau)}{2} + \left\vert  u  \right\vert^2_{L^2(\Sigma)} +T_{\theta, \Theta}
     \\\gtrsim \tau \norm{u}{L^2(\R ; H^1_\tau)}{2}+\tau^3 \normsurf{u}{\ls}{2} +\tau\normsurf{ \nabla u_+}{\ls}{2}+\tau\normsurf{ \nabla u_-}{\ls}{2}.
\end{multline}
Observe that the only term that is missing in the above estimate compared to the statement of Proposition~\ref{theorem bis} is the volume norm of the derivatives. The latter are estimated thanks to Lemma~\ref{derivatives Dn cv}. Indeed, we multiply \eqref{final ineq but without volume derivatives} by a large constant and add it to the estimate of Lemma~\ref{derivatives Dn cv}. This finishes the proof of Proposition~\ref{theorem bis}.
\end{proof}

\bigskip

\subsection{Convexification: A perturbation argument}
\label{convexification}

\noindent For the sequel it is important to notice that the quality of the estimates obtained for the first order factors (such as \eqref{estimate for e+} etc) depend on the \textit{imaginary} part of the operator only. Indeed, consider $L,M \in \psit^1$ with \textit{real} symbols. One has:
\begin{align*}
\norm{(D_n+L+iM)v}{}{2}=\norm{(D_n+L)v}{}{2}+\norm{Mv}{}{2}+2\operatorname{Re}(D_n v, iM v)+2\operatorname{Re}(Lv,iMv).
\end{align*}
Now the fact that $L$ and $M$ have real principal symbols implies (since we are working with the Weyl quantization) that 
$$
2\operatorname{Re}(Lv,iMv)=i ([L,M]v,v).
$$
Consequently one has $[L,M] \in \Psi^1$ and 
$$
|([L,M]v,v)|  \lesssim \norm{v}{L^2(\R ; H^{1/2})}{2},
$$
which can be absorbed in first-order factor estimates such as \eqref{estimate for e+}.

\bigskip

The Carleman estimate we have obtained so far involves a weight function $\phi$ depending only on the variable $x_n$ which in our local coordinates describes the interface $\Sigma=\{x_n=0\}.$ However it is important for applications to allow dependence in the other variables too. We show here that indeed this is possible if one changes “slightly” the weight function $\phi$. Recall that we have
$
\phi=\alpha_{\pm}x_n+\frac{\beta x_n^2}{2},
$
and consider the new weight 
$$
\psi=\phi+\kappa(t,x_n,x^\prime), \quad \kappa \: \textnormal{real valued quadratic polynomial.}
$$
We have taken $\kappa$ quadratic polynomial for technical reasons related to the action of $e^{-\delta \frac{D^2_t}{2 \tau}}$ (see \cite{Tataru:95, Hor:97, RZ:98, LLnotes}). However this should be sufficient for the applications. We shall now verify that if $ \norm{\kappa^\prime}{L^\infty}{}$ is sufficiently small then the steps carried out in the preceding sections remain valid. 

\begin{prop}[\textbf {Geometric convexification}]
\label{prop of geometric convexification}
Consider the new weight $\psi=\phi+\kappa(t,x_n,x^\prime)$ with $\kappa$ a real valued quadratic polynomial. Then there exist $\eta, \delta_0>0$ depending on the coefficients of $\phi$ and $P$ such that if $ \norm{\kappa^\prime}{L^\infty}{}\leq \eta$ and $\delta \leq \delta_0$ then the estimates of Proposition~\ref{theorem bis} and consequently of Theorem~\ref{the carleman ineq thm} remain valid with the weight $\psi$.
\end{prop}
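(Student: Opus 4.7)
The strategy is a direct perturbation argument: write $\psi=\phi+\kappa$, decompose the conjugated operator as $P_{\psi,\delta}=P_{\phi,\delta}+R_{\kappa}+R_{\delta}$, and show that for $\|\kappa'\|_{L^\infty}$ and $\delta$ small enough, the perturbation $R_\kappa+R_\delta$ can be absorbed at every step of the proof of Proposition~\ref{theorem bis}. Using the explicit formula \eqref{formule for conjugated operators} with $\psi$ in place of $\phi$ and expanding, $R_\kappa$ collects the terms arising from the tangential and normal derivatives of $\kappa$ (of the form $i\tau \kappa'_{x_j}$, $i\tau\kappa'_t$, $i\tau\kappa'_{x_n}$, together with their squares $\tau^2 (\kappa')^2$), while $R_\delta$ collects the terms of the form $\delta\, \kappa''\cdot (\text{tangential operator of order}\leq 1)$. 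Since $\kappa$ is a quadratic polynomial, $\kappa''$ is constant and $\kappa'$ is linear, so on the compact support under consideration all these coefficients are controlled by $\|\kappa'\|_{L^\infty}$ (which bounds the first-order coefficients) and by the fixed coefficients of the quadratic part (which bound $\kappa''$).

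The second step is to redo the factorization \eqref{microlocal factorization +}--\eqref{microlocal factorization -} with $\psi$ instead of $\phi$. The key observation is that the elliptic symbols $m_\pm$ in \eqref{definition of m+} depend only on $P$ and not on the weight, hence they are unchanged. The factors become $D_n+i\tau\psi'_n\pm iM_\pm$ up to tangential perturbations of the form $i\tau\op(\kappa'_{x_j})$ in the tangential variables, which are elements of $\Psi^{1}_\tau$ of norm $O(\tau\|\kappa'\|_{L^\infty})$. The two structural conditions that fix the parameters of $\phi$ are then checked to be preserved:
\begin{itemize}
\item The geometric condition \eqref{geometric assumption} was stated as a strict inequality with slack $\mu>1$; the effective ratio at $\Sigma$ becomes $(\alpha_+ +\kappa'_{x_n}|_{\Sigma})/(\alpha_- +\kappa'_{x_n}|_{\Sigma})$, and for $\eta$ small this ratio still exceeds the supremum of $m_+/m_-$.
\item The sub-ellipticity inequality \eqref{estim sous ell changins sign} was obtained for $\beta$ large; replacing $f_\pm$ by $f_\pm+\tau \kappa'_{x_n}$ and $\{\xi_n,f_\pm\}$ by $\tau(\beta+\kappa''_{x_n,x_n})-\partial_{x_n}m_\pm$ gives a perturbation of size $O(\|\kappa'\|_{L^\infty}+\|\kappa''\|_{L^\infty})$, which is absorbed by the slack provided by $\beta$ large.
\end{itemize}
Moreover the positivity $\psi'_n>0$ near $\Sigma$ is preserved for $\eta$ small, so the region where $f_+>0$ (resp.\ $f_-<0$) of Lemma~\ref{regions gamma} is only slightly shrunk.

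Once these structural properties are checked, the three microlocal estimates (Lemmas~\ref{prop for tau grand}, \ref{region G+ cv}--\ref{region G- cv}, and \ref{Lemma elliptic region}) are rerun verbatim with the perturbed factors. In each of them, the new contributions from $R_\kappa$ and $R_\delta$ take the form $\op(a)$ with $a\in \eta\cdot\mathcal{S}^1_\tau+\eta\cdot\tau\mathcal{S}^0_\tau+\eta\cdot\mathcal{S}^0_\tau D_n+\delta\cdot(\text{similar class})$. These are absorbed by the right-hand sides of the estimates, exactly as in Remark~\ref{Carleman insensitive}, provided $\eta$ and $\delta$ are small enough; crucially, the remark opening Subsection~\ref{convexification} shows that first-order factor estimates such as \eqref{estimate for e+}, \eqref{positive f+ } or \eqref{changin sign f} are insensitive to the \emph{real} part of the imaginary coefficients (up to commutator errors of order one), so that the real perturbations from $\nabla\kappa$ are truly harmless.

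The main obstacle is the verification of the sub-ellipticity inequality~\eqref{estim sous ell changins sign} after perturbation, which is the only point in the proof where a positivity condition on a symbol of order two must hold uniformly; this is why $\kappa$ is required to be a quadratic polynomial (so that $\kappa''$ is a fixed bounded constant on the whole space) and why $\eta$ must be taken small compared to $\beta$. The smallness of $\delta$ is then chosen independently at the very end, only to absorb the remaining $\delta\,\kappa''$ terms coming from the conjugation by the Gaussian multiplier $e^{-\delta D_t^2/(2\tau)}$. Once Proposition~\ref{theorem bis} is established for $\psi$, the passage to Theorem~\ref{the carleman ineq thm} is unchanged, and the conclusion follows.
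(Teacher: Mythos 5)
Your overall strategy — direct perturbation, re-verify the structural conditions (geometric hypothesis \eqref{geometric assumption}, sub-ellipticity \eqref{estim sous ell changins sign}, positivity of $\psi'_{x_n}$ near $\Sigma$), and absorb errors by taking $\|\kappa'\|_{L^\infty}$ and $\delta$ small — is the right one and essentially the one used in the paper. However, there is a genuine gap in your treatment of the elliptic region. You claim that ``the elliptic symbols $m_\pm$ depend only on $P$ and not on the weight, hence they are unchanged'' and that the remaining perturbations are first-order, ``elements of $\Psi^1_\tau$ of norm $O(\tau\|\kappa'\|_{L^\infty})$.'' This is not correct. After conjugation with the weight $\psi=\phi+\kappa$, the tangential part of the operator is not $Q(x,D_{x'})-c^{-1}D_t^2$ but rather the operator $S$ obtained by replacing $D_j$ by $D_j+i\tau\kappa'_{x_j}-\delta\kappa''_{t,x_j}D_t$ (and analogously for $D_t$). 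When you expand the quadratic form, you produce order-\emph{two} contributions, e.g.\ $-\tau^2\sum b_{jk}\kappa'_{x_j}\kappa'_{x_k}$ and $c^{-1}\tau^2(\kappa'_t)^2$, whose principal symbols are of size $O(\tau^2\|\kappa'\|_{L^\infty}^2)\sim O(\|\kappa'\|_{L^\infty}^2)\lambda_\tau^2$ in the region $\tau\lesssim|\xi'|+|\xi_t|$. These lie in $\Psi^2_\tau$ with small coefficient, not in the admissible class $\Psi^1_\tau+\tau\Psi^0_\tau+\Psi^0_\tau D_n$ of Remark~\ref{Carleman insensitive}, so one cannot keep the old $m_\pm$ and sweep the difference under the rug.

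What the paper actually does at this point is re-build the square root: it shows that the new tangential symbol $s$ has $\operatorname{Re}(s)\gtrsim |\xi'|^2+|\xi_t|^2$ in the elliptic region provided $\|\kappa'\|_{L^\infty}$ and $\delta$ are small (this uses that $\tau\lesssim|\xi'|+|\xi_t|$ on the support of $1-\psi_\sigma$), defines a new square root $\tilde m$ (which is complex-valued since $s$ is no longer real), and \emph{then} invokes the observation you make at the end — that first-order factor estimates depend only on the imaginary part of the factor — by routing $\operatorname{Re}\tilde m$ into the imaginary part and $\operatorname{Im}\tilde m$ into the real part of each first-order factor. You mention this last mechanism but do not apply it to an actually re-constructed symbol, so as written your proposal skips the verification that the perturbed factorization still has a positive-elliptic imaginary part, which is the crux of the elliptic-region argument.
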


\begin{proof}[\unskip\nopunct]
We shall show that the crucial sub-elliptic estimate estimate Proposition~\ref{theorem bis} remains valid with the new weight. To do so, we shall revisit the key arguments and show that up to taking $\lkp$ small we can see the new conjugated operator as a perturbation of the conjugated operator with the weight depending only on $x_n$.

Recall the microlocal weight $Q^\psi_{\delta, \tau}$ and the conjugated operator $P^\pm_\psi$ have been defined in \eqref{def of microlocal weight} and \eqref{def of conjugated operators}.

Recall as well that we have the following formula for $P_\psi$: (see \cite[Chapter 3.3.1]{LLnotes} or~\cite{Hor:97})
\begin{align*}
c^{-1}_{\pm}(x)P^{\pm}_{\psi}&=(D_n+ i \tau \phi^\prime + i \tau  \kappa^\prime_{x_n} -\delta \kappa ^{\prime \prime}_{t,t}D_t)^2\\ &\hspace{4mm}+\bjk (D_j+i \tau \partial_j \kappa -\delta \kappa^{\prime \prime}_{t,x_j} D_t )(D_k+i \tau \kappa^\prime_{x_k} -\delta \kappa ^{\prime \prime}_{t,x_k} D_t)\\
&\hspace{4mm}-c^{-1}_\pm(x)((1 -\delta \kappa ^{\prime \prime}_{t,t})D_t+i \tau \partial_t \kappa)^2.
\end{align*}
In the sequel we shall denote by $\tilde{P}_{\psi}$ the principal part of the operator $P_{\psi}$.

Notice that since $\kappa$ is supposed to be quadratic $\kappa^{\prime \prime}_{t,x_j}$, $\kappa^{\prime \prime}_{t,t}$  are actually constants. We shall write $\psi^\prime_{x_n}= \partial_{x_n} \psi$. Let us start by showing that the result of Lemma~\ref{derivatives Dn cv} remains valid, up to taking small values for $\delta$ and $\norm{\kappa^\prime}{L^{\infty}}{}.$ We write:
\begin{align*}
c^{-1}_{+}(x)\tilde{P}_{\psi}&=(D_n+ i \tau \psi^\prime_{x_n})^2-2\delta \kappa ^{\prime \prime}_{t,t}D_t(D_n+\tau \psi^\prime_{x_n})\\ &\hspace{4mm}+Q(x,D_{x^\prime})-c^{-1}_{+}(x)(1-\delta \kappa ^{\prime \prime}_{t,t})D_t^2+\tilde{R}_+,
\end{align*}
with $\tilde{R}_+$ a tangential differential operator (see Section~\ref{differential operators}) satisfying
\begin{align}
\label{conv aux 1}
    (\tau \tilde{R}_+v,v)_+ \lesssim \norm{\kappa ^\prime}{L^\infty}{}\left(\tau \norm{\nabla_{x^\prime}v}{\lrpn}{2}+\tau^3 \norm{v}{\lrpn}{2}\right)+\tau \delta \norm{\nabla_{t,x^\prime}}{\lrpn}{2}.
\end{align}
We notice that $\psi^\prime_{x_n}(0)=\alpha_{\pm}+\kappa^\prime_{x_n} (0,t,x^\prime) $, therefore for $|x_n|$ sufficiently small and for $\alpha_{\pm}/\norm{\kappa^\prime}{L^\infty}{}$ sufficiently large the estimates involving $\phip$ remain valid for $\psi^\prime$ as well. One obtains then:
\begin{align*}
    \norm{\tilde{P}_{\psi}v}{\lrpn}{2}+ \tau^2 \norm{v}{\lrpn}{2} &\geq 2 \tau \operatorname{Re}(\tilde{P}_{\psi}v,v)_+\\
    &\gtrsim  \tau \operatorname{Re}(\cnp v,v)_+ + (\tau \tilde{R}_+v,v)_+- 2\tau \delta \kappa ^{\prime \prime}_{t,t} (D_nv+ i \tau \psi^\prime_{x_n} v,D_t v)_+\\
    &\gtrsim \tau \operatorname{Re}(\cnp v,v)_+ + (\tau \tilde{R}_+v,v)_+ \\
    &\hspace{4mm}- (\tau \delta \norm{D_n v}{\lrpn}{2}+\delta \tau^3 \norm{v}{\lrpn}{2}+\delta \tau \norm{D_t v}{\lrpn}{2}).
    \end{align*}
Using \eqref{conv aux 1} combined with the estimate obtained in Lemma~\ref{derivatives Dn cv} for $\tau \operatorname{Re}(\cnp v,v)_+$ we obtain, up to taking $\delta \leq \delta_0$ the same result as in Lemma ~\ref{derivatives Dn cv} but for the convexified weight.

\bigskip

We now investigate what happens with respect to the microlocal regions considered in Section~\ref{proof for the general case} and show that in fact we have the same estimates. We recall that there are three main regions. The first is the one where $\tau$ is large compared to $|\xip|+|\xi_t|$, the second is the non-elliptic region and the third is the elliptic one. 

\begin{itemize}
    \item We localize with $\op(\psis)$ in a region where $
    \tau \geq \frac{1}{\sigma}(|\xip|+|\xi_t|).
    $ (recall that $\psis$ has been defined in ~\eqref{defintion of psi sigma}).
This is the region covered in Lemma~\ref{prop for tau grand} and we check that the change of weight function from $\phi$ to $\psi$ only adds acceptable error terms. Indeed we write:
\begin{align*}
    \invcp(x)\tilde{P}_{\psi}&= (D_n-\delta \kappa ^{\prime \prime}_{t,t}D_t + i \tau \psi^\prime_{x_n})^2=\tilde{A}_+ +\tilde{R}_+,
\end{align*}
where $\tilde{A}_+=(D_n-\delta \kappa ^{\prime \prime}_{t,t}D_t + i \tau \psi^\prime_{x_n})^2$ and $\tilde{R}_+$ has a principal symbol $\tilde{r}_+$ satisfying
\begin{equation}
\label{conv aux 2}
\tilde{r}_+ \lesssim |\xip|^2+|\xi_t|^2+{\lkps} \tau^2.
\end{equation}

As in the proof of Lemma~\ref{prop for tau grand} we have a very good estimate for $\norm{\tilde{A}_+ v}{\lrpn}{2}$. We calculate for the first order factor:
\begin{align*}
    \norm{(D_n-\delta \kappa ^{\prime \prime}_{t,t}D_t + i \tau \psi^\prime_{x_n})^2v}{\lrpn}{2}&=\norm{(D_n-\delta \kappa ^{\prime \prime}_{t,t}D_t)^2}{\lrpn}{2}+\norm{\tau \psi^\prime_{x_n} v}{\lrpn}{2}\\
    &\hspace{4mm}+2\operatorname{Re}(D_nv,i\tau \psi^\prime_{x_n} v)_+-2\delta\operatorname{Re}(D_tv,i \tau \psi^\prime_{x_n} v)_+.
\end{align*}
Since an integration by parts in the $t$ variable yields  
$$
2\delta\operatorname{Re}(D_tv,i \tau \psi^\prime_{x_n} v)_+=0,
$$
we obtain the usual estimate:
$$
 \norm{(D_n-\delta \kappa ^{\prime \prime}_{t,t}D_t + i \tau \psi^\prime_{x_n})^2v}{\lrpn}{2} \gtrsim \tau^2\norm{v}{\lrpn}{2}+ \tau \normsurf{v}{\ls}{2}.
$$
We iterate twice to get, exactly as in the proof of Lemma~\ref{prop for tau grand}:
\begin{equation*}
   \norm{A_+v}{\lrpn}{2} \gtrsim  \tau ^4 \norm{v}{\lrpn}{2} + \tau^3 \normsurf{v}{\ls}{2}+ \tau \normsurf{D_n v}{\ls}{2}.
\end{equation*}
And we estimate in the same way:
\begin{align*}
\norm{\tilde{P}_{\tphi}  v}{\lrpn}{2} &\geq C\norm{\tilde{A}_+ v}{\lrpn}{2}- \norm{\tilde{R}_+ v}{\lrpn}{2} \\
&\geq  C\left(\tau ^4 \norm{v}{\lrp}{2} + \tau^3 \normsurf{v}{\ls}{2}+\tau \normsurf{D_nv}{\ls}{2}\right)- \norm{\tilde{R}_+ v}{\lrpn}{2} \\
&=\left((C\tau^4-\tilde{R}_+^2)v,v\right)_+ +C\tau^3 \normsurf{v}{\ls}{2}+C\tau \normsurf{D_nv}{\ls}{2},
\end{align*}
with $C$ positive constant depending on the the coefficients of $\cnop$ and of $\phi$. According to~\eqref{conv aux 2} we now have
$$
C\tau^4-\tilde{r}_+^2 \geq C \tau^4 - \tilde{C}\left(|\xip|^2+|\xi_t|^2-\lkps \tau^4\right).
$$
Choosing $\lkps\leq \eta^2 $ with $\eta$ sufficiently small depending on $C$ and $\tilde{ C}$ we have
$$
 C \tau^4 - \tilde{C}\left(|\xip|^2+|\xi_t|^2-\lkps \tau^4\right)\geq \frac{C}{2}\tau^4-\tilde{C}(|\xip|^2+|\xi_t|^2).
$$
Then one can choose $\sigma \leq \sigma_0$ small such that
$$
C\tau^4-\tilde{r}_+^2 \geq \lambda_\tau^4,
$$
on the support of $\psis$. This fixes the choice of $\sigma_0$. Then we obtain the same estimate as in Lemma \ref{prop for tau grand} with the weight $\psi$.

\item We localize now with $\op(1-\psis)$ in the sub-region where $\tau \lesssim \frac{1}{\sigma} (|\xip|+|\xi_t|)$ with $\sigma\leq \sigma_0 $. In this region on has for $\lkp\leq \eta $, with $\eta$ sufficiently small that
$$
1/C ( |\xip|^2+|\xi_t|^2) \leq|\xip|^2+|\xi_t|^2-\tau^2 \lkps \leq C( |\xip|^2+|\xi_t|^2),
$$
for some $C>0$.

We now investigate the non-elliptic and elliptic regions. 

\textbf{Non elliptic region}

We can treat the non elliptic region as in the proof of Lemmata~\ref{region G+ cv} and~\ref{region G- cv}. In this region the localization of $\op(1-\psis)$ implies that $\tau \lesssim |\xip|+|\xi_t| $ and being outside the elliptic region implies that $|\xi_t| \gtrsim |\xip|$ and thus $|\xi_t| \gtrsim \tau+ |\xip|$. Therefore by the same arguments as in the proofs of Lemmata~\ref{region G+ cv} and~\ref{region G- cv}  one needs to obtain only one of the trace of the normal derivatives (for the other one we use as again Lemma~\ref{one of boundary derivatives cv}). The commutator technique works here exactly as before. Indeed, we consider 
$$
\tilde{Q}_2=c(x)^{-1} \frac{\tilde{P}_{\psi}+\tilde{P}_{\psi}^*}{2}, \quad \tilde{Q}_1=c(x)^{-1}\frac{\tilde{P}_{\psi}-\tilde{P}_{\psi}^*}{2i},
$$
and we decompose 
$$
c(x)^{-1} \tilde{P}_{\psi}=\tilde{Q}_2+i \tau \tilde{Q}_1.
$$
We observe that 
$$\tilde{Q}_2=(D_n-\delta \kappa ^{\prime \prime}_{t,t}D_t)^2-\tau^2\left(|\psi^\prime_{x_n}|^2+Q(\nabla_{x^\prime}\kappa)\right) +T_2,$$
and
$$
\tilde{Q}_1=\psi^\prime_{x_n} D_n+ D_n \psi^\prime_{x_n}+T_1,
$$
where $T_j$ are \textit{tangential} operators of order $j$. We can then proceed exactly as in the proof of Lemma~\ref{region G+ cv}. What is crucial in the proof of this lemma is the sign of $\phip(0)$ (which is positive close to $\Sigma$). If $\phi$ is replaced by $\psi$ one can also have $\psi^\prime_{x_n}(0)>0$, if we choose $\alpha_+/ \lkp=\phip(0^+)/ \lkp$ is sufficiently large.

\textbf{Elliptic region}

This is the region $\mathcal{E}^-_\epsilon \cap \mathcal{E}^+_\epsilon $ with the definition of \eqref{def of E}. Here we are in the situation of Lemma~\ref{Lemma elliptic region} and we follow \cite[Section 4E]{le2013carleman}. We revisit the factorization argument. To do this we check that in this microlocal region one can define a square root for the operator
\begin{align} 
\label{definition of s } 
S&:=\bjk (D_j+i \tau \kappa^\prime_{x_j} -\delta \kappa^{\prime \prime}_{t,x_j} D_t )(D_k+i \tau \kappa^\prime_{x_k} -\delta \kappa ^{\prime \prime}_{t,x_k} D_t) \nonumber \\ &\hspace{4mm}-c^{-1}(x)((1 -\delta \kappa ^{\prime \prime}_{t,t})D_t+i \tau \partial_t \kappa)^2. 
\end{align} 
Since its principal symbol $s$ is no longer real, we study its real part. A sufficient condition for defining a square root is that its real part is positive elliptic. We thus compute: 
\begin{align*} \operatorname{Re}(s)&=  \bjk \left(\xi_j\xi_k- \delta \kappa ^{\prime \prime}_{t,x_j}\xi_j\xi_t- \tau^2 \kappa ^{\prime }_{x_j}\kappa ^{ \prime}_{x_k}- \delta\kappa ^{\prime \prime}_{t,x_k}\xi_t\xi_k+\delta^2  \kappa ^{\prime \prime}_{t,x_j} \kappa ^{\prime \prime}_{t,x_k}    \xi_t^2 \right)
\\ &\hspace{4mm}-c^{-1}(x)(1-\delta \kappa ^{\prime \prime}_{t,t} \xi_t^2+c^{-1}(x)\tau^2 (\kappa ^{\prime }_{t})^2 
\\ &=Q(x,\xip) -c^{-1}(x)\xi_t^2+r, 
\end{align*} 
with
\begin{equation}
\label{conv aux 3}
|r|\lesssim \delta \norm{ \kappa^{\prime \prime}}{L^\infty}{}(|\xip|^2+|\xi_t|^2) + \tau^2 \norm{\kappa^\prime}{L^\infty}{2}.
\end{equation}

When microlocalized in $\mathcal{E}^-_\epsilon \cap \mathcal{E}^+_\epsilon $ one has $$Q(x,\xip) -c^{-1}(x)\xi_t^2 \geq \epsilon (|\xip|^2+|\xi_t|^2). $$ 
Combining this with estimate ~\eqref{conv aux 3} we see that for $\delta\leq \delta_0$ we have

 $$ \operatorname{Re}(s) \geq C \epsilon(|\xip|^2+|\xi_t|^2)-\tau^2 \norm{\kappa^\prime}{L^\infty}{2}.$$ 
Recalling that $\tau \leq \frac{1}{\sigma}(|\xip|^2+|\xi_t|^2)$ in the support of $\op(1-\psis)$ we obtain that up to taking $\eta$ small enough we have for $\norm{\kappa^\prime}{L^\infty}{}\leq  \eta$ in the elliptic region:
$$
\operatorname{Re}(s) \gtrsim |\xip|^2+|\xi_t|^2.
$$
Using a cut-off $\tchi$ which localizes in the elliptic region we define then (as for the definition of $m_+$ in \ref{ellipticity}):
$$
\tilde{s}:=\tchi s +(1-\tchi)\lambda^2.
$$
We use then the principal value of the square root for complex numbers to define 
$$
\tilde{m}=\tilde{s}^{\frac{1}{2}} \in \mathcal{S}^1, \quad \operatorname{Re} \tilde{m} \gtrsim (|\xip|+|\xi_t|).
$$

Consequently we obtain the following almost-factorization:
$$
\invc(x)\tilde{P}_{\psi}v=\left(D_n-\delta \kappa ^{\prime \prime}_{t,t}D_t+ i \tau \psi^\prime_{x_n} -i \op(\tilde{m})\right)\left(D_n-\delta \kappa ^{\prime \prime}_{t,t}D_t+ i \tau \psi^\prime_{x_n} +i \op(\tilde{m})\right) v+\tilde{ R} v,
$$
where $v=\op(1-\psis)\op(\chi)u$ with $u \in \mathscr{S}_c(\R^{n+1})$, $\chi \in \mathcal{S}^{0}$ with $\supp(\chi) \subset \mathcal{E}^-_\epsilon \cap \mathcal{E}^+_\epsilon  $ and $\tilde{R}   \in \psit^1+\tau \psit^0+ \psit^0 D_n$.
We have already seen in the beginning of Section~\ref{convexification} that the imaginary part of the first-order factors determines the quality of the estimate we obtain. We write then:
\begin{align*}
\invc(x)\tilde{P}_{\psi}&=\left(D_n-\delta \kappa ^{\prime \prime}_{t,t}D_t+ \op(\operatorname{Im}\tilde{m})+ i( \tau \psi^\prime_{x_n} -  \op(\operatorname{Re}\tilde{m}))\right)\\
&\hspace{4mm} \cdot \left(D_n-\delta \kappa ^{\prime \prime}_{t,t}D_t-\op(\operatorname{Im}\tilde{m})+ i (\tau \psi^\prime_{x_n} + \op(\operatorname{Re}\tilde{m}))\right)v+\tilde{ R}v,
\end{align*}
and we focus on the imaginary part of the first order factors above. Since $ \op(\operatorname{Re}\tilde{ m})$ satisfies the same estimates (elliptic positive) as $m$ (as defined in Section \ref{section for elliptic region}) the proof remains valid, up to taking $\alpha_{\pm}/\lkp$ sufficiently large and under the same geometric hypothesis similarly to~\cite[Section 4.5]{le2013carleman}.
\end{itemize}

Taking everything into account we have obtained the same estimates as in Section~\ref{proof for the general case} with $\psi$ in place of $\phi$ and in the same microlocal regions. One can then patch these estimates together and obtain the desired result with the convexified weight, exactly as in Section~\ref{proof for the general case}. This finishes the perturbation argument and therefore the proof of Proposition~\ref{prop of geometric convexification}.
\end{proof}

\section{The quantitative estimates}
\label{the quant estimates}
With Theorem~\ref{theorem} at hand we are now ready to obtain the desired quantitative estimates following~\cite{Laurent_2018}. Firstly we obtain a local quantitative estimate (the analogue of Theorem 3.1 in \cite{Laurent_2018}). This estimate allows to propagate the information quantitatively from a small neighborhood of one point belonging to one side of the interface to some other neighborhood of the other side. For this estimate one needs to make sure that the methods used in \cite{Laurent_2018} can also be adapted to our context. 

We can then use this new local quantitative estimate to cross the interface and then continue the propagation process by directly using the results of \cite{Laurent_2018} which are valid as soon as the coefficients of our operator are smooth with respect to the space variable.

\subsection{Some definitions and statement of the local estimate}
\label{some definitions for the local quant estimate}

Before stating the Theorem we need to introduce some notation from \cite{Laurent_2018}. We only propagate \textit{low frequency} information with respect to time. Let $m(t)$ be a smooth radial function, compactly supported in $|t|<1$ such that $m(t)=1$ for $|t|<3/4$. We shall denote by $M^\mu$ the Fourier multiplier defined $M^\mu=m(\frac{D_t}{\mu})$, that is
$$
M^\mu u (t,x)=\mathcal{F}_t^{-1}\left(m\left(\frac{\xi_t}{\mu}\right) \mathcal{F}_t (u)(\xi_t,x)\right)(t).
$$
Therefore the upper index $\mu$ translates to an operator that localizes to times frequencies smaller than $\mu$. We shall also use a regularization operator. Given a function $f \in L^\infty(\R^{n+1})$ we set
$$
f_\lambda(t,x):=e^{-\frac{|D_t|^2}{\lambda}}f=\left(\frac{\lambda}{4 \pi}  \right)^{\frac{1}{2}}\int_{\R} f(s,x)e^{-\frac{\lambda}{4}|t-s|^2}ds.
$$
That is, the lower index $\lambda$ produces an analytic function with respect to the time variable. We will need also the combination of the two procedures above. Given $\lambda, \mu >0$, we write $M_\lambda^\mu$ for the Fourier multiplier defined by $M_\lambda^\mu=m_\lambda (\frac{D_t}{\mu})$ or more precisely:
$$
(M_\lambda^\mu u)(t,x)=\mathcal{F}_t^{-1}\left(m_\lambda\left(\frac{\xi_t}{\mu}\right)\mathcal{F}_t (\xi_t,x)\right)(t).
$$
That is, we first regularize and then localize. 

Let us consider as well a smooth function $\sigma \in C^{\infty}(\R)$ such that $\sigma=1$ in a neighborhood of $(-\infty,1]$, and $\sigma=0$ in a neighborhood of $[2,+\infty )$. Given a point $(t_0,x_0) \in \Sigma$ we write
\begin{equation}
    \label{definition of function sigma}
    \sigma_R(t,x):=\sigma\left(\frac{|(t,x)-(t_0,x_0)|}{R}\right).
\end{equation}

We can now state the local quantitative estimate. We recall that $\Sigma$ is defined as $\R_t \times S$ and that we are in the geometric situation presented in Section~\ref{setting and main}.

\begin{thm}
	\label{local quant estimate}
	Let $(t_0,x_0) \in \Sigma$ given locally by $\Sigma=\{\phi=0 \}$. Then there exists $R_0 >0$ such that for any $R \in (0,R_0)$ there exist $r, \rho, \tilde{\tau}_0>0$ such that for any $\theta \in C^\infty_0(\R_t \times \mathcal{M})$ with $\theta(x)=1 $ on a neighborhood of $\{\phi \geq 2 \rho \} \cap B((t_0,x_0),3R)$, for all $c_1, \kappa >0$ there exist $C, \kappa^\prime, \beta_0 >0$ such that for all $\beta \leq \beta_0$, we have 
	\begin{equation*}
	Ce^{\kappa \mu}\left(\norm{M^\mu_{c_1 \mu}\theta_{c_1 \mu}u}{H^1}{}+\norm{Pu}{L^2(B((t_0,x_0),4R))}{}\right)+Ce^{-\kappa^\prime \mu} \norm{u}{H^1}{}\geq    \norm{M^{\beta \mu}_{c_1 \mu} \sigma_{r,c_1\mu}u}{H^1}{} ,
	\end{equation*}
	for all $\mu \geq \tilde{\tau_0}/\beta$ and $u \in \mathcal{W}$ compactly supported.
\end{thm}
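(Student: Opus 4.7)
The plan is to transfer the Carleman inequality of Theorem~\ref{the carleman ineq thm} into the Laurent--Léautaud framework by replacing the microlocal Carleman weight $Q^{\psi}_{\delta,\tau}$ by the frequency--localizing multipliers $M^{\beta\mu}_{c_1\mu}$ and the Gaussian regularization $(\cdot)_{c_1\mu}$, with $\tau$ essentially equal to $c_1\mu$. The first step is to apply Proposition~\ref{prop of geometric convexification} to replace the initial weight $\phi$ by a convexified weight $\psi=\phi+\kappa$ with a small quadratic perturbation $\kappa$, so that in a small ball $B((t_0,x_0),2r)$ one has $\psi\le -\eta<0$, while the half-space-like region $\{\psi\ge 2\rho\}\cap B((t_0,x_0),3R)$ still lies inside the set where $\theta\equiv 1$. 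This is the standard mechanism that turns a microlocal sub-elliptic estimate attached to a level hypersurface into an estimate that propagates information quantitatively from $\{\psi\ge 2\rho\}$ to a neighborhood of $\{\psi\le 0\}$.

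Next, I would apply Theorem~\ref{the carleman ineq thm} (with the convexified weight $\psi$) to $\chi u$, where $\chi$ is a smooth spatial-temporal cut-off equal to $1$ on $B((t_0,x_0),3R)$ and supported in $B((t_0,x_0),4R)$; since $\chi$ is scalar, $\chi u\in\mathcal{W}^{\theta_\phi,\Theta_\phi}$ with homogeneous transmission data, and the Carleman estimate applies. On the left-hand side, $P(\chi u)=\chi Pu+[P,\chi]u$. The term $\chi Pu$ contributes the $\|Pu\|_{L^2(B((t_0,x_0),4R))}$ factor, multiplied by $e^{\tau\sup_{\supp\chi}\psi}\le e^{C\mu}$. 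The commutator $[P,\chi]u$ is supported in $\supp\nabla\chi$, which I split into two pieces via a partition of unity subordinate to $\{\psi<2\rho\}\cup\{\psi>\rho\}$: the contribution from $\{\psi<2\rho\}$ is estimated by $e^{2\rho\tau}\|u\|_{H^1}$, which after choosing $\rho$ small enough relative to the gap between $\sup_{\supp\sigma_r}\psi$ (which is $<0$ by convexification) and $2\rho$ produces the sought $e^{-\kappa'\mu}\|u\|_{H^1}$ term; the contribution from $\{\psi>\rho\}$ lies in $\{\theta\equiv 1\}$ and can be rewritten as something controlled by $\theta u$, hence by $M^\mu_{c_1\mu}\theta_{c_1\mu}u$ up to an $e^{-\kappa'\mu}$ error coming from the regularization.

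For the right-hand side, I would set $\tau=c_1\mu$, identify $Q^\psi_{\delta,\tau}$ with $e^{-\delta D_t^2/(2c_1\mu)}e^{c_1\mu\psi}$, and insert a further cut-off $\sigma_r(t,x)$ supported where $\chi\equiv 1$ and where $\psi\le -\eta$; on this region the factor $e^{c_1\mu\psi}$ is bounded below by $e^{-c_1\mu\eta}$, which is absorbed into the final constants. The Gaussian factor $e^{-\delta D_t^2/(2c_1\mu)}$ is then decomposed via $1=m(D_t/(\beta\mu))+(1-m(D_t/(\beta\mu)))$: the high-frequency piece (where $|\xi_t|\gtrsim\beta\mu$) is killed by the Gaussian by a factor $e^{-c\beta^2\mu}$ and is bounded by $\|u\|_{H^1}$, providing again a contribution of the form $e^{-\kappa'\mu}\|u\|_{H^1}$ (here the smallness of $\beta$ helps); the low-frequency piece is (up to a residual of the same form) $M^{\beta\mu}_{c_1\mu}\sigma_r u$. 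This gives the required lower bound $\|M^{\beta\mu}_{c_1\mu}\sigma_{r,c_1\mu}u\|_{H^1}$ on the right-hand side, after using the equivalence of the $H^1$ and $H^1_\tau$ norms when they are both restricted to frequencies $\lesssim\mu$.

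The main obstacle I anticipate is the careful handling of the non-commutativity between the spatial cut-offs and the Gaussian multiplier $e^{-\delta D_t^2/(2c_1\mu)}$: the weight is sharp in time frequency but acts on functions that are compactly supported in time via $\chi$, so the residual terms generated when exchanging $\chi$ with $e^{-\delta D_t^2/(2c_1\mu)}$ must be estimated using the almost-analyticity of the Gaussian, in the spirit of Lemma~\ref{lemma 2.4 from ll} that was already invoked in the proof that Proposition~\ref{theorem bis} implies Theorem~\ref{the carleman ineq thm}. These residual terms are the source of the gain $e^{-\kappa'\mu}$ and of the need to choose $\beta$ small enough depending on $c_1,\delta,\kappa$, which is precisely the dependency stated in the theorem. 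The transmission conditions cause no additional trouble because $\chi$ and $\sigma_r$ are scalar.
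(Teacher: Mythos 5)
Your outline captures the preparatory steps — the convexification via Proposition~\ref{prop of geometric convexification}, the use of cut-offs, and the fact that almost-orthogonality estimates for Gaussian regularizations supply the $e^{-\kappa'\mu}$ errors — but it omits the central mechanism of the proof: the complex analysis argument of Lemma~\ref{complex analysis lemma}. Your proposal to ``set $\tau=c_1\mu$'' and decompose $e^{-\delta D_t^2/(2c_1\mu)}e^{c_1\mu\psi}\sigma_r u$ to read off $M^{\beta\mu}_{c_1\mu}\sigma_r u$ does not work: $M^{\beta\mu}$ and $e^{\tau\psi}\sigma_r$ do not commute (since $\psi$ depends on $t$), and the commutator carries exponential factors in $\tau$ that cannot be absorbed as errors. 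The passage from a bound on the Carleman-weighted norm $\|Q^\psi_{\epsilon,\tau}u\|$ to a bound on the frequency-truncated quantity $\|M^{\beta\mu}_{c_1\mu}\sigma_{r,c_1\mu}u\|$ requires defining the pushforward distribution $h_f=\psi_*(fu)$, bounding its Fourier transform on the imaginary axis via the Carleman estimate and on the real axis by an a priori sub-exponential bound, and then a quantitative Phragm\'en--Lindel\"of / Paley--Wiener argument (from~\cite{Laurent_2018}) that complexifies $\tau$. This, not a single choice of $\tau\sim\mu$, is what produces the $e^{-c\mu}$ gain and the appearance of $M^{\beta\mu}$ for small $\beta$.

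A secondary gap is the cut-off structure. You apply the Carleman estimate to $\chi u$ with a spatial cut-off $\chi$ and then split $\supp\nabla\chi$ by the sign of $\psi$, but the paper applies it to $\sigma_{2R}\sigma_{R,\lambda}\tilde\chi_\delta(\psi)\chi_{\delta,\lambda}(\psi)u$, i.e.\ cut-offs along the level sets of the convexified weight $\psi$, regularized in $t$. The level-set structure ensures that the derivative of the inner cut-off concentrates on $\{\psi\approx-8\delta\}$ where $e^{\tau\psi}$ decays, the derivative of the outer cut-off concentrates on $\{\psi\approx\delta/2\}$ where $\theta\equiv1$, and the $\lambda$-regularization controls the commutator with the Gaussian factor; these are also exactly the cut-offs that enter the definition of $h_f$. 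Finally, your claim that $\psi\le-\eta<0$ on $\supp\sigma_r$ is not accurate: Proposition~\ref{starting point} only gives $B((t_0,x_0),2r)\Subset\{-\delta/2\le\psi\le\delta/2\}$, so $\psi$ takes both signs on $\supp\sigma_r$, and the gain must come from the complex analysis step rather than from sign information on $\psi$.
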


\begin{remark}
\label{propagation in all directions}
 In the statement of Theorem~\ref{local quant estimate} uniqueness is propagated quantitatively from $\Omega_+$ to $\Omega_-$. However, we have the same result in the other direction as well. Indeed, this comes from the fact that since there is no assumption on the jump of the coefficient $c$, the geometric situation as presented in Section~\ref{setting and main} is completely symmetrical with respect to $\Omega_+$ and $\Omega_-$ up to changing the sign of $c_+-c_-$. This will be important in the proof of the semi-global estimate (proof of Theorem~\ref{semi global }) where the local quantitative estimate will be applied successively in chosen points of the interface.
\end{remark}

\begin{figure}
	\centering
	
	\begin{tikzpicture}
	\draw[scale=0.5, domain=-9:9, smooth, variable=\x] plot (0.01*\x*\x,\x) ;
	\node at ( -0.75,4.5) {$\{\phi=0\}$};
		\node at (0,-4.5)  {$\Sigma$};
		
		\node at ( -4,3) {$\Omega_{t,-}$};	
		
		\node at ( 4,3) {$\Omega_{t,+}$};
	
	\draw[scale=0.5, domain=-9:9, smooth, variable=\x] plot (0.01*\x*\x+2,\x) 	node[above] {$\{\phi=2 \rho\}$};
\filldraw[black] (0,0) circle (2pt) node[anchor=south,scale=0.9]{$(t_0,x_0)$} ;
	
\draw  (0,0) circle (0.75cm) ;
	
\draw  (0,0) circle (3.2cm) ;

\draw  (0,0) circle (2.5cm) ;

\draw[pattern=north west lines,opacity=.5, pattern color=red] (0,0) circle (0.75cm) ;

\begin{scope}
\clip  (0,0) circle (2.5cm);

\clip   (1.1,3)--(1,0) -- (1.1,-3) -- (10,-3) -- (10,3) -- cycle;

\draw[pattern=north west lines,opacity=.5, pattern color=blue] (0,0) circle (3cm) ;
\end{scope}

	\draw(0,0)--(-0.53,-0.53)  node[pos=0.6,left] {$r$} ;
	
		\draw(0,0)--(-3.04,0.988)  node[left] {$4R$};
		
	  \draw(0,0)--(-0.95,-2.30)  node[below] {$3R$};

\end{tikzpicture}
\caption{Geometry of the local quantitative estimate. The function $\theta$ localizes in the blue region and $\sigma$ in the red one. This allows to propagate information from the blue to the red region.}
\label{geometry of local quant esimate}
\end{figure}
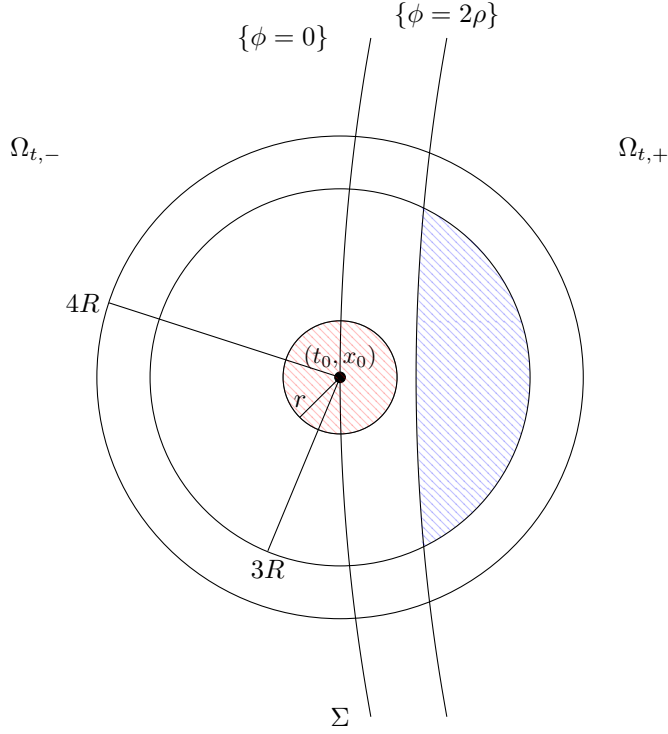

\subsection{Proof of Theorem~\ref{local quant estimate}}
\label{proof of local quant estimate}

\begin{proof}[\unskip\nopunct]

We work as usual in geodesic normal coordinates as explained in the local setting of Section~\ref{local setting}. This does not pose any problem since the estimate we are seeking to prove is invariant by change of coordinates in the $x$ variable. In our context, the first and most important step for the proof of Theorem~\ref{local quant estimate} will be to state a Carleman estimate with a \textit{geometrically} convexified weight. That is the purpose of Proposition~\ref{starting point}. This estimate will provide the analogue of Corollary 3.6 in \cite{Laurent_2018} and will be the starting point of the quantified version of Theorem~\ref{local quant estimate}.

\begin{prop}
\label{starting point}
Let $(t_0, x_0) \in \Sigma$ given locally by $\Sigma=\{\phi=0\}=\{x_n=0\}$. Then there exist $\Omega$ a neighborhood of $(t_0,x_0)$, a function $\psi: \Omega \rightarrow \R$ which is a quadratic polynomial in $t$ and $R_0>0$ such that $B((t_0,x_0),4R_0) \subset \Omega$ and for any $R \in (0,R_0]$, there exist $\epsilon, \delta,\rho, r, d,  \tau_0, C>0$ such that $\delta  \leq \frac{d}{8}$ and 

\begin{enumerate}
    \item The Carleman estimate 
    \begin{equation*}
    C\left( \norm{Q_{\epsilon, \tau}^{\psi} Pu}{L^2(\Omega_{t,-}\cup \: \Omega_{t,+})}{2} +e^{-d \tau} \norm{e^{\tau \psi}u }{H^1_\tau}{2} + T_{\theta, \Theta}\right) \geq \tau \norm{Q_{\epsilon, \tau}^{\psi}u}{H^1_\tau}{2},
\end{equation*}
holds for all $\tau \geq \tau_0$ and all $u \in \tran$ with $\supp{u} \subset B((t_0,x_0),4R)$;

\item One has
\begin{align}
        \left( B((t_0,x_0),5R/2) \backslash B((t_0,x_0),R/2) \cap \{-9 \delta \leq \psi \leq 2 \delta \} \right )&\Subset \{\phi> 2\rho \} \cap B((t_0,x_0),3R), \label{geom conv 1}\\
    \{\delta/4\leq \psi \leq 2 \delta  \} \cap B((t_0,x_0), 5R/2) \Subset \{\phi > &2\rho\} \cap B((t_0,x_0),3R), \label{geom conv 2}\\ \label{geom conv 3}
    \hspace{-10mm} B((t_0,x_0),2r)\Subset  \{-\delta/2 \leq \psi \leq \delta/2\} \: \cap & \: B((t_0,x_0),R).
\end{align}

\end{enumerate}
\end{prop}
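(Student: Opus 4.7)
The plan is to construct $\psi$ as a concave quadratic perturbation of the original weight $\phi(x_n)$ of Section~\ref{local setting} in the directions tangential to the interface. Specifically, I would take $\psi(t,x',x_n) = \phi(x_n) - \gamma\bigl((t-t_0)^2+|x'-x'_0|^2\bigr)$ with a small parameter $\gamma > 0$. This is a quadratic polynomial (in particular quadratic in $t$, so the conjugation formula~\eqref{formule for conjugated operators} is available) and has the structure $\psi = \phi + \kappa$ required by Proposition~\ref{prop of geometric convexification}; the concave correction in the tangential $(t,x')$ directions is what makes the level sets of $\psi$ compact around $(t_0,x_0)$ and so makes the inclusions~\eqref{geom conv 1}–\eqref{geom conv 3} achievable. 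I would fix $R_0 > 0$ small enough that the geodesic normal coordinates are valid on $B((t_0,x_0),4R_0)$ and $\phi'(x_n) > 0$ there; then, since $|\kappa'| \le C\gamma R$ on $B((t_0,x_0),4R)$, choose $\gamma$ so that $C\gamma R_0 \le \eta$, where $\eta$ is the threshold from Proposition~\ref{prop of geometric convexification}. Taking $\epsilon \le \delta_0$ and requiring $\delta \le d/8$ (with $d$ the loss constant in the Carleman estimate for $\psi$), Proposition~\ref{prop of geometric convexification} delivers part~(1) of the statement directly.

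Writing $(t,x) = (t_0+s, x'_0+y, z)$, one has $\psi = \phi(z) - \gamma(s^2+|y|^2)$. Condition~\eqref{geom conv 3} is immediate from continuity of $\psi$ and $\psi(t_0,x_0)=0$ once $r$ is small. For~\eqref{geom conv 2}, $\psi \ge \delta/4$ directly gives $\phi(z) \ge \psi \ge \delta/4$, so imposing $2\rho < \delta/4$ suffices. The delicate inclusion is~\eqref{geom conv 1}, which I would prove by contrapositive: suppose $(s,y,z)$ lies on the annulus $R/2 \le |(s,y,z)| \le 5R/2$ with $\phi(z) \le 2\rho$. If $\phi(z) < -9\delta$, then $\psi \le \phi(z) < -9\delta$ immediately; otherwise $\phi(z) \in [-9\delta, 2\rho]$ forces $|z| = O(\max(\rho,\delta))$ thanks to $\phi'(0^\pm) = \alpha_\pm > 0$, so on the annulus $s^2+|y|^2 \ge R^2/4 - O(\rho^2+\delta^2) \ge R^2/8$ once the parameters are small, yielding $\psi \le 2\rho - \gamma R^2/8 < -9\delta$ provided $\gamma R^2 > 16\rho + 72\delta$. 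In all cases $\psi \notin [-9\delta, 2\delta]$, contradicting the assumed membership in the LHS.

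The only technical point is the consistent ordering of parameter choices: first fix $R_0$ and $\gamma$ (depending on the coefficients of $P$, $\phi$, and on $\eta$); for each $R \in (0, R_0]$, choose $\delta$ small enough to satisfy both $\delta \le d/8$ and $\delta \ll \gamma R^2$; then $\rho < \delta/8$; finally $r$ so that~\eqref{geom conv 3} holds by continuity. I do not expect any serious obstacle beyond this bookkeeping — the entire analytic content lies in Proposition~\ref{prop of geometric convexification}, which has already shown that the Carleman estimate survives a quadratic perturbation of small $L^\infty$ gradient. The only structural input used beyond it is the positivity of $\phi'(0^\pm) = \alpha_\pm$, which provides the linear comparison $|\phi(z)| \sim |z|$ near the interface needed to convert the level-set constraints on $\psi$ into genuine size constraints on $z$.
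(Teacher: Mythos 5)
Your proposal is correct and follows essentially the same route as the paper: take $\psi = \phi + \kappa$ with $\kappa$ a small concave quadratic, invoke Proposition~\ref{prop of geometric convexification} for part (1), and then fix $\delta \le d/8$, $\rho$, $r$ small to force the level-set inclusions. The only (harmless) difference is that the paper perturbs in all space-time variables, $\kappa = -\tilde\delta\bigl|(t,x)-(t_0,x_0)\bigr|^2$, which turns~\eqref{geom conv 1} into the one-line implication $\psi \ge -9\delta \Rightarrow \phi = \psi + \tilde\delta|z|^2 \ge \tilde\delta R^2/4 - 9\delta \ge \delta$ on the annulus without any reference to the shape of $\phi$, whereas your tangential-only quadratic requires the extra (correct, but additional) observation that $\phi'(0^\pm)=\alpha_\pm>0$ forces $|x_n| = O(\max(\rho,\delta))$ when $\phi(x_n)\in[-9\delta,2\rho]$.
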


\begin{remark}
The first item is the Carleman estimate we have already obtained and the second one says that we can have this estimate with a weight function whose level sets are appropriately curved with respect to the interface $\Sigma$. This is the geometric convexification part.

\end{remark}

\begin{proof}
We suppose to simplify that $(t_0,x_0)=0$. Theorem~\ref{theorem} gives us the desired estimate with a weight function $\phi$ defined in~\eqref{def of weight phi}. Proposition~\ref{prop of geometric convexification} gives the existence of $\tilde{\delta}$ sufficiently small such that the same estimate is valid with the weight $\psi$ defined as  
\begin{equation}
\label{def of weight psi}
\psi= \phi - \tilde{\delta}|(t,x)|^2.
\end{equation}

More precisely one has the existence of $R_0, \epsilon, d, \tau_0$ and $C$ such that
  \begin{equation*}
    C\left(\norm{Q_{\epsilon, \tau}^{\psi} Pu}{L^2(\Omega_{t,-}\cup \: \Omega_{t,+})}{2} +e^{-d \tau} \norm{e^{\tau \psi}u }{H^1_\tau}{2} +  T_{\theta, \Theta}\right)\geq \tau \norm{Q_{\epsilon, \tau}^{\psi}u}{H^1_\tau}{2},
\end{equation*}
 for all $\tau \geq \tau_0$ and all $u \in \tran$ with $\supp{u} \subset B(0,4R)$ and $R \leq R_0$. Consider now  $\delta>0$ such that 
 $$
 \delta \leq \frac{\tilde{\delta}R^2}{4 \cdot 10} \Leftrightarrow \frac{\tilde{\delta}R^2}{4} \geq 10 \delta.
 $$
 This implies that for $z=(t,x) \in  B(0,5R/2) \backslash B(0,R/2) \cap \{-9 \delta \leq \psi \leq 2 \delta \}  $ one has
 $$
 \psi \geq -9 \delta \Rightarrow \phi \geq \tilde{\delta}|z|^2-9\delta \geq \tilde{\delta}\frac{R^2}{4}-9\delta\geq \delta.
 $$
We choose then $\rho=\frac{\delta}{10}$ and \eqref{geom conv 1} is satisfied. For the second condition we consider again $z=(t,x) \in \{\delta/4 \leq \psi \leq 2 \delta \}$ and we have 
$$
\psi \geq \frac{\delta}{4} \Rightarrow \phi \geq \frac{\delta}{4} >2 \rho=\frac{\delta}{5},
$$
which shows that \eqref{geom conv 2} is satisfied as well. The last property is simply a continuity statement. Indeed, since $\psi(0)=\phi(0)=0$ and $\psi$ is continuous there exists $0<r<R/2$ sufficiently small such that 
$$
B(0,2r) \Subset  \{-\delta/2 \leq \psi \leq \delta/2\} \cap B(0,R).
$$
We choose $\delta \leq \min (\frac{\tilde{\delta}R^2}{4 \cdot 10},\frac{d}{8})$ and , with $\rho=\delta/10$ and $r$ as above, the proposition is proved.
\end{proof}

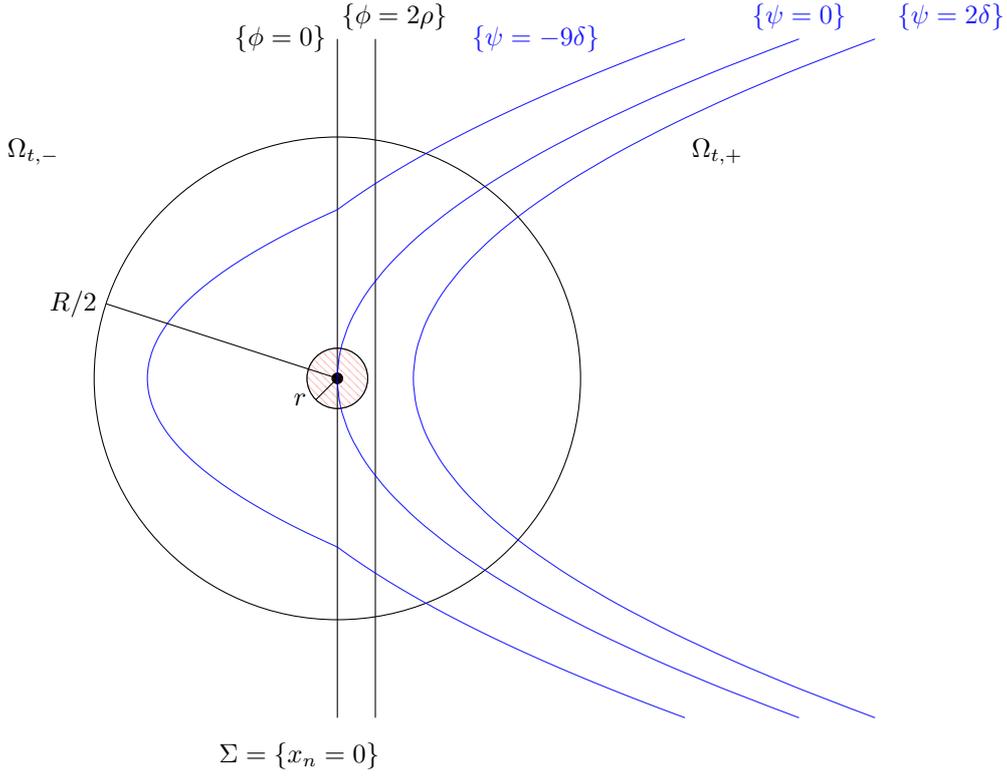
\begin{figure}[!ht]
	\centering
	
	\begin{tikzpicture}
	\draw[scale=0.5, domain=-9:9, smooth, variable=\x] plot (0,\x) ;
	\node at ( -0.75,4.5) {$\{\phi=0\}$};
		\node at (-0.5,-5)  {$\Sigma=\{x_n=0\}$};
		
		\node at ( -4,3) {$\Omega_{t,-}$};	
		
		\node at ( 5,3) {$\Omega_{t,+}$};
	
	\draw[scale=0.5, domain=-9:9, smooth, variable=\x] plot (1,\x) 	node[above, xshift=0.25cm] {$\{\phi=2 \rho\}$};

\filldraw[black] (0,0) circle (2pt)  ;
	
\draw  (0,0) circle (0.4cm) ;

\draw[pattern=north west lines,opacity=.5, pattern color=red] (0,0) circle (0.4cm) ;
	
\draw  (0,0) circle (3.2cm) ;

\draw[scale=0.5, domain=-9:9, smooth, variable=\x,blue, opacity=0.9] plot (0.15*\x*\x,\x)  node[above] {$\{\psi=0 \}$} ;

\draw[scale=0.5, domain=-9:9, smooth, variable=\x,blue, opacity=0.9] plot (0.15*\x*\x+2,\x)  node[above,xshift=1cm] {$\{\psi=2\delta \}$} ;

\draw[scale=0.5, domain=-4.47:4.47, smooth, variable=\x,blue, opacity=0.9] plot (0.25*\x*\x-5,\x ) ;

\draw[scale=0.5, domain=4.47:9, smooth, variable=\x,blue, opacity=0.9] plot (0.15*\x*\x-3,\x )  node[left, xshift=-1cm] {$\{\psi=-9 \delta \}$};

\draw[scale=0.5, domain=-9:-4.47, smooth, variable=\x,blue, opacity=0.9] plot (0.15*\x*\x-3,\x )  ;

\draw(0,0)--(-0.28,-0.28)  node[pos=1,left] {$r$} ;
	
\draw(0,0)--(-3.04,0.988)  node[left] {$R/2$};
		
\end{tikzpicture}
\caption{We have appropriately curved the level sets of $\phi$ with the help of the convexified weight  $\psi$, so that properties \eqref{geom conv 1} to \eqref{geom conv 3} of Proposition~\ref{starting point} are satisfied. Notice that the level sets of $\psi$ present a singularity when crossing the interface $\Sigma$.}
\label{geometric convexification}
\end{figure}

From this point on, one would like to follow the proof of Theorem 3.1 in \cite{Laurent_2018} from Step 2: Using the Carleman estimate (in the present setting Proposition~\ref{starting point}). The major difference is that in our context the coefficients of $P$ are no longer smooth, neither is the weight $\psi$. We show however that one can overcome this difficulty with a few modifications. Since this is a rather long and technical proof we only sketch the key arguments and explain, where necessary, what changes in our situation.

\begin{remark}
 Recall that the weight $\phi$ constructed in Section~\ref{the cerleman estimate} is Lipschitz continuous and in particular one has $\phi, \psi \in W^{1, \infty}(B(0,4R)).$
\end{remark}

\begin{remark}
With the notation introduced in Section~\ref{the cerleman estimate} one has 
	\begin{equation*}
	\norm{Q_{\epsilon, \tau}^{\psi} Pu}{L^2(\Omega_{t,-}\cup \: \Omega_{t,+})}{2}=\norm{H_-\poidsps P^-u_-}{L^2}{} +  \norm{H_+\poidsps P^+u_+}{L^2}{}.
	\end{equation*}
	
\end{remark}

We suppose to simplify that $(t_0,x_0)=0 \in \Sigma$. We consider then $\phi, \psi, R, d, \tau_0, C$ as given by Proposition~\ref{starting point}. We shall use the localization and regularization parameters $\lambda, \mu >0$ and we will suppose that $\lambda \sim \mu$, that is 
$$
1/\tilde{C} \mu \leq \lambda \leq \tilde{C} \mu,
$$
for some $\tilde{C}>0$.

\bigskip

We introduce now some cut-off functions that will allow us to localize and apply our Carleman estimate. We define $\chi(s) $ as a smooth function supported in $(-8,1)$ such that $\chi(s)=1$ for $s\in [-7, 1/2]$ and set 
\begin{equation}
    \label{def of chi delta}
    \chi_\delta (s):=\chi(s/\delta).
\end{equation}
We define as well $\tchi$ with $\tchi=1$ on $(-\infty, 3/2)$ and supported in $s \leq 2$, then $\tchi_\delta (s):=\tchi(s/\delta)$. 

\bigskip

Following \cite{Laurent_2018}, for $u \in \mathcal{W}$ compactly supported we wish to apply our Carleman estimate of Proposition~\ref{starting point} to
$$
\sigma_{2R}\sigma_{R, \lambda}\tchi_{\delta}(\psi)\chi_{\delta,\lambda}(\psi)u,
$$
where we recall that $\sigma_R$ has been defined in \eqref{definition of function sigma}. Note that, even though $u\in \mathcal{W}$ the function $\sigma_{2R}\sigma_{R, \lambda}\tchi_{\delta}(\psi)\chi_{\delta,\lambda}(\psi)u$ does not satisfy the homogeneous transmission conditions. This is why we need to consider non homogeneous transmission conditions.

One should notice that the fact that the operator $e^{-\frac{|D_t|^2}{\lambda}}$ is tangential with respect to the variable $x_n$ implies that
$$
f \in \tranphi \Longrightarrow f_\lambda \in \mathcal{W}^{\theta_\lambda,\Theta_\lambda}_\phi.
$$
Now the definition of  $\sigma_R$ in~\eqref{definition of function sigma} gives
$
(\partial_{x_n}\sigma_R){_{|\Sigma}}=0
$
and $(\partial_{x_n}\sigma_{R,\lambda})_{|\Sigma}=0$. This is true for $\tchi_{\delta}(\psi)$ also. To see this, we observe that by definition the derivative $\partial_{x_n} \tchi_{\delta}(\psi)$ is supported in $\{\psi \geq 3/2 \delta \}$ which according to the definition of $\psi$ in \eqref{def of weight psi} is away from the interface $\Sigma$. However, the term $\chi_{\delta}(\psi) $ may not be constant on $\Sigma$. More precisely, we have that:
$$
\sigma_{2R}\sigma_{R, \lambda}\tchi_{\delta}(\psi)\chi_{\delta,\lambda}(\psi)u \in \tran,
$$
with $\theta=0$ (since $\sigma_{2R}\sigma_{R, \lambda}\tchi_{\delta}(\psi)\chi_{\delta,\lambda}(\psi)u$ remains continuous) and 
$$
\Theta=\bigg((c_+-c_-)\sigma_{2R}\sigma_{R, \lambda}\tchi_{\delta}(\psi)u\partial_{x_n}(\chi_{\delta,\lambda})(\psi)\bigg)_{|\Sigma}.
$$
Notice that the definition of $\chi_\delta$ implies that 
$$
\supp{\partial_{x_n} \chi_\delta}_{|\Sigma} \subset \Sigma \cap \{-8\delta \leq \psi \leq -7 \delta\}.
$$
This support property combined with Lemma~\ref{lemma 2.13 from ll} allow to estimate the term $T_{\theta,\Theta}$ appearing in the left hand side of the estimate of Proposition~\ref{starting point} in the following way:
\begin{align}
    T_{\theta, \Theta}&=\tau \normsurf{e^{-\epsilon\frac{|D_t|^2}{2 \tau}}e^{\tau \psi} \Theta}{\ls}{2} \lesssim \tau \normsurf{e^{\tau \psi} \Theta}{\ls}{2}=\tau \normsurf{e^{\tau \psi} \bigg((c_+-c_-)\sigma_{2R}\sigma_{R, \lambda}\tchi_{\delta}(\psi)u\partial_{x_n}(\chi_{\delta,\lambda})(\psi)\bigg)_{|\Sigma}}{\ls}{2} \nonumber \\ \label{estim for transmission term}
    &\lesssim \tau \mu e^{-14 \delta \tau}e^{\frac{\tau^2}{\mu}} \normsurf{u}{\ls}{2} \lesssim \tau \mu e^{-14 \delta \tau}e^{\frac{\tau^2}{\mu}} \norm{u}{H^1}{2}.
\end{align}
The other term in the left hand side of Proposition~\ref{starting point} that we need to estimate is $$
\norm{\poidsps P \sigma_{2R}\sigma_{R, \lambda}\tchi_{\delta}(\psi)\chi_{\delta,\lambda}(\psi)u}{L^2(\Omega_{t,-}\cup \: \Omega_{t,+})}{}.
$$
We use again $\supp{\chi_\delta} \subset (-\infty, \delta)$ with Lemma~\ref{lemma 2.13 from ll} to obtain
\begin{align}
\label{elemantary estimate with commutator}
 \norm{\poidsps P \sigma_{2R}\sigma_{R, \lambda}\tchi_{\delta}(\psi)\chi_{\delta,\lambda}(\psi)u}{L^2 (\Omega_{t,-}\cup \: \Omega_{t,+})}{} &\leq    \norm{\poidsps  \sigma_{2R}\sigma_{R, \lambda}\tchi_{\delta}(\psi)\chi_{\delta,\lambda}(\psi)Pu}{L^2 (\Omega_{t,-}\cup \: \Omega_{t,+})}{} \nonumber\\ &\hspace{4mm}+ \norm{\poidsps  [\sigma_{2R}\sigma_{R, \lambda}\tchi_{\delta}(\psi)\chi_{\delta,\lambda}(\psi),P]u}{L^2 (\Omega_{t,-}\cup \: \Omega_{t,+})}{} 
\nonumber\\ &\lesssim \mu^{1/2}e^{C \frac{\tau^2}{\mu}}e^{\delta \tau} \norm{Pu}{L^2(B(0,4R)\cap (\Omega_{t,-}\cup \: \Omega_{t,+}))}{} \nonumber\\
 &\hspace{4mm}+ \norm{\poidsps  [\sigma_{2R}\sigma_{R, \lambda}\tchi_{\delta}(\psi)\chi_{\delta,\lambda}(\psi),P]u}{L^2 (\Omega_{t,-}\cup \: \Omega_{t,+})}{}.
\end{align}
We need therefore to estimate the commutator appearing in~\eqref{elemantary estimate with commutator}. This is the purpose of the following Lemma.

\begin{lem}
\label{commutator estimate }
There exists $R_0 >0$ such that for any $R \in (0,R_0)$ there exist $r, \rho>0$ such that for any $\theta \in C^{\infty}_0(\R^{n+1})$ such that $\theta(x)=1$ on a neighborhood of $\{\phi \geq 2 \rho\} \cap B(0,3R)$, there exist $C>0$, $c>0$ and $N>0$ such that 
\begin{align}
\label{comm estimate}
   C e^{2 \delta \tau}\norm{M^{2\mu}_{\lambda}\theta_\lambda u}{H^1}{}&+C\mu^{1/2}\tau\bigg(e^{-8\delta}+e^{-\frac{\epsilon \mu^2}{8 \tau}}+e^{-c \mu}e^{\delta \tau}\bigg)e^{C \frac{\tau^2}{\mu}}e^{\delta \tau}\norm{u}{H^1}{} \\
  &\hspace{10mm}\geq 
  \norm{\poidsps  [\sigma_{2R}\sigma_{R, \lambda}\tchi_{\delta}(\psi)\chi_{\delta,\lambda}(\psi),P]u}{L^2 (\Omega_{t,-}\cup \: \Omega_{t,+})}{},
\end{align}
for any $u \in \mathcal{W} $ compactly supported, $\mu \geq 1$, $\lambda \sim \mu$ and $\tau \geq 1$.

\end{lem}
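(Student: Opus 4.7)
Setting $A := \sigma_{2R}\sigma_{R,\lambda}\tchi_\delta(\psi)\chi_{\delta,\lambda}(\psi)$ and recalling that $P = \partial_t^2 - \mathrm{div}_g(c\nabla_g)$ is a second-order differential operator while $A$ acts by multiplication, the commutator $[A,P]$ is a first-order differential operator whose coefficients are linear combinations of first and second derivatives of $A$. My plan is to expand $[A,P]u$ by the Leibniz rule into four groups of terms, each corresponding to a derivative falling on exactly one of the four factors of $A$, and to estimate each group by intersecting the support of the differentiated factor with those of the remaining factors. The geometric properties \eqref{geom conv 1}--\eqref{geom conv 3} from Proposition~\ref{starting point} are designed precisely so that each resulting support is either contained in $\{\phi > 2\rho\}$ (where $\theta \equiv 1$, feeding the leading term $Ce^{2\delta\tau}\|M^{2\mu}_\lambda \theta_\lambda u\|_{H^1}$) or lies in a region where either the Carleman weight $e^{\tau\psi}$ or the $t$-regularization $e^{-|D_t|^2/\lambda}$ provides exponential smallness.

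\textbf{Support analysis.} A derivative on $\sigma_{2R}$ produces a coefficient supported in $\{2R \le |(t,x)| \le 4R\}$, while $\sigma_{R,\lambda}$ is essentially supported in $\{|(t,x)| \le 2R\}$; this disjointness converts, via Lemma~\ref{lemma 2.13 from ll}, into an $e^{-c\lambda}$ tail that, after applying $\poidsps$, becomes the $e^{-c\mu}e^{\delta\tau}e^{C\tau^2/\mu}\|u\|_{H^1}$ contribution. A derivative on $\sigma_{R,\lambda}$ is essentially supported in $\{R \le |(t,x)| \le 2R\}$; intersected with $\supp(\tchi_\delta(\psi)\chi_{\delta,\lambda}(\psi)) \subset \{-9\delta \le \psi \le 2\delta\}$, it lies in $\{\phi > 2\rho\} \cap B(0,3R)$ by \eqref{geom conv 1}, so $\theta \equiv 1$ there. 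A derivative on $\tchi_\delta(\psi)$ is supported in $\{3\delta/2 \le \psi \le 2\delta\}$, which is inside $\{\phi > 2\rho\}$ by \eqref{geom conv 2}. Finally, a derivative on $\chi_{\delta,\lambda}(\psi)$ picks up $\chi_\delta'(\psi)$, essentially supported on $\{\delta/2 \le \psi \le \delta\} \cup \{-8\delta \le \psi \le -7\delta\}$: the upper branch sits in $\{\phi > 2\rho\}$ by \eqref{geom conv 2}, while on the lower branch $e^{\tau\psi} \le e^{-7\delta\tau}$ furnishes the $e^{-8\delta\tau}$ term (with a constant to spare).

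\textbf{Extracting $M^{2\mu}_\lambda \theta_\lambda u$ and main obstacle.} In every branch where $\theta \equiv 1$ on the support, I would replace $u$ by $\theta u$, approximate $\theta u$ by $\theta_\lambda u$ (the remainder being $O(e^{-c\lambda})$ in $H^1$ by Lemma~\ref{lemma 2.13 from ll}), and split $\theta_\lambda u = M^{2\mu}\theta_\lambda u + (1-M^{2\mu})\theta_\lambda u$. The high-frequency remainder, upon applying $e^{-\epsilon|D_t|^2/(2\tau)}$, is bounded by $e^{-\epsilon\mu^2/(2\tau)}\|u\|_{H^1}$, absorbed into the $e^{-\epsilon\mu^2/(8\tau)}$ term; the low-frequency part produces $e^{2\delta\tau}\|M^{2\mu}_\lambda \theta_\lambda u\|_{H^1}$ once $M^{2\mu}$ is commuted past the analytic regularization. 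The factor $e^{C\tau^2/\mu}$ arises, as in \eqref{elemantary estimate with commutator}, whenever $e^{-\epsilon|D_t|^2/(2\tau)}$ is applied to a function with effective time-frequency content at most $\lambda \sim \mu$. The main technical obstacle is coordinating these three error regimes simultaneously: the regularization errors from $e^{-|D_t|^2/\lambda}$ traversing the non-analytic cutoffs $\sigma_{2R}, \tchi_\delta(\psi)$, the high-frequency cutoff errors from $M^{2\mu}$ interacting with $\poidsps$, and the jumps of $c$ and $\psi$ across $\Sigma$. The interface itself is not the main difficulty here: since $\psi$ and $c$ depend only on $x$, the $t$-regularization $e^{-|D_t|^2/\lambda}$ commutes with multiplication by them and passes across the jump without loss, and the non-smoothness only affects the normal derivative of $\chi_{\delta,\lambda}(\psi)$ across $\Sigma$, which is controlled by splitting the $L^2$-estimates into the two sides of $\Sigma$ and summing.
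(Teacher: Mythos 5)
Your global strategy matches the paper: write $P=\sum_{|\alpha|\le 2}p_\alpha(x)\partial^\alpha$ with $p_\alpha$ independent of $t$ so that it commutes with $\poidsps$, expand $[A,P]$ by Leibniz, and route each generic term to the appropriate branch of the inequality via the support relations \eqref{geom conv 1}--\eqref{geom conv 3}. Your grouping is essentially the paper's $B_1,\dots,B_4$ (with the further split of $B_1=B_++B_-$), and your dispatch of $B_-$, $B_2$, $B_4$ is the same as the paper's. However, for the hardest terms $B_+$ and $B_3$ — the ones which must produce $Ce^{2\delta\tau}\norm{M^{2\mu}_\lambda\theta_\lambda u}{H^1}{}$ — your reduction has two genuine gaps.

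First, the step ``replace $u$ by $\theta u$, approximate $\theta u$ by $\theta_\lambda u$ with remainder $O(e^{-c\lambda})$ in $H^1$ by Lemma~\ref{lemma 2.13 from ll}'' is not correct. Lemma~\ref{lemma 2.13 from ll} bounds $\norm{e^{\tau\psi}\tilde\chi_\lambda(\psi)}{L^\infty}{}$; it says nothing about $\norm{\theta u-\theta_\lambda u}{H^1}{}$, and that difference is in fact \emph{not} $O(e^{-c\lambda})$ because $\theta$ is a compactly supported smooth cutoff, hence not analytic in $t$, so the Gaussian regularization does not approximate it exponentially fast. The paper avoids this entirely: it never attempts to turn $\theta u$ into $\theta_\lambda u$ directly, but rather works from the inside out via the chain ``estimate $\norm{e^{\tau\psi}b_*M_\lambda^{2\mu}\partial^\gamma u}{L^2(O_3)}{}$ on a region $O_3$ compactly inside $\{\phi>2\rho\}$, insert $\tilde\theta_\lambda$, then apply Lemma~\ref{lemma 2.6 from ll} to pass $\partial^\gamma$ inside and Lemma~\ref{lemma 2.11 from ll} to commute the cutoff with $M^{2\mu}_\lambda$'', which is where $M^{2\mu}_\lambda\theta_\lambda u$ finally appears. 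Relatedly, $b_+$ and $b_3$ contain the \emph{regularized} factors $\sigma_{R,\lambda}$ and $\chi_{\delta,\lambda}(\psi)$, whose supports are all of $\R^{n+1}$: there is no set on which $\theta\equiv 1$ contains $\supp b_*$. Handling the Gaussian tails requires the explicit decomposition $\R^{n+1}=O_1\cup O_2\cup O_3$ and Lemma~\ref{lemma 2.3 from ll} on $O_1$ and $O_2$, which is absent from your plan.

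Second, the crucial ``anti-resonance'' estimate of Lemma~\ref{lemma 2.16 from ll} is missing. Your claim that ``the high-frequency remainder, upon applying $e^{-\epsilon|D_t|^2/(2\tau)}$, is bounded by $e^{-\epsilon\mu^2/(2\tau)}\norm{u}{H^1}{}$'' is an oversimplification: after multiplying by $e^{\tau\psi}$ (a Gaussian in $t$, since $\psi$ is a quadratic polynomial) and by $b_*$, the $t$-frequency content of the product is not controlled by the frequency content of $u$. The quantitative statement that $M^{\mu}_\lambda e^{\tau\psi}b_*(1-M^{2\mu}_\lambda)$ is $O(\tau^N e^{C\tau^2/\mu}e^{2\delta\tau}e^{-c\mu})$ in $L^2\to L^2$ operator norm is exactly the content of Lemma~\ref{lemma 2.16 from ll}, which relies on the analyticity of $e^{\tau\psi}$ in $t$ and is the technical heart of the proof. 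You correctly identify that $\psi$ being quadratic in $t$ is important, but you do not supply (or cite) the mechanism that converts this into the required exponential smallness. Without these two ingredients the proof of Lemma~\ref{commutator estimate } does not close.
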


This is the difference with respect to the situation of Lemma 3.7 in \cite{Laurent_2018}: the coefficients $p_\alpha$ are no longer smooth. They remain bounded however and 
the regularization-localization operators are tangential to the interface where the coefficients may jump. This allows to use the same techniques. Moreover in our situation we can exploit the fact that the coefficients $p_\alpha$ are independent of $t$ and therefore they commute with $\poidsps$.

\begin{proof}
The preceding remark allows us to write our operator as
$$
P=\sum_{|\alpha|\leq 2}p_{\alpha}(x)\partial^\alpha,
$$
with $p_{\alpha}=H_-p^{-}_{\alpha}+H_+p^{+}_{\alpha}$ and $p^{\pm}_{\alpha}=p^{\pm}_{\alpha}(x)$ smooth functions independent of $t$.

 This implies:
\begin{align}
\label{sum of commutators}
  \sum_{\pm}\norm{H_{\pm}\poidsps  [\sigma_{2R}\sigma_{R, \lambda}\tchi_{\delta}(\psi)\chi_{\delta,\lambda}(\psi),P_{\pm}]u_\pm}{L^2}{}&=\sum_{|\alpha|\leq 2}  \norm{H_{\pm}\poidsps  [\sigma_{2R}\sigma_{R, \lambda}\tchi_{\delta}(\psi)\chi_{\delta,\lambda}(\psi),p_{\alpha}(x)\partial^{\alpha}]u}{L^2}{}\nonumber\\
  &=\sum_{|\alpha|\leq 2}  \norm{\poidsps p_{\alpha}(x) [\sigma_{2R}\sigma_{R, \lambda}\tchi_{\delta}(\psi)\chi_{\delta,\lambda}(\psi),\partial^{\alpha}]u}{L^2}{} \nonumber\\
  &=\sum_{|\alpha|\leq 2}  \norm{ p_{\alpha}(x)\poidsps  [\sigma_{2R}\sigma_{R, \lambda}\tchi_{\delta}(\psi)\chi_{\delta,\lambda}(\psi),\partial^{\alpha}]u}{L^2}{} \nonumber\\
  &\leq C\sum_{|\alpha|\leq 2}  \norm{\poidsps  [\sigma_{2R}\sigma_{R, \lambda}\tchi_{\delta}(\psi)\chi_{\delta,\lambda}(\psi),\partial^{\alpha}]u}{L^2}{}.
\end{align}

By the Leibniz rule we can write 
\begin{multline*}
\partial^\alpha(\sigma_{2R}\sigma_{R, \lambda}\tchi_{\delta}(\psi)\chi_{\delta,\lambda}(\psi)u)\\
=\sum_{\alpha_1+\alpha_2+\alpha_3+\alpha_4+\alpha_5=\alpha}C_{(\alpha_i)}\partial^{\alpha_1}(\chi_{\delta,\lambda}(\psi))\partial^{\alpha_2}(\sigma_{2R})\partial^{\alpha_3}(\sigma_{R, \lambda})\partial^{\alpha_4}(\tchi_{\delta}(\psi))\partial^{\alpha_5}u.
\end{multline*}

We split then the commutators in ~\eqref{sum of commutators} in  a sum of differential operators of order one as follows:
\begin{equation*}
\sum_{|\alpha|\leq 2} [\partial^\alpha,\sigma_{2R}\sigma_{R, \lambda}\tchi_{\delta}(\psi)\chi_{\delta,\lambda}(\psi)]
=B_1+B_2+B_3+B_4,
\end{equation*}
where:

\begin{enumerate}
    \item $B_1$ contains the terms with $\alpha_1 \neq 0$ and $\alpha_2=\alpha_4=0$;
    
    \item $B_2$ contains some terms with $\alpha_2 \neq 0$;
    
    \item $B_3$ contains the terms with $\alpha_3 \neq0$ and $\alpha_1=\alpha_2=\alpha_4=0$;
    
    \item $B_4$ contains some terms with $\alpha_4 \neq 0$.
\end{enumerate}
Now we provide estimates for each of the terms above. Before that we further decompose $B_1$ in two terms by observing 
$$
(\chi_{\delta, \lambda})^{\prime}=\frac{1}{\delta}(\chi^\prime)_{\delta, \lambda}=\frac{1}{\delta}(\mathds{1}_{[\delta/2,\delta]}\chi^\prime)_{\delta, \lambda}+\frac{1}{\delta}(\mathds{1}_{[-8\delta,-7\delta]}\chi^\prime)_{\delta, \lambda}=\chi_{\delta, \lambda}^{+}+\chi_{\delta, \lambda}^{-},
$$
with $\chi_{\delta, \lambda}^{+}:=\frac{1}{\delta}(\mathds{1}_{[\delta/2,\delta]}\chi^\prime)_{\delta, \lambda}$ and $\chi_{\delta, \lambda}^{-}:=\frac{1}{\delta}(\mathds{1}_{[-8\delta,-7\delta]}\chi^\prime)_{\delta, \lambda}$, and we have used the properties of $\supp{\chi^\prime}$. 

\begin{enumerate}
\item This allows to decompose $B_1$ as a sum of generic terms of the form
\begin{equation}
\label{def of B+-}
B_{\pm}=b_{\pm}\partial^\gamma=f \sigma_{2R}\partial^{\beta}(\sigma_{R,\lambda})\chi_{\delta, \lambda}^{\pm}\tchi_\delta(\psi)\partial^\gamma,	
\end{equation}

where $|\beta|, |\gamma| \leq 1$, $f \in L^\infty(\R^{n+1})$, compactly supported and analytic in $t$. Notice that in the absence of the regularization parameter $\lambda$ the terms of $B_+$ would be supported in 
$$
\bigg(\{\delta/2 \leq \psi \leq \delta \} \cap B(0,2R)\bigg) \subset \bigg(\{\phi > 2 \rho \} \cap \{\psi \leq \delta \} \cap B(0,2R)\bigg)
$$
and those of $B_-$ in $\{-8\delta\leq \psi \leq-7\delta \} \cap B(0,2R).$

\item $B_2$ consists of terms where there is at least one derivative on $\sigma_{2 R}$ and contains terms of the form
$$
\tilde{b}\partial^{\beta}(\sigma_{R, \lambda})(\chi_{\delta,\lambda})^{(k)}(\psi)\tchi_\delta(\psi)\partial^\gamma,
$$
where $k, |\beta|, |\gamma| \leq 1$ with $\tilde{b}$ bounded and supported in $B(0,4R)\backslash B(0,2R)$.

\item $B_3$ consists of terms where there is at least one derivative on $\sigma_{R, \lambda}$ and none on $\chi_{\delta,\lambda}(\psi), \tchi_\delta(\psi)$ and $\sigma_R$. These are terms of the form 
$$
f \sigma_{2R}\partial^{\beta}(\sigma_{R, \lambda})\chi_{\delta, \lambda}(\psi)\tchi_\delta(\psi)\partial^\gamma,
$$
where $f$ is bounded and independent of $t$, $|\beta|=1$ and $|\gamma|\leq 1$. Notice that in the absence of the regularization parameter $\lambda$ these terms would be supported in 
$$
\bigg( \{-8 \delta \leq \psi \leq -7\delta \}\cap B(0,2R)\backslash B(0,R) \bigg) \subset \bigg(\{\phi > 2\rho\}\cap \{\psi \leq \delta\} \cap B(0,2R)\bigg).
$$
\item $B_4$ consists of terms where there is at least one derivative on $\tchi_\delta(\psi)$ and contains terms of the form
$$
\tilde{b}\partial^{\beta}(\sigma_{R,\lambda})(\chi_{\delta,\lambda})^{(k)}(\psi)\partial^\gamma,
$$
where $k, |\beta|, |\gamma|\leq 1 $ and the function $\tilde{b}$ is bounded and supported in $B(0,4R) \cap \{3  \delta/2 \leq \psi \leq 2 \delta\}$.
\end{enumerate}

To prove Lemma~\ref{commutator estimate } one needs to provide appropriate estimates for a generic term from each of the four groups defined above. What happens is that terms containing derivatives of a non regularized function (that is without the subscript $\lambda$) are easier to handle since they localize exactly. To deal with derivatives of regularized functions requires more work since they produce additional errors coming from the non exact localization properties.

\bigskip

\noindent \textbf{Estimating $B_-$} (defined in~\ref{def of B+-}). We use Lemma~\ref{lemma 2.13 from ll} applied to $\chi^{-}_\delta$ to find:
\begin{equation*}
    \norm{\poidsps B_- u}{L^2}{} \leq \norm{e^{\tau \psi}B_-}{L^2}{}\leq C_{\delta}\lambda^{1/2}e^{-7 \delta}e^{\frac{\tau^2}{\lambda}}\norm{u}{H^1}{}\leq C\mu^{1/2}e^{-7 \delta}e^{C \frac{\tau^2}{\mu}}\norm{u}{H^1}{}.
\end{equation*}

\bigskip

\noindent\textbf{Estimating $B_2$.} We use Lemma~\ref{lemma 2.3 from ll} applied to $\tilde{b}$ and $\partial^{\beta}(\sigma_{R, \lambda})$ which have supports away from each other. We apply then Lemma~\ref{lemma 2.13 from ll} to $\chi_\delta^{(k)}$ using its support properties to find:
\begin{align*}
    \norm{\poidsps B_2 u}{L^2}{}&\leq\norm{e^{\tau \psi}B_2 u}{L^2}{}\leq \norm{\tilde{b}\partial^\beta(\sigma_{R,\lambda})}{L^\infty}{}\norm{\chi_\delta^{(k)}u}{L^2}{}\leq C \lambda^{1/2}e^{\delta \tau}e^{\frac{\tau^2}{\lambda}e^{-c \lambda}}\norm{u}{H^1}{}\\
    &\leq C \mu^{1/2}e^{\delta \tau}e^{C \frac{\tau^2}{\mu}}e^{-c\mu}\norm{u}{H^1}{}.
\end{align*}

\bigskip

\noindent \textbf{Estimating $B_4$.} We use Lemma~\ref{lemma 2.3 from ll} applied to $(\chi_{\delta,\lambda})^{(k)}(\psi)$ and $\mathds{1}_{[3\delta/2,2\delta]}$ to find thanks to the localization of $\tchi_\delta^\prime(\psi)$:
$$
\norm{\poidsps B_4 u}{L^2}{}\leq \norm{e^{\tau \psi}B_4 u}{L^2}{}\leq C e^{2 \delta \tau}e^{-c\mu}\norm{u}{H^1}{}.
$$

\bigskip 

\noindent \textbf{First estimates on $B_+$} (defined in~\eqref{def of B+-}) \textbf{and $B_3$.} These are the most difficult terms since here the derivative does not localize \textit{exactly}. We have for $B_*$ with $*=+$ or $*=3$:
\begin{align*}
    \norm{\poidsps B_* u}{L^2}{}&=\norm{e^{-\epsilon \frac{|D_t|^2}{2  \tau}}e^{\tau \psi}B_* u}{L^2}{}\leq \norm{e^{-\epsilon \frac{|D_t|^2}{2  \tau}}M^\mu_\lambda e^{\tau \psi}B_* u}{L^2}{}+\norm{e^{-\epsilon \frac{|D_t|^2}{2  \tau}}(1-M^\mu_\lambda) e^{\tau \psi}B_* u}{L^2}{}\\
    &\leq \norm{M^\mu_\lambda e^{\tau \psi}B_* u}{L^2}{}+C\lambda^{1/2}\left(e^{-\frac{\epsilon \mu^2}{8 \tau}} +e^{- c \mu}\right)e^{C \frac{\tau^2}{\mu}}e^{\delta \tau}\norm{u}{H^1}{},
\end{align*}
where we have applied successively Lemma~\ref{lemma 2.14 from ll} and  Lemma~\ref{lemma 2.13 from ll}. We estimate now the first term in the above inequality, with $B_*=b_*\partial^\gamma$:
$$
\norm{M^\mu_\lambda e^{\tau \psi}B_* u}{L^2}{}\leq\norm{M^\mu_\lambda e^{\tau \psi}b_*(1-M_\lambda^{2\mu}) \partial^\gamma u}{L^2}{}+\norm{M^\mu_\lambda e^{\tau \psi}b_*M_\lambda^{2\mu} \partial^\gamma u}{L^2}{}.
$$
We apply then Lemma~\ref{lemma 2.16 from ll} which gives
$$
\norm{M^\mu_\lambda e^{\tau \psi}b_*(1-M_\lambda^{2\mu}) \partial^\gamma u}{L^2}{}\leq C \tau^N e^{C \frac{\tau^2}{\mu}}e^{2 \delta \tau - c \mu}\norm{u}{H^1}{}.
$$
Using the fact that
$$
\norm{M^\mu_\lambda e^{\tau \psi}b_*M_\lambda^{2\mu} \partial^\gamma u}{L^2}{}\leq \norm{ e^{\tau \psi}b_*M_\lambda^{2\mu} \partial^\gamma u}{L^2}{}
$$
we have thus obtained
$$
 \norm{\poidsps B_* u}{L^2}{}\leq \norm{ e^{\tau \psi}b_*M_\lambda^{2\mu} \partial^\gamma u}{L^2}{}+C\mu^{1/2}\tau^N\left(e^{-\frac{\epsilon\mu^2}{8 \tau}}+e^{\delta \tau}e^{-c\mu}\right)e^{C \frac{\tau^2}{\mu}}e^{\delta \tau}\norm{u}{H^1}{}.
$$
That is we “almost commuted”$M_\lambda^\mu$ with $e^{\tau \psi}B_*$. To finish the proof of Lemma~\ref{commutator estimate } we need therefore to estimate $ \norm{ e^{\tau \psi}b_*M_\lambda^{2\mu} \partial^\gamma u}{L^2}{}$, for $b_*=b_+$ and $b_*=b_3$.

This is done exactly as in \cite{Laurent_2018}. As we have already seen in the course of this proof, in our case $b_*$ is less regular. However, $u \in H^1$, and $b_* \in L^\infty $ satisfies the same localization properties as in \cite{Laurent_2018}. For the sake of completeness we sketch the estimate for $b_+$.

\bigskip

\noindent \textbf{Estimating $B_+$.} A generic term of $B_+$ has the form 
$$
b_+ \partial^\gamma=f \tilde{\tilde{b}}_\lambda\chi_{\delta, \lambda}^+(\psi)\tchi(\psi)\partial^\gamma,
$$
where $\tilde{\tilde{b}}=\partial^\beta(\sigma_R)$, $|\beta|\leq 1$, is supported in $B(0,2R)$ and $f$ is bounded. We decompose $\R^{n+1}$ as 
\begin{align*}
    \R^{n+1}& =O_1\cup O_2\cup O_3, \quad \textnormal{with} \\
    O_1&=\{\psi \notin [\delta/4, 2 \delta ] \}\cap B(0,5R/2),\\
    O_2&=B(0,5R/2)^c, \\
    O_3&= \{\psi\in [\delta/4,2\delta]\}\cap B(0,5R/2).
\end{align*}

For the region $O_1$ we use the fact that $\chi_\delta^+$ is supported in $[\delta/2, \delta]$ and then Lemma~\ref{lemma 2.3 from ll} with $f_2=\mathds{1}_{[\delta/4, 2\delta]^c}$. For the region $O_2$ we exploit as well the almost localization by using Lemma~\ref{lemma 2.3 from ll} and Lemma~\ref{lemma 2.13 from ll}.

For the region $O_3$ we start by noticing that thanks to the geometric convexification property \eqref{geom conv 2} one can find a smooth $\tilde{\theta}$ with $\tilde{\theta}=1$ on a neighborhood of $O_3$ and supported in $\{\phi>2 \rho\} \cap B(0,3R)$. We estimate then
$$
\norm{e^{\tau \psi}b_+ M^{2\mu}_\lambda \partial^\gamma u}{L^2(O_3)}{}\leq C e^{\delta \tau } \norm{M^{2\mu}_\lambda \partial^\gamma u}{L^2(O_3)}{}\leq C e^{\delta \tau } \norm{\tilde{\theta}_\lambda M^{2\mu}_\lambda \partial^\gamma u}{L^2}{}.
$$
The final step is to commute $\theta_\lambda$ with $M^{2\mu}_\lambda$. Let $\tilde{\tilde{\theta}} \in C^\infty_0$ be such that $\tilde{\tilde{\theta}}=1$ on a neighborhood of $\supp{\tilde{\theta}}$ and supported in $\{\phi>2\rho \cap B(0,3R)\}$. Now recall that from the assumption of Theorem~\ref{local quant estimate} we are given $\theta \in C^\infty_0$ with $\theta=1$ on $\{\phi> 2 \rho\} \cap B(0,3R)$ and consequently one has that $\theta=1$ in a neighborhood of $\supp{\tilde{\tilde{\theta}}}$. We use then
Lemma~\ref{lemma 2.6 from ll} which gives
$$
\norm{\tilde{\theta}_\lambda M_\lambda^{2\mu}\partial^\gamma u }{L^2}{}\leq C\norm{\tilde{\tilde{\theta}}M_\lambda^{2\mu}u}{H^1}{}+Ce^{-c \lambda}\norm{u}{H^1}{},
$$
and then Lemma~\ref{lemma 2.11 from ll}:
$$
\norm{\tilde{\tilde{\theta}}M_\lambda^{2\mu}u}{H^1}{}\leq \norm{M^{2\mu}_\lambda \theta_\lambda u}{H^1}{}+Ce^{-c \mu}\norm{u}{H^1}{}.
$$
Consequently, we have obtained in each region $O_*$ an estimate of the same type as is~\eqref{comm estimate}. This gives the estimate for $B_+$.

\bigskip

\noindent \textbf{Estimating $B_3$.} This is done in a similar manner to $B_+$.

\bigskip

We have finally obtained estimates of the type of \eqref{comm estimate} for all of the generic terms of the commutator. This proves Lemma~\ref{commutator estimate }.
\end{proof}

With the commutator estimate at hand we can now show:

\begin{lem}
\label{application of Carleman+ commutator}
There exists $R_0 >0$ such that for any $R \in (0,R_0)$ there exist $r, \rho>0$ such that for any $\theta \in C^\infty_0(\R^{n+1})$ such that $\theta(x)=1$ on a neighborhood of $\{\phi>2 \rho \}\cap B(0,3R),$ there exist $\mu_0, C, c, N>0$ such that:
\begin{align*}
    C\mu^{1/2}e^{C \frac{\tau^2}{\lambda}}e^{\delta\tau}\norm{Pu}{L^2(B(0,4R))}{}&+C\mu^{1/2}\tau^N \left(e^{-8\delta \tau }+e^{-\frac{\epsilon\mu^2}{8\tau}}+e^{\delta\tau -c\mu}\right) e^{C \frac{\tau^2}{\mu}}e^{\delta \tau}\norm{u}{H^1}{}\\
    &\hspace{4mm}+Ce^{2\delta\tau}\norm{M^{2\mu}_\lambda\theta_\lambda u}{H^1}{}\geq \tau \norm{\poidsps\sigma_{2R}\sigma_{R, \lambda}\tchi_{\delta}(\psi)\chi_{\delta,\lambda}(\psi)u}{H^1_\tau}{},
\end{align*}
for $u \in \mathcal{W}$ compactly supported, $\mu \geq \mu_0, \lambda \sim \mu$ and $\tau \geq \tau_0$.

\end{lem}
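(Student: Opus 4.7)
The strategy is to apply the geometrically convexified Carleman estimate of Proposition~\ref{starting point} directly to the cutoff/regularized function
\[ v := \sigma_{2R}\sigma_{R,\lambda}\tchi_\delta(\psi)\chi_{\delta,\lambda}(\psi)u, \]
whose support sits inside $B(0,4R)$ thanks to $\sigma_{2R}$, and then control each of the three terms produced on the left-hand side by already established bounds. The first step is to check that $v$ lies in $\tran$ with $\theta=0$ (continuity is preserved) and with $\Theta=((c_+-c_-)\sigma_{2R}\sigma_{R,\lambda}\tchi_\delta(\psi)u\,\partial_{x_n}(\chi_{\delta,\lambda})(\psi))_{|\Sigma}$, exactly the situation analyzed just before equation~\eqref{estim for transmission term}. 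The squared Carleman inequality of Proposition~\ref{starting point}, after taking square roots and multiplying by $\tau^{1/2}$, then gives
\[ \tau\,\norm{\poidsps v}{H^1_\tau}{} \;\lesssim\; \tau^{1/2}\Big(\norm{\poidsps Pv}{L^2(\Omega_{t,-}\cup\Omega_{t,+})}{} + e^{-d\tau/2}\norm{e^{\tau\psi}v}{H^1_\tau}{} + T_{\theta,\Theta}^{1/2}\Big). \]

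The three terms on the right are then bounded separately. For the first, I invoke the commutator decomposition~\eqref{elemantary estimate with commutator}: writing $Pv=\sigma_{2R}\sigma_{R,\lambda}\tchi_\delta(\psi)\chi_{\delta,\lambda}(\psi)Pu + [\sigma_{2R}\sigma_{R,\lambda}\tchi_\delta(\psi)\chi_{\delta,\lambda}(\psi),P]u$, the first piece yields the $\norm{Pu}{L^2(B(0,4R))}{}$ contribution after applying Lemma~\ref{lemma 2.13 from ll} to control $\poidsps$ acting on a function supported in $\{\psi\leq\delta\}$ (up to regularization errors), and the commutator piece is exactly the object bounded by Lemma~\ref{commutator estimate } in terms of $\norm{M^{2\mu}_\lambda\theta_\lambda u}{H^1}{}$ and the three decaying error quantities $e^{-8\delta\tau},\ e^{-\epsilon\mu^2/(8\tau)},\ e^{\delta\tau-c\mu}$. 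For the third term, the already obtained estimate~\eqref{estim for transmission term} gives $T_{\theta,\Theta}^{1/2}\lesssim(\tau\mu)^{1/2}e^{-7\delta\tau}e^{C\tau^2/\mu}\norm{u}{H^1}{}$, which is of the required form after absorbing the harmless $e^{\delta\tau}$ and polynomial $\tau$ factors into $\tau^N e^{-8\delta\tau}e^{\delta\tau}$.

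The remaining term $e^{-d\tau/2}\norm{e^{\tau\psi}v}{H^1_\tau}{}$ is the only one that does not immediately fit the target. Here I exploit the fact that $\tchi_\delta(\psi)$ forces $\psi\leq 2\delta$ on $\supp v$, while the regularization $\chi_{\delta,\lambda}$ contributes only an extra $e^{C\tau^2/\mu}$ factor by Lemma~\ref{lemma 2.13 from ll}. Combined with the choice $\delta\leq d/8$ from Proposition~\ref{starting point}, this produces a term of order $e^{-(d/2-2\delta)\tau}e^{C\tau^2/\mu}\tau\norm{u}{H^1}{}$, which decays at least like $e^{-2\delta\tau}$ in $\tau$ and is therefore dominated (up to enlarging $N$) by $\tau^N e^{-8\delta\tau}e^{C\tau^2/\mu}e^{\delta\tau}\norm{u}{H^1}{}$ provided one chooses $d$ sufficiently large relative to $\delta$ (a harmless tightening of the convexification step). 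Summing these three bounds and collecting the various $e^{\delta\tau}$, $\mu^{1/2}$ and $\tau^N$ factors reproduces the announced right-hand side.

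The main obstacle is, as in Lemma~\ref{commutator estimate }, the presence of non-exactly-localized derivatives such as those falling on $\sigma_{R,\lambda}$ and $\chi_{\delta,\lambda}(\psi)$; but this work has already been carried out in the commutator estimate, so here I only need to bookkeep exponents and verify that $\delta \leq d/8$ is sufficient for the Carleman remainder $e^{-d\tau/2}\norm{e^{\tau\psi}v}{H^1_\tau}{}$ to be absorbed. No term truly new to this lemma appears; rather, Lemma~\ref{application of Carleman+ commutator} is the precise packaging of the Carleman estimate, restricted to functions of the form $v$, in the shape required for the iterative propagation argument of Section~\ref{the quant estimates}.
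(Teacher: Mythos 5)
Your proposal follows the same structure as the paper's proof: apply Proposition~\ref{starting point} to $w=\sigma_{2R}\sigma_{R,\lambda}\tchi_\delta(\psi)\chi_{\delta,\lambda}(\psi)u$, split $Pw$ via~\eqref{elemantary estimate with commutator} and invoke Lemma~\ref{commutator estimate } for the commutator, bound $T_{\theta,\Theta}$ by~\eqref{estim for transmission term}, and control the Carleman remainder $e^{-d\tau}\norm{e^{\tau\psi}w}{H^1_\tau}{}$ via Lemma~\ref{lemma 2.13 from ll} and the condition $\delta\leq d/8$. The one place you diverge is in taking square roots of the Carleman inequality at the outset: this turns the remainder exponent $e^{-d\tau}$ into $e^{-d\tau/2}$, so the bound from Lemma~\ref{lemma 2.13 from ll} gives you only $e^{(\delta - d/2)\tau}\leq e^{-3\delta\tau}$ (or $e^{-2\delta\tau}$ with your coarser use of $\tchi_\delta$ in place of the tighter support $(-\infty,\delta]$ of $\chi_\delta$), which is \emph{not} dominated by $\tau^N e^{-7\delta\tau}$ as $\tau\to\infty$, contrary to your phrase ``dominated up to enlarging $N$.'' The correct repair is the one you append at the end — tighten the convexification so that, e.g., $\delta\leq d/16$ — and this is indeed harmless since the proof of Proposition~\ref{starting point} only imposes $\delta\leq\min(\tilde{\delta}R^2/40,\,d/8)$. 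The paper avoids the issue entirely by keeping the Carleman inequality in squared form, so the remainder is bounded with the full factor $e^{-d\tau}$, yielding $e^{(\delta-d)\tau}\leq e^{-7\delta\tau}$ directly from $\delta\leq d/8$; this is the cleaner bookkeeping and requires no further tightening.
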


\begin{proof}
We apply the estimate of Proposition~\ref{starting point} to $w:=\sigma_{2R}\sigma_{R, \lambda}\tchi_{\delta}(\psi)\chi_{\delta,\lambda}(\psi)u$. This gives:

 \begin{equation*}
    C\left( \norm{Q_{\epsilon, \tau}^{\psi} Pw}{L^2(\Omega_{t,-}\cup \: \Omega_{t,+})}{2} +e^{-d \tau} \norm{e^{\tau \psi}w }{H^1_\tau}{2} + T_{\theta, \Theta}\right) \geq \tau \norm{Q_{\epsilon, \tau}^{\psi}w}{H^1_\tau}{2},
\end{equation*}
We need therefore to estimate the three terms appearing in the left hand side of the above inequality.

\begin{itemize}
    \item For the term $\norm{Q_{\epsilon, \tau}^{\psi} Pw}{L^2(\Omega_{t,-}\cup \: \Omega_{t,+})}{2}$ we simply combine estimate \eqref{elemantary estimate with commutator} with Lemma~\ref{commutator estimate }.

    \item For the term $T_{\theta,\Theta}$ we use the estimate \eqref{estim for transmission term}.

    \item It remains to deal with $e^{-d \tau}\norm{e^{\tau \psi}w }{H^1_\tau}{}$. Recall that $\chi_\delta$ is supported in $(-8\delta,\delta)$. This implies, using Lemma~\ref{lemma 2.13 from ll}:
\begin{align*}
  e^{-d \tau}\norm{e^{\tau \psi}w }{H^1_\tau}{}&=  e^{-d \tau} \norm{e^{\tau \psi}\sigma_{2R}\sigma_{R, \lambda}\tchi_{\delta}(\psi)\chi_{\delta,\lambda}(\psi)u }{H^1_\tau}{} \\
  &\leq C e^{- d \tau}\lambda^{1/2}\tau e^{\delta \tau}e^{\frac{\tau^2}{\lambda}}\norm{u}{H^1}{}\\
  &\leq \mu^{1/2}\tau e^{(\delta-d)\tau}e^{C\frac{\tau^2}{\mu}}\norm{u}{H^1}{}\\
  &\leq \mu^{1/2}\tau e^{-7\delta\tau}e^{C\frac{\tau^2}{\mu}}\norm{u}{H^1}{},
\end{align*}
where for the last inequality we used the fact $\delta \leq \frac{d}{8}$, thanks to Proposition~\ref{starting point}.
\end{itemize}

This finishes the proof of Lemma~\ref{application of Carleman+ commutator}.
\end{proof}

\bigskip

\noindent \textbf{The complex analysis argument}

\bigskip

The final step of the proof of Theorem~\ref{local quant estimate} consists in transferring the estimate provided by Lemma~\ref{application of Carleman+ commutator} from $$\norm{\poidsps\sigma_{2R}\sigma_{R, \lambda}\tchi_{\delta}(\psi)\chi_{\delta,\lambda}(\psi)u}{H^1_\tau}{}$$ to $\norm{M^{\beta\mu}_{c_1 \mu}\sigma_{r,c_1 \mu}u}{H^1}{}$. The presence of the microlocal weight $\poidsps$ makes this part highly non-trivial and one has to work by duality. In \cite{Hor:97, Tataru:95} the authors prove a qualitative unique continuation result. They use the following strategy:

\begin{enumerate}
    \item For any test function $f$ define the distribution $h_f= \psi_{*}(fu)$ by
    $$
    \langle h_f, w \rangle_{\mathcal{E}^\prime(\R), C^\infty(\R)}= \langle fu, w(\psi) \rangle_{\mathcal{E}^\prime(\R^{n+1}), C^\infty(\R^{n+1})}.
    $$
That measures $fu$ along the level sets of $\psi$.

\item Consider the Fourier transform of $h_f$ and use

\begin{itemize}

\item the Carleman estimate to bound the quantity $\widehat{h_f}(i\tau)$ for $\tau $ large

\item an \textit{a priori} estimate on $\widehat{h_f}(\zeta)$ for $\zeta \in \mathbb{C}$ which gives sub-exponential growth
\end{itemize}
Thanks to a Phragmén–Lindelöf theorem transfer the estimate provided by the Carleman estimate from the upper imaginary axis to the whole upper plane.

\item Use a Paley-Wiener theorem to deduce from the bound obtained for the Fourier transform of $h_f$ an information about its support.
    
\end{enumerate}

Now in \cite{Laurent_2018} both the Phragmén–Lindelöf and the Paley-Wiener theorem are replaced by some precise estimates for $\widehat{h_f}$. In our case the proof works in exactly the same way. The only difference is that here $\psi$ and $u$ are no longer smooth but they are  Lipschitz continuous. However this does not affect the proof.

\begin{lem}
\label{complex analysis lemma}
Under the above assumptions, there exists $\tilde{\tau_0}$ such that for any $\kappa, c_1>0$, there exist $\beta_0, C, c>0$, such that for any $0<\beta <\beta_0$, for all $\mu \geq \frac{\tilde{\tau_0}}{\beta}$ and $u \in \mathcal{W}$ compactly supported, one has:
\begin{align*}
    &Ce^{-c \mu}\left(e^{\kappa \mu}\left(\norm{M^{2\mu}_\lambda \theta_\lambda u}{H^1}{} +\norm{Pu}{L^2(B(0,4R))}{}\right)+\norm{u}{H^1}{} \right)
    \\& \hspace{8mm}\geq \norm{M^{\beta \mu}\sigma_{2R}\sigma_{R, \lambda}\tchi_{\delta}(\psi)\chi_{\delta,\lambda}(\psi)\eta_{\delta,\lambda}(\psi)u}{H^1}{},
\end{align*}
with $\lambda=2 c_1 \mu,$ and 
$$
\eta \in C^{\infty}_0((-4,1)), \quad \eta=1 \: \textnormal{in } \: [-1/2,1/2] \quad \textnormal{and}  \quad \eta_{\delta}(s):=\eta(s/\delta).
$$
\end{lem}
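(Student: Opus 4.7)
The plan is to follow the quantitative complex-analytic scheme of \cite{Laurent_2018}, whose sole analytic input is the Carleman estimate of Lemma~\ref{application of Carleman+ commutator}. Writing $w = \sigma_{2R}\sigma_{R,\lambda}\tilde{\chi}_\delta(\psi)\chi_{\delta,\lambda}(\psi)u$, I would first apply Lemma~\ref{application of Carleman+ commutator} at $\tau = c_1\mu$ (matching $\lambda = 2c_1\mu$). Tuning $\delta$ and $c_1$ so that the ``good'' factor $e^{-8\delta\tau}$ dominates the other negative exponentials $e^{-\epsilon\mu^2/(8\tau)}$ and $e^{(\delta-c)\tau}$, this yields
\begin{equation*}
\mu \,\|Q^{\psi}_{\epsilon,c_1\mu}w\|_{H^1_\mu} \lesssim e^{A\mu}\bigl(\|M^{2\mu}_\lambda\theta_\lambda u\|_{H^1}+\|Pu\|_{L^2(B(0,4R))}\bigr) + e^{-A'\mu}\|u\|_{H^1},
\end{equation*}
for explicit positive constants $A, A'$ depending on $\delta, c_1, \epsilon$.

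The core of the argument is to invert the weight $Q^{\psi}_{\epsilon,c_1\mu} = e^{-\epsilon|D_t|^2/(2c_1\mu)}e^{c_1\mu\psi}$ against the localizing pair $(M^{\beta\mu}, \eta_{\delta,\lambda}(\psi))$. The identity I would work with is
\begin{equation*}
M^{\beta\mu}\bigl(w\,\eta_{\delta,\lambda}(\psi)\bigr) = M^{\beta\mu}\Bigl(e^{-c_1\mu\psi}\eta_{\delta,\lambda}(\psi)\cdot e^{\epsilon|D_t|^2/(2c_1\mu)}\,Q^{\psi}_{\epsilon,c_1\mu}w\Bigr) + \mathcal{C},
\end{equation*}
where $\mathcal{C}$ will gather the commutators of $e^{\pm\epsilon|D_t|^2/(2c_1\mu)}$ and $M^{\beta\mu}$ with the Lipschitz multiplier $e^{-c_1\mu\psi}\eta_{\delta,\lambda}(\psi)$, together with the approximate-identity error from composing $e^{+\epsilon|D_t|^2/(2c_1\mu)}$ with $e^{-\epsilon|D_t|^2/(2c_1\mu)}$ on non-Schwartz functions. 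For the main term, I would bound the composite operator by noticing that the Fourier multiplier $M^{\beta\mu}e^{\epsilon|D_t|^2/(2c_1\mu)}$ has symbol $m(\xi_t/\mu)e^{\epsilon\xi_t^2/(2c_1\mu)}$, hence $L^2$-norm at most $e^{\epsilon\beta^2\mu/(2c_1)}$, while multiplication by $e^{-c_1\mu\psi}\eta_{\delta,\lambda}(\psi)$ contributes at most $e^{4\delta c_1\mu}$ in $L^\infty$. Plugged into the Carleman bound above, this produces the target $H^1$-estimate with overall exponential factor $\exp\bigl((\epsilon\beta^2/(2c_1)+4\delta c_1+A-A')\mu\bigr)$ multiplying $\|u\|_{H^1}$; choosing $\delta\le d/8$ as in Proposition~\ref{starting point}, then picking $\beta_0$ so that $\epsilon\beta_0^2/(2c_1) < \tfrac12(A'-4\delta c_1 - A)$, will yield the net gain $e^{-c\mu}$ claimed in the lemma.

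The main technical obstacle will be the control of $\mathcal{C}$: each commutator has to lose at most $e^{-c\mu}\|u\|_{H^1}$. These should be handled as in the corresponding step of \cite{Laurent_2018}, by means of the quantitative Gaussian estimates of the appendix (the same tools already used in our proof of Lemma~\ref{commutator estimate }). A crucial point, and what makes the argument work in the non-smooth setting, is that all of these estimates are performed in the time variable $t$ only: by Proposition~\ref{starting point}, $\psi$ is quadratic (hence analytic) in $t$, and the jumps of $u$ and of $c$ occur only across $\Sigma=\{x_n=0\}$, which is transverse to the $t$-direction. In particular, multiplication by $\mathds{1}_{\Omega_{t,\pm}}$ commutes with the Fourier multipliers $e^{-\epsilon|D_t|^2/(2c_1\mu)}$ and $M^{\beta\mu}$, so the discontinuities never intervene in the commutator estimates, and the proof reduces to the smooth case of \cite{Laurent_2018}.
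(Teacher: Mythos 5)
Your proposed route is genuinely different from the paper's, and unfortunately it has a gap that is not cosmetic. The paper does not invert the Gaussian weight: it warns explicitly that ``the presence of the microlocal weight $\poidsps$ makes this part highly non-trivial and one has to work by duality,'' and its proof of Lemma~\ref{complex analysis lemma} pushes $M^{\beta\mu}w$ forward along the level sets of $\psi$ to a one-dimensional distribution $h_f$, then studies the entire function $\zeta\mapsto\widehat{h_f}(\zeta)$: the Carleman estimate of Lemma~\ref{application of Carleman+ commutator} controls $\widehat{h_f}(i\tau)$ for large real $\tau$, an a priori bound gives sub-exponential growth in $\zeta$, and precise Phragm\'en--Lindel\"of/Paley--Wiener-type estimates (as in \cite{Laurent_2018}) transfer the information across the upper half-plane. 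Your proposal instead tries to undo $e^{-\epsilon|D_t|^2/(2\tau)}$ by composing with $e^{+\epsilon|D_t|^2/(2\tau)}$, which is unbounded, and hopes the frequency cutoff $M^{\beta\mu}$ tames it.

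The step where this breaks is the identity you write and the subsequent norm estimate. You have
$M^{\beta\mu}\bigl(w\,\eta_{\delta,\lambda}(\psi)\bigr)=M^{\beta\mu}\bigl(g\cdot e^{\epsilon|D_t|^2/(2\tau)}\,Q^{\psi}_{\epsilon,\tau}w\bigr)$
with $g=e^{-\tau\psi}\eta_{\delta,\lambda}(\psi)$, but then you estimate as if the symbol of $M^{\beta\mu}e^{\epsilon|D_t|^2/(2\tau)}$ controlled the whole composition. It does not, because the multiplier $g$ sits \emph{between} the two Fourier multipliers; you cannot bound $\|M^{\beta\mu}(g\cdot h)\|_{L^2}$ by $\|M^{\beta\mu}e^{\epsilon|D_t|^2/(2\tau)}\|_{L^2\to L^2}\,\|g\|_{L^\infty}\,\|Q^\psi w\|_{L^2}$ since the intermediate $h=e^{\epsilon|D_t|^2/(2\tau)}Q^\psi w$ is not in $L^2$. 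To use the cut-off $M^{\beta\mu}$ you must first commute it past $g$. That commutator is not small: $g$ oscillates in $t$ at rate $\tau=c_1\mu$ (because $\psi$ is quadratic in $t$) while $M^{\beta\mu}$ acts at frequency scale $\beta\mu$, so $[M^{\beta\mu},g]$ has operator norm of order $(c_1/\beta)\|g\|_{L^\infty}$, which is \emph{larger} than the main term when $\beta$ is small — and it must still be composed with the unbounded inverse Gaussian, whose frequency support you no longer control after commutation. So both the main term and the commutator fail to close, and the overall loss $e^{4\delta c_1\mu}$ from $\|g\|_{L^\infty}$ does not cancel against the Carleman gain once these errors are accounted for. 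This is precisely the obstruction that forces the dual, complex-analytic formulation; the direct inversion does not reduce to the Gaussian lemmas of the appendix, which is why your final paragraph's appeal to them is not available here.
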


\begin{proof}
Recall that $\psi \in H^1$ (see \eqref{def of weight psi}). For any test function $f \in \mathscr{S}(\R^{n+1})$ we define then the distribution
$$
\langle h_f, w \rangle_{\mathcal{E}^\prime(\R), C^{\infty}(\R)}:= \langle(M^{\beta \mu}f \sigma_{2R}\sigma_{R, \lambda}\tchi_{\delta}(\psi)\chi_{\delta,\lambda}(\psi), w(\psi)\rangle_{H^{-1}(\R^{n+1}),H^1_0(\R^{n+1})}.
$$
We work with $w=\eta_{\delta, \lambda}$ and estimate the quantity $
\langle h_f, \eta_{\delta, \lambda} \rangle_{\mathcal{E}^\prime(\R), C^{\infty}(\R)}.
$
This done exactly as in \cite{Laurent_2018}. We use the formula for the Fourier transform of a compactly supported distribution to obtain an a priori estimate on $\hat{h}_f(\xi)$ for $\xi \in \R$. Then we use Lemma~\ref{application of Carleman+ commutator} to obtain an estimate for $\widehat{h_f}(\zeta)$ for $\zeta= i \tau$. All the complex analysis arguments that follow remain valid in our context. Indeed, these arguments do not involve the $t,x$ space but they are carried out in the complexification of our Carleman large parameter $\tau$.
\end{proof}

\bigskip

\noindent \textit{End of the proof of Theorem~\ref{local quant estimate}.} We are ready to finish the proof the local quantitative estimate. The last thing we need to do is to estimate the term
$$
\norm{M^{\beta \mu}\sigma_{2R}\sigma_{R, \lambda}\tchi_{\delta}(\psi)\chi_{\delta,\lambda}(\psi)\eta_{\delta,\lambda}(\psi)u}{H^1}{}
$$
appearing in the right hand side of Lemma~\ref{complex analysis lemma}. This is done as in \cite{Laurent_2018}. Indeed, all the operations are tangential and thanks to our Proposition~\ref{starting point} the geometric context is the same is in ~\cite{Laurent_2018}. We sketch the end of the proof in a concise way.

We have:
\begin{align*}
    \norm{M^{\beta \mu}_\lambda \sigma_{2R}\sigma_{R, \lambda}\tchi_{\delta}(\psi)\chi_{\delta,\lambda}(\psi)\eta_{\delta,\lambda}(\psi)u}{H^1}{} &\leq \norm{M^{\frac{\beta \mu}{2}}_\lambda (1-M^{\beta \mu})\sigma_{R, \lambda}\tchi_{\delta}(\psi)\chi_{\delta,\lambda}(\psi)\eta_{\delta,\lambda}(\psi)u}{H^1}{}\\
    &\hspace{4mm}+\norm{M^{\frac{\beta \mu}{2}}_\lambda M^{\beta \mu}\sigma_{R, \lambda}\tchi_{\delta}(\psi)\chi_{\delta,\lambda}(\psi)\eta_{\delta,\lambda}(\psi)u}{H^1}{}.
\end{align*}
To control the first term we use Lemma~\ref{lemma 2.3 from ll}. For the second one we use Lemma~\ref{complex analysis lemma}. We find for $0<\beta < \beta_0$, for all $\mu \geq \frac{\tilde{\tau_0}}{\beta}$ and $\lambda=2 c_1 \mu$:
$$
 \norm{M^{\beta \mu}_\lambda \sigma_{2R}\sigma_{R, \lambda}\tchi_{\delta}(\psi)\chi_{\delta,\lambda}(\psi)\eta_{\delta,\lambda}(\psi)u}{H^1}{} \leq Ce^{-c \mu}\left(e^{\kappa \mu}\left(\norm{M^{2\mu}_\lambda \theta_\lambda u}{H^1}{} +\norm{Pu}{L^2(B(0,4R))}{}\right)+\norm{u}{H^1}{} \right).
$$
Next we combine the above estimate with Lemma~\ref{lemma 2.11 from ll}:
\begin{align*}
\norm{M^{\frac{\beta \lambda}{4}}\sigma_{r, \lambda}u}{H^1}{}&\leq    Ce^{-c \mu}\left(e^{\kappa \mu}\left(\norm{M^{2\mu}_\lambda \theta_\lambda u}{H^1}{} +\norm{Pu}{L^2(B(0,4R))}{}\right)+\norm{u}{H^1}{} \right)\\
&\hspace{4mm}+ \norm{\sigma_{r,\lambda}M^{\frac{\beta \mu}{2}}_\lambda \left(1-  \sigma_{2R}\sigma_{R, \lambda}\tchi_{\delta}(\psi)\chi_{\delta,\lambda}(\psi)\eta_{\delta,\lambda}(\psi)\right)u}{H^1}{},
\end{align*}
where $r$ is given by Proposition~\ref{starting point}. In particular Proposition~\ref{starting point} implies thanks to the property \eqref{geom conv 3} that $\sigma_R=\chi_\delta(\psi)=\tchi_\delta(\psi)=\eta_\delta(\psi)=1$ on a neighborhood of $\supp{\sigma_r}$. We can then finish the proof exactly as in \cite{Laurent_2018}. Indeed, we take $\Pi \in C^{\infty}_0$ with $\Pi=1$ on a neighborhood of $\supp{\sigma_r}$ and such that  $\sigma_{2R}=\sigma_R=\chi_\delta(\psi)=\tchi_\delta(\psi)=\eta_\delta(\psi)=1$ on a neighborhood of $\supp{\Pi}$. Then
\begin{multline*}
\norm{\sigma_{r,\lambda}M^{\frac{\beta \mu}{2}}_\lambda \left(1-  \sigma_{2R}\sigma_{R, \lambda}\tchi_{\delta}(\psi)\chi_{\delta,\lambda}(\psi)\eta_{\delta,\lambda}(\psi)\right)u}{H^1}{}\\
\leq \norm{\sigma_{r,\lambda}M^{\frac{\beta \mu}{2}}_\lambda \left(1-  \sigma_{2R}\sigma_{R, \lambda}\tchi_{\delta}(\psi)\chi_{\delta,\lambda}(\psi)\eta_{\delta,\lambda}(\psi)\right)(1-\Pi)u}{H^1}{}\\
+\norm{\sigma_{r,\lambda}M^{\frac{\beta \mu}{2}}_\lambda \left(1-  \sigma_{2R}\sigma_{R, \lambda}\tchi_{\delta}(\psi)\chi_{\delta,\lambda}(\psi)\eta_{\delta,\lambda}(\psi)\right)\Pi u}{H^1}{}.
\end{multline*}
We control from above the first term by using Lemma~\ref{lemma 2.10 from ll} and for the second one we combine Lemmata~\ref{lemma 2.3 from ll} and~\ref{lemma 2.5 from ll}.

This finishes the proof of Theorem~\ref{local quant estimate} up to renaming the constants appearing in the statement of the theorem.
\end{proof}

\subsection{Propagation of information and applications}
\label{propagation of info}

\bigskip

\noindent In this section we make use $\mathcal{L}(\mathcal{M}, E)$. This is the “largest distance” of a subset $E \subset \mathcal{M}$ to a point of $\mathcal{M}$ and has been defined in \eqref{greatest distance}.

\bigskip

We introduce now the tools of \cite[Section 4]{Laurent_2018}  in order to explain how one can propagate information by applying successively the local quantitative estimate.

\begin{definition}
Fix an open subset $\Omega$ of  $\R^{n+1}=\R_t \times \R^n_x$ and two \textit{finite} collections $(V_j)_{j\in J}$ and $(U_i)_{i \in I}$ of bounded open sets in $\R^{n+1}$. We say that $(V_j)_{j\in J}$ is \textit{under the dependence} of $(U_i)_{i \in I}$, denoted

$$
(V_j)_{j\in J}  \trianglelefteq  (U_i)_{i \in I},
$$
if for any $\theta_i \in C^\infty_0(\R^{n+1})$ such that $\theta_i (t,x)=1$ on a neighborhood of $\overline{U_i}$, for any $\tilde{\theta}_j \in C^\infty_0(V_j)$ and for all $\kappa, \alpha >0$, there exist $C, \kappa^\prime, \beta, \mu_0$ such that for all $\mu \geq \mu_0$ and $u \in \mathcal{W} $ compactly supported, one has:

$$
C\left(\sum_{i \in I} \norm {M_\mu^{\alpha \mu} \theta_{i,\mu} u}{H^1}{}+\norm{Pu}{L^2(\Omega_{t,+} \cup \Omega_{t,-})}{} \right)+Ce^{-\kappa^\prime \mu} \norm{u}{H^1}{}\geq
\sum_{j\in J} \norm{M_\mu^{\beta \mu} \tilde{\theta}_{j,\mu} u}{H^1}{},
$$
where $P$ is as defined in Section \ref{local setting}. 
\end{definition}

The motivation behind this definition becomes apparent when one looks at the local quantitative estimate of Theorem~\ref{local quant estimate}. If one forgets about the
localization and regularization indexes then the definition says simply that $V$ depends on $U$ if information on $U$ controls information on $V$, and this comes with a precise estimate. 

The Carleman estimate we have obtained (Theorem~\ref{theorem}) as well as Carleman estimates in general provide already some sort of quantitative estimate which propagates information. However, what is of fundamental importance here is that the relation of dependence as defined above (in fact in \cite{Laurent_2018}) can be \textbf{propagated} (in an optimal way). Indeed, the crucial property that one needs in order to iterate such a dependence relation is $\textit{transitivity}$. That is if a set $A$ depends on a set $B$ in the sense of the definition above, and $B$ depends on a set $C$ then one would like to say that $A$ depends on $C$. That is why the slightly stronger notion of \textit{strong dependence} is introduced:

\bigskip

\begin{definition}
\label{def of triangle strict}
Fix an open subset $\Omega$ of  $\R^{n+1}=\R_t \times \R^n_x$ and two \textit{finite} collections $(V_j)_{j\in J}$ and $(U_i)_{i \in I}$ of bounded open sets in $\R^{n+1}$. We say that $(V_j)_{j\in J}$ is \textit{under the strong dependence} of $(U_i)_{i \in I}$, denoted by

$$
(V_j)_{j\in J}  \vartriangleleft  (U_i)_{i \in I},
$$
if there exist $\tilde{U}_i \Subset U_i $ such that $(V_j)_{j\in J}  \trianglelefteq  (\tilde{U}_i)_{i \in I}$.

\end{definition}

\bigskip

To facilitate the lecture we re-write here the basic properties of the relation $\vartriangleleft$ summarised in Proposition 4.5 of \cite{Laurent_2018}. We shall use these properties for the proof of Theorem~\ref{semi global } in order to iterate in an abstract way the local estimate of Theorem~\ref{local quant estimate}.

\begin{prop}[Proposition 4.5 in \cite{Laurent_2018}]
\label{prop 4.5 in ll} One has:
\begin{enumerate}
    \item $(V_j)_{j\in J}  \vartriangleleft  (U_i)_{i \in I}$ implies $(V_j)_{j\in J}  \trianglelefteq  (U_i)_{i \in I}.$
    
    \item If $(V_j)_{j\in J}  \vartriangleleft  (U_i)_{i \in I}$ with $U_i=U$ for all $i \in I$, then $(V_j)_{j\in J}  \vartriangleleft  U$.
    
    \item If $V_i \Subset U_i $ for any $i \in I$, then $(V_i)_{i\in I}  \vartriangleleft  (U_i)_{i \in I}$.
    
    \item If $V_i \Subset U_i $ for any $i \in I$, then $\cup_{i \in I} V_i  \vartriangleleft  (U_i)_{i \in I}$.
    
    \item If $V_i  \vartriangleleft  U_i$ for any $i \in I$, then $(V_i)_{i\in I}  \vartriangleleft  (U_i)_{i \in I}$. In particular, if $U_i \vartriangleleft U$ for any $i \in I$, then $(U_i)_{i \in I} \vartriangleleft U$.
    
    \item The relation is transitive, that is:
    
    \begin{equation*}
    [(V_j)_{j\in J}  \vartriangleleft  (U_i)_{i \in I} \: \textnormal{and} \: (U_i)_{i\in I}  \vartriangleleft  (W_k)_{k \in K}    ] \Rightarrow (V_j)_{j\in J}  \vartriangleleft  (W_k)_{k \in K}.
    \end{equation*}
\end{enumerate}
\end{prop}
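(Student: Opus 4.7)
The plan is to verify the six properties in turn. Items (1)--(5) amount to inserting suitable intermediate cutoffs and invoking standard manipulations on the Fourier multipliers $M_\mu^{\alpha\mu}$, while item (6), transitivity, is where the real content sits. For (1), unpack the definition: pick $\tilde U_i \Subset U_i$ witnessing the strong dependence; any $\theta_i$ equal to $1$ on a neighborhood of $\overline{U_i}$ is automatically equal to $1$ on a neighborhood of $\overline{\tilde U_i}$, so the weak estimate applies verbatim. For (2), set $\tilde U := \bigcup_i \tilde U_i \Subset U$ and, given $\theta \equiv 1$ on a neighborhood of $\overline{\tilde U}$, use $\theta_i := \theta$ for all $i$, collapsing the finite sum on the right. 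For (3), chain $V_i \Subset W_i \Subset \tilde W_i \Subset U_i$ and show directly that for $\tilde\theta_i \in C^\infty_0(V_i)$ and $\theta_i \equiv 1$ on a neighborhood of $\overline{\tilde W_i}$ one has
\[
\|M_\mu^{\beta\mu}\tilde\theta_{i,\mu} u\|_{H^1} \le C\|M_\mu^{\alpha\mu}\theta_{i,\mu} u\|_{H^1} + Ce^{-c\mu}\|u\|_{H^1},
\]
for any $\beta \le \alpha$, which follows from the localization/commutator lemmata invoked throughout the proof of Theorem~\ref{local quant estimate} (the analogues of Lemmata~\ref{lemma 2.10 from ll}--\ref{lemma 2.11 from ll}). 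Items (4) and (5) reduce to summing finitely many such estimates.

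The substantive part is (6). Fix $\tilde\theta_j \in C^\infty_0(V_j)$ and $\theta_k \in C^\infty_0$ with $\theta_k \equiv 1$ on a neighborhood of $\overline{W_k}$. From $(V_j) \vartriangleleft (U_i)$ I would pick $\tilde U_i \Subset U_i$ witnessing the relation, and then intermediate cutoffs $\rho_i \in C^\infty_0(U_i)$ with $\rho_i \equiv 1$ on a neighborhood of $\overline{\tilde U_i}$. Applying $(U_i) \vartriangleleft (W_k)$ to each $\rho_i$ in place of $\tilde\theta_i$ yields, for any prescribed $\alpha' > 0$, a frequency threshold $\beta'_i$ and an estimate controlling $\|M_\mu^{\beta'_i \mu}\rho_{i,\mu} u\|_{H^1}$ by $\sum_k\|M_\mu^{\alpha'\mu}\theta_{k,\mu} u\|_{H^1} + \|Pu\|_{L^2} + e^{-\kappa''\mu}\|u\|_{H^1}$. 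Meanwhile $(V_j)_{j\in J} \trianglelefteq (\tilde U_i)_{i\in I}$ tested with the cutoffs $\rho_i$ gives, for any $\alpha > 0$, a $\beta$ and a bound on $\sum_j\|M_\mu^{\beta\mu}\tilde\theta_{j,\mu} u\|_{H^1}$ by $\sum_i\|M_\mu^{\alpha\mu}\rho_{i,\mu} u\|_{H^1}$ plus the analogous source and error terms.

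The hard part will be matching the two multipliers cleanly. Since the parameter $\alpha$ in the outer estimate is free, I would select $\alpha := \tfrac12\min_i \beta'_i$. For such $\alpha$, the Fourier multiplier identity
\[
M_\mu^{\alpha\mu} = M_\mu^{\alpha\mu}\, M_\mu^{\beta'_i\mu} + R_i,
\]
holds, where $R_i$ is residual: its operator norm on $H^1$ decays like $e^{-c\mu}$ because the symbol of $1 - m(\cdot/\beta'_i\mu)$ vanishes on the support of $m(\cdot/\alpha\mu)$ and the regularization at scale $\lambda \sim \mu$ supplies the gain, as in the manipulations behind Lemma~\ref{lemma 2.16 from ll}. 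Hence $\|M_\mu^{\alpha\mu}\rho_{i,\mu} u\|_{H^1} \le \|M_\mu^{\beta'_i\mu}\rho_{i,\mu} u\|_{H^1} + Ce^{-c\mu}\|u\|_{H^1}$. Substituting into the outer estimate and collecting terms produces exactly the dependence estimate witnessing $(V_j)_{j\in J} \vartriangleleft (W_k)_{k\in K}$, the witnesses being any $\tilde W_k \Subset W_k$ on which the $\theta_k$ of interest are identically $1$, and the final outer $\beta$ unchanged. The main obstacle, as anticipated, is this parameter matching: it is precisely why the definition of weak dependence must be quantified over all $\alpha > 0$, so that the inner threshold $\beta'_i$ furnished by the second link of the chain can always be accommodated.
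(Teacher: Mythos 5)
The paper does not supply its own proof of this proposition: it is imported verbatim from Laurent--L\'eautaud's Proposition~4.5, and a remark immediately after the statement observes that, although the relation $\vartriangleleft$ depends on $P$, the argument in the cited reference is insensitive to the regularity of the coefficients and so carries over unchanged. Your reconstruction follows exactly that reference's strategy --- in particular the decisive observation that transitivity hinges on the universal quantifier over $\alpha>0$ in the definition of $\trianglelefteq$, which lets you match the inner frequency threshold $\beta'$ furnished by the second link with the source frequency $\alpha$ of the first --- and is sound in all six items. Two small imprecisions: in~(6), the weak dependence $(U_i)\trianglelefteq(\tilde W_k)$ produces a single threshold $\beta'$, not a per-$i$ family $\beta'_i$, so the $\min_i$ is unnecessary; and in~(3), the claimed inequality holds for $\beta$ sufficiently small relative to $\alpha$ (typically $\beta\leq\alpha/2$ given the factor of $2$ lost in the almost-orthogonality Lemma~\ref{lemma 2.11 from ll}), not for all $\beta\leq\alpha$. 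Neither affects the validity of the conclusion, since in the end $\beta$ is existentially quantified and may be taken as small as needed.
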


\begin{remark}
    Notice that the relation $\vartriangleleft$ depends on the operator $P$, therefore technically speaking Proposition~\ref{prop 4.5 in ll} is not exactly the same as Proposition 4.5 in~\cite{Laurent_2018} since in our case the coefficients of $P$ present a jump discontinuity. However this does not have any impact in the proof. 
\end{remark}

We can now formulate the result of Theorem ~\ref{local quant estimate} in terms of relations of dependence. Indeed, one has:

\begin{cor}
	\label{cor of local quant estimate}
 In the geometric situation of Theorem~\ref{local quant estimate}	let $(t_0,x_0) \in \Sigma$ given locally by $\Sigma=\{\phi=0 \}$. Then there exists $R_0$ such that for any $R\in (0,R)$, there exist $r,\rho >0$ such that
	$$
	B((t_0,x_0),r) \vartriangleleft \{\phi > \rho\} \cap B((t_0,x_0),4R).
	$$
\end{cor}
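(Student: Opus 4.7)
The strategy is to directly translate the quantitative inequality furnished by Theorem~\ref{local quant estimate} into the abstract dependence relation of Definition~\ref{def of triangle strict}. Let $R_0$ be the radius given by Theorem~\ref{local quant estimate}, fix $R\in(0,R_0)$, and let $r,\rho,\tilde\tau_0>0$ be the constants produced by that theorem. Set $V:=B((t_0,x_0),r)$ and $U:=\{\phi>\rho\}\cap B((t_0,x_0),4R)$. By the definition of $\vartriangleleft$ we must exhibit an intermediate open set $\tilde U\Subset U$ with $V\trianglelefteq \tilde U$; a convenient choice is
$$
\tilde U:=\{\phi> 3\rho/2\}\cap B((t_0,x_0), 7R/2),
$$
since $\{\phi\ge 3\rho/2\}\Subset\{\phi>\rho\}$ (strict inequality at the boundary, using continuity of $\phi$) and $B((t_0,x_0),7R/2)\Subset B((t_0,x_0),4R)$, so that indeed $\overline{\tilde U}\subset U$. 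The crucial geometric point is that this $\tilde U$ still contains $\{\phi\ge 2\rho\}\cap B((t_0,x_0),3R)$, which is exactly the set on a neighborhood of which the cutoff $\theta$ in Theorem~\ref{local quant estimate} is required to equal $1$.

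Next, one verifies $V\trianglelefteq \tilde U$. Pick any $\theta\in C_0^\infty(\R^{n+1})$ equal to $1$ on a neighborhood of $\overline{\tilde U}$ and any $\tilde\theta\in C_0^\infty(V)$; fix the parameters $\kappa,\alpha>0$ appearing in Definition~\ref{def of triangle strict}. Because $\{\phi\ge 2\rho\}\cap B((t_0,x_0),3R)\subset \tilde U$, such a $\theta$ automatically satisfies the hypothesis of Theorem~\ref{local quant estimate}. We then apply that theorem with $c_1:=1/\alpha$ and with the Carleman parameter renamed $\mu_{\mathrm{thm}}:=\alpha\mu$, which is precisely the matching that makes the regularization index $c_1\mu_{\mathrm{thm}}=\mu$ coincide with the index in Definition~\ref{def of triangle strict} while turning the localization index into $\alpha\mu$. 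The output is, for $\mu$ large enough,
$$
Ce^{\kappa\alpha\mu}\bigl(\|M_{\mu}^{\alpha\mu}\theta_{\mu}u\|_{H^1}+\|Pu\|_{L^2(B((t_0,x_0),4R))}\bigr)+Ce^{-\kappa'\alpha\mu}\|u\|_{H^1}\ \ge\ \|M_{\mu}^{\alpha\beta\mu}\sigma_{r,\mu}u\|_{H^1}.
$$
Using $\|Pu\|_{L^2(B((t_0,x_0),4R))}\le \|Pu\|_{L^2(\Omega_{t,-}\cup\Omega_{t,+})}$ and relabelling $\kappa$ and $\kappa'$ (both are arbitrary at this stage), this has exactly the form required by Definition~\ref{def of triangle strict} apart from the fact that the right-hand side involves the specific cutoff $\sigma_r$ rather than the arbitrary $\tilde\theta$.

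The last step is therefore to replace $\sigma_{r,\mu}$ by $\tilde\theta_\mu$. Since $\sigma_r\equiv 1$ on $\overline{B((t_0,x_0),r)}\supset\supp\tilde\theta$, we have $\tilde\theta=\tilde\theta\,\sigma_r$. Commuting $\tilde\theta_\mu$ with $\sigma_{r,\mu}$ and $M^{\cdot}_\mu$ through the approximation/commutator lemmas of the appendix (analogues of Lemmas~\ref{lemma 2.6 from ll}, \ref{lemma 2.11 from ll}, \ref{lemma 2.10 from ll}) produces errors of type $Ce^{-c\mu}\|u\|_{H^1}$ which are absorbed into the term $Ce^{-\kappa'\mu}\|u\|_{H^1}$, at the cost of slightly diminishing $\beta$ and $\kappa'$. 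Setting the new $\beta$ equal to $\alpha\beta_{\mathrm{thm}}$ (further decreased if needed to absorb the errors) gives
$$
C\bigl(\|M_{\mu}^{\alpha\mu}\theta_{\mu}u\|_{H^1}+\|Pu\|_{L^2(\Omega_{t,-}\cup\Omega_{t,+})}\bigr)+Ce^{-\kappa'\mu}\|u\|_{H^1}\ \ge\ \|M_{\mu}^{\beta\mu}\tilde\theta_{\mu}u\|_{H^1},
$$
which is the condition $V\trianglelefteq\tilde U$, and hence $V\vartriangleleft U$.

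The only genuinely nontrivial step is the last one: matching the two indices (regularization and localization) through the rescaling $\mu\mapsto\alpha\mu$ while simultaneously swapping the concrete cutoff $\sigma_r$ for an arbitrary $\tilde\theta\in C_0^\infty(V)$. The rest is essentially a transcription of Theorem~\ref{local quant estimate} into the language of $\vartriangleleft$, with the choice of $\tilde U$ dictated by the geometry $\{\phi\ge 2\rho\}\cap B(3R)\subset\tilde U\Subset\{\phi>\rho\}\cap B(4R)$.
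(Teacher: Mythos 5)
Your proposal is correct and follows essentially the same route as the paper, which simply defers to Corollary 4.6 of Laurent--L\'eautaud: you make explicit what that citation entails. The two nontrivial observations you isolate -- the choice of intermediate set $\tilde U = \{\phi>3\rho/2\}\cap B((t_0,x_0),7R/2)$ sandwiched so that $\{\phi\ge 2\rho\}\cap B(3R)\subset\tilde U\Subset\{\phi>\rho\}\cap B(4R)$, and the rescaling $c_1=1/\alpha$, $\mu_{\mathrm{thm}}=\alpha\mu$ that aligns the regularization index $c_1\mu_{\mathrm{thm}}=\mu$ and the localization index $\mu_{\mathrm{thm}}=\alpha\mu$ of Theorem~\ref{local quant estimate} with those of the definition of $\trianglelefteq$ -- are indeed the substance of the reduction. (Note in passing that, as written, the inequality in the definition of $\trianglelefteq$ omits the factor $e^{\kappa\mu}$ in front of the observation term, even though $\kappa$ is quantified; this is evidently a misprint inherited from \cite{Laurent_2018}, and your proof reads the definition in the intended way.) The final step, exchanging the concrete cutoff $\sigma_{r,\mu}$ for an arbitrary $\tilde\theta_\mu$ with $\supp\tilde\theta\subset B((t_0,x_0),r)$, is only sketched, but the ingredients you invoke (the disjoint-support Lemma~\ref{lemma 2.3 from ll}/\ref{lemma 2.5 from ll} to drop $\tilde\theta_\mu(1-\sigma_{r,\mu})$ up to $e^{-c\mu}\|u\|_{H^1}$, then Lemmas~\ref{lemma 2.11 from ll} and \ref{lemma 2.10 from ll} to commute $\tilde\theta_\mu$ past the Fourier multipliers at the cost of shrinking $\beta$) are the right ones; this is the same level of detail as the paper's citation.
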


\begin{remark}
	In fact the constants $R,\rho,r$ depend only on $x_0$ and not on $t_0$. This comes from the fact that the coefficients of $P$ and the interface $\Sigma$ are independent of $t$.
\end{remark}
\begin{proof}
	The proof is as in Corollary 4.6 in \cite{Laurent_2018}.
\end{proof}

With the local quantitative estimate of Theorem~\ref{local quant estimate} at our disposal we are now ready to propagate these estimates to obtain a global one. We will start by propagating the low frequency estimates only (such as the estimate provided by Theorem~\ref{local quant estimate}). This will be done by using some abstract iteration properties of the relation $\vartriangleleft$ as defined in~\ref{def of triangle strict}. We will then follow a path from a point $x_0 \in \mathcal{M}$ to another one $x_1 \in \mathcal{M}$. As long as the path stays either in $\Omega_{+}$ or in $\Omega_{-}$ the propagation is guaranteed by the result of \cite{Laurent_2018}. If the path crosses the interface $S$ then we use Theorem~\ref{local quant estimate} which allows to propagate uniqueness from one side of $S$ to the other losing only an $\epsilon$ of time. Then we continue the propagation using again \cite{Laurent_2018}.  We state the main theorem:

\begin{thm}
\label{semi global }
Let $(\mathcal{M},g)$ be a smooth compact connected $n$-dimensional Riemannian manifold with (or without) boundary and $S$ a $(n-1)$-dimensional smooth submanifold of $\mathcal{M}$. We suppose that $\mathcal{M}\backslash S= \Omega_{-}\cup \Omega_{+}$ with $\Omega_{-}\cap \Omega_{+}= \emptyset$. Consider $P$ as defined in \eqref{definition of P }. For any nonempty open subset $\omega$ of $\mathcal{M}\backslash S$ and any $T>\mathcal{L}(\mathcal{M},\omega)$, there exist $\eta, C, \kappa, \mu_0$ such that for any $u\in H^1((-T,T)\times \mathcal{M})$ and $f \in L^2((-T,T)\times \mathcal{M})$ solving
\begin{equation}
\label{system}
    \begin{cases}
    Pu=f & \textnormal{in} \:(-T,T) \times \Omega_{-}\cup\Omega_{+}\\
    u_{|S_-}=u_{|S_+} & \textnormal{in}\: (-T,T)\times S \\
    (c\partial_\nu u)_{|S_-}=(c \partial_\nu u)_{|S_+} & \textnormal{in}\: (-T,T)\times S \\
    u=0 & \textnormal{in}\: (-T,T)\times \partial\mathcal{M},
    \end{cases}
\end{equation}
one has, for any $\mu\geq \mu_0$,

$$
\norm{u}{L^2((-\eta,\eta)\times \mathcal{M})}{}\leq Ce^{\kappa \mu}\left( \norm{u}{L^2((-T,T)\times\omega)}{}+\norm{f}{L^2(-T,T)\times \mathcal{M}}{} \right)+\frac{C}{\mu}\norm{u}{H^1((-T,T)\times\mathcal{M})}{}.
$$
\end{thm}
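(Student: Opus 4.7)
The plan is to deduce the semi-global statement from a chain of local relations of dependence built from two families of local quantitative estimates: Corollary~\ref{cor of local quant estimate} (which crosses the interface $S$) and the analogous local estimate of Laurent--Léautaud~\cite[Corollary 4.6]{Laurent_2018} (which propagates inside $\Omega_+$ or $\Omega_-$, where the coefficients are smooth). The abstract machinery of the strong dependence relation $\vartriangleleft$ encoded in Proposition~\ref{prop 4.5 in ll}, and most crucially its transitivity, is tailor-made to patch these local estimates together without any loss in the final constants $\kappa,\mu_0$.

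The central step is a \emph{propagation-along-paths} argument. Fix $x_0 \in \mathcal{M}$ and an admissible path $\gamma:[0,1]\to\mathcal{M}$ from $x_0$ to a point $y_0 \in \omega$, with $\textnormal{length}(\gamma)<T$; such a path exists because $T > \mathcal{L}(\mathcal{M},\omega)$ and because admissible paths can be chosen arbitrarily close (in length) to a minimising Lipschitz path. Since $\gamma$ intersects $S$ transversally a finite number of times, we may partition $[0,1]$ into finitely many sub-intervals on each of which $\gamma$ either stays strictly inside $\Omega_\pm$ or crosses $S$ once. Along a sub-interval of the first type, we insert a finite chain of small balls $B((t_k,\gamma(s_k)),r_k)$, each strongly depending on the next via the smooth-coefficient local estimate of \cite{Laurent_2018}; the total ``time cost'' of each such chain can be bounded by the $g_c$-length of the corresponding piece of $\gamma$ plus an arbitrarily small $\varepsilon$. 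At a crossing $\gamma(s_\ast)\in S$, we invoke Corollary~\ref{cor of local quant estimate}, which provides a relation
\begin{equation*}
B((t_\ast,\gamma(s_\ast)),r)\ \vartriangleleft\ \{\phi>\rho\}\cap B((t_\ast,\gamma(s_\ast)),4R),
\end{equation*}
the right-hand side being located on the side of $S$ which the path has just left (Remark~\ref{propagation in all directions} ensures we may cross in either direction). By the transitivity property~(6) of Proposition~\ref{prop 4.5 in ll}, concatenating all these local relations along $\gamma$ yields a single strong dependence $B((0,x_0),r(x_0)) \vartriangleleft U(x_0)$, where $U(x_0)\Subset (-T,T)\times \omega$, provided $T$ strictly exceeds the total length (in $g_c$) used along the chain.

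By compactness of $\{0\}\times\mathcal{M}$ one extracts a finite sub-cover $\{B((0,x_\ell),r(x_\ell))\}_{\ell=1}^{N}$ of $\{0\}\times\mathcal{M}$, and choosing $\eta>0$ sufficiently small the slab $(-\eta,\eta)\times\mathcal{M}$ is still covered. Properties~(4) and~(5) of Proposition~\ref{prop 4.5 in ll} imply
\begin{equation*}
(-\eta,\eta)\times \mathcal{M}\ \subset\ \bigcup_{\ell=1}^N B((0,x_\ell),r(x_\ell))\ \vartriangleleft\ \bigl(U(x_\ell)\bigr)_{\ell=1,\dots,N},
\end{equation*}
with all $U(x_\ell) \Subset (-T,T)\times \omega$. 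Unfolding the definition of $\vartriangleleft$ with cut-off functions $\tilde\theta_\ell$ adapted to the covering and with $\theta_\ell\in C^\infty_0((-T,T)\times\omega)$, and summing, gives
\begin{equation*}
\norm{M_\mu^{\beta\mu}\mathds{1}_{(-\eta,\eta)\times\mathcal{M}}\,u}{H^1}{}\ \leq\ Ce^{\kappa\mu}\bigl(\norm{u}{L^2((-T,T)\times\omega)}{}+\norm{f}{L^2((-T,T)\times\mathcal{M})}{}\bigr)+Ce^{-\kappa'\mu}\norm{u}{H^1}{},
\end{equation*}
after absorbing the cut-offs and commuting with $M_\mu^{\beta\mu}$ in the standard way. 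The final step is to remove the low-frequency Fourier multiplier on the left: write $u = M_\mu^{\beta\mu}u + (I-M_\mu^{\beta\mu})u$ and use that $(I-M_\mu^{\beta\mu})$ maps $H^1_t$ into $L^2_t$ with norm $O(1/\mu)$, producing the term $\frac{C}{\mu}\|u\|_{H^1}$ in the statement. Replacing $\beta\mu$ by $\mu$ after renaming constants yields the theorem.

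The main obstacle is the construction of the chains crossing $S$ and the verification that the total time budget along \emph{every} admissible path can be made strictly less than $T$ while each local step still fits inside Corollary~\ref{cor of local quant estimate} or the smooth-case local estimate. This is precisely where the definition of the $g_c$-distance, the transversality condition in the definition of admissible paths, and the flexibility of choosing $\rho,r,R$ in Corollary~\ref{cor of local quant estimate} combine: each interface crossing costs an arbitrarily small amount of time, so a finite number of crossings along $\gamma$ is harmless as long as $\textnormal{length}_{g_c}(\gamma)<T$, which is exactly what $T>\mathcal{L}(\mathcal{M},\omega)$ guarantees.
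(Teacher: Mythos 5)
Your proposal follows the paper's two-step strategy: (Step 1) propagate the dependence relation $\vartriangleleft$ along admissible paths, using Corollary~\ref{cor of local quant estimate} at the finitely many crossings of $S$ and the smooth-coefficient estimates of~\cite{Laurent_2018} inside $\Omega_{\pm}$, chaining by transitivity (Proposition~\ref{prop 4.5 in ll}); (Step 2) cover $\{0\}\times\mathcal{M}$ by finitely many such balls, unfold the definition of $\vartriangleleft$, and peel off the low-frequency multiplier. This is the right skeleton and matches the paper's argument.

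There is, however, a genuine gap in Step 2 as written. The definition of $\vartriangleleft$ places $\sum_i \norm{M_\mu^{\alpha\mu}\theta_{i,\mu}u}{H^1}{}$ on the observation side — an $H^1$ norm of the \emph{regularized and low-time-frequency localized} quantity. Your intermediate inequality already has $\norm{u}{L^2((-T,T)\times\omega)}{}$ there, with no multiplier, no regularization, and in $L^2$ rather than $H^1$; this passage is not ``absorbing the cut-offs and commuting with $M_\mu^{\beta\mu}$ in the standard way''. It requires Lemma~\ref{lemma from ll hypo} (Lemma 5.2 of~\cite{LL:17Hypo}): the bound
\begin{equation*}
\norm{M^\mu_\mu \phi_\mu u}{H^1}{}\leq C\mu\,\norm{u}{L^2(\omega)}{}+C\norm{Pu}{L^2(\Omega)}{}+Ce^{-c\mu}\norm{u}{H^1}{}
\end{equation*}
is where the spatial derivatives of $u$ are recovered from $u$ alone, and it is a structural consequence of the ellipticity of $P$ on $\{\xi_t=0\}$ acting on the support of the low time-frequency cut-off. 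Without invoking (or re-deriving) this lemma, your observation term stays as an $H^1$ norm with the multiplier in front, which is strictly stronger than what the theorem allows. Relatedly, the paper first shrinks $\omega$ to $\tilde\omega\Subset\omega$ so that the cut-off $\phi$ appearing in the definition of $\vartriangleleft$ (required to equal $1$ on a neighborhood of $\overline{(-T,T)\times\tilde\omega}$) can be chosen compactly supported in $(-T,T)\times\omega$; this reduction should be made explicit. With these two points supplied, the rest of your proposal — including the $\frac{C}{\mu}$ gain from $(I-M_\mu^{\beta\mu})$ via $|\xi_t|\gtrsim\mu$ on its essential support together with the full $H^1$ norm — is correct and is how the paper concludes.
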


The following lemma will be used for the proof of Theorem~\ref{semi global } in order to transport the information locally from one side of the interface to the other. 

\begin{lem}
\label{local passage}
Consider a point $x \in S=\{\phi=0\}$. Let $R_0, R, r, \rho$ be the associated constants given by Corollary~\ref{cor of local quant estimate}. Then for any $T>0$ , for any $\epsilon>0$ and any subset $U \subset \R^{n+1}$ such that
$$
[-T,T]\times\left(  \{\phi > \frac{\rho}{2}\}  \cap B(x,r),4R)\right) \Subset U. 
$$
there exists $r_\epsilon>0$ with 
$$
[-T + 2 \epsilon, T-2 \epsilon]\times B(x,r_{\epsilon}) \vartriangleleft U.
$$
\end{lem}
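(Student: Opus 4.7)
The plan is to apply Corollary~\ref{cor of local quant estimate} at a finite collection of points $(t_k,x)\in\Sigma$ whose associated space-time balls $B((t_k,x),r)$ cover the target cylinder $[-T+2\epsilon,T-2\epsilon]\times B(x,r_\epsilon)$, and then combine the resulting local dependencies using the calculus of the relation $\vartriangleleft$ encoded in Proposition~\ref{prop 4.5 in ll}. Since the coefficients of $P$ are independent of $t$, the Carleman estimate, and hence Corollary~\ref{cor of local quant estimate}, are invariant under time translations, so for every $t_k\in\mathbb{R}$ we have $B((t_k,x),r)\vartriangleleft \{\phi>\rho\}\cap B((t_k,x),4R)$ with the \emph{same} constants $r,\rho,R$ as those given by the Corollary at $(0,x)$.

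First I would fix $r_\epsilon\in(0,r)$ and a time step $\Delta\in(0,\sqrt{r^{2}-r_\epsilon^{2}})$, and set $t_k:=-T+2\epsilon+k\Delta$ for $k=0,1,\dots,N$, with $N$ chosen so that $t_N\ge T-2\epsilon$. For any $(t,y)$ with $t\in[-T+2\epsilon,T-2\epsilon]$ and $|y-x|<r_\epsilon$ there exists $k$ with $|t-t_k|\le\Delta$, and therefore $|(t,y)-(t_k,x)|^{2}\le\Delta^{2}+r_\epsilon^{2}<r^{2}$. After a small further shrinking of $r_\epsilon$ and $\Delta$, this yields the strict covering
\[
[-T+2\epsilon,T-2\epsilon]\times B(x,r_\epsilon)\;\Subset\;\bigcup_{k=0}^{N}B((t_k,x),r),
\]
and, by introducing a partition of unity subordinated to this cover, one writes the left-hand side as a union of sets each compactly contained in one $B((t_k,x),r)$. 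Property~(4) of Proposition~\ref{prop 4.5 in ll} then yields $[-T+2\epsilon,T-2\epsilon]\times B(x,r_\epsilon)\vartriangleleft \bigl(B((t_k,x),r)\bigr)_{k}$. Composing with the local estimate via properties~(5) and~(6) (transitivity) gives
\[
[-T+2\epsilon,T-2\epsilon]\times B(x,r_\epsilon)\;\vartriangleleft\;\bigl(\{\phi>\rho\}\cap B((t_k,x),4R)\bigr)_{k}.
\]

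The main obstacle is to show that each observation region $\{\phi>\rho\}\cap B((t_k,x),4R)$ is compactly contained in $U$, because the time projection of $B((t_k,x),4R)$ is $(t_k-4R,t_k+4R)$, which for $t_k$ close to $\pm(T-2\epsilon)$ may exceed $[-T,T]$. Using the trivial inclusion $\{\phi>\rho\}\cap B((t_k,x),4R)\subset (t_k-4R,t_k+4R)\times\bigl(\{\phi>\rho/2\}\cap B(x,4R)\bigr)$, the problem reduces to absorbing the time overflow $\max(0,4R-2\epsilon)$. For $\epsilon\ge 2R$ this is immediate, since $(t_k-4R,t_k+4R)\subset[-T,T]$ and the hypothesis $[-T,T]\times(\{\phi>\rho/2\}\cap B(x,4R))\Subset U$ applies directly. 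For smaller $\epsilon$, the strict containment assumption supplies a slack $\eta>0$ such that the $\eta$-neighborhood of $[-T,T]\times\overline{\{\phi>\rho/2\}\cap B(x,4R)}$ lies in $U$; the overflow is absorbed into this slack whenever $4R-2\epsilon<\eta$. If one wishes to handle arbitrarily small $\epsilon$, one refines the cover near the endpoints by invoking Corollary~\ref{cor of local quant estimate} with a smaller radius $R'\in(0,R_0)$ (for which the required containment $\{\phi>\rho(R')\}\cap B(x,4R')\subset\{\phi>\rho/2\}\cap B(x,4R)$ can be arranged by choosing $R'$ close enough to $R$ so that $\rho(R')\ge\rho/2$), thereby shrinking the temporal extent of the observation balls.

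Once every $\{\phi>\rho\}\cap B((t_k,x),4R)\Subset U$ is secured, property~(3) of Proposition~\ref{prop 4.5 in ll} yields $\bigl(\{\phi>\rho\}\cap B((t_k,x),4R)\bigr)_{k}\vartriangleleft(U)_{k}$, and property~(2) collapses the right-hand side to the single set $U$. A final application of transitivity gives
\[
[-T+2\epsilon,T-2\epsilon]\times B(x,r_\epsilon)\;\vartriangleleft\;U,
\]
which is the desired conclusion after renaming $r_\epsilon$. The entire argument is a covering/transitivity procedure; the only genuinely delicate point is the boundary adjustment in time, which is precisely where the strict containment in the hypothesis and the flexibility of $R$ in Corollary~\ref{cor of local quant estimate} come into play.
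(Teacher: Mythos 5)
Your proof takes essentially the same route as the paper's: time-translation invariance of $P$ allows Corollary~\ref{cor of local quant estimate} to be applied at a chain of interface points $(t_k,x)\in\Sigma$ with identical constants, and the calculus of $\vartriangleleft$ from Proposition~\ref{prop 4.5 in ll} assembles the resulting local relations. The concern you raise about the time extent of the observation balls $B\left((t_k,x),4R\right)$ is the delicate step; the paper's displayed inclusion $\bigcup_i\bigl(\{\phi>\rho\}\cap B((t_i,x),4R)\bigr)\Subset[-T,T]\times\bigl(\{\phi>\rho/2\}\cap B(x,4R)\bigr)$ is asserted without comment precisely at the point where this endpoint overflow must be handled, so the extra scrutiny is warranted.

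Your third remedy for arbitrarily small $\epsilon$ has the dependence of $\rho$ on $R$ in the wrong direction, however. In Proposition~\ref{starting point} one takes $\rho=\delta/10$ with $\delta\le\tilde{\delta}R^2/40$, so $\rho(R')\lesssim (R')^2$ \emph{decreases} as $R'$ shrinks. Insisting that $\rho(R')\ge\rho/2$ therefore pins $R'$ comparable to $R$, and the temporal extent $4R'$ of the observation balls is not reduced; conversely, shrinking $R'$ until $4R'<\epsilon$ sends $\rho(R')$ to zero, and $\{\phi>\rho(R')\}\cap B\left((t_i,x),4R'\right)$ then need not lie inside $\{\phi>\rho/2\}\cap B(x,4R)$, so the hypothesis on $U$ no longer applies. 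This step does not close. A cleaner resolution of the endpoint issue, which does not vary $R$, is to restrict the chain to $t_k\in[-T+4R,T-4R]$, so the observation balls stay inside $[-T,T]\times\bigl(\{\phi>\rho/2\}\cap B(x,4R)\bigr)$; this establishes the Lemma under the constraint $2\epsilon\ge 4R$, which is enough for its use in the proof of Theorem~\ref{semi global } because $R$ can be taken as small as desired at the outset (Corollary~\ref{cor of local quant estimate} holds for any $R\in(0,R_0)$, and the constants it produces depend only on the geometry near the interface point, not on the subsequent choice of $\epsilon$).
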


\begin{proof}
Let us consider a finite covering of 
$$
[-T+\epsilon, T- \epsilon]\times B(x,r/2)\subset\bigcup_{i \in I}B((t_i,x),r), \quad I \: \textnormal{finite}.
$$
Now recall that from Corollary~\ref{cor of local quant estimate} and definitions of the above quantities we have as well:
\begin{equation}
\label{application of cor1}
B((t_i,x),r)\vartriangleleft \{\phi>\rho\}\cap B((t_i,x),4R), \quad i \in I.    
\end{equation}

We also have that
\begin{equation*}
    \bigcup_{i \in I}\bigg(\{\phi>\rho\}\cap B((t_i,x),4R)\bigg) \Subset [-T,T]\times\left(  \{\phi > \frac{\rho}{2}\}  \cap B(x,r),4R)\right)
\end{equation*}
and consequently thanks to the assumption made on $U$ we obtain that
$$
\{\phi>\rho\}\cap B((t_i,x),4R)\Subset U, \quad \textnormal{for} \: i \in I.
$$
We apply then item 3 of Proposition~\ref{prop 4.5 in ll} which gives
$$
\bigg(\{\phi>\rho\}\cap B((t_i,x),4R)\bigg)_{i\in I} \vartriangleleft U.
$$
By transitivity of $\vartriangleleft$ by \eqref{application of cor1} we get
\begin{equation}
\label{u contains 2}
\bigg(B((t_i,x),r)\bigg)_{i \in I}\vartriangleleft U. 
\end{equation}
We now use the following geometric fact, which is a consequence of the triangle inequality: For a given $\epsilon>0$ there exists $r_{\epsilon}$ such that we have
\begin{equation*}
[-T + 2 \epsilon, T-2 \epsilon]\times B(x,r_{\epsilon}) \Subset \bigcup_{i \in I} B\left((t_i,x),\frac{r}{2}\right).
\end{equation*}
Consequently, item 3 of Proposition~\ref{prop 4.5 in ll} gives:
\begin{equation}
\label{final inclusion property}
[-T + 2 \epsilon, T-2 \epsilon]\times B(x,r_{\epsilon}) \vartriangleleft \bigcup_{i \in I} B\left((t_i,x),\frac{r}{2}\right).
\end{equation}
The compact inclusion $B\left((t_i,x),\frac{r}{2}\right) \Subset B((t_i,x),r)$ implies according to item 4 of Proposition~\ref{prop 4.5 in ll}:
\begin{equation}
\label{application compact incl}
\bigcup_{i \in I} B\left((t_i,x),\frac{r}{2}\right) \vartriangleleft \bigg(B((t_i,x),r)\bigg)_{i \in I}.
\end{equation}

Finally, we combine \eqref{u contains 2}, \eqref{final inclusion property}, \eqref{application compact incl} using as well the transitivity (property 6 in Proposition~\ref{prop 4.5 in ll})) to find 
$$
[-T + 2 \epsilon, T-2 \epsilon]\times B(x,r_{\epsilon}) \vartriangleleft U,
$$
which finishes the proof of the Lemma.
\end{proof}

We are now ready to give the proof of Theorem~\ref{semi global }.

\begin{proof}[Proof of Theorem~\ref{semi global }]
The proof is divided in two steps:

\bigskip

\noindent \textbf{Step 1: Abstract propagation of low frequency information}

\bigskip

For technical reasons related to the second step of the proof we consider an open set $\tilde{\omega}$ with $\tilde{\omega}\Subset \omega.$

We want to propagate uniqueness from $\tilde{\omega}$ to a neighborhood of an arbitrary point of $\mathcal{M}$, say $y_0 \in \mathcal{M}$. From the definition of $\mathcal{L}(\mathcal{M}, \tilde{\omega})$ we can find a point $x_0 \in \tilde{\omega}$ and an admissible path $\gamma: [0,1] \rightarrow \mathcal{M}$ of length $l$ such that $T>\mathcal{L}(\mathcal{M}, \tilde{\omega})>l$, $\gamma(0)=x_0$ and $\gamma(1)=y_0$. 

By assumption (see Section~\ref{not and def}), the path $\gamma$ intersects the interface $S$ a finite number of times $N$. We call $x_{S,j},\: j\in \{1,2,...,N\}$ the intersection points. Moreover, the conditions made on the family of admissible paths imply that we can use the same coordinates as in \cite[proof of Theorem 6.3]{Laurent_2018} ,~\cite[pp 21-22]{Leb:Analytic}) in a neighborhood of this path. In particular, in these coordinates the path is straighten out, that is $\gamma(s)=(0,sl)$. We now apply Corollary~\ref{cor of local quant estimate} to $x_{S,1}\in S $ which gives us some constants $r_{S,1}, \rho_{S,1}$. We then choose a point $\tilde{x}_0$ with
$$
\tilde{x}_0 \in \{0<\phi_{S,1} < \rho/4 \} \cap B(x_{S,1},r_{S,1})\cap \gamma\left((\gamma^{-1}(x_0),\gamma^{-1}(x_{S,1}-))\right).
$$
\begin{figure}
    \centering
    \includegraphics{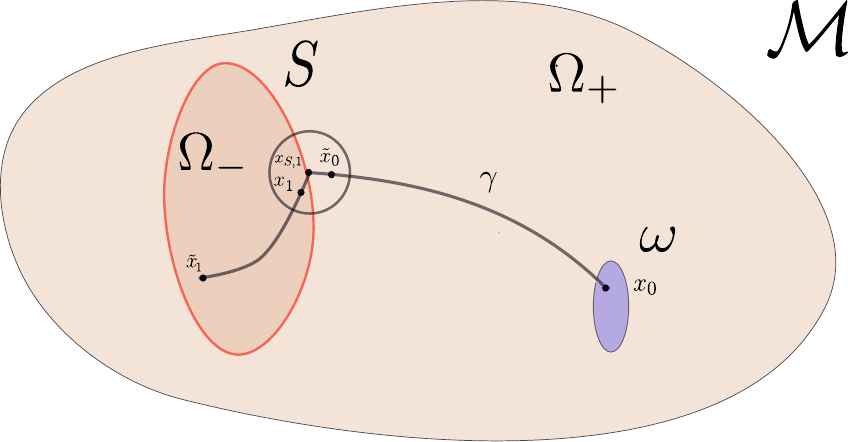}
    \caption{First step of the iteration process for the proof of Theorem~\ref{semi global }. Information is propagated from $x_0$ to $x_1$. The process is then repeated starting from $x_1$.}
    \label{two paths}
\end{figure}
Let us now look at the path joining $x_0$ to $\tilde{x}_0$. This path is entirely either in $\Omega_{+}$ or $\Omega_{-}$ and in particular one can apply the results of \cite{Laurent_2018}. More precisely one can construct an appropriate foliation such that one can apply Theorem 4.7 of \cite{Laurent_2018}. The construction is exactly as in the proof of Theorem 6.3 in \cite{Laurent_2018}. The difference is that here we only use the foliation to apply Theorem 4.7 which solely concerns \textit{low frequency} information. That is because in our case, this is only the first step of the propagation. Indeed, since we will need to continue the iteration we can not consider high frequencies yet. 

Applying Theorem 4.7 of \cite{Laurent_2018} gives us a set $U \subset \R_t \times \Omega_{+}$ such that 
$$
U \vartriangleleft  [-T,T]\times  \tilde{\omega},
$$
and the precise construction of \cite[Proof of theorem 6.3]{Laurent_2018} implies that $U$ contains a set of the form 
\begin{equation}
\label{u contains}
[-T_0,T_0]\times\left(  \{\phi_{S,1} > \frac{\rho}{2}\}  \cap B(x_{S,1},r_{S,1}),4R)\right) \Subset U.    
\end{equation}
The time $T_0$ is equal to $T_0=T-\tilde{T}_0$ where $\tilde{T}_0$ is the time given by \cite{Laurent_2018} to propagate the information from $x_0$  to $\tilde{x}_0$. This is any time greater than the length of the path joining those two points.

We can now apply Lemma~\ref{local passage} which ensures that for arbitrary $\epsilon>0$ there exists $r_\epsilon$ with
$$
[-T_0 + 2 \epsilon, T_0-2 \epsilon]\times B(x_{S_1},r_{\epsilon}) \vartriangleleft U.
$$
Since by construction of $U$ we have $U \vartriangleleft [-T,T]\times \tilde{\omega}$, using once again the transitivity of $\vartriangleleft$ we have finally shown that
$$
[-T_0 + 2 \epsilon, T_0-2 \epsilon]\times B(x_{S,1},r_{\epsilon}) \vartriangleleft[-T,T]\times \tilde{\omega}.
$$
Recalling that $x_{S,1} \in S$ the above property says that we managed to pass on (a possibly very small) neighborhood of the other side of the interface, and this by losing an arbitrarily small time. In particular we have shown that we can find a $x_1 \in \gamma\left(\gamma^{-1}(x_{S,1}),1]\right)$ such that 
\begin{equation}
\label{pass the interface}
    [-T_0 + 2 \epsilon, T_0-2 \epsilon]\times V(x_1) \vartriangleleft [-T,T]\times   \tilde{\omega},
\end{equation}
where $V(x_1)$ is a (small) neighborhood of $x_1$. We can now repeat this propagation procedure starting from the point $x_1$. We consider a point $\tilde{x}_1$ which is on the same side of the interface as $x_1$, that is $\tilde{x}_1 \in  \gamma\left((\gamma^{-1}(x_{S,1}),\gamma^{-1}(x_{S,2}))\right)$ and sufficiently close to the interface and we propagate the information from $x_1$ to $\tilde{x}_1$ using Theorem 4.7 of~\cite{Laurent_2018}. Then we pass on the other side of the interface using Lemma~\ref{local passage}. Iterating this process $N$ times and using the transitivity of $\vartriangleleft$ gives 
$$
[-T_N + 2 N\epsilon, T_N-2 N\epsilon]\times V(y_0) \vartriangleleft  [-T,T]\times \tilde{\omega},
$$
with $T_N=T-\sum_{0\leq j\leq N}\tilde{T}_j$ and $\tilde{T}_j$ any number strictly larger than the length of the path joining $x_j$ to $\tilde{x}_j$. Notice that the definition of $ \mathcal{L}(\mathcal{M}, \tilde{\omega})$ (see \eqref{greatest distance}) implies thanks to our assumption $T>\mathcal{L}(\mathcal{M}, \tilde{\omega})$ that $T_N>0$ and since $\epsilon$ can be chosen arbitrarily small we can have
$$
\eta_{y_0}:=T_N-2 N \epsilon>0.
$$

\begin{figure}
	\centering
	
	\begin{tikzpicture}
\draw (-8,0) -- (6,0) node[right]{$x$} ;
\filldraw[black] (5,0) circle (1pt) node[anchor=south west]{$x_0$};
\draw (5,-4) -- (5,4) ;
\draw (1.4,-3) -- (1.4,3) ;
\filldraw[black] (1.4,0) circle (1pt) node[anchor=south west]{$\tilde{x}_0$};

\node at (5.2, 4) {$T$};

\node at (5.2, -4.1) {$-T$};

\node at (1.7, 3) {$T_0$};
\node at (1.7, -3.1) {$-T_0$};

\draw (0.8,-2.9) -- (0.8,2.9) ;

\node at (0.3, 3.1) {$T_0-2\epsilon$};
\node at (0.3, -3.1) {$-T_0+2\epsilon$};

\filldraw[black] (1.1,0) circle (1pt) node[anchor=south]{$x_{S,1}$};

\filldraw[black] (0.8,0) circle (1pt) node[anchor=south east]{$x_{1}$};

\draw (-2,-1.5) -- (-2,1.5) ;

\node at (-2,1.6) {$T_1$};

\node at (-2,-1.7) {$-T_1$};

\filldraw[black] (-2,0) circle (1pt) node[anchor=south west]{$\tilde{x}_1$};

\draw (-5,-0.8) -- (-5,0.8) ;
\draw (-5.6,-0.7) -- (-5.6,0.7) ;
\filldraw[black] (-5.3,0) circle (1pt) node[anchor=south]{$x_{S,j}$};

\filldraw[black] (-5,0) circle (1pt) node[anchor=south west]{$\tilde{x}_{j-1}$};

\filldraw[black] (-5.6,0) circle (1pt) node[anchor=south east]{$x_j$};

\draw (-7.5,-0.3) -- (-7.5,0.3) ;
\filldraw[black] (-7.5,0) circle (1pt) node[anchor=south east]{$y_0$};

\node at (-7.5,0.5) {$\eta_{y_0}$};

\node at (-7.5,-0.5) {$-\eta_{y_0}$};

\draw [very thick, blue] (4.7,0) -- (5.3,0)node[anchor=north]{$\omega$};

\draw [thick, red] (5,0) -- (-7.5,0)node[anchor=north east]{$\gamma$};

 \draw [-stealth, thick, green](5.8,3) [out=150,in=30] to (2,3);

 \draw [-stealth, thick, orange](1.5,0.6) [out=150,in=30] to (0.6,0.6);

 \draw [-stealth, thick, green](0.6,1) [out=150,in=30] to (-2,1);

 \draw [-stealth, thick, orange](-4.9,0.45) [out=150,in=30] to (-5.75,0.45);

\end{tikzpicture}
\caption{The iteration process of the proof of Theorem~\ref{semi global } in space-time. We transport information from a point $x_0 \in \omega$ to $y_0\in \mathcal{M}$ by following the path $\gamma$. The points $x_{S,j}$ are the intersection points of $\gamma$ with $S$. The green arrows correspond to propagation of information in the smooth context where we use the results of~\cite{Laurent_2018}. The orange arrows propagate the information through the interface using Theorem~\ref{local quant estimate} and this, by losing an arbitrarily small time. }
\label{Propagation of information with time figure}
\end{figure}

To sum up, for an arbitrary $y_0 \in \mathcal{M}$ we were able to find an $\eta_{y_0}>0$ and a neighborhood $V(y_0)$ of $y_0$ such that 
\begin{equation}
    \label{property with eta}
[-\eta_{y_0},\eta_{y_0}]\times V(y_0) \vartriangleleft [-T,T]\times  \tilde{\omega}.  
\end{equation}

We have thus completed the iteration procedure. 
\bigskip

\noindent \textbf{Step 2: Unfolding the definition of $\vartriangleleft$}

\bigskip

The last part of the proof will be to write down the definition of $\vartriangleleft$ and obtain the desired estimate using a compactness argument for $\mathcal{M}$.

Define $U_{y_0}$ as 
$$
U_{y_0}:=[-\eta_{y_0},\eta_{y_0}]\times V(y_0).
$$
Consider a sufficiently small neighborhood $\tilde{V}(y_0)$ of $y_0$ such that $K \Subset U_{y_0}$, where we define $K$ as $$K:= \left[-\frac{\eta_{y_0}}{2}, \frac{\eta_{y_0}}{2}\right] \times \tilde{V}(y_0).$$ Pick $\chi \in C^\infty_0(U_{y_0})$ such that $\chi=1$ on a neighborhood $U_\chi$ of $K$ and $\phi \in C^\infty_0 ((-T,T)\times \omega)$ with $\phi=1$ on a neighborhood of $\tilde{\omega}$. The definition of $\vartriangleleft$ and  \eqref{property with eta} imply that for any $\kappa>0$, there exist $C, \beta, \kappa^\prime, \mu_0>0$ such that for $\mu \geq \mu_0$~\footnote{Here and all along the proof we write directly, with a slight abuse of notation, the estimates in an invariant way. In fact one writes down the estimates given by the definition of $\vartriangleleft$ in some appropriate coordinates and then passes into the global ones. This does not pose any problem since the estimates we consider are invariant by changes of coordinates.},
\begin{equation}
    \label{unfolding triangle}
 Ce^{\kappa \mu}\left(\norm{M^\mu_\mu \phi_\mu u}{H^1}{}+\norm{Pu}{L^2(-T,T)\times \mathcal{M}}{}\right)+Ce^{- \kappa^\prime \mu} \norm{u}{H^1}{}
\geq \norm{M_\mu^{\beta \mu}\chi_\mu u}{H^1}{}.
\end{equation}

We want to use the almost localization of $\phi_\mu$ in order to control the left hand side of the above quantity.  To do so we use Lemma~\ref{lemma from ll hypo} which gives us
$$
\norm{M^\mu_\mu \phi_\mu u}{H^1}{} \leq C\mu \norm{u}{L^2((-T,T)\times \omega)}{}+Ce^{-c \mu}\norm{u}{H^1}{}.
$$
We want to inject the estimate above in \eqref{unfolding triangle}. We need to be sure that the term $\norm{u}{H^1}{}$ will be multiplied by a negative exponential. We recall that $\kappa>0$ can be  chosen arbitrarily small and we impose $\kappa < c/2$. Since $\mu\leq Ce^{\kappa \mu}$ we obtain with a new constant $c^\prime>0$:
\begin{equation}
\label{estimate with localization of phi}
    \norm{M_\mu^{\beta \mu}\chi_\mu u}{H^1}{}\leq Ce^{2 \kappa \mu}\left(\norm{u}{L^2((-T,T)\times \omega)}{}+\norm{Pu}{L^2(-T,T)\times \mathcal{M}}{}\right)+Ce^{-c^\prime \mu}\norm{u}{H^1}{}.
\end{equation}
We need to exploit the almost localization of $\chi_\mu$ to control from below the left hand side of \eqref{estimate with localization of phi}. We take $\tchi \in C^\infty_0(U_\chi)$ with $\tchi=1$ in a neighborhood of $K$. We use Lemma~\ref{lemma 2.3 from ll} to control:
\begin{align}
\label{estim for final estim}
    \norm{\tchi u }{L^2}{}&\leq \norm{\tchi\chi_\mu u }{L^2}{}+\norm{\tchi(1-\chi_\mu) u }{L^2}{}\leq C \norm{\chi_\mu u }{L^2}{}+Ce^{-c \mu} \norm{u}{H^1}{}\nonumber\\ 
    &\lesssim \norm{M^{\beta \mu}_\mu \chi_\mu u}{L^2}{}+\norm{(1-M^{\beta \mu}_\mu) \chi_\mu u}{L^2}{}+e^{-c \mu}\norm{u}{H^1}{}.
\end{align}
We control the second term as follows:
\begin{align*}
\norm{(1-M^{\beta \mu}_\mu) \chi_\mu u}{L^2}{}&=\norm{\left(1-m_\mu\left(\frac{\xi_t}{\beta \mu}\right)\right) \widehat{\chi_\mu u}}{L^2}{}=\norm{\frac{\left(1-m_\mu\left(\frac{\xi_t}{\beta \mu}\right)\right)}{|\xi_t|+|\xi_x|} \left(|\xi_t|+|\xi_x|\right)\widehat{\chi_\mu u}}{L^2}{}\\
&\lesssim \norm{\frac{\left(1-m_\mu\left(\frac{\xi_t}{\beta \mu}\right)\right)}{|\xi_t|+|\xi_x|}}{L^\infty}{} \norm{u}{H^1}{}.
\end{align*}
In the region $|\xi_t| \geq \frac{\beta \mu}{2}$ we simply have
$$
\frac{\left(1-m_\mu\left(\frac{\xi_t}{\beta \mu}\right)\right)}{|\xi_t|+|\xi_x|} \leq \frac{C}{\mu}.
$$
In the region $|\xi_t |\leq \frac{\beta \mu}{2}$ we use the support of $m$ and Lemma~\ref{lemma 2.3 from ll} to find
$$
\frac{\left(1-m_\mu\left(\frac{\xi_t}{\beta \mu}\right)\right)}{|\xi_t|+|\xi_x|} \leq C e^{- c \mu},
$$
hence in particular
$$
\norm{\frac{\left(1-m_\mu\left(\frac{\xi_t}{\beta \mu}\right)\right)}{|\xi_t|+|\xi_x|}}{L^\infty}{} \leq \frac{C}{\mu}.
$$
Combining estimates \eqref{estim for final estim} and \eqref{estimate with localization of phi} yields:

$$
\norm{\tchi u}{L^2}{}\leq Ce^{2 \kappa \mu}\left(\norm{u}{L^2((-T,T)\times \omega)}{}+\norm{Pu}{L^2(-T,T)\times \mathcal{M}}{}\right)+\frac{C}{\mu}\norm{u}{H^1}{}.
$$
Since $\tchi=1$ on $K$ we have:
$$
\norm{u}{L^2(K)}{}=\norm{\tchi u }{L^2(K)}{}\leq \norm{\tchi u }{L^2}{}.
$$
This yields, thanks to the previous estimate and the explicit definition of $K$ the final estimate:
\begin{equation*}
    \norm{u}{L^2\left([-\frac{\eta_{y_0}}{2}, \frac{\eta_{y_0}}{2}] \times \tilde{V}(y_0)\right)}{}\leq Ce^{2 \kappa \mu}\left(\norm{u}{L^2((-T,T)\times \omega)}{}+\norm{Pu}{L^2(-T,T)\times \mathcal{M}}{}\right)+\frac{C}{\mu}\norm{u}{H^1}{},
\end{equation*}
where $\tilde{V}_{y_0}$ is a (small) neighborhood of $y_0$, $y_0$ is an arbitrary point of $\mathcal{M}$ and $\eta_{y_0}>0$ is an associated strictly positive time. One can cover the manifold $\mathcal{M}$ by such neighborhoods and by compactness we can extract a finite covering 
$$
\mathcal{M} \subset \bigcup_{j \in J \: \textnormal{finite}} \tilde{V}(y_j),
$$
such that 
\begin{equation*}
    \norm{u}{L^2\left([-\frac{\eta_{y_j}}{2}, \frac{\eta_{y_j}}{2}] \times \tilde{V}(y_j)\right)}{}\leq C_je^{2 \kappa_j \mu}\left(\norm{u}{L^2((-T,T)\times \omega)}{}+\norm{Pu}{L^2(-T,T)\times \mathcal{M}}{}\right)+\frac{C_j}{\mu}\norm{u}{H^1}{},
\end{equation*}
for $\mu \geq \mu_0$. Let $\eta:= \min \eta_{y_j}/2$, $C:= \max C_j$, $c:= \min c_j^\prime$. We have:
\begin{align*}
 \norm{u}{L^2\left([-\eta, \eta] \times \mathcal{M}\right)}{}&\leq \sum_{j \in J} \norm{u}{L^2\left([-\frac{\eta_{y_j}}{2}, \frac{\eta_{y_j}}{2}] \times \tilde{V}(y_j)\right)}{}\\
 &\leq |J|\bigg( Ce^{2 \kappa \mu}\left(\norm{u}{L^2((-T,T)\times \omega)}{}+\norm{Pu}{L^2(-T,T)\times \mathcal{M}}{}\right)+\frac{C}{\mu}\norm{u}{H^1}{}\bigg).
\end{align*}
 
The proof of Theorem~\ref{semi global } is now complete.
\end{proof}

In the preceding theorem we chose to present a proof in the case where our observation domain $\omega$ is an open subset of $\mathcal{M}$. The point is that the difficulty of our problem comes from the interface inside $\mathcal{M}$ where the metric may jump. The Theorem and its proof show how we can propagate the information when crossing the interface in a way that is compatible with the quantitative results of \cite{Laurent_2018}. Since in \cite{Laurent_2018} the boundary case has been treated too, we can as well formulate the analogous result in the case of boundary observation. More precisely one has:

\begin{thm}
\label{boundary observation}
Under the assumptions of Theorem~\ref{semi global } assume additionally that $\partial \mathcal{M}$ is non empty and consider $\Gamma$ a non empty open subset of $\partial \Gamma$. Then for any $T> \mathcal{L}(\mathcal{M}, \Gamma)$, there exist $\eta, C, \kappa, \mu_0>0$ such that for any $u \in H^1((-T,T)\times \mathcal{M})$ and $f \in L^2((-T,T)\times \mathcal{M})$ solving~\ref{system}, we have for any $\mu \geq\mu_0$
$$
\norm{u}{L^2((-\eta,\eta)\times \mathcal{M})}{}\leq Ce^{\kappa \mu}\left( \norm{\partial_{\nu_{\Gamma}}u}{L^2((-T,T)\times \Gamma)}{}+\norm{f}{L^2(-T,T)\times \mathcal{M}}{} \right)+\frac{C}{\mu}\norm{u}{H^1((-T,T)\times\mathcal{M})}{}.
$$
\end{thm}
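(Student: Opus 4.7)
The plan is to run the same two-step scheme as in Theorem~\ref{semi global }: first, an abstract propagation argument in terms of the strong dependence relation $\vartriangleleft$, and second, an unfolding of the definition of $\vartriangleleft$ combined with a compactness argument on $\mathcal{M}$. Only the starting point of the iteration changes: instead of an interior observation, one begins with the boundary observation $\partial_{\nu_\Gamma}u|_{(-T,T)\times\Gamma}$.

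The key new input is a boundary analogue of Corollary~\ref{cor of local quant estimate}. Since the interface $S$ is a compact submanifold without boundary contained in $\mathcal{M}$, it is disjoint from $\partial\mathcal{M}$, so there exists a neighborhood of $\partial\mathcal{M}$ on which $c$ is smooth and $P$ reduces to a standard wave operator with smooth coefficients. Consequently, the local quantitative estimate at the boundary proved in~\cite[Section 5]{Laurent_2018} applies verbatim near any point $x_b\in\Gamma$: there exist $r_b,\rho_b,R_b>0$ and a half-neighborhood $V_b$ of $x_b$ in $\mathcal{M}$ such that
\[
B(x_b,r_b)\cap\mathcal{M}\;\vartriangleleft\;\partial_{\nu_\Gamma}u\big|_{(-T,T)\times\Gamma},
\]
in the obvious adaptation of Definition~\ref{def of triangle strict} to a boundary observation. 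Concretely, this first step replaces the role played by the initial smooth-case foliation in~\cite[proof of Theorem 6.3]{Laurent_2018} that fed information from $\tilde\omega$ into the interior of~$\mathcal{M}$.

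From this boundary seed, the iteration is strictly the same as in the proof of Theorem~\ref{semi global }. Given $y_0\in\mathcal{M}$ and an admissible path $\gamma:[0,1]\to\mathcal{M}$ of length $<T$ with $\gamma(0)\in\Gamma$, $\gamma(1)=y_0$, one follows $\gamma$. On segments where $\gamma$ stays in $\Omega_\pm$, propagation of $\vartriangleleft$ is ensured by~\cite[Theorem 4.7]{Laurent_2018} (which only requires smooth coefficients away from $S$). Each of the finitely many crossings of the interface $S$ is handled by Lemma~\ref{local passage}, at the cost of an arbitrarily small time $\epsilon>0$ per crossing. By transitivity (item~6 of Proposition~\ref{prop 4.5 in ll}), one obtains, for every $y_0\in\mathcal{M}$, a neighborhood $V(y_0)$ and a time $\eta_{y_0}>0$ with
\[
[-\eta_{y_0},\eta_{y_0}]\times V(y_0)\;\vartriangleleft\;\partial_{\nu_\Gamma}u\big|_{(-T,T)\times\Gamma}.
\]
The positivity of $\eta_{y_0}$ is guaranteed by $T>\mathcal{L}(\mathcal{M},\Gamma)$ upon taking $\epsilon$ small relative to the slack $T-\mathrm{length}(\gamma)$.

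The final step unfolds the definition of $\vartriangleleft$ exactly as in Step~2 of the proof of Theorem~\ref{semi global }. One writes the abstract inequality with appropriate cut-offs in time and in $\Gamma$ on the observation side, uses the boundary version of Lemma~\ref{lemma from ll hypo} to absorb the localization/regularization operators $M^\mu_\mu$ into the trace norm $\norm{\partial_{\nu_\Gamma}u}{L^2((-T,T)\times\Gamma)}{}$, and then combines it with the almost-localization argument that converts $M^{\beta\mu}_\mu\chi_\mu u$ into $u$ modulo a remainder of size $C\mu^{-1}\norm{u}{H^1}{}$. A finite subcover $(V(y_j))_{j\in J}$ of $\mathcal{M}$ produces the uniform time $\eta:=\min_j\eta_{y_j}/2>0$ and yields the claimed inequality. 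I expect the main technical obstacle to be verifying that the specific smoothing/commutation lemmas from~\cite{Laurent_2018} used in the unfolding step extend to boundary observations in the form needed here; this is essentially bookkeeping but requires care, since the trace operator does not commute with the Fourier multiplier $M^\mu_\mu$ in a completely trivial way, and one must work with cut-offs supported in $(-T,T)\times\Gamma$ rather than in an interior set.
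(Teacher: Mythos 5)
Your proposal matches the paper's approach, which is simply stated as a remark before the theorem: since $S$ is disjoint from $\partial\mathcal{M}$, the coefficients are smooth near the boundary, the boundary version of the local quantitative estimate from~\cite{Laurent_2018} applies there, and the rest of the iteration and the unfolding/compactness argument are identical to Steps 1 and 2 of Theorem~\ref{semi global }. The one point you flag as a possible obstacle (commuting the trace with $M^\mu_\mu$) is actually harmless because $M^\mu_\mu$ acts only in the time variable, which is tangential to $\partial\mathcal{M}$, so the relevant lemmas from~\cite{Laurent_2018} transfer directly.
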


\bigskip

One can now combine the two preceding theorems with classical energy estimates for solutions of the wave equation that allow to relate $\norm{u}{H^1}{}$ and $\norm{u}{L^2}{}$ with the energy of its initial data $(u,\partial_t u)_{|t=0}$ to obtain the following slightly more general version of Theorem~\ref{theorem quant}: 

\begin{thm}
\label{initial data}
Let $(\mathcal{M},g)$ be a smooth compact connected $n$-dimensional Riemannian manifold with (or without) boundary and $S$ an $(n-1)$-dimensional smooth submanifold of $\mathcal{M}$. We write $\mathcal{M}\backslash S= \Omega_{-}\cup \Omega_{+}$. Consider 
$P$ as defined in \eqref{definition of P }. For any nonempty open subset $\omega$ of $\mathcal{M}\backslash S$ and any $T>\mathcal{L}(\mathcal{M},\omega)$, there exist $C, \kappa, \mu_0$ such that for any $(u_0,u_1) \in H^1_0( \mathcal{M}) \times L^2(\mathcal{M})$, $f \in L^2((-T,T)\times \mathcal{M})$ and $u$ solving
\begin{equation}
\label{system with data}
    \begin{cases}
    Pu=f & \textnormal{in} \:(-T,T) \times \Omega_{-}\cup\Omega_{+}\\
    u_{|S_-}=u_{|S_+} & \textnormal{in}\: (-T,T)\times S \\
    (c\partial_\nu u)_{|S_-}=(c \partial_\nu u)_{|S_+} & \textnormal{in}\: (-T,T)\times S \\
    u=0 & \textnormal{in}\: (-T,T)\times \partial\mathcal{M} \\
    \left(u,\partial_t u \right)_{|t=0}=(u_0,u_1) & \textnormal{in} \: \mathcal{M},
    \end{cases}
\end{equation}
one has, for any $\mu\geq \mu_0$,

$$
\norm{(u_0,u_1)}{L^2\times H^{-1}}{}\leq Ce^{\kappa \mu}\left( \norm{u}{L^2((-T,T)\times \omega)}{}+\norm{f}{L^2(-T,T)\times \mathcal{M}}{} \right)+\frac{C}{\mu}\norm{(u_0,u_1)}{H^1\times L^2}{}.
$$
If moreover $\mathcal{M}\neq \emptyset$ and $\Gamma$ is a non empty open subset of $\partial \mathcal{M}$, for any $T> \mathcal{L}(\mathcal{M},\Gamma)$, there exist $C, \kappa, \mu_0>0$ such that for any $(u_0,u_1) \in H^1_0( \mathcal{M}) \times L^2(\mathcal{M})$, $f \in L^2((-T,T)\times \mathcal{M})$ and $u$ solving \eqref{system with data}, we have
$$
\norm{(u_0,u_1)}{L^2\times H^{-1}}{}\leq Ce^{\kappa \mu}\left( \norm{\partial_{\nu_{\Gamma}}u}{L^2((-T,T) \times \Gamma)}{}+\norm{f}{L^2(-T,T)\times \mathcal{M}}{} \right)+\frac{C}{\mu}\norm{(u_0,u_1)}{H^1\times L^2}{}.
$$
\end{thm}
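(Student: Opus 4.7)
The plan is to derive Theorem~\ref{initial data} from the semi-global estimates Theorem~\ref{semi global} (interior observation) and Theorem~\ref{boundary observation} (boundary observation) by combining them with classical energy estimates for the wave equation with transmission conditions, which translate space-time norms of $u$ into norms of the Cauchy data $(u_0,u_1)$. These energy estimates apply verbatim in our setting because, as recorded in Section~\ref{general trans cond }, the spatial operator $A:=-\textnormal{div}_g(c\nabla_g \cdot)$ equipped with \eqref{trans cond general 1}--\eqref{trans cond general 2} is self-adjoint and positive on $L^2(\mathcal{M})$; it therefore generates a standard scale of energy spaces $\mathcal{H}^s:=\mathcal{D}(A^{s/2})\times\mathcal{D}(A^{(s-1)/2})$, with in particular $\mathcal{H}^1=H^1_0(\mathcal{M})\times L^2(\mathcal{M})$ and $\mathcal{H}^0=L^2(\mathcal{M})\times H^{-1}(\mathcal{M})$.

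For the forward direction, well-posedness of~\eqref{system with data} in $\mathcal{H}^1$ with source $f\in L^2((-T,T)\times\mathcal{M})$ gives the usual estimate
\begin{equation*}
\sup_{t\in (-T,T)}\norm{(u(t),\partial_t u(t))}{H^1\times L^2}{} \;\leq\; C\bigl(\norm{(u_0,u_1)}{H^1\times L^2}{}+\norm{f}{L^2((-T,T)\times\mathcal{M})}{}\bigr),
\end{equation*}
and integrating in $t$ converts the last term on the right-hand side of Theorem~\ref{semi global}/\ref{boundary observation}, namely $\norm{u}{H^1((-T,T)\times\mathcal{M})}{}$, into the last term of Theorem~\ref{initial data}.

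For the reverse direction I will prove the bound
$$
\norm{(u_0,u_1)}{L^2\times H^{-1}}{}\;\leq\; C\bigl(\norm{u}{L^2((-\eta,\eta)\times\mathcal{M})}{}+\norm{f}{L^2((-T,T)\times\mathcal{M})}{}\bigr)
$$
by the usual transposition argument. Given $g\in C^\infty_0((-\eta,\eta)\times\textnormal{Int}\,\mathcal{M})$, solve the adjoint wave $Pw=g$ on $(0,T)\times\mathcal{M}$ with Dirichlet boundary condition, the transmission conditions \eqref{trans cond general 1}--\eqref{trans cond general 2}, and terminal data $(w,\partial_t w)|_{t=T}=(0,0)$; well-posedness in $\mathcal{H}^1$ gives $\norm{(w(0),\partial_t w(0))}{H^1\times L^2}{}\leq C\norm{g}{L^2}{}$. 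Pairing $Pu=f$ with $w$ and $Pw=g$ with $u$ and integrating by parts, the interface contributions at $\Sigma$ cancel exactly because $u$ and $w$ satisfy the \emph{same} transmission conditions, the boundary contribution at $\partial\mathcal{M}$ vanishes by the Dirichlet condition, and the terminal contribution at $t=T$ vanishes by construction of $w$, leaving the identity
$$
\int_{0}^{T}\!\!\int_\mathcal{M}u\,g\,dx\,dt=\int_{0}^{T}\!\!\int_\mathcal{M}f\,w\,dx\,dt+\langle u_1,w(0)\rangle_{H^{-1},H^1_0}-\int_\mathcal{M}u_0\,\partial_t w(0)\,dx.
$$
Taking the supremum over $g$ with $\norm{g}{L^2}{}\leq 1$ controls the $L^2\times H^{-1}$ norm of $(u_0,u_1)$; an entirely symmetric argument on $(-T,0)$ handles the time interval $(-\eta,0)$.

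Finally, injecting this reverse estimate into Theorem~\ref{semi global} (respectively Theorem~\ref{boundary observation}) and bounding the residual $\frac{C}{\mu}\norm{u}{H^1}{}$ by $\frac{C}{\mu}\bigl(\norm{(u_0,u_1)}{H^1\times L^2}{}+\norm{f}{L^2}{}\bigr)$ via the forward estimate yields Theorem~\ref{initial data}, after absorbing the extra $C\norm{f}{L^2}{}$ and $\frac{C}{\mu}\norm{f}{L^2}{}$ contributions into the $Ce^{\kappa\mu}\norm{f}{L^2}{}$ term using $1\leq e^{\kappa\mu}$. The only delicate point I expect is verifying that the transposition identity remains clean across the interface; this is guaranteed by the compatibility of the transmission conditions imposed on both $u$ and $w$, which makes $A$ self-adjoint and causes the would-be interface integrals of the form $\int_{\Sigma}\bigl(c\partial_\nu w\,[u]-c\partial_\nu u\,[w]\bigr)\,dS$ to vanish identically.
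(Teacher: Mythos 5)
Your two-step structure — a forward energy estimate converting $\norm{u}{H^1((-T,T)\times\mathcal{M})}{}$ into $\norm{(u_0,u_1)}{H^1\times L^2}{}$ plus the source, and a reverse estimate converting $\norm{u}{L^2((-\eta,\eta)\times\mathcal{M})}{}$ into $\norm{(u_0,u_1)}{L^2\times H^{-1}}{}$ plus the source, before plugging into Theorems~\ref{semi global } and~\ref{boundary observation} — is indeed what the paper's one-line proof alludes to, and your transposition identity is valid: the interface terms cancel precisely because $u$ and $w$ obey the same transmission conditions, and the self-adjointness of $-\textnormal{div}_g(c\nabla_g\cdot)$ on its domain makes the spatial integration by parts clean. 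The forward direction is fine as stated.

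There is, however, a genuine gap in the concluding step of the reverse bound. Your identity controls $\langle u_1,w(0)\rangle-\langle u_0,\partial_t w(0)\rangle$ only for those pairs $(w(0),\partial_t w(0))$ that lie in the \emph{range} of the map $g\mapsto(w(0),\partial_t w(0))$ (with $w$ solved backward from zero terminal data at $t=T$), and the a priori bound $\norm{(w(0),\partial_t w(0))}{H^1\times L^2}{}\le C\norm{g}{L^2}{}$ goes in the wrong direction for what you need. Writing ``taking the supremum over $g$ with $\norm{g}{L^2}{}\le 1$ controls the $L^2\times H^{-1}$ norm of $(u_0,u_1)$'' tacitly assumes this map is \emph{onto} $H^1_0(\mathcal{M})\times L^2(\mathcal{M})$ with a uniformly bounded right inverse — i.e.\ exact controllability of the adjoint wave with a distributed control on $(-\eta,\eta)\times\mathcal{M}$ — which is not proved. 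It is fortunately elementary here because the control acts on all of $\mathcal{M}$: given $(a_0,a_1)\in H^1_0\times L^2$, solve $P\tilde w=0$ with the same boundary and transmission conditions and $(\tilde w,\partial_t\tilde w)|_{t=0}=(a_0,a_1)$; pick $\chi\in C^\infty(\R)$ with $\chi\equiv1$ near $t=0$ and $\chi\equiv0$ for $t\ge\eta/2$; set $w:=\chi\tilde w$ and $g:=Pw=\chi''\tilde w+2\chi'\partial_t\tilde w$. Then $g$ is supported in $(0,\eta)\times\mathcal{M}$, $(w,\partial_t w)|_{t=0}=(a_0,a_1)$, $(w,\partial_t w)|_{t=T}=(0,0)$, and conservation of the $H^1\times L^2$ energy of $\tilde w$ gives $\norm{g}{L^2}{}\lesssim\norm{(a_0,a_1)}{H^1\times L^2}{}$ and $\norm{w}{L^2}{}\lesssim\norm{(a_0,a_1)}{H^1\times L^2}{}$. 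Substituting this particular $g$ into your identity produces the reverse estimate in operator-norm form, and the rest of your argument then goes through.
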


\appendix

\section{A few facts on pseudodifferential calculus}

\label{a few facts on pseudo}

We collect here some facts and notations concerning the symbolic calculus which is an essential ingredient for the proof of the Carleman estimate. We follow here the exposition of~\cite{le2013carleman} and~\cite[Chapter~2]{rousseau2022elliptic}.

\subsection{Differential operators}
\label{differential operators}
For $(t,x) \in \R_t \times \R^n_x$ we write $(t,x)=(t,x^\prime,x_n)$. The variable $x_n$ is normal to the interface.  We use as well the notation $D_{x_j}:=\frac{1}{i} \partial_{x_j} $.

We denote by $\mathcal{D}^m_\tau$ the set of differential operators depending on $\tau$, that is operators of the form
$$
P(t,x,D_t,D_x,\tau)=\sum_{j+|\alpha| \leq m}a_{j,\alpha}(t,x)\tau^j D_{t,x}^\alpha.
$$
Their principal symbols are defined as
$$
\sigma(P)=\sum_{j+|\alpha| = m}a_{j,\alpha}(t,x)\tau^j (\xi_t \xi)^\alpha.
$$

The set of tangential operators depending on the large parameter $\tau$ is denoted by $\mathcal{D}^m_{\top, \tau}$ and contains operators of the form
$$
P(t,x,D_t,D_{x^\prime},\tau)=\sum_{j+|\alpha| \leq m}a_{j,\alpha}(t,x)\tau^j (D_t D_{x^\prime})^\alpha
$$
with principal symbols defined as
$$
\sigma(P)=\sum_{j+|\alpha| = m}a_{j,\alpha}(t,x)\tau^j (\xi_t \xi^\prime)^\alpha.
$$

\subsection{Standard tangential classes}
\label{standard pseudo }

For $m \in \R$ we define the class \textit{tangential} symbols $\mathcal{S}^m$ as the smooth functions on $\R^{n+1}\times \R^n$ such that for all $(\alpha, \beta) \in \mathbb{N}^{n+1} \times \mathbb{N}^n$,
$$
\sup_{( t,x,\xi_t, \xip)} (1+|\xip|^2+ |\xi_t|^2)^{\frac{-m+ |\beta|}{2}}|(\partial^{\alpha}_{t,x} \partial^{\beta}_{\xi_t,\xip})a( t,x,\xi_t, \xip)| < \infty.
$$
We shall mainly work with the \textit{Weyl quantization} which associates to $a \in \mathcal{S}^m$ an operator denoted by $\op(a)$ defined by
\begin{align*}
(\op(a)u)(t,x^\prime,x_n)&=\\
&\hspace{-4mm}(2\pi)^{-n}\iint_ {\R^{2n}}e^{i\left((t,x^\prime)-(s,y^\prime)\right) \cdot (\xi_t,\xip)}a\left(\frac{t+s}{2},\frac{x^\prime+y^\prime}{2},x_n,\xip\right)u(y^\prime,s,x_n)dsdy^\prime d\xip d\xi_t.    
\end{align*}

These integrals may not be defined in the classical sense (Lebesgue integration). They are however well defined as \textit{oscillatory integrals}. 

We denote by $\Psi^{m}$ the set of these pseudodifferential operators. We define as well
$$
\mathcal{S}^{-\infty}:=\bigcap_{m\in \R} \mathcal{S}^m, \quad \Psi^{-\infty}:=\bigcap_{m\in \R} \Psi^m.
$$
Notice that even though the operators above are tangential we do not use any special notation since all pseudodifferential operators we consider are tangential.

A basic feature of the Weyl quantization is that we have the exact equality:
$$
(\op(a))^*=\op(\Bar{a}),
$$
where we denote by $^*$ the adjoint operator on $L^2$. In particular, operators associated to real valued symbols are (formally) self-adjoint. 

For $a \in \mathcal{S}^m$ we call \textit{principal symbol}, $\sigma(a)$, the equivalence class of $a$ in $\mathcal{S}^m/\mathcal{S}^{m-1}.$

Many times we refer to the \textit{Sobolev regularity} property of pseudodifferential calculus, namely:
$$
\op(a): L^2(\R_{x_n}; H^{s+m}(\R^n_{t,x^\prime}))\rightarrow L^2(\R_{x_n}; H^{s}(\R^n_{t,x^\prime})) \: \textnormal{continuously},
$$
where $a \in \mathcal{S}^m$.

Consider now $a_1 \in \mathcal{S}^{m_1}$ and $a_2 \in \mathcal{S}^{m_2}$. Then there exists a $c\in \mathcal{S}^{m_1+m_2}$ such that we have
$$
\op(a_1)\op(a_2)=\op(c),
$$
and we denote $c:=a_1\sharp a_2$ where $\sharp$ is called the \textit{Moyal product}. One has, for any $N \in\mathbb{N}$ the following asymptotic formula:
\begin{equation}
\label{asympt formula}
(a_1 \sharp a_2 )(t,x,\xi_t,\xi)-\sum_{j<N}\left(i \omega(D_{t,x^\prime},D_{\xi_t,\xip};D_{y^\prime},D_{\eta^\prime})\right)^j a_1(t,x,\xi_t,\xi)a_2(y,\eta)|_{y=(t,x), \eta=(\xi_t,\xi)}  \in \mathcal{S}^{m_1+m_2-N},
\end{equation}

with $\omega(a,b;c,d)=c\cdot b- a\cdot d$. This formula implies:

\begin{enumerate}
    \item $\op(a_1)\op(a_2)=\op(a_1a_2)+\op(r_1),\quad r_1\in \mathcal{S}^{m_1+m_2-1}. $
    
    \item $[\op(a_1),\op(a_2)]=\op\left( \frac{1}{i}\{a_1,a_2\}\right)+\op(r_3),\quad r_3 \in \mathcal{S}^{m_1+m_2-2}.$
\end{enumerate}

\subsection{Tangential classes with a large parameter}

Since we want to show a Carleman estimate which involves a large parameter, the natural class in our context is that of \textit{pseudodifferential operators with a large parameter.} For $\tau\geq 1$ we define 
$$
\lambda^2_\tau= \tau^2+|\xip|^2+|\xi_t|^2,
$$
The class denoted by $\mathcal{S}_{\tau}^m$ contains the functions $a \in C^{\infty}( t,x,\xi_t, \xip, \tau)$ satisfying for all $(\alpha, \beta) \in \mathbb{N}^{n+1} \times \mathbb{N}^n$:
$$
\sup_{\substack{( t,x,\xi_t, \xip) \\ \tau \geq 1}} \lambda_\tau^{-m+ |\beta|}(\partial^{\alpha}_{t,x} \partial^{\beta}_{\xi_t,\xip})a( t,x,\xi_t, \xip, \tau)| < \infty.
$$
We set $\Psi^m_\tau:=\{ \op(a), \: a\in \mathcal{S}^m_\tau\}$ and
$$
\mathcal{S}^{-\infty}_\tau:=\bigcap_{m\in \R} \mathcal{S}^m_\tau, \quad \Psi^{-\infty}_\tau:=\bigcap_{m\in \R} \Psi^m_\tau.
$$

We denote by $(\cdot, \cdot)$ the inner product on $\lrn$ defined by $(f,g)=\int_{\R^{n+1}} f \Bar{g}$ and by $(\cdot, \cdot)_{\pm}$ its restriction on $L^2(\R^{n+1}_{\pm})$.

We introduce the following Sobolev norms, defined in the tangential variables:
$$
\normsurf{u(x_n,\cdot)}{H^s}{}=\normsurf{\op(\lambda^s) u(x_n, \cdot)}{L^2(\R^{n})}{}, \quad \normsurf{u(x_n,\cdot)}{H^s_{\tau}}{}=\normsurf{\op(\lambda^s_{\tau}) u(x_n, \cdot)}{L^2(\R^{n})}{}.
$$
The above norms define the (usual) Sobolev space 
$H^s$ and the Sobolev space including a large parameter $H^s_\tau.$ We use many times that for $s=1$ one has the equivalence
$$
\norm{\cdot}{H^1_\tau}{2}\sim \tau^2 \norm{\cdot}{L^2}{2}+\norm{\nabla \cdot}{L^2}{2}.
$$

All the properties listed in Section \ref{standard pseudo } in the classical case remain valid in the context of the large parameter. In particular we have the \textit{Sobolev regularity} property, for $a\in \mathcal{S}_m^\tau$: 

$$
\op(a): L^2(\R_{x_n}; H^{s+m}_\tau(\R^n_{t,x^\prime}))\rightarrow L^2(\R_{x_n}; H^{s}_\tau(\R^n_{t,x^\prime})),
$$
continuously and uniformly in $\tau\geq 1$. This yields that for $a\in \mathcal{S}^m_\tau$ we have 
$$
\left| \left( \op(a)u,u\right) \right|\lesssim \norm{u}{L^2(\R;H^{m/2}_\tau)}{2}.
$$

Many times in the article we absorb error terms by taking  \textit{ $\tau$ sufficiently large}. By that we invoke the following property: If $m^\prime>m$ then 
$$
\norm{\cdot}{H^m_\tau}{}\leq C \tau^{-(m^\prime-m)}\norm{\cdot}{H^{m^\prime}_\tau}{},
$$
for $\tau\geq 1$.

The main tool to transfer some positivity properties of the symbol to some estimate for the corresponding operator is \textit{Gårding's inequality}. We shall use it in the context of operators involving a large parameter. For a proof we refer to \cite[Chapter 2]{rousseau2022elliptic}. There the standard quantization is used, the same proof works however for the Weyl quantization.

\begin{lem}[\textbf{Gårding's inequality with a large parameter}]
\label{Garding with a large}
Consider $a\in \mathcal{S}^m_\tau$ with principal symbol $a_m$. Suppose that there exist $C>0$ and $R>0$ such that 
$$
\operatorname{Re}a_m( t,x,\xi_t, \xip,\tau)\geq C \lambda^m_{\tau}, \quad x\in \R^{n+1}, \: (\xi_t,\xip)\in \R^n, \: \tau \geq 1, \: |(\xi_t,\xip,\tau)|\geq R,
$$
then there exist $C^\prime$ and $\tau_0$ such that
$$
\operatorname{Re}\left(\op(a)u,u\right)\geq C^\prime \norm{u}{L^2(\R; H^{m/2}_\tau)}{2},
$$
for $u \in \mathscr{S}(\R^{n+1})$ and $\tau \geq \tau_0$.
\end{lem}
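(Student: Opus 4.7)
The proof follows Hörmander's classical square-root trick, adapted to the Weyl quantization with a large parameter. I first note two elementary reductions. Since the Weyl quantization satisfies $\op(a)^* = \op(\bar a)$ \emph{exactly}, one has $\operatorname{Re}(\op(a)u,u) = (\op(\operatorname{Re} a)u,u)$, so I may replace $a$ by $\operatorname{Re} a$ and assume from the outset that $a$ is real-valued, the hypothesis on the principal symbol being preserved. Moreover, as soon as $\tau \geq R$, the condition $|(\xi_t,\xip,\tau)| \geq R$ holds automatically, so the ellipticity $a_m \geq C\lambda_\tau^m$ is valid on \emph{all} of phase space; this is the specific feature of the large-parameter calculus that makes the proof much simpler than in the classical (non-parametric) Gårding inequality.

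I then set
$$
b(t,x,\xi_t,\xip,\tau) := \bigl(a_m - \tfrac{C}{2}\lambda_\tau^m\bigr)^{1/2},
$$
which defines an element of $\mathcal{S}_\tau^{m/2}$ for $\tau \geq R$ because its square is bounded below by $\tfrac{C}{2}\lambda_\tau^m > 0$ uniformly. Since $b$ is real, $\op(b)^* = \op(b)$, and by the asymptotic formula~\eqref{asympt formula} for the Moyal product we have $\op(b)\op(b) = \op(b^2) + \op(r)$ with $r \in \mathcal{S}_\tau^{m-1}$ (in fact $\mathcal{S}_\tau^{m-2}$, since the first Weyl corrector vanishes when the two factors coincide). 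Using that $\lambda_\tau^{m/2}$ is a tangential Fourier multiplier, so that $(\op(\lambda_\tau^m)u,u) = \norm{u}{L^2(\R;H^{m/2}_\tau)}{2}$ with equality, and substituting $b^2 = a_m - \tfrac{C}{2}\lambda_\tau^m$, I obtain
$$
(\op(a_m)u,u) = \norm{\op(b)u}{L^2}{2} + \tfrac{C}{2}\norm{u}{L^2(\R;H^{m/2}_\tau)}{2} - (\op(r)u,u).
$$

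The last step consists in absorbing lower-order contributions. Writing $a = a_m + (a-a_m)$ with $a-a_m \in \mathcal{S}_\tau^{m-1}$ and invoking the Sobolev regularity of the pseudodifferential calculus, both $|(\op(a-a_m)u,u)|$ and $|(\op(r)u,u)|$ are bounded by a constant times $\norm{u}{L^2(\R;H^{(m-1)/2}_\tau)}{2}$, which is in turn $\lesssim \tau^{-1}\norm{u}{L^2(\R;H^{m/2}_\tau)}{2}$ thanks to the large-parameter scale. These error terms are therefore absorbed by the main contribution $\tfrac{C}{2}\norm{u}{L^2(\R;H^{m/2}_\tau)}{2}$ as soon as $\tau \geq \tau_0$ is taken sufficiently large, yielding the lemma with, for instance, $C' = C/4$. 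The only step that requires genuine care is verifying that $b = (a_m - \tfrac{C}{2}\lambda_\tau^m)^{1/2}$ really lies in $\mathcal{S}_\tau^{m/2}$: one observes that the ratio $b^2/\lambda_\tau^m$ is smooth and uniformly bounded above and below away from zero, that differentiation in $(t,x)$ preserves these bounds, and that each differentiation in $(\xi_t,\xip)$ produces the factor $\lambda_\tau^{-1}$ built into the symbol class. Everything else is a mechanical application of the symbolic calculus on the large-parameter scale.
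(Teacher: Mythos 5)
Your proof is correct and is the standard Friedrichs/Hörmander square-root argument, which is precisely what the paper has in mind when it refers the reader to \cite{rousseau2022elliptic} and remarks that the proof carries over to the Weyl quantization. Your exploitation of the Weyl identity $\op(a)^*=\op(\bar a)$ to reduce immediately to real $a$, and the observation that the large parameter upgrades the hypothesis to global ellipticity once $\tau\geq R$, is exactly the simplification the paper's remark alludes to. The only small point worth stating explicitly is that $b=(a_m-\tfrac{C}{2}\lambda_\tau^m)^{1/2}$ is a priori only defined for $\tau\geq R$; since the conclusion is stated for $\tau\geq\tau_0$ one can harmlessly take $\tau_0\geq R$ (or extend $b$ smoothly for small $\tau$), but this should be said rather than left implicit. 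Your remark that the first Weyl corrector in $b\sharp b$ vanishes, yielding $r\in\mathcal{S}_\tau^{m-2}$ rather than merely $\mathcal{S}_\tau^{m-1}$, is a nice observation, though $\mathcal{S}_\tau^{m-1}$ already suffices for the absorption. Otherwise this matches the intended argument.
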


Before stating the last lemma of this section let us recall a definition.

\begin{definition}
The \textbf{essential support} of a symbol $\mathcal{S}^m_\tau$, denoted by $\textnormal{essupp}(a)$ is the complement of the largest open set of $\R^{n+1}_{t,x}\times\R^n_{\xi_t,\xip}\times \{\tau \geq 1\}$ where the estimates for $\mathcal{S}^{-\infty}_\tau$ hold. More precisely, a point $(t_0,x_0,s_0, \xip_0) \in \R^{n+1}\times (\R^n \backslash \{0\})$ \textbf{does not} lie in the essential support of $a$ if there exists a neighborhood $\mathcal{U}$ of $(t_0, x_0)$ and a conic neighborhood $\mathcal{V}$ of $(s_0, \xip_0)$ such that for all $m \in \R$ and all $(\alpha, \beta) \in \mathbb{N}^{n+1} \times \mathbb{N}^n$ one has
$$
\sup_{\substack{( t,x) \in \mathcal{U}, (\xi_t, \xip) \in \mathcal{V} \\ \tau \geq 1}} \lambda_\tau^{-m+ |\beta|}(\partial^{\alpha}_{t,x} \partial^{\beta}_{\xi_t,\xip})a( t,x,\xi_t, \xip, \tau)| < \infty.
$$

\end{definition}

Although the natural classes for us to work with are $\mathcal{S}^m_\tau$ involving the large parameter $\tau$, for technical reasons we also have to deal with symbols in $\mathcal{S}^m$. Since $\mathcal{S}^m \not \subset \mathcal{S}^m_\tau$ one has to make sure that the chosen objects belong to the appropriate spaces. The following Lemma  \cite[Lemma A.4]{le2013carleman} will be then very useful:

\begin{lem}
\label{lemma with different classes}
Let $m, m^\prime \in \R$, $a_1( t,x,\xi_t, \xip) \in \mathcal{S}^{m}$ and $a_2( t,x,\xi_t, \xip,\tau)\in \mathcal{S}^{m^\prime}_\tau$ such that the essential support of $a_2$ is contained in a region where $|(\xi_t,\xip)|\gtrsim \tau$. Then
$$
\op(a_1)\op(a_2)=\op(b_1), \quad \op(a_2)\op(a_1)=\op(b_2),
$$
with $b_1,b_2 \in \mathcal{S}_\tau ^{m+m^\prime}$. Moreover one has the same asymptotic formula as~\eqref{asympt formula} with the remainder in $\mathcal{S}^{m+m^\prime-N}_\tau$. 
\end{lem}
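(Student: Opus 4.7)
The plan is to exploit the following key observation: on the essential support of $a_2$, we have $|(\xi_t,\xip)| \gtrsim \tau$, so $\lambda^2 = 1+|\xi_t|^2+|\xip|^2$ and $\lambda_\tau^2 = \tau^2+|\xi_t|^2+|\xip|^2$ are comparable. Consequently, the $\mathcal{S}^m$ estimates on $a_1$ translate into $\mathcal{S}^m_\tau$ estimates there. The argument will therefore reduce the lemma to the standard Moyal product calculus inside the class $\mathcal{S}^*_\tau$, modulo a negligible (in the $\tau$-class sense) term.

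First, I would introduce a microlocal cutoff. Pick a function $\tilde{\chi} = \tilde{\chi}(\xi_t,\xip,\tau)$, homogeneous of degree zero in $(\xi_t,\xip,\tau)$ outside a compact set, with $\tilde{\chi}\equiv 1$ on a conic neighborhood of $\operatorname{essupp}(a_2)$ and $\supp \tilde{\chi}\subset \{|(\xi_t,\xip)|\geq c\tau\}$ for some $c>0$. Such $\tilde{\chi}$ belongs to $\mathcal{S}^0_\tau$. Decompose
\begin{equation*}
a_1 = \tilde{\chi}\, a_1 + (1-\tilde{\chi})\, a_1 =: a_1^{\sharp} + a_1^{\flat}.
\end{equation*}
On $\supp\tilde{\chi}$ one has $\lambda\sim\lambda_\tau$, so by Leibniz
\begin{equation*}
|\partial^\alpha_{t,x}\partial^\beta_{\xi_t,\xip}(\tilde{\chi}\,a_1)|\lesssim \sum_{\beta_1+\beta_2=\beta}\lambda_\tau^{-|\beta_1|}\lambda^{m-|\beta_2|}\lesssim \lambda_\tau^{m-|\beta|},
\end{equation*}
which shows $a_1^{\sharp}\in\mathcal{S}^m_\tau$.

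Next, since $a_1^{\sharp}\in\mathcal{S}^m_\tau$ and $a_2\in\mathcal{S}^{m'}_\tau$, the standard Moyal product in the $\tau$-class (which is established exactly as in the classical case by applying stationary phase to the oscillatory integral defining $\op(a_1^{\sharp})\op(a_2)$, with the large parameter $\tau$ playing no special role in that argument) yields
\begin{equation*}
\op(a_1^{\sharp})\op(a_2)=\op(a_1^{\sharp}\sharp a_2),\qquad a_1^{\sharp}\sharp a_2\in\mathcal{S}^{m+m'}_\tau,
\end{equation*}
together with the asymptotic expansion \eqref{asympt formula}, whose $N$-th remainder lies in $\mathcal{S}^{m+m'-N}_\tau$. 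This handles the main piece.

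The remaining piece $\op(a_1^{\flat})\op(a_2)$ is the real obstacle, and is where the essential-support hypothesis is used. By construction $\supp(1-\tilde{\chi})\cap \operatorname{essupp}(a_2)=\emptyset$, and more quantitatively $(1-\tilde{\chi})$ vanishes on a conic neighborhood of $\operatorname{essupp}(a_2)$, with the separation uniform in $\tau$ because $\tilde{\chi}$ is homogeneous. I would then write $\op(a_1^{\flat})\op(a_2)$ as an oscillatory integral and perform repeated non-stationary phase integrations by parts using the vector field
\begin{equation*}
L=\frac{1}{|\eta-\xi|^2+|y-x|^2}\bigl((\eta-\xi)\cdot D_y+(y-x)\cdot D_\eta\bigr),
\end{equation*}
after localizing in the $\xi$ and $\eta$ variables to the (disjoint) essential supports of $a_1^{\flat}$ and $a_2$. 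Each integration by parts gains a factor of $\lambda_\tau^{-1}$ uniformly, because on the relevant region either $|\eta-\xi|\gtrsim \lambda_\tau$ or $a_2$'s decay in its own arguments controls the derivatives. This shows $a_1^{\flat}\sharp a_2\in\mathcal{S}^{-\infty}_\tau$, so the operator is absorbed into the remainder of the asymptotic expansion; thus $b_1:=a_1^\sharp\sharp a_2 + a_1^\flat\sharp a_2 \in \mathcal{S}^{m+m'}_\tau$ satisfies $\op(a_1)\op(a_2)=\op(b_1)$ together with the stated expansion. The opposite composition $\op(a_2)\op(a_1)$ is handled by the same decomposition after reversing the roles, or equivalently by taking formal adjoints. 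The main obstacle, as noted, is the uniform-in-$\tau$ non-stationary phase estimate for $a_1^{\flat}\sharp a_2$; everything else is a straightforward application of calculus in $\mathcal{S}^*_\tau$.
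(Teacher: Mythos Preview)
The paper does not actually prove this lemma: it is stated in the appendix with an explicit citation to \cite[Lemma~A.4]{le2013carleman}, so there is no ``paper's own proof'' to compare against. Your sketch follows the natural route and is essentially the standard argument one finds for this kind of statement: cut off $a_1$ to the region $\{|(\xi_t,\xip)|\gtrsim\tau\}$ where $\lambda\sim\lambda_\tau$, so that the truncated symbol lies in $\mathcal{S}^m_\tau$ and the Moyal calculus in the $\tau$-classes applies directly; then show that the leftover piece, whose support is disjoint from $\operatorname{essupp}(a_2)$, contributes a $\Psi^{-\infty}_\tau$ operator.

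The one place where your write-up is genuinely thin is the treatment of $a_1^{\flat}\sharp a_2$. The heuristic is right, but your integration-by-parts sketch with the vector field $L$ is vague about why each integration gains a full power of $\lambda_\tau^{-1}$ \emph{uniformly in $\tau$}; this is exactly the point where the proof requires care, since $a_1^{\flat}$ does not belong to any $\mathcal{S}^k_\tau$ class. A cleaner way to handle this piece is to observe that every term of the formal Moyal expansion of $a_1^{\flat}\sharp a_2$ is a finite sum of products $(\partial^{\gamma}a_1^{\flat})(\partial^{\delta}a_2)$ evaluated on the diagonal; on $\supp a_1^{\flat}$ one has $\lambda\lesssim\tau\sim\lambda_\tau$ and $a_2\in\mathcal{S}^{-\infty}_\tau$ (by definition of essential support), so each such product is $O(\lambda_\tau^{-M})$ for every $M$, while off $\supp a_1^{\flat}$ it vanishes. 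The integral remainder after $N$ terms is then controlled by the same mechanism. With that clarification your argument is complete and matches what one would expect from the cited reference.
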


\section{Some lemmata used in the quantitative estimates}
In this Section we collect some estimates coming essentially from $\cite{Laurent_2018}$ concerning the regularization and localization operators that we introduced for the quantitative estimates. 

\bigskip

\noindent We define for a function $f \in L^\infty(\R^{n+1})$ and $\lambda>0$:
$$
f_\lambda(t,x):=e^{-\frac{|D_t|^2}{\lambda}}f=\left(\frac{\lambda}{4 \pi}  \right)^{\frac{1}{2}}\int_{\R} f(s,x)e^{-\frac{\lambda}{4}|t-s|^2}ds.
$$
We use many times the fact that
$$
\norm{f_\lambda}{L^2}{}\leq \norm{e^{- \frac{|\cdot|^2}{\lambda}}}{L^\infty(\R_t)}{}\norm{\mathcal{F}_t(f)(\xi_t,x)}{L^2}{}=\norm{f}{L^2}{}.
$$
Notice also that we have 
$$
f \geq 0 \Longrightarrow f_{\lambda}\geq 0,
$$
and consequently
\begin{equation}
\label{property croissance}
f\geq g \Longrightarrow f_\lambda \geq g_\lambda.    
\end{equation}

\bigskip

We now recall several Lemmas from \cite{Laurent_2018} that we use in the main part of this article.

\begin{lem}[Lemma 2.3 in \cite{Laurent_2018}]
\label{lemma 2.3 from ll}
For any $d>0$, there exist $C,c >0$ such that for any $f_1, f_2 \in L^{\infty}(\R^{n+1})$ such that $\textnormal{dist}(\supp{f_1},\supp{f_2})\geq d$ and all $\lambda\geq 0$, we have
$$
\norm{f_{1,\lambda}f_2}{L^{\infty}}{}\leq C e^{-c \lambda}\norm{f_1}{L^\infty}{} \norm{f_2}{L^\infty}{}, \quad \norm{f_{1, \lambda}f_{2,\lambda}}{L^\infty}{} \leq C e^{-c \lambda}\norm{f_1}{L^\infty}{} \norm{f_2}{L^\infty}{}.
$$

\end{lem}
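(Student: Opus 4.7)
The plan is to exploit the fact that the regularization $f \mapsto f_\lambda$ is a pure Gaussian convolution in the time variable only, so it preserves the $x$-slice structure. Writing $f_{1,\lambda}(t,x) = (\lambda/4\pi)^{1/2}\int_\R f_1(s,x) e^{-\lambda(t-s)^2/4}\,ds$, the key point is that for fixed $x$, any $s$ with $(s,x)\in \supp f_1$ satisfies $|t-s|=\mathrm{dist}((t,x),(s,x))\geq \mathrm{dist}((t,x),\supp f_1)$, since the two points share the same $x$-coordinate. This reduces the problem to a one-dimensional Gaussian tail estimate.

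For the first inequality I would pick $(t,x)\in \supp f_2$, so that the above observation combined with $\mathrm{dist}(\supp f_1,\supp f_2)\geq d$ gives $|t-s|\geq d$ for all $s$ in the $x$-slice of $\supp f_1$. Hence
\[
|f_{1,\lambda}(t,x)|\leq \|f_1\|_{L^\infty}\Bigl(\tfrac{\lambda}{4\pi}\Bigr)^{1/2}\int_{|u|\geq d}e^{-\lambda u^2/4}\,du.
\]
Using the elementary bound $u^2/4\geq d^2/8+u^2/8$ for $|u|\geq d$, the right-hand side is at most $\sqrt{2}\,\|f_1\|_{L^\infty}e^{-\lambda d^2/8}$. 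Multiplying by $f_2$ (which is supported in the set where this bound was proved) yields the first inequality with $c=d^2/8$ and $C=\sqrt{2}$.

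For the second inequality there is one mild subtlety: $f_{2,\lambda}$ is typically supported everywhere, so one cannot simply restrict to $\supp f_2$. My plan is to split $\R^{n+1}=A\cup A^c$ with $A=\{(t,x):\mathrm{dist}((t,x),\supp f_1)\leq d/2\}$. On $A$ the triangle inequality gives $\mathrm{dist}((t,x),\supp f_2)\geq d/2$, so the argument of the first step, applied to $f_2$ in place of $f_1$ with $d$ replaced by $d/2$, gives $|f_{2,\lambda}(t,x)|\leq C e^{-\lambda d^2/32}\|f_2\|_{L^\infty}$; one then bounds $|f_{1,\lambda}|$ trivially by $\|f_1\|_{L^\infty}$. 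On $A^c$ we have $\mathrm{dist}((t,x),\supp f_1)>d/2$, so the symmetric argument bounds $|f_{1,\lambda}(t,x)|$ exponentially, while $\|f_{2,\lambda}\|_{L^\infty}\leq \|f_2\|_{L^\infty}$. Combining both cases gives the required bound.

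There is no genuine obstacle here: the proof is a straightforward one-dimensional Gaussian tail computation combined with a triangle-inequality dichotomy. The only point that requires a moment's care is remembering that the regularization is tangential (acts only in $t$), so the relevant distance for the kernel estimate is the distance in the time slice at fixed $x$, which is automatically bounded below by the full Euclidean distance between the supports.
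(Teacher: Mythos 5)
Your proof is correct. The paper does not actually prove this lemma---it imports it verbatim from \cite{Laurent_2018} (Lemma 2.3 there)---so there is no in-paper argument to compare against, but the reasoning you give is exactly the standard one-dimensional Gaussian tail estimate that underlies the cited result.

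Both parts check out: for the first inequality, your observation that the regularization acts only in $t$ so that the relevant distance in the kernel is $|t-s|$ at fixed $x$, bounded below by $\textnormal{dist}(\supp f_1,\supp f_2)\geq d$, is the whole point, and the bound $u^2/4\geq d^2/8+u^2/8$ on $|u|\geq d$ correctly extracts $e^{-\lambda d^2/8}$ with the normalization constant $\sqrt{2}$. For the second inequality, your dichotomy $A=\{\textnormal{dist}(\cdot,\supp f_1)\leq d/2\}$ is clean and correct: on $A$ one is at distance $\geq d/2$ from $\supp f_2$, so $f_{2,\lambda}$ is exponentially small while $f_{1,\lambda}$ is bounded by $\norm{f_1}{L^\infty}{}$; on $A^c$ the roles are swapped. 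The constant $c=d^2/32$ (the weaker of the two) then serves both estimates. An alternative, which is closer to what Laurent--L\'eautaud actually do, is to write the product $f_{1,\lambda}f_{2,\lambda}$ as a double integral in $(s_1,s_2)$, use $(t-s_1)^2+(t-s_2)^2\geq \tfrac12(s_1-s_2)^2\geq d^2/2$, and split off $e^{-\lambda d^2/16}$ before integrating the remaining Gaussians; this avoids the case split but yields a comparable constant. Either route is fine, and your version is, if anything, slightly more elementary.
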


\begin{lem}[Lemma 2.4 in \cite{Laurent_2018}]
\label{lemma 2.4 from ll}
Let $f_2 \in C^{\infty}(\R^{n+1})$ with all derivatives bounded, and $d>0$. Then for every $k \in \mathbb{N}$, there exist $C,c >0$ such that for all $f_1 \in H^k(\R^{n+1})$ such that $\textnormal{dist}(\supp{f_1},\supp{f_2})\geq d$ and all $\lambda \geq 0$ we have 
$$
\norm{f_{1, \lambda}f_2}{H^k}{} \leq Ce^{- c\lambda}\norm{f_1}{H^k}{}.
$$

\end{lem}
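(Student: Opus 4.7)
The key point is that the Gaussian smoothing operator $e^{-|D_t|^2/\lambda}$ is a convolution in the $t$-variable only with the kernel $K_\lambda(t) = (\lambda/4\pi)^{1/2} e^{-\lambda t^2/4}$, and that the distance hypothesis constrains precisely the $t$-separation between points of $\supp f_1$ and $\supp f_2$ lying on the same fibre $\{x = \text{const}\}$. I would first treat the case $k=0$ and then bootstrap to general $k$ via Leibniz combined with the fact that the regularization commutes with all partial derivatives.

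For $k=0$, I would write
\[
(f_{1,\lambda}f_2)(t,x) = f_2(t,x)\int_\R K_\lambda(t-s)f_1(s,x)\,ds.
\]
For $(t,x)\in\supp f_2$ and $(s,x)$ such that $f_1(s,x)\neq 0$, the hypothesis $\mathrm{dist}(\supp f_1,\supp f_2)\geq d$ forces $|t-s|\geq d$, since the two points share the same $x$-coordinate. This allows the elementary splitting
\[
e^{-\lambda|t-s|^2/4} \leq e^{-\lambda d^2/8}\, e^{-\lambda|t-s|^2/8} \quad \text{for } |t-s|\geq d,
\]
so that pointwise on $\supp f_2$ one has $|f_{1,\lambda}(t,x)|\leq e^{-\lambda d^2/8}(\widetilde K_\lambda * |f_1(\cdot,x)|)(t)$, where $\widetilde K_\lambda(t) = (\lambda/4\pi)^{1/2}e^{-\lambda t^2/8}$. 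Since $\|\widetilde K_\lambda\|_{L^1(\R)}=\sqrt{2}$ uniformly in $\lambda$, Young's inequality in $t$ followed by integration in $x$ yields
\[
\|f_{1,\lambda}f_2\|_{L^2(\R^{n+1})}\leq \sqrt{2}\,\|f_2\|_{L^\infty}\, e^{-\lambda d^2/8}\|f_1\|_{L^2},
\]
which is the desired estimate for $k=0$ with $c=d^2/8$.

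For $k\geq 1$, I would differentiate the product using Leibniz:
\[
\partial^\alpha(f_{1,\lambda}f_2)=\sum_{\beta\leq\alpha}\binom{\alpha}{\beta}\,\partial^\beta f_{1,\lambda}\,\partial^{\alpha-\beta}f_2.
\]
Since $e^{-|D_t|^2/\lambda}$ is a Fourier multiplier in $t$ alone, it commutes with every $\partial^\beta_{t,x}$, so $\partial^\beta f_{1,\lambda}=(\partial^\beta f_1)_\lambda$, and by the definition of distributional derivative $\supp(\partial^\beta f_1)\subset \supp f_1$, so the distance hypothesis is inherited. The hypothesis on $f_2$ gives $\|\partial^{\alpha-\beta} f_2\|_{L^\infty}\leq C_{\alpha,\beta}$. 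Applying the $k=0$ estimate already established to each pair $(\partial^\beta f_1,\partial^{\alpha-\beta}f_2)$ and summing over $\beta\leq\alpha$, $|\alpha|\leq k$ produces
\[
\|f_{1,\lambda}f_2\|_{H^k}\leq C_k\,e^{-\lambda d^2/8}\|f_1\|_{H^k},
\]
as claimed.

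I do not expect a serious obstacle: the argument is purely a Gaussian tail estimate plus Leibniz, and the structural facts needed (convolution form of the regularization, commutation with derivatives, preservation of support under distributional differentiation) are elementary. The only mildly delicate point is to notice that the distance condition in $\R^{n+1}$ directly controls $|t-s|$ on fibres of constant $x$, which is exactly what is needed because the smoothing acts only in $t$; if one tried to smooth in all variables, an analogous but slightly more involved bound with $e^{-c\lambda d^2}$ would still follow from a multidimensional Gaussian splitting.
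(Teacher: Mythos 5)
Your proof is correct. The paper does not reprove this lemma --- it quotes Lemma 2.4 of \cite{Laurent_2018} --- and the argument there is the same Gaussian tail estimate. The two essential observations are both handled correctly: since the smoothing acts only in $t$, points of $\supp f_1$ and $\supp f_2$ on a common $x$-fibre are separated by at least $d$ in $t$, so a factor $e^{-\lambda d^2/8}$ can be split off while leaving a kernel $\widetilde K_\lambda$ with $\|\widetilde K_\lambda\|_{L^1(\R)}=\sqrt{2}$ uniformly in $\lambda$; and the $H^k$ case reduces to $k=0$ because $e^{-|D_t|^2/\lambda}$ is a constant-coefficient Fourier multiplier in $t$, hence commutes with all $\partial^\beta_{t,x}$, and distributional differentiation does not enlarge supports, so the distance hypothesis is inherited by each pair $(\partial^\beta f_1,\partial^{\alpha-\beta}f_2)$ in the Leibniz expansion. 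The Young's-inequality formulation you give is a clean way to package the $k=0$ bound.
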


\begin{lem}
\label{lemma 2.5 from ll}
Let $\psi: \R^{n+1} \rightarrow \R$ be a Lipschitz continuous function, $f_1 \in C^{\infty}(\R^{n+1})$ with bounded derivatives and $f_2 \in C^{\infty}_0(\R^{n+1})$ such that $\textnormal{dist}(\supp{f_1(\psi), \supp{f_2}})>0$. Then, for $k \in \{0,1\}$ there exist $C,c >0$ such that for all $\lambda>0$, we have
$$
\norm{f_{1,\lambda}(\psi)f_2}{H^k \rightarrow H^k}{}\leq Ce^{-c \lambda}.
$$
\end{lem}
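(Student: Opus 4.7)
The plan is to unfold the regularization $(\cdot)_\lambda=e^{-|D_t|^2/\lambda}$ and reduce everything to the off-support decay estimate of Lemma~\ref{lemma 2.3 from ll}. Interpreting $f_1(\psi)$ as the composition $f_1\circ\psi$ (with $f_1$ smooth having bounded derivatives), the composition $g:=f_1\circ\psi$ is Lipschitz continuous, hence in $L^\infty$, and $\supp g\subset\psi^{-1}(\supp f_1)$. By hypothesis this set lies at positive distance from $\supp f_2$, so Lemma~\ref{lemma 2.3 from ll} applies to the pair $(g,f_2)$.

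For $k=0$, the $L^2\to L^2$ operator norm of pointwise multiplication by a function $h$ equals $\|h\|_{L^\infty}$. Taking $h=g_\lambda f_2$, Lemma~\ref{lemma 2.3 from ll} gives directly
\[
\|g_\lambda f_2\|_{L^\infty}\leq Ce^{-c\lambda}\|g\|_{L^\infty}\|f_2\|_{L^\infty},
\]
which is the desired bound.

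For $k=1$, writing again $h=g_\lambda f_2$, the $H^1\to H^1$ operator norm of multiplication by $h$ is controlled by $\|h\|_{L^\infty}+\|\nabla h\|_{L^\infty}$ via the Leibniz rule $\nabla(hu)=u\,\nabla h+h\,\nabla u$. Since $e^{-|D_t|^2/\lambda}$ is a Fourier multiplier acting in the $t$ variable only, it commutes with $\partial_t$ and trivially with each $\partial_{x_i}$, so
\[
\nabla h = (\nabla g)_\lambda\, f_2 + g_\lambda\, \nabla f_2.
\]
Each component of $\nabla g$ lies in $L^\infty$ with support contained in $\supp g$, so Lemma~\ref{lemma 2.3 from ll} applied componentwise with $f_2$ controls the first summand by $Ce^{-c\lambda}$. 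For the second summand Lemma~\ref{lemma 2.3 from ll} applies to $(g,\partial_i f_2)$: the supports remain separated since $\supp \partial_i f_2\subset\supp f_2$.

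The only delicate point is the chain rule at the $L^\infty$ level: even though $\psi$ is merely Lipschitz, the distributional gradient $\nabla(f_1\circ\psi)=f_1'(\psi)\,\nabla\psi$ (almost everywhere, by Rademacher's theorem) is a well-defined $L^\infty$ function supported in $\psi^{-1}(\supp f_1')\subset \supp g$. Once this is granted, the argument is just a two-term Leibniz expansion followed by applications of Lemma~\ref{lemma 2.3 from ll}, with no further analytic input.
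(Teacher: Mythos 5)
The issue is the interpretation of the object $f_{1,\lambda}(\psi)$. You read it as $(f_1\circ\psi)_\lambda$, i.e.\ the $t$-regularization of the pullback, in which case the lemma does indeed collapse to a direct application of Lemma~\ref{lemma 2.3 from ll} plus a Leibniz step, exactly as you write. But the intended object is $(f_{1,\lambda})\circ\psi$: here $f_1$ is a function of a \emph{single} real variable (the ``$\R^{n+1}$'' in the statement should read $\R$), $f_{1,\lambda}$ denotes its one-dimensional Gaussian regularization, and the pullback by $\psi$ is taken \emph{afterwards}. This is the only reading consistent with how the lemma is used in Section~\ref{proof of local quant estimate} --- it is applied to $1-\chi_{\delta,\lambda}(\psi)$, where $\chi_{\delta,\lambda}$ is the Gaussian mollification of the one-variable cutoff $\chi_\delta:\R\to\R$ --- and with Lemma~\ref{lemma 2.13 from ll}, whose stated bound is precisely what one gets by composing a one-dimensional regularization with $\psi$. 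Under your reading the lemma carries no content beyond Lemma~\ref{lemma 2.3 from ll}, which is why it would not have been stated separately.

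Under the correct reading one cannot invoke Lemma~\ref{lemma 2.3 from ll}. The relevant estimate is the one-dimensional Gaussian tail bound: from $f_{1,\lambda}(s)=(\lambda/4\pi)^{1/2}\int_\R f_1(r)e^{-\lambda(s-r)^2/4}\,dr$, one has $|f_{1,\lambda}(s)|\lesssim e^{-c\lambda}\|f_1\|_{L^\infty}$ whenever $\textnormal{dist}(s,\supp f_1)$ is bounded below. One then has to verify, from the hypothesis $\textnormal{dist}(\supp(f_1\circ\psi),\supp f_2)>0$, that $\psi(t,x)$ stays uniformly away from $\supp f_1$ for $(t,x)\in\supp f_2$ --- this translation from a distance in $\R^{n+1}$ to a distance in $\R$ is a real step that your proof never faces. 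For $k=1$ the chain rule is $\nabla(f_{1,\lambda}\circ\psi)=(f_{1,\lambda})'(\psi)\,\nabla\psi$ with $(f_{1,\lambda})'=(f_1')_\lambda$, so one applies the same one-dimensional tail estimate to $f_1'$; the observation you make about Rademacher and $\nabla\psi\in L^\infty$ a.e.\ is exactly the point the paper flags and is on target. The overall architecture (Leibniz, chain rule, off-support decay) is right, but as written your proof establishes a strictly easier statement than the one claimed.
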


Lemma \ref{lemma 2.5 from ll} is essentially Lemma 2.5 from \cite{Laurent_2018}. In its statement the Lemma requires for $\psi$ to be smooth. However, since we only need to control derivatives of order at most one Lipschitz regularity is sufficient.

\begin{lem}[Lemma 2.6 in \cite{Laurent_2018}]
\label{lemma 2.6 from ll}
Let $f_1, f_2 \in C^{\infty}_0(\R^{n+1})$ such that $f_1=1$ in a neighborhood of $\supp{f_2}$. Then there exist $C,c >0 $ such that for all $\lambda>0$, and all $u \in H^1(\R^{n+1})$, we have 
$$
\norm{f_{2,\lambda}\partial^\alpha u}{L^2}{} \leq C \norm{f_{1, \lambda}u}{H^1}{}+Ce^{-c \lambda}\norm{u}{H^1}{}, \quad \textnormal{for all} \: |\alpha|\leq 1.
$$

\end{lem}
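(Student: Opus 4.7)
The goal is to absorb all the mismatch between the ``bare'' derivative $\partial^\alpha u$ on the left and the ``dressed'' $f_{1,\lambda}u$ on the right into the exponentially small remainder. The conceptual engine is the pointwise $L^\infty$ identity
$$
f_{2,\lambda} = f_{2,\lambda}\, f_{1,\lambda} + O_{L^\infty}(e^{-c\lambda}),
$$
which I would establish first and then exploit via a Leibniz commutation.

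\textbf{Step 1 (key identity).} To prove $\norm{f_{2,\lambda}(1 - f_{1,\lambda})}{L^\infty}{} \leq Ce^{-c\lambda}$, pick an auxiliary $\chi \in C^\infty_0(\R^{n+1})$ with $\chi = 1$ on a neighborhood of $\supp f_2$ and $f_1 = 1$ on a neighborhood of $\supp \chi$ (possible by the hypothesis). Split
$$
f_{2,\lambda}(1 - f_{1,\lambda}) = f_{2,\lambda}(1-\chi)(1-f_{1,\lambda}) + f_{2,\lambda}\,\chi\,(1 - f_{1,\lambda}).
$$
Since $\supp f_2$ and $\supp(1-\chi)$ are at positive distance, Lemma~\ref{lemma 2.3 from ll} gives $\norm{f_{2,\lambda}(1-\chi)}{L^\infty}{} \leq Ce^{-c\lambda}$, which handles the first piece (using $\norm{1-f_{1,\lambda}}{L^\infty}{}\leq C$). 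For the second piece, observe that the mollification in $t$ applied to the constant $1$ returns $1$, so $1 - f_{1,\lambda} = (1-f_1)_\lambda$; applying Lemma~\ref{lemma 2.3 from ll} again (now with $\chi$ and $1-f_1$, whose supports are disjoint) yields $\norm{\chi\,(1-f_1)_\lambda}{L^\infty}{} \leq Ce^{-c\lambda}$.

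\textbf{Step 2 (commutation).} From Step 1 we immediately get $\norm{f_{2,\lambda}\partial^\alpha u - f_{2,\lambda}f_{1,\lambda}\partial^\alpha u}{L^2}{} \leq Ce^{-c\lambda}\norm{u}{H^1}{}$, which already gives the lemma for $|\alpha|=0$. For $|\alpha|=1$ apply Leibniz: $f_{1,\lambda}\partial^\alpha u = \partial^\alpha(f_{1,\lambda}u) - (\partial^\alpha f_{1,\lambda})u$. The first term, multiplied by $f_{2,\lambda}$ and taken in $L^2$, is bounded by $\norm{f_{2,\lambda}}{L^\infty}{}\norm{f_{1,\lambda}u}{H^1}{}\leq C\norm{f_{1,\lambda}u}{H^1}{}$. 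For the second, since the Gaussian mollification in $t$ commutes with every $\partial^\alpha$ (whether $\partial^\alpha = \partial_t$ or $\partial^\alpha = \partial_{x_j}$), we have $\partial^\alpha f_{1,\lambda} = (\partial^\alpha f_1)_\lambda$; because $f_1 \equiv 1$ on a neighborhood of $\supp f_2$, $\supp(\partial^\alpha f_1)$ is at positive distance from $\supp f_2$, so Lemma~\ref{lemma 2.3 from ll} gives $\norm{f_{2,\lambda}(\partial^\alpha f_1)_\lambda}{L^\infty}{} \leq Ce^{-c\lambda}$, producing a remainder $Ce^{-c\lambda}\norm{u}{L^2}{}\leq Ce^{-c\lambda}\norm{u}{H^1}{}$. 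Summing the three contributions closes the estimate.

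\textbf{Main obstacle.} The only nontrivial input is Step~1: although heuristically $f_{1,\lambda} \to 1$ on $\supp f_2$, this is not an equality, and the mollification in $t$ can in principle pick up values of $f_1$ from arbitrarily far away in the $t$-direction, so some care is needed. The intermediate cutoff $\chi$ sitting strictly between the supports of $f_1$ and $f_2$ is the standard device that reduces the problem to two independent applications of the disjoint-support decay of Lemma~\ref{lemma 2.3 from ll}, one for each of the two products $f_{2,\lambda}\cdot\chi$ and $\chi \cdot (1-f_1)_\lambda$. Everything else is bookkeeping.
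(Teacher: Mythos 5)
Your proof is correct. The overall strategy — insert $f_{1,\lambda}$ next to $\partial^\alpha u$ up to an exponentially small error, then commute $\partial^\alpha$ past $f_{1,\lambda}$ and kill the commutator by disjoint‐support decay — is exactly the right one, and the paper itself only cites Laurent--Léautaud for this statement without reproving it. One remark on Step 1: the intermediate cutoff $\chi$ is an unnecessary detour. Since $1-f_{1,\lambda}=(1-f_1)_\lambda$ and $f_1\equiv 1$ on a neighborhood of $\supp f_2$, the sets $\supp f_2$ and $\supp(1-f_1)$ are already at positive distance, and Lemma~\ref{lemma 2.3 from ll} is stated for arbitrary $L^\infty$ functions (compactness of the supports is not required, only positive distance), so the second estimate of that lemma applied directly to the pair $(f_2,\,1-f_1)$ gives $\norm{f_{2,\lambda}(1-f_1)_\lambda}{L^\infty}{}\leq Ce^{-c\lambda}$ in one stroke. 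Your worry that the $t$-mollification ``picks up values of $f_1$ from arbitrarily far away'' is precisely what the exponential decay in Lemma~\ref{lemma 2.3 from ll} already quantifies; the extra cutoff does not change the conclusion, it only lengthens the bookkeeping. Steps 2 and the $|\alpha|=0,1$ cases, including the observation that the $t$-mollifier commutes with both $\partial_t$ and $\partial_{x_j}$, are handled correctly.
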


\noindent We recall that the operators $M^\mu_\lambda$ have been defined in Section~\ref{some definitions for the local quant estimate}.

\begin{lem}[Lemma 2.10 in \cite{Laurent_2018}]
\label{lemma 2.10 from ll}
Let $f_1$ and $f_2$ be in $C^\infty$ bounded as well as their derivatives with $\textnormal{dist}(\supp{f_1},\supp{f_2})\geq d>0$. Then for any $k \in \mathbb{N}$, there exist $C,c >0$ such that for all $\mu>0$ and $\lambda>0$, we have
$$
\norm{f_{1,\lambda}M^\mu_\lambda f_{2,\lambda}}{H^k\rightarrow H^k}{}\leq C^{-c \frac{\mu^2}{\lambda}}+Ce^{-c \lambda}, \quad \norm{f_{1,\lambda}M^\mu_\lambda f_{2}}{H^k\rightarrow H^k}{}\leq C^{-c \frac{\mu^2}{\lambda}}+Ce^{-c \lambda}
$$
\end{lem}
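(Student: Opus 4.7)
The plan is to analyze $T := f_{1,\lambda}M^\mu_\lambda f_{2,\lambda}$ via its explicit integral kernel in the time variable. Since $M^\mu_\lambda = m_\lambda(D_t/\mu)$ is a Fourier multiplier in $t$ alone, it commutes with multiplication by functions of $x$ and with every spatial derivative. Using $m_\lambda = k_\lambda \ast m$ with $k_\lambda(\eta)=(\lambda/4\pi)^{1/2}e^{-\lambda\eta^2/4}$, a direct Fourier calculation gives that the $t$-kernel of $M^\mu_\lambda$ is
\begin{equation*}
K^\mu_\lambda(\tau) \;=\; \mu\, e^{-\mu^2\tau^2/\lambda}\,\check m(\mu\tau),
\end{equation*}
so that $Tu(t,x) = f_{1,\lambda}(t,x)\int K^\mu_\lambda(t-s)f_{2,\lambda}(s,x)u(s,x)\,ds$. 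Since $\check m \in \mathscr{S}(\R)$, one has $|K^\mu_\lambda(\tau)| \leq C_N \mu(1+\mu|\tau|)^{-N}e^{-\mu^2\tau^2/\lambda}$ and $\|K^\mu_\lambda\|_{L^1(\R)}\leq C$ uniformly, while standard Gaussian tail bounds on $f_{j,\lambda} = k_\lambda \ast f_j$ (in the $t$-variable) give $|f_{j,\lambda}(t,x)|\leq Ce^{-c\lambda r^2}$ whenever $\mathrm{dist}(t,\supp_t f_j(\cdot,x))\geq r$.

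First I treat $k=0$. By Fubini in $x$ the operator norm satisfies $\|T\|_{L^2\to L^2} \leq \sup_x\|T_x\|_{L^2_t\to L^2_t}$, where $T_x$ has kernel $K(t,s;x) := f_{1,\lambda}(t,x)K^\mu_\lambda(t-s)f_{2,\lambda}(s,x)$. For fixed $x$, if the $t$-slice of one of the two supports is empty then $T_x \equiv 0$; otherwise the slices $A(x):=\supp_t f_1(\cdot,x)$ and $B(x):=\supp_t f_2(\cdot,x)$ are $d$-separated in $\R_t$ (because for any $(s_1,x)\in\supp f_1,(s_2,x)\in\supp f_2$, the ambient separation gives $|s_1-s_2|\geq d$). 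I apply Schur's test by a three-region splitting of $\sup_t\int|K(t,s;x)|\,ds$: (a) if $\mathrm{dist}(t,A(x))\geq d/4$, the first Gaussian tail yields $|f_{1,\lambda}(t,x)|\leq Ce^{-c\lambda}$ and the remaining integral is controlled by $\|K^\mu_\lambda\|_{L^1}$; (b) if $t$ is within $d/4$ of $A(x)$ and $s$ within $d/4$ of $B(x)$, then $|t-s|\geq d/2$, and the elementary inequality $e^{-\mu^2\tau^2/\lambda}\leq e^{-\mu^2 d^2/(8\lambda)}e^{-\mu^2\tau^2/(2\lambda)}$ on $\{|\tau|\geq d/2\}$ together with the substitution $u=\mu\tau$ gives $\int_{|\tau|\geq d/2}|K^\mu_\lambda(\tau)|\,d\tau \leq Ce^{-c\mu^2/\lambda}$; (c) if $t$ is near $A(x)$ but $\mathrm{dist}(s,B(x))\geq d/4$, the second Gaussian tail gives $|f_{2,\lambda}(s,x)|\leq Ce^{-c\lambda}$ and the $K^\mu_\lambda$ factor is again integrable. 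The symmetric bound for $\sup_s\int|K|\,dt$ is identical, and Schur's lemma yields $\|T_x\|_{L^2_t\to L^2_t}\leq C(e^{-c\mu^2/\lambda}+e^{-c\lambda})$ uniformly in $x$.

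For general $k\in\mathbb{N}$ I use that $M^\mu_\lambda$ commutes with every $\partial^\alpha_{t,x}$, and that $\partial^\beta f_{j,\lambda} = (\partial^\beta f_j)_\lambda$, where each $\partial^\beta f_j$ remains in $C^\infty$ with bounded derivatives and with support contained in $\supp f_j$. Leibniz's rule gives
\begin{equation*}
\partial^\alpha(Tu) \;=\; \sum_{\beta+\gamma+\delta=\alpha} c_{\beta\gamma\delta}\,(\partial^\beta f_1)_\lambda\, M^\mu_\lambda\,(\partial^\gamma f_2)_\lambda\,\partial^\delta u,
\end{equation*}
so that each summand is of the form covered by the $k=0$ case (the separation hypothesis being preserved for the derivatives). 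Combining with $\|\partial^\delta u\|_{L^2}\leq\|u\|_{H^k}$ and summing over $|\alpha|\leq k$ produces the first inequality. The second inequality, with $f_2$ replacing $f_{2,\lambda}$, follows by exactly the same argument; if anything it is simpler, since region (c) of the Schur splitting then vanishes identically, $f_2$ being exactly supported in $\supp f_2$.

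The main technical obstacle is the Schur estimate of region (b): one must ensure that the polynomial prefactor produced by the change of variables $u=\mu\tau$ in the integral $\int|K^\mu_\lambda|$ is absorbed either by slightly decreasing the constant $c$ in the exponential $e^{-c\mu^2/\lambda}$ (in the regime $\mu^2/\lambda$ large) or by the competing decay $e^{-c\lambda}$ emerging from regions (a) and (c) (in the regime $\mu^2/\lambda$ bounded). This dichotomy is exactly why the stated bound contains the sum of both exponentials.
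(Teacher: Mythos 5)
The paper does not contain a proof of this lemma: it is quoted verbatim as Lemma 2.10 of Laurent--L\'eautaud and restated in the appendix without argument, so there is no internal proof to compare against. Your kernel-level argument is nevertheless correct and self-contained, and it is worth recording why.

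The crucial observation that the $t$-kernel of $M^\mu_\lambda = m_\lambda(D_t/\mu)$ factors as $C\,\mu\, e^{-\mu^2\tau^2/\lambda}\,\check m(\mu\tau)$ --- a Gaussian in $\mu\tau/\sqrt\lambda$ times a Schwartz factor in $\mu\tau$ --- is correct, and so is the reduction by Fubini to a slice-by-slice Schur test on $L^2(\R_t)$. The Gaussian tail bound $|f_{j,\lambda}(t,x)|\leq Ce^{-c\lambda r^2}$ whenever $\operatorname{dist}(t,\operatorname{supp}_t f_j(\cdot,x))\geq r$, and the fact that the $t$-slices $A(x)$, $B(x)$ inherit the $d$-separation from the ambient supports (since $A(x)\times\{x\}\subset\supp f_1$ and similarly for $f_2$), are both correctly justified. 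The three-region splitting covers $\R_t\times\R_s$ and each region contributes either $e^{-c\lambda}$ (Gaussian tail of $f_{1,\lambda}$ or $f_{2,\lambda}$ combined with $\|K^\mu_\lambda\|_{L^1}\leq C$) or $e^{-c\mu^2/\lambda}$ (the far-diagonal estimate on $K^\mu_\lambda$), giving the claimed bound. The passage to $H^k$ via Leibniz, using $[M^\mu_\lambda,\partial^\alpha]=0$, $\partial^\beta f_{j,\lambda}=(\partial^\beta f_j)_\lambda$, and $\supp\partial^\beta f_j\subset\supp f_j$, is also correct, as is the observation that region (c) vanishes identically when $f_2$ replaces $f_{2,\lambda}$.

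One small remark: your closing concern about a polynomial prefactor from the substitution $u=\mu\tau$ is unfounded. After the substitution one has $\int_{|u|\geq\mu d/2}(1+|u|)^{-N}e^{-u^2/\lambda}\,du$, and bounding the Gaussian pointwise on $\{|u|\geq\mu d/2\}$ leaves $\int(1+|u|)^{-N}\,du<\infty$ with no $\mu$- or $\lambda$-dependence (any $N\geq 2$ suffices). So region (b) gives a clean $Ce^{-c\mu^2/\lambda}$ directly, and no dichotomy between the two exponential regimes is needed at that particular step.
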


\begin{lem}[Lemma 2.11 in \cite{Laurent_2018}]
\label{lemma 2.11 from ll}
Let $f \in C^\infty_0(\R^{n+1})$. Then there exist $C, c>0$ such that for all $\mu>0$, $\lambda>0$ and $u  \in H^1(\R^{n+1})$, one has
$$
\norm{f_\lambda M^\mu_\lambda u}{H^1}{} \leq \norm{f_\lambda M^{2\mu}_\lambda u }{H^1}{}+C\left(e^{-c \frac{\mu^2}{\lambda}}+e^{-c \lambda}\right)\norm{u}{H^1}{}.
$$
\end{lem}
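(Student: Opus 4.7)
The proof rests on the elementary algebraic identity $m(\xi_t/\mu)(1-m(\xi_t/(2\mu))) \equiv 0$. Indeed, $m(\xi_t/\mu) \neq 0$ forces $|\xi_t|<\mu$, hence $|\xi_t/(2\mu)|<1/2<3/4$, on which set $m \equiv 1$. After regularization this identity degrades only by Gaussian tails: on the region where $1-m_\lambda(\xi_t/(2\mu))$ is not itself $O(e^{-c\lambda})$ (namely, where $|\xi_t/(2\mu)|$ is close to or beyond $3/4$, i.e.\ $|\xi_t| \gtrsim 3\mu/2$), the argument $\xi_t/\mu$ of the other factor lies at distance $\gtrsim 1/2$ from the support of $m$, so $m_\lambda(\xi_t/\mu)$ is exponentially small in $\lambda$. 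Therefore the multiplier $m_\lambda(\xi_t/\mu)(1-m_\lambda(\xi_t/(2\mu)))$ is uniformly $O(e^{-c\lambda})$ in $L^\infty(\R)$, which provides the factorization $M^\mu_\lambda = M^\mu_\lambda M^{2\mu}_\lambda + R$ with $\|R\|_{H^k \to H^k} \leq Ce^{-c\lambda}$.

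Using this, I decompose
\begin{equation*}
f_\lambda M^\mu_\lambda u = f_\lambda M^\mu_\lambda M^{2\mu}_\lambda u + f_\lambda R u,
\end{equation*}
so that $\|f_\lambda R u\|_{H^1} \leq Ce^{-c\lambda}\|u\|_{H^1}$. Next I commute the Fourier multipliers (which commute with each other) and commute $M^\mu_\lambda$ through the multiplication operator $f_\lambda$:
\begin{equation*}
f_\lambda M^\mu_\lambda M^{2\mu}_\lambda u = M^\mu_\lambda\bigl(f_\lambda M^{2\mu}_\lambda u\bigr) + [f_\lambda, M^\mu_\lambda] M^{2\mu}_\lambda u.
\end{equation*}
Since $0 \leq m \leq 1$, one has $0 \leq m_\lambda \leq 1$ and thus $\|M^\mu_\lambda\|_{H^1 \to H^1} \leq 1$; this yields $\|M^\mu_\lambda(f_\lambda M^{2\mu}_\lambda u)\|_{H^1} \leq \|f_\lambda M^{2\mu}_\lambda u\|_{H^1}$, which is the main right-hand side term with \emph{no} multiplicative constant lost. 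This tight bound, without any $C$ in front, is what makes the lemma useful in the iteration arguments.

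The main obstacle is therefore controlling the commutator $\|[f_\lambda, M^\mu_\lambda] M^{2\mu}_\lambda u\|_{H^1}$ by $C(e^{-c\mu^2/\lambda}+e^{-c\lambda})\|u\|_{H^1}$. To this end, pick cutoffs $h, \chi \in C^\infty_0(\R^{n+1})$ with $h \equiv 1$ on a neighborhood of $\supp f$ and $\chi \equiv 1$ on a neighborhood of $\supp h$. Applying Lemma~\ref{lemma 2.3 from ll} to the disjoint supports of $f$ and $1-h$, one sees $\|f_\lambda(1-h_\lambda)\|_{L^\infty} \leq Ce^{-c\lambda}$; this allows to replace $f_\lambda$ by $h_\lambda f_\lambda$ throughout the commutator at the cost of an $O(e^{-c\lambda})$ operator-norm error. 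The resulting expression is then split via $1 = \chi + (1-\chi)$. The piece acting on $(1-\chi) M^{2\mu}_\lambda u$ is the composition $h_\lambda f_\lambda M^\mu_\lambda(1-\chi) - h_\lambda M^\mu_\lambda f_\lambda(1-\chi)$; both terms are controlled by Lemma~\ref{lemma 2.10 from ll} applied to the disjoint-support pairs $(h, 1-\chi)$ and $(f, 1-\chi)$, yielding precisely the norm bound $C(e^{-c\mu^2/\lambda}+e^{-c\lambda})$. The piece acting on $\chi M^{2\mu}_\lambda u$ is reduced to the same configuration by a dual-sided cutoff argument: writing $\chi M^{2\mu}_\lambda u = \chi M^{2\mu}_\lambda u$ and exploiting, via a further cutoff $\tilde\chi$ equal to $1$ on $\supp \chi$ and disjoint from the far range where convolution kernels can reach, one reassembles a configuration to which Lemma~\ref{lemma 2.10 from ll} again applies. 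The $e^{-c\mu^2/\lambda}$ contribution is inherent to Lemma~\ref{lemma 2.10 from ll}: it reflects the Gaussian decay of the convolution kernel of $M^\mu_\lambda$ (width $\sqrt{\lambda}/\mu$ in time) across the positive distance separating the supports, while the $e^{-c\lambda}$ arises from the regularization tails of $f_\lambda$.
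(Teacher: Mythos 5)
The factorization $M^\mu_\lambda M^{2\mu}_\lambda = M^\mu_\lambda + O(e^{-c\lambda})$ (Steps 1--2), the commutation (Step 3), and the observation $\|M^\mu_\lambda\|_{H^1\to H^1}\leq 1$ with no lost constant (Step 4) are all correct. The gap is in Step 5: the operator $[f_\lambda,M^\mu_\lambda]M^{2\mu}_\lambda$ does \textbf{not} have $H^1\to H^1$ operator norm $O(e^{-c\mu^2/\lambda}+e^{-c\lambda})$; it is genuinely of size $\gtrsim 1/\mu$, so the triangle-inequality decomposition you set up cannot close.

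To see this concretely, fix $s_0\in(3/4,1)$ with $m'(s_0)\neq 0$, and take $u(t,x)=\psi(t)e^{i s_0\mu t}\varphi(x)$ with $\psi,\varphi$ bumps placed so that $\partial_t f_\lambda$ is bounded away from zero on $\supp u$. Since $m_\lambda(s_0/2)=1+O(e^{-c\lambda})$ and $m_\lambda'(s_0/2)=O(e^{-c\lambda})$, one has $M^{2\mu}_\lambda u=u+O(e^{-c\lambda})$, so the $M^{2\mu}_\lambda$ insertion costs nothing. On the other hand, a first-order Taylor expansion of the kernel of $M^\mu_\lambda$ (using $\int rK(r)e^{-i\xi_0 r}\,dr = \tfrac{i}{\mu}m_\lambda'(s_0)$) gives
\begin{equation*}
[f_\lambda,M^\mu_\lambda]u \;\approx\; \frac{i\,m_\lambda'(s_0)}{\mu}\,\bigl(\partial_t f_\lambda\bigr)\,u + O(\mu^{-2}),
\end{equation*}
and since $m_\lambda'(s_0)\to m'(s_0)\neq 0$, this term has norm comparable to $\|u\|/\mu$ (the $H^1$ normalizations match on both sides because both $u$ and the commutator output sit at time-frequency $\sim\mu$). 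For $\lambda\sim\mu$, the stated error $C(e^{-c\mu^2/\lambda}+e^{-c\lambda})\sim e^{-c'\mu}$ is far smaller than $1/\mu$, so the bound you need on the commutator simply fails.

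The cutoff argument in your Step 5 does not rescue this. The piece where it applies --- $[\,h_\lambda f_\lambda,M^\mu_\lambda\,](1-\chi)M^{2\mu}_\lambda u$ --- is controlled by the disjoint-support pairs $(h,1-\chi)$ and $(f,1-\chi)$ via Lemmas~\ref{lemma 2.3 from ll} and~\ref{lemma 2.10 from ll}, and that part is fine. But the remaining piece $[\,h_\lambda f_\lambda,M^\mu_\lambda\,]\chi M^{2\mu}_\lambda u$ has $h$, $f$ and $\chi$ all overlapping, so there is no disjoint-support pair to which Lemma~\ref{lemma 2.10 from ll} can be applied; the ``dual-sided cutoff'' step you describe is not a reduction to such a configuration, and this is precisely the piece in which the example above lives. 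The underlying issue is that the commutator contribution is produced by small-$|t-s|$ differences of $f_\lambda$ against the kernel of $M^\mu_\lambda$; physical-space cutoffs cannot suppress it, only frequency localization well inside or well outside the transition of $m(\cdot/\mu)$ would --- and $M^{2\mu}_\lambda$ does not supply that.

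More structurally, the triangle inequality $\|f_\lambda M^\mu_\lambda u\|\leq\|M^\mu_\lambda f_\lambda M^{2\mu}_\lambda u\|+\|[f_\lambda,M^\mu_\lambda]M^{2\mu}_\lambda u\|+\cdots$ is lossy in exactly the regime where the commutator is large: there $\|f_\lambda M^{2\mu}_\lambda u\|$ exceeds $\|f_\lambda M^\mu_\lambda u\|$ by a comparable amount, so the estimate is true with slack, but this slack is invisible to the triangle inequality. A correct proof has to exploit the monotonicity $m(\xi_t/\mu)\leq m(\xi_t/(2\mu))$ (hence, after regularization, $m_\lambda(\xi_t/\mu)\leq m_\lambda(\xi_t/(2\mu))+Ce^{-c\lambda}$) at the level of the quadratic form $\|f_\lambda M_\lambda^{\cdot} u\|^2=(M_\lambda^{\cdot}f_\lambda^2 M_\lambda^{\cdot}u,u)$ rather than operator-by-operator; your Steps 1--4 are a good start, but Step 5 as written is not salvageable.
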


\begin{lem}[Lemma 2.13 in \cite{Laurent_2018}]
\label{lemma 2.13 from ll}
There exists $C>0$ such that for all $D  \in \mathbb{R}$, $\tilde{\chi} \in L^\infty(\R) $ such that $\supp{\tilde{\chi}} \subset (-\infty, D],$ for all $\lambda, \tau >0$, we have
$$
\norm{e^{\tau \psi } \tilde{\chi}_\lambda(\psi)}{L^\infty}{}\leq C \norm{\tchi}{L^\infty}{}\langle \lambda \rangle^{1/2}e^{D \tau}e^{\frac{\tau^2}{\lambda}},
$$
for all $\psi \in C^0(\R^{n+1}; \R).$

\end{lem}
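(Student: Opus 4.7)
The plan is to reduce the $L^\infty(\R^{n+1})$ estimate for $e^{\tau\psi}\tilde{\chi}_\lambda(\psi)$ to a one-variable inequality. Indeed, at each point $(t,x)$ the expression depends on $\psi(t,x)$ only through its scalar value, so setting $s=\psi(t,x)\in\R$ it suffices to prove
\[
\sup_{s\in\R}\ e^{\tau s}\,\tilde{\chi}_\lambda(s)\leq C\,\|\tilde{\chi}\|_{L^\infty(\R)}\,\langle\lambda\rangle^{1/2}\,e^{D\tau}\,e^{\tau^2/\lambda}.
\]
The continuity of $\psi$ plays no essential role beyond ensuring measurability; in particular the estimate is really an inequality about the single-variable function $\tilde{\chi}_\lambda$ tested against the exponential weight $e^{\tau s}$.

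Next I would substitute the explicit Gaussian convolution formula
\[
\tilde{\chi}_\lambda(s)=\Bigl(\tfrac{\lambda}{4\pi}\Bigr)^{1/2}\int_{\R}\tilde{\chi}(r)\,e^{-\frac{\lambda}{4}(s-r)^2}\,dr
\]
and restrict the integration to $r\leq D$ using the hypothesis $\supp \tilde{\chi}\subset(-\infty,D]$. The crucial algebraic step is a completion of the square in the combined exponent: writing $\tau s=\tau r+\tau(s-r)$, one obtains the identity
\[
\tau s-\tfrac{\lambda}{4}(s-r)^2=\tau r+\tfrac{\tau^2}{\lambda}-\tfrac{\lambda}{4}\Bigl(s-r-\tfrac{2\tau}{\lambda}\Bigr)^2.
\]
This is the single decisive computation of the proof: it isolates the sharp factor $e^{\tau^2/\lambda}$, which is the standard cost of Gaussian regularization tested against an exponential weight, together with a boundary factor $e^{\tau r}\leq e^{D\tau}$ coming from the support condition, while leaving only a translated Gaussian depending on the new variable $s-r-2\tau/\lambda$.

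After pulling the $r$-independent quantity $\|\tilde{\chi}\|_{L^\infty}e^{D\tau}e^{\tau^2/\lambda}$ outside the integral, the remaining Gaussian integral on $(-\infty,D]$ is bounded by the full integral on $\R$, equal to $2\sqrt{\pi/\lambda}$, which exactly cancels the prefactor $(\lambda/4\pi)^{1/2}$. This yields in fact a bound \emph{without} any $\lambda$-factor, so the stated bound with the harmless $\langle\lambda\rangle^{1/2}$ follows \emph{a fortiori}. There is therefore no substantive obstacle: the proof is a short explicit Gaussian computation, and the only point to be careful about is the algebraic sign and coefficient in the completion of the square, which is precisely what produces the sharp constant $\tau^2/\lambda$ in the exponent and makes the estimate essentially optimal.
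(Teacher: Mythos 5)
Your proof is correct, and it is the standard argument behind Lemma 2.13 of Laurent--Léautaud (the paper here only cites that result without reproducing the proof). The reduction to a one-variable supremum is legitimate because $e^{\tau\psi(t,x)}\tilde{\chi}_\lambda(\psi(t,x))$ depends on $(t,x)$ only through $s=\psi(t,x)\in\R$, the support restriction $r\leq D$ and the bound $e^{\tau r}\leq e^{D\tau}$ are used correctly, and the completion of the square
\[
\tau s-\tfrac{\lambda}{4}(s-r)^2=\tau r+\tfrac{\tau^2}{\lambda}-\tfrac{\lambda}{4}\bigl(s-r-\tfrac{2\tau}{\lambda}\bigr)^2
\]
checks out. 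The remaining Gaussian integral is at most $\int_\R e^{-\frac{\lambda}{4}u^2}\,du=2\sqrt{\pi/\lambda}$, which exactly cancels the normalization $(\lambda/4\pi)^{1/2}$, so you in fact obtain the stronger bound $\|e^{\tau\psi}\tilde{\chi}_\lambda(\psi)\|_{L^\infty}\leq\|\tilde{\chi}\|_{L^\infty}e^{D\tau}e^{\tau^2/\lambda}$, from which the stated estimate (with the extra harmless $\langle\lambda\rangle^{1/2}$) follows a fortiori. No gaps.
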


\begin{lem}[Lemma 2.14 in \cite{Laurent_2018}]
\label{lemma 2.14 from ll}
There exist $C, c$ such that, for any $\epsilon, \tau, \lambda, \mu>0$, for any $k \in \mathbb{N}$, we have:
$$
\norm{e^{- \frac{\epsilon |D_t|^2}{2\tau}}(1-M_\mu^\lambda)}{H^k\rightarrow H^k}{}\leq e^{-\frac{\epsilon \mu^2}{8 \tau}}+Ce^{-c \lambda}.
$$

\end{lem}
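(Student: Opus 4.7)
Since both factors $e^{-\epsilon|D_t|^2/(2\tau)}$ and $1-M^\mu_\lambda$ are Fourier multipliers acting only in the $t$ variable, they commute with any derivative $\partial^\alpha_{t,x}$ with $|\alpha|\leq k$. It therefore suffices to establish the bound for $k=0$, i.e.\ the $L^2\to L^2$ operator norm. By Plancherel, this $L^2$ norm equals the $L^\infty(\mathbb{R}_{\xi_t})$ norm of the symbol
\[
a(\xi_t)=e^{-\frac{\epsilon\xi_t^2}{2\tau}}\bigl(1-m_\lambda(\xi_t/\mu)\bigr),
\]
so the whole problem reduces to a one-dimensional elementary estimate on $a$.

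The plan is to split the analysis of $a$ into two regions. On $\{|\xi_t|\geq \mu/2\}$ we discard $1-m_\lambda(\xi_t/\mu)$, which is bounded (since $m_\lambda$ is bounded, being the Gaussian convolution of a bounded function), and just use the Gaussian decay of the first factor, yielding
\[
|a(\xi_t)|\leq C\,e^{-\frac{\epsilon\xi_t^2}{2\tau}}\leq C\,e^{-\frac{\epsilon\mu^2}{8\tau}}.
\]
On the complementary region $\{|\xi_t|<\mu/2\}$, set $s=\xi_t/\mu$, so $|s|<1/2$. Since $m\equiv 1$ on $[-3/4,3/4]$, $m(s)=1$ there, so
\[
1-m_\lambda(s)=\Bigl(\frac{\lambda}{4\pi}\Bigr)^{1/2}\!\int_{\mathbb{R}}(1-m(r))\,e^{-\frac{\lambda(s-r)^2}{4}}\,dr,
\]
using that the Gaussian integrates to $1$. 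The integrand vanishes for $|r|\leq 3/4$, and for $|s|<1/2$ the range $|r|\geq 3/4$ forces $|s-r|\geq 1/4$; the standard Gaussian-tail estimate then gives
\[
|1-m_\lambda(\xi_t/\mu)|\leq C\int_{1/4}^{\infty}\!\sqrt{\lambda}\,e^{-\lambda u^2/4}\,du\leq C\,e^{-c\lambda},
\]
for some absolute $c>0$. Bounding $e^{-\epsilon\xi_t^2/(2\tau)}\leq 1$ on this region, we obtain $|a(\xi_t)|\leq C\,e^{-c\lambda}$ there.

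Combining the two regions yields $\|a\|_{L^\infty(\mathbb{R})}\leq e^{-\epsilon\mu^2/(8\tau)}+C\,e^{-c\lambda}$, which by Plancherel is the desired $L^2\to L^2$ norm bound, and as noted gives the $H^k\to H^k$ bound for every $k\in\mathbb{N}$. The only non-routine step is the quantitative Gaussian-tail control of $1-m_\lambda$ on the set where $m\equiv 1$, which is what produces the $Ce^{-c\lambda}$ error and is the main (but entirely elementary) ingredient.
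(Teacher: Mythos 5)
The paper does not prove this lemma; it is quoted from Laurent--L\'eautaud~\cite{Laurent_2018}. Your proposal is a correct, self-contained elementary proof of the cited result, and it follows the natural (and almost certainly identical to the reference) route: reduce to the $L^2$ operator norm because everything is a Fourier multiplier in $t$, pass to the supremum of the scalar symbol, and split the frequency line at $|\xi_t|=\mu/2$. On the high-frequency piece the Gaussian factor alone produces $e^{-\epsilon\mu^2/(8\tau)}$; on the low-frequency piece the mollified cutoff is exponentially close to $1$ because the mollifier localizes at scale $\lambda^{-1/2}$ while the distance to $\supp(1-m)$ is at least $1/4$, giving $Ce^{-c\lambda}$.

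One small point to tidy: in the high-frequency region you wrote $|a(\xi_t)|\le C\,e^{-\epsilon\xi_t^2/(2\tau)}$ and then quoted the target bound with no constant on that term. To legitimately drop the constant you should note $0\le m\le 1$ (as is standard for such a cutoff), hence $0\le m_\lambda\le 1$ and $|1-m_\lambda|\le 1$; then the bound is exactly $e^{-\epsilon\xi_t^2/(2\tau)}\le e^{-\epsilon\mu^2/(8\tau)}$ with no prefactor. Also note that the statement as typeset in the paper reads $M_\mu^\lambda$, with the sub- and superscripts swapped relative to the convention $M^\mu_\lambda=m_\lambda(D_t/\mu)$ set up in Section~\ref{some definitions for the local quant estimate}; your reading $M^\mu_\lambda$ is the one consistent with the right-hand side $e^{-\epsilon\mu^2/(8\tau)}+Ce^{-c\lambda}$ and is the intended one.
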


\begin{lem}
\label{lemma 2.16 from ll}
Let $\psi$ be a locally Lipschitz continuous real valued function on $\R_t \times \R^n_x$, which is a quadratic polynomial in $t$, let $R_\sigma>0$, and $\sigma \in C^\infty_0(B(0,R_{\sigma}))$. Let $\chi \in C^\infty_0(\R)$ with $\supp(\chi)\subset (-\infty,1),$ and $\tchi\in C^\infty_0(\R)$ such that $\tchi=1$ on a neighborhood of $(-\infty,3/2)$, $\supp{\tchi}\subset (-\infty,2)$, and set $\chi_\delta(s):=\chi(s/\delta),$ $\tchi_\delta(s):=\tchi(s/\delta)$. Let $f$ be bounded, compactly supported and real analytic in the variable $t$ in a neighborhood of $\overline{B}_{\R_t}(0,R_\sigma)$ and define 
$$
g:=e^{\tau \psi}\chi_{\delta, \lambda}(\psi)\tchi_\delta(\psi)f \sigma_\lambda.
$$
Then one has the following estimate: for all $c, \delta>0$ there exist $c_0, C, N>0$ such that for any $\tau, \mu \geq 1$ and $\frac{\mu}{c}  \leq \lambda \leq c \mu,$ we have
$$
\norm{M_\lambda^{\mu/2}g(1-M_\lambda^\mu)}{L^2\rightarrow L^2}{}\leq C \tau^N e^{\frac{\tau^2}{\lambda}}e^{2 \delta \tau}e^{-c_0 \mu}.
$$
\end{lem}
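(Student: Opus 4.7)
The approach is to exploit the essentially disjoint frequency supports of the Fourier-in-$t$ multipliers $M_\lambda^{\mu/2}$ and $1-M_\lambda^\mu$: the first vanishes up to Gaussian-in-$\lambda$ tails for $|\xi_t| \gtrsim \mu/2$ and the second for $|\xi_t| \lesssim 3\mu/4$, so that the operator $A := M_\lambda^{\mu/2}\, g\, (1-M_\lambda^\mu)$ effectively measures how much multiplication by $g$ can spread time-frequencies by an amount of order $\mu$. Since $\psi$ is a quadratic polynomial in $t$, $\sigma_\lambda$ and $\chi_{\delta,\lambda}$ extend to entire functions of their arguments (as Gaussian convolutions) with Gaussian-in-$\lambda$ growth along imaginary directions, and $f$ is real analytic in $t$ near $\supp\sigma$, the function $g$ admits a holomorphic extension in $t$ with controlled growth; its partial Fourier transform $\widehat g(\zeta, x)$ in $t$ therefore enjoys strong exponential decay in $|\zeta|$.

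First I take the Fourier transform in $t$, which gives
$$\widehat{Au}(\xi, x) = m_\lambda(2\xi/\mu)\, \frac{1}{2\pi} \int \widehat g(\xi - \eta, x)\, (1 - m_\lambda(\eta/\mu))\, \widehat u(\eta, x)\, d\eta,$$
so by Plancherel and Schur's test in $(\xi, \eta)$ at fixed $x$, the $L^2 \to L^2$ norm of $A$ is controlled by the symmetric suprema of the $L^1$-norms of $(\xi, \eta) \mapsto m_\lambda(2\xi/\mu)\, \widehat g(\xi - \eta, x)\, (1 - m_\lambda(\eta/\mu))$. I split the $(\xi, \eta)$-domain into a main region where $|\xi| \leq (1-\epsilon_0)\mu/2$ and $|\eta| \geq (1+\epsilon_0)3\mu/4$, which forces $|\xi - \eta| \geq c\mu$, and a remainder region where one of the cutoffs is already bounded by $C e^{-c\lambda}$ via Gaussian tails of $m_\lambda$ (in the spirit of Lemmas \ref{lemma 2.3 from ll} and \ref{lemma 2.14 from ll}). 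On the remainder, the crude $L^\infty$ bound $\|g\|_{L^\infty} \leq C \lambda^{1/2} e^{\delta \tau} e^{\tau^2/\lambda}$ from Lemma \ref{lemma 2.13 from ll}, combined with $e^{-c\lambda} \leq e^{-c_0 \mu}$ (using $\lambda \sim \mu$), already yields the required bound.

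The core of the proof is the pointwise decay estimate
$$|\widehat g(\zeta, x)| \leq C \tau^N e^{\tau^2/\lambda} e^{2\delta \tau} e^{-c_0 |\zeta|}, \qquad |\zeta| \geq c\mu,$$
which I obtain by combining two ingredients. First, the completing-the-square identity
$$e^{\tau \psi} \chi_{\delta, \lambda}(\psi) = e^{\tau^2/\lambda} \sqrt{\lambda/(4\pi)} \int \chi_\delta(\sigma)\, e^{\tau \sigma}\, e^{-\lambda(\psi - \sigma - 2\tau/\lambda)^2/4}\, d\sigma$$
isolates the factor $e^{\tau^2/\lambda}$ and leaves an integrand that is Gaussian in $\psi$ times $e^{\tau\sigma} \leq e^{\delta \tau}$ on $\supp\chi_\delta$. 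Second, I perform a contour shift $t \mapsto t - i s_0 \, \mathrm{sgn}(\zeta)$ in the integral defining $\widehat g(\zeta, x)$, picking up the gain $e^{-s_0 |\zeta|}$. Since $\psi$ is quadratic in $t$, one has $\mathrm{Re}\, \psi(t - i s_0 \mathrm{sgn}(\zeta), x) = \psi(t, x) + O(s_0^2)$, and on $\supp \tchi_\delta(\psi)$ we have $\psi \leq 2\delta$, giving the factor $e^{2\delta \tau}$ after absorbing the $O(s_0^2)$ correction into the $2\delta$ margin. The entire extensions of $\chi_{\delta, \lambda}$ and $\sigma_\lambda$ to complex $t$ contribute at worst $e^{C \lambda s_0^2}$. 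Choosing $s_0$ a small fixed constant (depending only on the analyticity radius of $f$ in $t$ and on $\delta$), and using $\lambda \sim \mu$ together with $|\zeta| \geq c \mu$, the decay $e^{-s_0 |\zeta|} \leq e^{-s_0 c\, \mu}$ dominates the Gaussian-in-$\lambda$ losses and yields the advertised $e^{-c_0 \mu}$.

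The main technical obstacle is this last step: one must carefully track the Gaussian-in-$\lambda$ growth coming from the entire extensions of the regularized cutoffs $\chi_{\delta,\lambda}$ and $\sigma_\lambda$ along imaginary shifts, absorb the possibly-negative quadratic-in-$t$ contribution of $\psi$ to $\mathrm{Re}\,\psi$ along the shift, and handle the region outside the $t$-analyticity domain of $f$ (where one replaces $f$ by a smooth cutoff compactly supported in the analyticity region and controls the remainder by $\|\sigma_\lambda\|_{L^\infty(\text{outside})} \leq C e^{-c\lambda}$ via Lemma \ref{lemma 2.3 from ll}). All of these errors must fit into the final factor $e^{-c_0 \mu}$, which forces $s_0$ to be a small fixed constant and ultimately rests on the hypothesis $\mu/c \leq \lambda \leq c\mu$.
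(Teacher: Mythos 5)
The contour-shift step has a gap. You shift $t \mapsto t - is_0\,\mathrm{sgn}(\zeta)$ in the integral defining $\widehat g(\zeta, x)$, but $g$ contains the factor $\tchi_\delta(\psi)$, and $\tchi_\delta \in C^\infty_0(\R)$ is \emph{not} analytic. Composing it with the (entire in $t$, since quadratic) function $\psi$ does not produce a function holomorphic in $t$; the expression $\tchi_\delta(\psi(t - is_0, x))$ is not even defined, since $\psi(t-is_0, x)$ is non-real. Cauchy's theorem therefore does not equate $\widehat g(\zeta,x)$ with $e^{-s_0\zeta}\int_\R g(t-is_0,x)e^{-it\zeta}\,dt$, and your phrase ``on $\supp\tchi_\delta(\psi)$ we have $\psi\leq 2\delta$'' invokes a support restriction that lives only on the real axis and is no longer available after the shift. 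This is not cosmetic: without removing $\tchi_\delta(\psi)$, $\widehat g(\zeta, x)$ decays only at the polynomial rate dictated by the $C^\infty$-smoothness of $\tchi_\delta$, far short of the exponential decay at scale $\mu$ that you need.

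The repair — which is, in substance, what~\cite[Lemmas~2.15 and~2.17]{Laurent_2018} do (the paper merely defers to them, noting the hypotheses can be relaxed for $k=0$) — is to split \emph{first}: write $g = g_1 + g_2$ with $g_1 := e^{\tau\psi}\chi_{\delta,\lambda}(\psi)\,f\,\sigma_\lambda$ and $g_2 := g_1\left(\tchi_\delta(\psi) - 1\right)$. Every factor of $g_1$ extends holomorphically in $t$ near $\supp\sigma$ (modulo the cutoff of $f$ to its analyticity region, which you already mention), so your contour shift, the completing-the-square identity, and the $e^{C\lambda s_0^2}$ Gaussian-loss bookkeeping apply to $g_1$. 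For $g_2$, the extra factor $\tchi_\delta(\psi)-1$ vanishes on $\{\psi < 3\delta/2\}$ while $\chi_\delta$ is supported in $(-\infty,\delta)$; the completing-the-square identity together with the constraint $\psi - \sigma \geq \delta/2$, a short case split on whether $\tau\lesssim\lambda\delta$, the hypothesis $\lambda\sim\mu$, and the margin between $e^{\delta\tau}$ and the target $e^{2\delta\tau}$ give $\|g_2\|_{L^\infty}\lesssim\lambda^{1/2}e^{\tau^2/\lambda}e^{2\delta\tau}e^{-c_0\mu}$, and the crude bound $\|M_\lambda^{\mu/2}g_2(1-M_\lambda^\mu)\|_{L^2\to L^2}\leq\|g_2\|_{L^\infty}$ disposes of that piece. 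The rest of what you wrote — the Fourier-side Schur test, the $(\xi,\eta)$-region split using the almost-localization of $m_\lambda$, and the quadratic control of $\operatorname{Re}\psi$ along the imaginary shift — is correct and in line with the reference argument.
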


\begin{proof}
This is essentially Lemma 2.17 in \cite{Laurent_2018}. Its proof is based upon Lemma 2.15 from \cite{Laurent_2018} and there the assumptions on $\psi$ and $f$ are more restrictive. However, since we only need to use a version of~\cite[Lemma 2.17]{Laurent_2018} for the case $k=0$ we see that the proof works as well in the less restrictive case of Lipschitz regularity for $\psi$ (our function $\psi$ remains a quadratic polynomial in $t$) and boundedness for $f$.
\end{proof}
Lemma~\ref{lemma 2.16 from ll} is used Section~\ref{proof of local quant estimate} for the first estimates on the terms $B_*$. In our case $f$ will be equal to functions which will be either independent of $t$ or simply a polynomial in $t$, therefore the real analyticity property is preserved as well.

The following lemma is taken from~\cite{LL:17Hypo} and allows to replace in the quantitative estimates the observation term $\norm{u}{H^1(\omega)}{}$ by the weaker $\norm{u}{L^2(\omega)}{}$. It is used in the proof of the semi-global estimate of Theorem~\ref{semi global }.

\begin{lem}[Lemma 5.2 in \cite{LL:17Hypo}]
\label{lemma from ll hypo}
Let $\Omega$ be a bounded set of $\R^{n+1}=\R_t\times\R^n_x$. Let $P$ be a differential operator of order $2$, defined in a neighborhood of $\Omega$, with real principal symbol and coefficients independent of the variable $t$. Suppose as well that $P$ is elliptic in $\{\xi_t=0\}$. Let $\omega \Subset \Omega$ and $\theta \in C^\infty_0(\omega)$. Then there exists $C>0$ such that for all $u \in C^\infty_0(\R^{n+1})$ and $\mu \geq 1$, we have
$$
\norm{M^\mu_\mu u}{H^1}{}\leq C\mu \norm{u}{L^2(\omega)}{}+C\norm{Pu}{L^2(\Omega)}{}+Ce^{- c \mu}\norm{u}{H^1}{}.
$$
\end{lem}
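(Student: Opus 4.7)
The plan is to exploit two properties of $P$: its coefficients are independent of $t$, so $P$ commutes with the time Fourier multiplier $M^\mu_\mu$, and its ellipticity on $\{\xi_t=0\}$ extends by continuity to a conic neighborhood $\{|\xi_t|\le c|\xi_x|\}$ for some $c>0$. Setting $v:=M^\mu_\mu u$, one has $Pv=M^\mu_\mu(Pu)$ and $v$ has time-frequencies in $|\xi_t|\le\mu$, so the principal symbol of $P$ is elliptic on $v$ wherever $|\xi_x|\ge\mu/c$. I would then split $v$ at the spatial frequency scale $\mu$ via a tangential partition of unity $1=\chi_0(D_x/\mu)+\chi_1(D_x/\mu)$, with $\chi_1$ supported in $|\xi_x|\ge R$ for $R\gg 1/c$.

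On the high spatial-frequency piece $v_1:=\chi_1(D_x/\mu)v$, interior elliptic regularity applies: with an auxiliary cutoff $\theta_\Omega\in C^\infty_0(\Omega)$ equal to $1$ on $\overline{\omega}$, one gets $\|v_1\|_{H^1}\lesssim\|\theta_\Omega Pv_1\|_{L^2}+\|v_1\|_{L^2}$. Commuting $P$ through $\chi_1(D_x/\mu)M^\mu_\mu$ produces only lower-order terms, and the Gaussian regularization built into $M^\mu_\mu$ yields an $e^{-c\mu}\|u\|_{H^1}$ residual, so that $\|v_1\|_{H^1}\lesssim\|Pu\|_{L^2(\Omega)}+\|v\|_{L^2}+e^{-c\mu}\|u\|_{H^1}$. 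The low spatial-frequency piece $v_0:=\chi_0(D_x/\mu)v$ has both temporal and spatial frequencies bounded by $C\mu$, and Bernstein's inequality gives $\|v_0\|_{H^1}\le C\mu\|v_0\|_{L^2}\le C\mu\|v\|_{L^2}$.

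The main obstacle is then to bound $\|v\|_{L^2}$ by $\|u\|_{L^2(\omega)}$ (up to admissible residuals), which is a quantitative observability statement for the time-frequency-localized $v$. Since $v$ is band-limited at scale $\mu$, it is quasi-analytic, and a Runge / Lebeau--Robbiano-type propagation-of-smallness argument transfers an $L^2$-bound on $\omega$ to a global one at cost polynomial in $\mu$. Concretely, writing $v=\theta v+(1-\theta)v$ with $\theta\in C^\infty_0(\omega)$ the cutoff from the hypothesis, the local term is controlled by $\|u\|_{L^2(\omega)}$ via the almost-localization of $M^\mu_\mu$ in $t$ (Lemma~\ref{lemma 2.3 from ll}), while the nonlocal term is pushed into the $e^{-c\mu}\|u\|_{H^1}$ residual via pseudolocality of the spatial projection combined with an interior elliptic propagation estimate for $P$ at frequency scale $\mu$. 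Summing the two frequency pieces and absorbing the $\|v\|_{L^2}$ term by taking $\mu$ large yields the announced inequality, the factor $C\mu$ in the final estimate coming from the Bernstein bound on $v_0$.
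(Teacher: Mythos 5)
The paper does not prove this lemma: it is imported by citation from~\cite{LL:17Hypo}, so there is no in-paper proof to compare against. More importantly, as transcribed here the statement contains a typo: the cutoff $\theta\in C^\infty_0(\omega)$ is introduced in the hypotheses but never appears in the conclusion, and the left-hand side should read $\left\Vert M^\mu_\mu\theta_\mu u\right\Vert_{H^1}$ rather than $\left\Vert M^\mu_\mu u\right\Vert_{H^1}$. This is confirmed by how the lemma is applied in the proof of Theorem~\ref{semi global }, where it produces a bound on $\left\Vert M^\mu_\mu\phi_\mu u\right\Vert_{H^1}$. Without the $\theta_\mu$ localizer the inequality is simply false: take $u\in C^\infty_0(\R^{n+1})$ supported at positive distance from $\overline{\Omega}$, so that $\left\Vert u\right\Vert_{L^2(\omega)}=\left\Vert Pu\right\Vert_{L^2(\Omega)}=0$; since $m_\mu(\xi_t/\mu)\to 1$ uniformly on compact sets as $\mu\to\infty$, one has $M^\mu_\mu u\to u$ in $H^1$, so $\left\Vert M^\mu_\mu u\right\Vert_{H^1}\not\to 0$, contradicting the claimed bound $Ce^{-c\mu}\left\Vert u\right\Vert_{H^1}$.

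Your proposal targets the mistyped statement and consequently runs into this wall. The Bernstein-at-scale-$\mu$ plus high-spatial-frequency ellipticity skeleton is sound and is indeed close to the argument in~\cite{LL:17Hypo}, but the step you yourself flag as the ``main obstacle''---bounding the \emph{global} quantity $\left\Vert v\right\Vert_{L^2}$, $v=M^\mu_\mu u$, by the \emph{local} quantity $\left\Vert u\right\Vert_{L^2(\omega)}$ up to $e^{-c\mu}\left\Vert u\right\Vert_{H^1}$---is not a missing lemma: it is false, for the reason just given. The invocation of quasi-analyticity and a Lebeau--Robbiano-type propagation of smallness cannot repair it. Band-limiting $u$ only in the dual variable $\xi_t$ gives analytic control in $t$ but says nothing about decay or localization in $x$, and the hypothesis that $P$ is elliptic on $\{\xi_t=0\}$ is a statement about the principal symbol, not a global propagation-of-smallness estimate valid for functions that are not solutions of $Pv=0$. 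Concretely, in your decomposition $v=\theta v+(1-\theta)v$ the ``nonlocal'' term $(1-\theta)M^\mu_\mu u$ does not become exponentially small in $\mu$ when $u$ lives away from $\supp\theta$; there is no pseudolocality to exploit, since $M^\mu_\mu$ is a temporal Fourier multiplier and the spatial cutoff $\theta$ simply factors out. What actually makes the lemma provable is the $\theta_\mu$ that is missing from the statement you worked from: one has $\left\Vert M^\mu_\mu\theta_\mu u\right\Vert_{L^2}\le\left\Vert \theta_\mu u\right\Vert_{L^2}$, and since $\supp\theta\Subset\omega$ and the regularization acts only in $t$, Lemma~\ref{lemma 2.3 from ll} gives $\left\Vert \theta_\mu u\right\Vert_{L^2}\lesssim\left\Vert u\right\Vert_{L^2(\omega)}+e^{-c\mu}\left\Vert u\right\Vert_{L^2}$. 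With that localization supplied at the outset, your low/high spatial-frequency split (Bernstein on $|\xi_x|\lesssim\mu$, elliptic estimates for $P$ in the cone $|\xi_t|\lesssim\mu\ll|\xi_x|$, commutators producing only $e^{-c\mu}\left\Vert u\right\Vert_{H^1}$ residuals) does assemble into the stated inequality; without it, the argument has a genuine gap.
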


\small \bibliographystyle{alpha}
\bibliography{bibl}

\newcommand{\etalchar}[1]{$^{#1}$}
\begin{thebibliography}{FdLKBH01}

\bibitem[AdHG17]{AdHG:17}
Giovanni Alessandrini, Maarten~V. de~Hoop, and Romina Gaburro.
\newblock Uniqueness for the electrostatic inverse boundary value problem with
  piecewise constant anisotropic conductivities.
\newblock {\em Inverse Problems}, 33(12):125013, 24, 2017.

\bibitem[BCDCG20]{buffe:hal-02473477}
Remi Buffe, Marcelo~M. Cavalcanti, Val{\'e}ria~N. Domingos~Cavalcanti, and
  Ludovick Gagnon.
\newblock {Control and exponential stability for a transmission problem of a
  viscoelastic wave equation}.
\newblock working paper or preprint, February 2020.

\bibitem[BGM21]{baudouin:hal-03211176}
Lucie Baudouin, Pamela Godoy, and Alberto Mercado.
\newblock {Carleman estimates for the wave equation in heterogeneous media with
  non-convex interface}.
\newblock working paper or preprint, April 2021.

\bibitem[BKL16]{BKL:16}
Roberta Bosi, Yaroslav Kurylev, and Matti Lassas.
\newblock Stability of the unique continuation for the wave operator via
  {T}ataru inequality and applications.
\newblock {\em J. Differential Equations}, 260(8):6451--6492, 2016.

\bibitem[BLR92]{BLR:92}
Claude Bardos, Gilles Lebeau, and Jeffrey Rauch.
\newblock Sharp sufficient conditions for the observation, control, and
  stabilization of waves from the boundary.
\newblock {\em SIAM J. Control Optim.}, 30:1024--1065, 1992.

\bibitem[BMO07]{baudouin2007global}
Lucie Baudouin, Alberto Mercado, and Axel Osses.
\newblock A global carleman estimate in a transmission wave equation and
  application to a one-measurement inverse problem.
\newblock {\em Inverse Problems}, 23(1):257, 2007.

\bibitem[Cal58]{Calderon:58}
Alberto~Pedro Calder{\'o}n.
\newblock Uniqueness in the {C}auchy problem for partial differential
  equations.
\newblock {\em Amer. J. Math.}, 80:16--36, 1958.

\bibitem[Car39]{Carleman:39}
Torsten Carleman.
\newblock Sur un probl\`eme d'unicit\'e pour les syst\`emes d'\'equations aux
  d\'eriv\'ees partielles \`a deux variables ind\'ependantes.
\newblock {\em Ark. Mat. Astr. Fys.}, 26B(17):1--9, 1939.

\bibitem[CdHKU19]{CHKVU:19}
Peter Caday, Maarten~V. de~Hoop, Vitaly Katsnelson, and Gunther Uhlmann.
\newblock Reconstruction of piecewise smooth wave speeds using multiple
  scattering.
\newblock {\em Trans. Amer. Math. Soc.}, 372(2):1213--1235, 2019.

\bibitem[DOP02]{DOP:02}
Anna Doubova, Axel Osses, and Jean-Pierre Puel.
\newblock Exact controllability to trajectories for semilinear heat equations
  with discontinuous diffusion coefficients.
\newblock {\em ESAIM Control Optim. Calc. Var.}, 8:621--661, 2002.

\bibitem[FdLKBH01]{F:01}
Vera~M. Fernandez~de {L}ima, Joao~E. Kogler, Jocelyn Bennaton, and Wolfgang
  Hanke.
\newblock Wave onset in central gray matter - its intrinsic optical signal and
  phase transitions in extracellular polymers.
\newblock {\em An. Acad. Bras. Ci\^enc.}, 73(3):1678--2690, 2001.

\bibitem[Gag18]{gagnon:hal-01958161}
Ludovick Gagnon.
\newblock {Sufficient Conditions for the Controllability of Wave Equations with
  a Transmission Condition at the Interface}.
\newblock 28 pages, 30 figures, December 2018.

\bibitem[GL20]{galkowski2020control}
Jeffrey Galkowski and Matthieu L{\'e}autaud.
\newblock Control from an interior hypersurface.
\newblock {\em Transactions of the American Mathematical Society},
  373(5):3177--3233, 2020.

\bibitem[H{\"o}r63]{Hoermander:63}
Lars H{\"o}rmander.
\newblock {\em Linear {P}artial {D}ifferential {O}perators}.
\newblock Springer-Verlag, Berlin, 1963.

\bibitem[H{\"o}r67]{Hor67}
Lars H{\"o}rmander.
\newblock Hypoelliptic second order differential equations.
\newblock {\em Acta Math.}, 119:147--171, 1967.

\bibitem[H{\"o}r85]{Hoermander:V3}
Lars H{\"o}rmander.
\newblock {\em The {A}nalysis of {L}inear {P}artial {D}ifferential
  {O}perators}, volume III.
\newblock Springer-Verlag, 1985.
\newblock Second printing 1994.

\bibitem[H{\"o}r92]{Hormander:92}
Lars H{\"o}rmander.
\newblock A uniqueness theorem for second order hyperbolic differential
  equations.
\newblock {\em Comm. Partial Differential Equations}, 17(5-6):699--714, 1992.

\bibitem[H{\"o}r94]{Hoermander:V4}
Lars H{\"o}rmander.
\newblock {\em The analysis of linear partial differential operators. {IV}},
  volume 275 of {\em Grundlehren der Mathematischen Wissenschaften}.
\newblock Springer-Verlag, Berlin, 1994.
\newblock Fourier integral operators, Corrected reprint of the 1985 original.

\bibitem[H{\"o}r97]{Hor:97}
Lars H{\"o}rmander.
\newblock On the uniqueness of the {C}auchy problem under partial analyticity
  assumptions.
\newblock In {\em Geometrical optics and related topics ({C}ortona, 1996)},
  volume~32 of {\em Progr. Nonlinear Differential Equations Appl.}, pages
  179--219. Birkh\"auser Boston, Boston, MA, 1997.

\bibitem[Leb92]{Leb:Analytic}
Gilles Lebeau.
\newblock Contr\^ole analytique. {I}. {E}stimations a priori.
\newblock {\em Duke Math. J.}, 68(1):1--30, 1992.

\bibitem[Ler19]{lerner2019carleman}
Nicolas Lerner.
\newblock {\em Carleman Inequalities: An Introduction and More}, volume 353.
\newblock Springer, 2019.

\bibitem[Lio88]{Lio:88}
Jacques-Louis Lions.
\newblock {\em Contr\^olabilit\'e exacte, perturbations et stabilisation de
  syst\`emes distribu\'es. {T}ome 1}, volume~8 of {\em Recherches en
  Math\'ematiques Appliqu\'ees}.
\newblock Masson, Paris, 1988.

\bibitem[LL19]{Laurent_2018}
Camille Laurent and Matthieu L\'{e}autaud.
\newblock Quantitative unique continuation for operators with partially
  analytic coefficients. {A}pplication to approximate control for waves.
\newblock {\em J. Eur. Math. Soc. (JEMS)}, 21(4):957--1069, 2019.

\bibitem[LL20]{LLnotes}
Camille Laurent and Matthieu Léautaud.
\newblock {\em Unique continuation and applications}.
\newblock Lecture notes. unpublished,
  {http://leautaud.perso.math.cnrs.fr/files/UCPApplications.pdf}, 2020.

\bibitem[LL21]{LL:20dampedhypo}
Camille Laurent and Matthieu L\'{e}autaud.
\newblock Logarithmic decay for linear damped hypoelliptic wave and
  {S}chr\"{o}dinger equations.
\newblock {\em SIAM J. Control Optim.}, 59(3):1881--1902, 2021.

\bibitem[LL22]{LL:17Hypo}
Camille Laurent and Matthieu L{\'e}autaud.
\newblock Tunneling estimates and approximate controllability for hypoelliptic
  equations.
\newblock {\em Memoirs of the American Mathematical Society}, 276(1357), 2022.

\bibitem[LR95]{LR:95}
Gilles Lebeau and Luc Robbiano.
\newblock Contr\^ole exact de l'\'equation de la chaleur.
\newblock {\em Comm. Partial Differential Equations}, 20:335--356, 1995.

\bibitem[LR10]{LRR:10}
J\'er\^ome {Le~Rousseau} and Luc Robbiano.
\newblock Carleman estimate for elliptic operators with coefficents with jumps
  at an interface in arbitrary dimension and application to the null
  controllability of linear parabolic equations.
\newblock {\em Arch. Rational Mech. Anal.}, 105:953--990, 2010.

\bibitem[LR11]{LRR:11}
J\'er\^ome {Le~Rousseau} and Luc Robbiano.
\newblock Local and global {C}arleman estimates for parabolic operators with
  coefficients with jumps at interfaces.
\newblock {\em Invent. Math.}, 183:245--336, 2011.

\bibitem[LRL13]{le2013carleman}
J{\'e}r{\^o}me Le~Rousseau and Nicolas Lerner.
\newblock Carleman estimates for anisotropic elliptic operators with jumps at
  an interface.
\newblock {\em Analysis \& PDE}, 6(7):1601--1648, 2013.

\bibitem[LRLR13]{LRLR:13}
J{\'e}r{\^o}me Le~Rousseau, Matthieu L{\'e}autaud, and Luc Robbiano.
\newblock Controllability of a parabolic system with a diffuse interface.
\newblock {\em Journal of the European Mathematical Society}, 15(4):1485--1574,
  2013.

\bibitem[LRLR22]{rousseau2022elliptic}
J{\'e}r{\^o}me Le~Rousseau, Gilles Lebeau, and Luc Robbiano.
\newblock {\em Elliptic Carleman Estimates and Applications to Stabilization
  and Controllability, Volume I: Dirichlet Boundary Conditions on Euclidean
  Space}.
\newblock Progress in Nonlinear Differential Equations and Their Applications.
  Springer International Publishing, 2022.

\bibitem[MCCP17]{FCCP:17}
Fanny Morin, Matthieu Chabanas, Hadrien Courtecuisse, and Yohan Payan.
\newblock Biomechanical modeling of brain soft tissues for medical
  applications.
\newblock In {\em Biomechanics of Living Organs. Hyperelastic Constitutive Laws
  for Finite Element Modeling}, pages 127--146. Academic Press, 2017.

\bibitem[Rob91]{Robbiano:91}
Luc Robbiano.
\newblock Th\'eor\`eme d'unicit\'e adapt\'e au contr\^ole des solutions des
  probl\`emes hyperboliques.
\newblock {\em Comm. Partial Differential Equations}, 16(4-5):789--800, 1991.

\bibitem[Rob95]{Robbiano:95}
Luc Robbiano.
\newblock Fonction de co\^ut et contr\^ole des solutions des \'equations
  hyperboliques.
\newblock {\em Asymptotic Anal.}, 10:95--115, 1995.

\bibitem[RZ98]{RZ:98}
Luc Robbiano and Claude Zuily.
\newblock Uniqueness in the {C}auchy problem for operators with partially
  holomorphic coefficients.
\newblock {\em Invent. Math.}, 131(3):493--539, 1998.

\bibitem[Sym83]{Symes:83}
William~W. Symes.
\newblock A trace theorem for solutions of the wave equation, and the remote
  determination of acoustic sources.
\newblock {\em Math. Methods Appl. Sci.}, 5(2):131--152, 1983.

\bibitem[Tat95]{Tataru:95}
Daniel Tataru.
\newblock Unique continuation for solutions to {PDE}'s; between {H}\"ormander's
  theorem and {H}olmgren's theorem.
\newblock {\em Comm. Partial Differential Equations}, 20(5-6):855--884, 1995.

\bibitem[Tat99]{Tataru:99}
Daniel Tataru.
\newblock Unique continuation for operators with partially analytic
  coefficients.
\newblock {\em J. Math. Pures Appl. (9)}, 78(5):505--521, 1999.

\bibitem[YDdH{\etalchar{+}}17]{CDHCH:17}
Chunquan Yu, Elizabeth~A. Day, Maarten~V. de~Hoop, Michel Campillo, and van~der
  Hilst Robert~D.
\newblock Mapping mantle transition zone discontinuities beneath the central
  pacific with array processing of ss precursors.
\newblock {\em J. Geophys. Res.}, 122(12):10364--10378, 2017.

\end{thebibliography}

\end{document}